\providecommand{\makenomenclature}{\makeglossary}
\newcommand{\lyxdot}{.}
\numberwithin{equation}{section}
\numberwithin{figure}{section}
\theoremstyle{plain}
\newtheorem{thm}{\protect\theoremname}[section]
  \theoremstyle{remark}
  \newtheorem{rem}[thm]{\protect\remarkname}
  \theoremstyle{definition}
  \newtheorem{defn}[thm]{\protect\definitionname}
  \theoremstyle{plain}
  \newtheorem{cor}[thm]{\protect\corollaryname}
  \theoremstyle{definition}
  \newtheorem{example}[thm]{\protect\examplename}
  \theoremstyle{plain}
  \newtheorem{prop}[thm]{\protect\propositionname}
  \theoremstyle{plain}
  \newtheorem{lem}[thm]{\protect\lemmaname}
  \theoremstyle{remark}
  \newtheorem*{rem*}{\protect\remarkname}
\def\documenttitle{Loss systems in a random environment}
\def\queue@width{
	\pgf@x=\wd\pgfnodeparttextbox
	% Correct dimension if it is larger than the minimum width.
	% Convert minimum width to dimension.
	\pgfmathsetlength{\pgf@xa}{\pgfkeysvalueof{/pgf/minimum width}}%
	\ifdim\pgf@x<\pgf@xa
		\pgf@x=\pgf@xa
	\fi
}
\def\queue@height{
	\pgf@x=\ht\pgfnodeparttextbox
	% Correct dimension if it is larger than the minimum height.
	% Convert minimum height to dimension.
	\pgfmathsetlength{\pgf@xa}{\pgfkeysvalueof{/pgf/minimum height}}%
	\ifdim\pgf@x<\pgf@xa
		\pgf@x=\pgf@xa
	\fi
}
	\savedmacro{\queuehead}{\pgfkeysvalueof{/tikz/queue head}}
	\saveddimen{\halfwidth}{
		\queue@width
		\divide \pgf@x by 2
	}
	\saveddimen{\halfheight}{
		\queue@height
		\divide \pgf@x by 2
	}
	\savedmacro{\queuehead}{\pgfkeysvalueof{/tikz/queue head}}
	\saveddimen{\width}{
		\queue@width
	}
	\saveddimen{\height}{
		\queue@height
	}
		\pgfextractx\pgf@xa{\southwest}
		\pgfextracty\pgf@ya{\southwest}
		\pgfextractx\pgf@xb{\northeast}
		\pgfextracty\pgf@yb{\northeast}
		\newdimen \midy
		\newdimen \midx
			\newdimen \cellheight
			\newdimen \currcelly
			\newdimen \cellwidth
			\newdimen \currcellx
\definecolor{blocked.bg}{RGB}{255,125,125}
\definecolor{vratecolor}{named}{cyan}
\newcommand\etid{environment transition and interaction diagram}
\newcommand\Etid{Environment transition and interaction diagram}
  \providecommand{\corollaryname}{Corollary}
  \providecommand{\definitionname}{Definition}
  \providecommand{\examplename}{Example}
  \providecommand{\lemmaname}{Lemma}
  \providecommand{\propositionname}{Proposition}
  \providecommand{\remarkname}{Remark}
\providecommand{\theoremname}{Theorem}
\begin{document}

\title{\documenttitle}

\author{Ruslan Krenzler, Hans Daduna%
\thanks{University of Hamburg, Department of Mathematics, Bundesstr. 55, 20146
Hamburg, Germany.%
}\\
}

\date{December 2, 2013}
\maketitle
\begin{abstract}
We consider a single server system with infinite waiting room in a
random environment. The service system and the environment interact
in both directions. Whenever the environment enters a prespecified
subset of its state space the service process is completely blocked:
Service is interrupted and newly arriving customers are lost. We prove
an if-and-only-if-condition for a product form steady state distribution
of the joint queueing-environment process. A consequence is a strong
insensitivity property for such systems.

We discuss several applications, e.g. from inventory theory and reliability
theory, and show that our result extends and generalizes several theorems
found in the literature, e.g. of queueing-inventory processes.

We investigate further classical loss systems, where due to finite
waiting room loss of customers occurs. In connection with loss of
customers due to blocking by the environment and service interruptions
new phenomena arise.

We further investigate the embedded Markov chains at departure epochs
and show that the behaviour of the embedded Markov chain is often
considerably different from that of the continuous time Markov process.
This is different from the behaviour of the standard M/G/1/$\infty$,where
the steady state of the embedded Markov chain and the continuous time
process coincide.

For exponential queueing systems we show that there is a product form
equilibrium of the embedded Markov chain under rather general conditions.
For systems with non-exponential service times more restrictive constraints
are needed, which we prove by a counter example where the environment
represents an inventory attached to an M/D/1 queue. Such integrated
queueing-inventory systems are dealt with in the literature previously,
and are revisited here in detail. \\

\end{abstract}
\textbf{MSC 2000 Subject Classification:} Primary 60K; Secondary:
60J10, 60F05, 60K20, 90B22, 90B05\\

\textbf{Keywords:} Queueing systems, random environment, product form
steady state, loss systems, $M/M/1/\infty$, $M/M/m/\infty$, $M/G/1/\infty$,
embedded Markov chains, inventory systems, availability, lost sales,
matrix invertibility.

\tableofcontents{}

\section{Introduction\label{sect:intro}}

Product form networks of queues are common models for easy to perform
structural and quantitative first order analysis of complex networks
in Operations Research applications. The most prominent representatives
of this class of models are the Jackson \cite{jackson:57} and Gordon-Newell
\cite{gordon;newell:67} networks and their generalizations as BCMP
\cite{baskett;chandy;muntz;palacios:75} and Kelly \cite{kelly:76}
networks, for a short review see \cite{daduna:01a}.

Standard mathematical description of this class of models is by time
homogeneous Markovian vector processes, where each coordinate represents
the behaviour of one of the queues. Product form networks are characterized
by the fact that in steady state (at any fixed time $t$) the joint
distribution of the multi-dimensional (over nodes) queueing process
is the product of the stationary marginal distributions of the individual
nodes' (non Markovian) queueing processes. With respect to the research
described in this note the key point is that the coordinates of the
vector process represent objects of the same class, namely queueing
systems.\\

In Operations Research applications queueing systems constitute an
important class of models in very different settings. Nevertheless,
in many applications those parts of, e.g., a complex production system
which are modeled by queues interact with other subsystems which usually
can not be modeled by queues. We will describe two prototype situations
which will be considered in detail throughout the paper as introductory
examples. These examples deal with interaction of (i) a queueing system
with an inventory, and (ii) a queueing system with its environment,
which influences the availability of the server.\\

\textbf{Introductory example (i): Production-inventory system.} Typically,
there is a manufacturing system, (machine, modeled by a queueing system)
which assembles delivered raw material to a final product, consuming
in the production process some further material (we will call these
additional pieces of material ''items\textquotedbl{} henceforth)
which is hold in inventories.\\

\textbf{Introductory example (ii): Availability of a production system.}
The manufacturing system (machine, modeled by a queueing system) may
break down caused by influences of its environment or by wear out
of its server and has to be repaired.\\

Our present research is motivated by the observation that in both
situations we have to construct an integrated model with a common
structure: 
\begin{itemize}
\item A production processes modeled by queueing systems and 
\item an additional relevant part of the system with different character,
e.g., inventory control or availability control. 
\end{itemize}
In both models the components strongly interact, and although the
additional feature are quite different, we will extract similarities.
This motivates to construct a unified model which encompasses both
introductory examples and as we will show, many other examples in
different fields. For taking a general nomenclature we will subsume
in any case the ''additional relevant part of the system'' attached
to the queueing model as ''environment of the queue''.

Our construction of ''queueing systems in a random environment''
will result in a set of product form stationary distributions for
the Markovian joint queueing-environment process, i.e., in equilibrium
the coordinates at fixed time points decouple: The stationary distribution
of the joint queueing-environment process is the product of stationary
marginal distributions of the queue and the environment, which in
general cannot be described as a Markov process of their own. The
key point is that the coordinates of the vector process represent
very different classes of objects.

Product form stationary distributions for the introductory examples
have been found only recently.\\
 \textbf{(i)} Schwarz, Sauer, Daduna, Kulik, and Szekli \cite{schwarz;sauer;daduna;kulik;szekli:06}
discovered product forms for the steady state distributions of an
$M/M/1/\infty$ under standard order policies with lost sales. Further
contributions to product form results in this field are by Vineetha
\cite{vineetha:08}, Saffari, Haji, and Hassanzadeh \cite{saffari;haji;hassanzadeh:11},
and Saffari, Asmussen, and Haji \cite{saffari;asmussen;haji:13}.
An early paper of Berman and Kim \cite{berman;kim:99} can be considered
to contribute to integrated models with product form steady state.\\
 \textbf{(ii)} For classical product form networks of queues in \cite{sauer;daduna:03}
the influence of breakdown and repair of the nodes was studied and
it was proved that under certain conditions a product form equilibrium
for such networks of unreliable servers exists. The Markovian description
of the system encompasses coordinates describing (possibly many) queues
and an additional coordinate to indicate the reliability status of
the system, for more details see \cite{sauer:06}.\\
 Related research on queueing systems in a random environment is by
Zhu \cite{zhu:94}, Economou \cite{economou:05}, Tsitsiashvili, Osipova,
Koliev, Baum \cite{tsitsiashvili;osipova;koliev;baum:02}, and Balsamo,Marin
\cite{balsamo;marin:13}. There usually the environment is a Markov
process of its own, which is the case neither in our model nor in
the motivating results in \cite{schwarz;sauer;daduna;kulik;szekli:06}
and \cite{sauer;daduna:03}, which consider our introductory examples.\\

An important common aspect of the interaction in both introductory
examples leads to the term \textbf{loss system} for our general interacting
system: Whenever for the queue, respectively, 
\begin{itemize}
\item the inventory is depleted (i), 
\item the machine is broken down (ii), 
\end{itemize}
service at the production unit is interrupted due to stock out (i),
resp. no production capacity available (ii). Additionally, during
the time the interruption continues no new arrivals are admitted to
both systems, due to lost sales and because customers prefer to enter
some other working server.

Note, that this loss of customers is different from what is usually
termed loss systems in pure queueing theory, where loss of customers
happens, when the finite waiting space is filled up to maximal capacity.\\

Following the above description, in our present investigation of complex
systems we always start with a queueing system as one subsystem and
a general attached other subsystem (the environment) which imposes
side constraints on the queueing process and in general interacts
in both directions with the queue. In typical cases there will be
a part of the environment's state space, the states of which we shall
call ''blocking states'', with the following property: Whenever
the environment enters a blocking state, the service process will
be interrupted and no new arrivals are admitted to enter the system
and are lost to the system forever.\\
 The interaction of the components in this class of models is that
jumps of the queue may enforce the environment to jump instantaneously,
and in the other direction the evolving environment may interrupt
service and arrivals at the queue, by entering blocking states, and
when leaving the set of blocking states service is resumed and new
customers are admitted again.\\

We describe our exponential system in \prettyref{sect:LS-MM1Inf-model}
and start our detailed investigation in \prettyref{sect:LS-steady-state}.
Our main result (\prettyref{thm:LS-productform}) is that, although
production and environment strongly interact, asymptotically and in
equilibrium (at fixed time instants) the production process and the
environment process seem to decouple, which means that a product form
equilibrium emerges.

This shows that the mentioned independence results in \cite{schwarz;sauer;daduna;kulik;szekli:06},
\cite{sauer;daduna:03}, \cite{saffari;haji;hassanzadeh:11}, \cite{saffari;asmussen;haji:13},
and \cite{vineetha:08} do not depend on the specific properties of
the attached second subsystem. Furthermore, we will show that the
theorem can be interpreted as a strong insensitivity property of the
system: As long as ergodicity is maintained, the environment can change
drastically without changing the steady state distribution of the
queue length.

And, vice versa, it can be seen that the environment's steady state
will not change when the service capacity of the production will change.

We shall discuss this with related problems and some complements to
the theorem in more detail in \prettyref{sect:LS-steady-state} after
presenting our main result there. In \prettyref{sect:LS-finite-capacity}
we investigate the case of finite waiting space at the queue, so in
the classical loss system we introduce additional losses due to the
impact of the environment on the production process. Astonishingly,
there occur new structural problems when product form steady states
are found.

In \prettyref{sect:LS-applications} we present a bulk of applications
of our abstract modeling process to systems found in the literature.
We show especially, that our main theorem allows to generalize rather
directly many of the previous results. In \prettyref{sect:LS-EMC-exponential}
we consider the systems (which live in continuous time) at departure
instants only, which results in considering an embedded Markov chain.
We find that the behaviour of the embedded Markov chain is often considerably
different from that of the original continuous time Markov process
investigated in \prettyref{part:LS-continous-time}. Especially, it
is a non trivial task to decide whether the stationary distribution
of the embedded Markov chain (at departure instants) is of product
form as well.

For exponential queueing systems we show that there is a product form
equilibrium under rather general conditions. We provide this stationary
distributions explicitly in \prettyref{thm:LCS-EMC-product-form-exp-service},
showing that the marginal queue length distribution is the same as
in continuous time, and discuss the relation relation between the
respective marginal environment distributions, which are not equal
but related by a transformation which we explicitly give in \prettyref{lem:LS-EMC-theta-theta-hat-releationship}.

To emphasize the problems arising from the interaction of the two
components of integrated systems, we remind the reader, that for ergodic
$M/M/1/\infty$ queues the limiting and stationary distribution of
the continuous time queue length process and the Markov chains embedded
at either arrival instants or departure instants are the same. In
connection with this, we revisit some of Vineetha's \cite{vineetha:08}
queueing-inventory systems, using similarly embedded Markov chain
techniques.

A striking observation is moreover, that for a system which is ergodic
in the continuous time Markovian description the Markov chain embedded
at departure instants may be not ergodic. The reason for this is two-fold.
Firstly, the embedded Markov chain may have inessential states due
to the specified interaction rules. Secondly, even when we delete
all inessential states, the resulting single positive recurrent class
may be periodic. We study this problem in depth in \prettyref{sect:LS-EMC-steadystate}.

In \prettyref{sect:LS-EMC-MG1} we show that for systems with non-exponential
service times more restrictive constraints are needed, which we prove
by a counter example where the environment represents an inventory
attached to an $M/D/1$ queue. Such integrated queueing-inventory
systems are dealt with in the literature previously, e.g. in \cite{vineetha:08}.
Further applications are, e.g., in modeling unreliable queues.

In \prettyref{sect:LS-EMC-applications} we present further applications
and discuss the differences between the stationary distributions of
the continuous time process and the embedded Markov chain.

In \prettyref{sect:LS-EMC-useful-lemma} we provide some useful lemmata
which seem to be of interest for their own, because we can generalize
some standard results on invertibility of M-matrices. The invertibility
was a necessary ingredient of our proofs in the main body of the paper,
but the assumption of irreducibility of the M-matrices which is required
in the literature (see e.g. \cite{kanzow:05}, Lemma 4.12) does not
hold in our models.\\

\textbf{Related work:} We have cited literature related to our introductory
examples which deal with product form stationary distributions above.
Clearly, there are many investigations on queueing systems with unreliable
servers without this separability property, for a survey see the introductions
in \cite{sauer;daduna:03} and \cite{sauer:06}.

In classical Operations Research the fields of queueing theory and
inventory theory are almost disjoint areas of research. Recently,
research on integrated models has found some interest, a survey is
the review in \cite{krishnamoorthy;lakshmy;manikandan:11}.

In a more abstract setting, we can describe our present work as to
develop a framework for a birth-death process in a random environment,
where the birth-death process' development is interrupted from time
to time by some configurations occurring in the environment. On the
other side, in our framework the birth-death process influences the
development of the environment.

There are many investigations on birth-death processes in random environments,
we shall cite only some selected references. Best to our knowledge
our results below are complementary to the literature. A stream of
research on birth-death processes in a random environment exploits
the interaction of birth-death process and environment as the typical
structure of a quasi-birth-death process. Such ''QBD processes''
have two dimensional states, the ''level'' indicates the population
size, while the ''phase'' represents the environment. For more details
see Chapter 6 (Queues in a Random Environment) in \cite{neuts:81},
and Example C in \cite{neuts:89}{[}p. 202, 203{]}.

Related models are investigated in the theory of branching processes
in a random environment, see Section 2.9 in \cite{haccou;jagers;vatutin:05}
for a short review. An early survey with many references to old literature
is \cite{kesten:80}.

Another branch of research is optimization of queues under constraints
put on the queue by a randomly changing environment as described e.g.
in \cite{helm;waldmann:84}.

While the most of the annotated sources are concerned with conventional
steady state analysis, the work \cite{falin:96} is related to ours
two-fold: A queue (finite classical loss system) in a random environment
shows a product form steady state.\\

This paper is an extension and unification of the preprints \cite{krenzler;daduna:12}
and \cite{krenzler;daduna:13}.

\begin{center}
\begin{figure}[H]
\centering{}\begin{tikzpicture}
\begin{scope}[shift={(0,0.1)}]
 \node(envorment-label)[] at (3.25,1) {environment ($Y(t)$)};
 \draw[rounded corners=1ex,color=gray] (0,0) rectangle (6.5,1.5);
\end{scope}

% Draw queing system
\begin{scope}[shift={(1,-3.1)}]
 \node()[] at (2,1.5) {queuing system ($X(t)$) };
 \draw[rounded corners=1ex,color=gray] (-1,-1) rectangle (5.5,2);
 \node(input-node)[] at (-2,0) {$\lambda(n)$};
 \node(loss-decision)[draw, shape=circle] at (0,0) {};
 \node(input-loss)[] at (0,-2) {lost};
 \node(server)[shape=circle, draw,
	  label=above:server,
  	minimum width=33pt, minimum height=30pt] at (3.5,0){$\mu(n)$};
 \node(Q)[shape=queue, draw, queue head=east,
	    label=above:queue,
		minimum width=50pt,minimum height=20pt] at (2,0) {};
 \node(output-node)[] at (6.5,0) {};
 %\node(output-label)[] at (4.6,0.3){$\mu(n)$};
 \draw (input-node)edge[->,thick] node[]{}(loss-decision);
 \draw (loss-decision)edge[->,thick] node[]{}(Q);
 \draw (loss-decision)edge[->,thick] node[]{}(input-loss);
 \draw (server)edge[->,thick] node[]{}(output-node);
\end{scope}

% Draw arrows
\begin{scope}[shift={(2,0)}]
 \draw [->, thick, dashed, anchor=east]
  (0.25,0)--(0.25,-1)
  node[left, align=center,midway]
  {
   may \\
   stop/resume
  };
  \draw [->, thick, dashed, anchor=east]
  (2,-1)--(2,0)
  node[right,align=center,midway]
  {
   may change \\
   environment state
  };
\end{scope}
\end{tikzpicture}\caption{\label{fig:LS-loss-system}Loss system}
\end{figure}
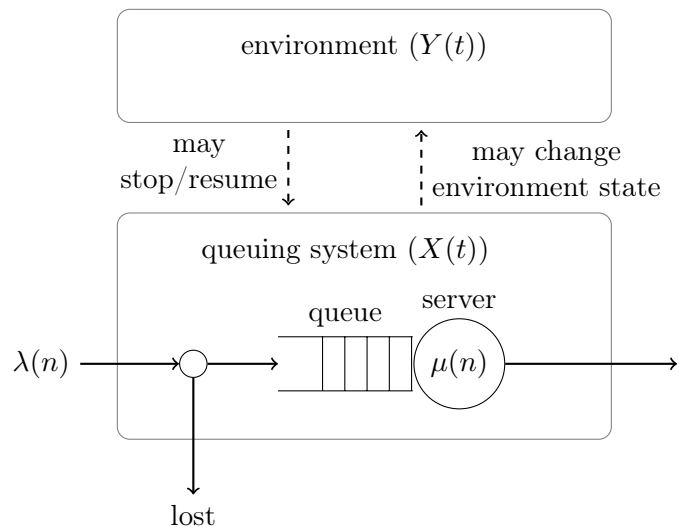

\par\end{center}

\paragraph*{Notations and conventions: }
\begin{itemize}
\item $\mathbb{R}_{0}^{+}=[0,\infty)$, $\mathbb{R}^{+}=(0,\infty)$, $\mathbb{N}$=\{1,2,3,\dots\},
$\mathbb{N}_{0}=\{0\}\cup\mathbb{N}$ 
\item All random variables and processes occurring henceforth are defined
on a common underlying probability space $(\Omega,{\cal F},P)$. 
\item For all processes considered in this paper we can and will assume
that their paths are right continuous with left limits (cadlag)\index{cadlag}. 
\item $1_{[expression]}$ is the indicator function which is $1$ if $expression$
is true and $0$ otherwise. 
\item For any quadratic matrix $V$ we define $diag(V)$ \index{$diag$}as
the matrix with the same diagonal as $V$, while all other entries
are $0$. \end{itemize}

\part{Loss systems in continuous time\label{part:LS-continous-time}}

\global\long\def\V{V}
\global\long\def\v{v}
\global\long\def\qsep{,}
\global\long\def\intpositive{>0}
\global\long\def\Rentry#1#2{R(#1,#2)}

\section{The exponential model}

\subsection{The $M/M/1/\infty$ model\label{sect:LS-MM1Inf-model} }

We consider a two-dimensional process $Z=(X,Y)=((X(t),Y(t)):t\in[0,\infty))$\marginpar{$Z=(X,Y)$}\index{Z (X,Y) two dimensional process in continous time@$Z=(X,Y)$, two
dimensional stochastic process in \foreignlanguage{icelandic}{continous}
time} with state space $E=\mathbb{N}_{0}\times K$. \index{$E$, state space}\marginpar{$E$}$K$\nomenclature{$K$}{$K$}\marginpar{$K$}\index{K enveironment states@$K$, environment states}
is a countable set, the environment space of the process, whereas
the queueing state space is $\mathbb{N}_{0}$.

We assume throughout that $Z=(X,Y)$ is non-explosive in finite times
and irreducible (unless specified otherwise).

According to our introductory example the environment space of the
process is partitioned into disjoint components $K:=K_{W}+K_{B}$.
In the framework of $K$ describing the inventory size $K_{B}$\marginpar{$K_{B}$}
\index{K B@$K_{B}$, set of the blocking environment states}describes
the status ''stock out'', in the reliability problem $K_{B}$ describes
the status ''server broken down''. So accordingly $K_{W}$ \marginpar{$K_{W}$}
\index{K w@$K_{W}$ , environment states, in which the server works.}
indicates for the inventory that there is stock on hand for production,
and ''server is up'' in the other system.

The general interpretation is that whenever the environment process
enters $K_{B}$ the service process is ''\textbf{B}LOCKED'', and the
service is resumed immediately whenever the environment process returns
to $K_{W}$, the server ''\textbf{W}ORKS'' again.

Whenever the environment process stays in $K_{B}$ new arrivals are
lost.

Obviously, it is natural to assume that the set $K_{W}$ is not empty,
while in certain frameworks $K_{B}$ may be empty, e.g. no break down
of the server in the second introductory example occurs.

The server in the system is a single server under First-Come-First-Served%
\footnote{Wer zuerst kommt, mahlt zuerst.%
} regime (FCFS)\index{FCFS} with an infinite waiting room.

The arrival stream of customers is Poisson with rate $\lambda(n)>0$,
when there are $n$ customers in the system.

The system develops over time as follows.

\textbf{1)} If the environment at time $t$ is in state $Y(t)=k\in K_{W}$
and if there are $X(t)=n$ customers in the queue then service is
provided to the customer at the head of the queue with rate $\mu(n)>0$.
The queue is organized according First-Come-First-Served regime (FCFS).
As soon as his service is finished he leaves the system and the environment
changes with probability $\Rentry km$\marginpar{$R$ - matrix}\index{R matrix@$R$ - stochastic matrix.}
to state $m\in K$, independent of the history of the system, given
$k$. We consider $R=(\Rentry km:k,m\in K)$ as a stochastic matrix
for the environment driven by the departure process.

\textbf{2)} If the environment at time $t$ is in state $Y(t)=k\in K_{B}$
no service is provided to customers in the queue and arriving customers
are lost.

\textbf{3)} Whenever the environment at time $t$ is in state $Y(t)=k\in K$
it changes with rate $\v({k,m})$ \marginpar{$\v(k,m)$} \index{v k,m@$\v(k,m)$}to
state $m\in K$, independent of the history of the system, given $k$.\\
 Note, that such changes occur independent from the service and arrival
process, while the changes of the environment's status under \textbf{1)}
are coupled with the service process.

From the above description we conclude that the non negative transition
rates of $(X,Y)$ are for $(n,k)\in E$ 
\begin{eqnarray*}
q((n,k)\qsep(n+1,k)) & = & \lambda(n),\qquad k\in K_{W},\\
q((n,k)\qsep(n-1,m)) & = & \mu(n)\Rentry km,\qquad k\in K_{W},n\intpositive,\\
q((n,k)\qsep(n,m)) & = & \v(k,m)\in\mathbb{R}_{0}^{+},~~~k\neq m,\\
q((n,k)\qsep(i,m)) & = & 0\,,\qquad\text{otherwise for}~~(n,k)\neq(i,m)\,.
\end{eqnarray*}
 Note, that the diagonal elements of $Q:=(q((n,k)\qsep(i,m)):(n,k),(i,m)\in E)$
\marginpar{$Q$} \index{Q generator matrx@$Q$, generator matrix}
are determined by the requirement that row sum is $0$.
\begin{rem}
It is allowed to have positive diagonal entries $\Rentry kk$. $R$
needs not be irreducible, there may exist closed subsets in $K$.

$\v(k,k)=-\sum_{m\in K\setminus\{k\}}\v(k,m)$ is required for all
$k\in K$ such that\marginpar{$\V$ - generator}\index{V generator matrix @$V$ - generator matrix}
\[
\V=(\v(k,m):k,m\in K)
\]
 is a generator matrix.

The Markov process associated with $\V$ may have absorbing states,
i.e., $\V$ then has zero rows. 
\end{rem}
%LyX Krücke
\begin{rem}
\label{rem:LS-environment-interaction-diagram}We will visualize the
dynamics of the environment by \etid \index{etid@\etid} consisting
of colored nodes and colored arrows. We will use the following conventions:\end{rem}
\begin{itemize}
\item Square nodes describe the environment states from $K$.
\item Red nodes describe the blocking states, i.e., states from $K_{B}$.
\item Blue arrows describe possible environment changes independent from
the queueing system and correspond to the positive rates of the $V$
matrix.
\item Black arrows describe environment changes after services and correspond
to the positive entries of the $R$ matrix.
\end{itemize}
For example the diagrams \prettyref{fig:LS-r-S-example-diagram} and
\prettyref{fig:LS-r-Q-example-diagram} describe the behaviours of
two different lost-sales systems.

\subsection{Steady state distribution}

\label{sect:LS-steady-state} Our aim is to compute for an ergodic
system explicitly the steady state and limiting distribution of $(X,Y)$.
We can not expect that this will be possible in the general system
as described in Section \ref{sect:LS-MM1Inf-model}, but fortunately
enough we will be able to characterize those systems which admit a
product form equilibrium.
\begin{defn}
For a loss system $(X(t),Y(t))$ in a state space $E:=\mathbb{N}_{0}\times K$,
whose unique limiting distribution exists, we define \marginpar{$\pi$}\index{pi
, limiting distribution of the loss system in continuous time@$\pi$, limiting distribution of the loss system in continuous time} 

\begin{eqnarray*}
\pi & := & (\pi(n,k):(n,k)\in E:=\mathbb{N}_{0}\times K)\\
\pi(n,k) & := & \lim_{t\rightarrow\infty}P(X(t)=n,Y(t)=k)
\end{eqnarray*}

and the appropriate marginal limiting distributions \marginpar{$\xi$}
\index{xi
 - limiting distribution of queueing system@$\xi$ - limiting distribution of queueing system}
\begin{eqnarray*}
\xi:=(\xi(n):n\in\mathbb{N}_{0}) & \qquad\text{with}\qquad & \xi(n):=\lim_{t\rightarrow\infty}P(X(t)=n)
\end{eqnarray*}
\marginpar{$\theta$}\index{theta
 - limiting distribution of environment@$\theta$ - limiting distribution of environment}
\begin{eqnarray*}
\theta:=(\theta(k):k\in K) & \qquad\text{with}\qquad & \theta(k):=\lim_{t\rightarrow\infty}P(Y(t)=k)
\end{eqnarray*}
\end{defn}
\begin{rem}
\label{rem:LS-E-order} It will be convenient to order the state space
in the way which is common in matrix analytical investigations, where
$X$ is the level process and $Y$ is the phase process. Take on $\mathbb{N}_{0}$
the natural order and fix a total (linear) order $\preccurlyeq$ on
$K$ such that 
\begin{equation}
k\in K_{W}~\wedge~l\in K_{B}\Longrightarrow k\preccurlyeq l,\label{Eorder1}
\end{equation}
holds, and define on $E=\mathbb{N}_{0}\times K$ the lexicographic
order $\prec$ by 
\begin{equation}
(m,k),(n,l)\in E~~\text{then}~~{\big((m,k)}\prec{(n,l)}:\Longleftrightarrow\big[m<n~~\text{or}~~(m=n~~\text{and}~~k\preccurlyeq l)\big]\big)\,.\label{Eorder4}
\end{equation}

\end{rem}
\noindent \textbf{Some notation} which will be used henceforth: $I_{W}$
is a matrix which has ones on its diagonal elements $(k,k)$ with
$k\in K_{W}$ and $0$ otherwise. \index{I W@$I_{W}$}\marginpar{$I_{W}$}
That is

\[
(I_{W})_{km}=\delta_{km}1_{[k\in K_{W}]}\,,
\]
and using the ordering \eqref{Eorder1} we have the convenient notation
(which is not necessary, but makes reading more comfortable in the
proofs below) 
\[
I_{W}=\left(\begin{array}{c|cc}
 & K_{W} & K_{B}\\
\hline K_{W} & \left(\begin{array}{ccc}
1 &  & 0\\
 & \ddots\\
0 &  & 1
\end{array}\right) & 0\\
K_{B} & 0 & 0
\end{array}\right)
\]

\begin{thm}
\label{thm:LS-productform} 
\begin{itemize}
\item [(a)]Denote for $n\in\mathbb{N}_{0}$ 
\begin{eqnarray}
\tilde{Q}(n) & := & (\tilde{q}(n;k,m):k,m\in K)=\lambda(n)I_{W}(R-I)+\V\label{eq:LS-q-tilde-matrix-representation}
\end{eqnarray}
Then the matrices $\tilde{Q}(n)$ are generator matrices for some
homogeneous Markov processes and their entries are
\begin{eqnarray}
\tilde{q}(n;k,m) & = & \lambda(n)\Rentry km1_{[k\in K_{W}]}+\v(k,m)\qquad k\neq m\nonumber \\
\tilde{q}(n;k,k) & = & -(1_{[k\in K_{W}]}\lambda(n)(1-\Rentry kk)+\sum_{m\in K\backslash\{k\}}\v(k,m))\label{eq:LS-q-tilde-n-definition}
\end{eqnarray}

\item [(b)]For the process $(X,Y)$ the following properties are equivalent:

\begin{itemize}
\item [(i)]$(X,Y)$ is ergodic with product form steady state 
\begin{equation}
\pi(n,k)=\underbrace{C^{-1}\prod_{i=0}^{n-1}\frac{\lambda(i)}{\mu(i+1)}}_{=:\xi(n)}\theta(k)\,\label{eq:LS-product-form}
\end{equation}

\item [(ii)]The summability condition 
\begin{eqnarray}
C & := & \sum_{n=0}^{\infty}\prod_{i=0}^{n-1}\frac{\lambda(i)}{\mu(i+1)}<\infty\label{eq:LS-normalization1}
\end{eqnarray}
holds, and the equation 
\begin{equation}
\theta\cdot\tilde{Q}(0)=0\label{eq:LS-q-tilde-0-conditon}
\end{equation}
 admits a unique strictly positive stochastic solution $\theta=(\theta(k):k\in K)$
which solves also 
\begin{equation}
\forall n\in\mathbb{N}:\theta\cdot\tilde{Q}(n)=0\,.\label{eq:LS-q-tilde-n-conditon}
\end{equation}

\end{itemize}
\end{itemize}
\end{thm}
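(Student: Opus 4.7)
The plan is to verify part (a) by a direct check and then, for part (b), to reduce the global balance equations $\pi Q = 0$ for the candidate product measure to the family $\theta \tilde{Q}(n)=0$, $n\in\mathbb{N}_0$, and to read off both implications from this reduction.

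For part (a) I would note that the formula $\tilde{Q}(n)=\lambda(n)I_W(R-I)+V$ yields off-diagonal entries
$\tilde q(n;k,m)=\lambda(n)R(k,m)1_{[k\in K_W]}+v(k,m)\ge 0$ for $k\ne m$, and that the rows sum to zero because $I_W(R-I)$ and $V$ each have vanishing row sums (the former since $R$ is stochastic, the latter since $V$ is a generator by the preceding Remark). Hence each $\tilde{Q}(n)$ is a conservative generator.

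For part (b) the central computation is the following. Fix $(n,k)\in E$ and write out $\pi Q=0$ at $(n,k)$: arrivals and departures contribute only when $k\in K_W$, spontaneous environment moves always. Plugging in the ansatz $\pi(n,k)=\xi(n)\theta(k)$ with $\xi(n)=C^{-1}\prod_{i=0}^{n-1}\lambda(i)/\mu(i+1)$, and using the birth--death local balance $\xi(n)\mu(n)=\xi(n-1)\lambda(n-1)$ to rewrite the service in-flow from level $n+1$, the terms $\mu(n)\theta(k)$ cancel on both sides. After dividing through by $\xi(n)>0$ the remaining identity is
\[
\lambda(n)\Bigl[\sum_{m\in K_W}R(m,k)\theta(m)-\theta(k)1_{[k\in K_W]}\Bigr]+\sum_{m}v(m,k)\theta(m)=0,
\]
which is precisely $[\theta\tilde{Q}(n)](k)=0$. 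For $n=0$ the same computation (with no service terms to cancel) gives the same equation with $\lambda(0)$, i.e.\ $[\theta\tilde{Q}(0)](k)=0$. Hence global balance for the product ansatz is equivalent to the system $\theta\tilde{Q}(n)=0$ for every $n\in\mathbb{N}_0$.

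Given this reduction, the two implications are almost formal. For (ii)$\Rightarrow$(i): $\xi$ is a probability vector because $C<\infty$ and $\theta$ is stochastic, so $\pi(n,k)=\xi(n)\theta(k)$ is a probability distribution on $E$; by the reduction it satisfies $\pi Q=0$; it is strictly positive since $\theta$ is. Combined with irreducibility and non-explosivity (standing assumptions), the existence of a stationary distribution forces positive recurrence and ergodicity, and the stationary distribution coincides with the limiting distribution. For (i)$\Rightarrow$(ii): from $\pi(n,k)=\xi(n)\theta(k)$ being a probability measure we obtain $C<\infty$; $\theta$ is strictly positive stochastic as the marginal of the positive stationary $\pi$; the reduction (run in reverse) yields $\theta\tilde{Q}(n)=0$ for all $n$, in particular for $n=0$. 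Uniqueness of such a $\theta$ follows because any other strictly positive stochastic $\theta'$ satisfying $\theta'\tilde{Q}(n)=0$ for all $n$ would, by the same reduction, produce another stationary distribution $\xi\theta'$, contradicting uniqueness of the stationary law of an ergodic Markov process.

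The only delicate bookkeeping is in the balance computation at the boundary $n=0$, where no departure transitions exist, and in tracking how the indicator $1_{[k\in K_W]}$ built into $I_W$ correctly silences both the arrival flux and the $R$-driven interaction exactly on the blocking states; this is what makes the single family $\tilde{Q}(n)$ encode both the $n=0$ and $n\ge 1$ balance equations uniformly, and is really the only place where care is needed.
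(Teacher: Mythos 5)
Your proof is correct and follows essentially the same route as the paper: verifying the generator property of $\tilde{Q}(n)$ directly from stochasticity of $R$ and the generator property of $V$, then reducing the global balance equations for the product ansatz to the family $\theta\tilde{Q}(n)=0$ by dividing through by $\xi(n)$ (equivalently, using $\xi(n)\mu(n)=\xi(n-1)\lambda(n-1)$), and reading both implications off that reduction together with the standard fact that an irreducible non-explosive chain with a stationary distribution is ergodic. The only addition you make beyond the paper's text is the explicit uniqueness argument for $\theta$, which the paper leaves implicit.
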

Before proving the theorem some remarks seem to be in order.
\begin{rem}
\label{rem:LS-BaD1}If $Z=(Z(t):t\geq0)$ is stationary, i.e., for
$t\geq0$ holds $P(Z(t)=(n,k))=\pi(n,k)$, for all $n,k\in E$, then
for any fixed time instant $t_{0}$ we have a product form distribution
\[
\pi(n,k)=\xi(n)\cdot\theta(k),\qquad(n,k)\in E
\]

Note, that this does not mean that the marginal Processes $X$ and
$Y$ are independent. Especially $X$ in general is not Markov for
its own, although its stationary distribution is identical to that
of a birth death process with birth rates $\lambda(n)$ and death
rates $\mu(n)$, i.e.,
\begin{equation}
\xi=\left(\xi(n):=C^{-1}\prod_{i=0}^{n-1}\frac{\lambda(i)}{\mu(i+1)}:n\in\mathbb{N}_{0}\right)\,,\label{eq:LS-BaD1}
\end{equation}

The observation \eqref{eq:LS-BaD1} is remarkable not only because
$X$ in general is not a birth-death process, but also because neither
the $\lambda(i)$ are the effective arrival rates (expected number
of arrivals per time unit) for queue length $i$ nor the $\mu(i+1)$
the effective service rates (expected maximal number of departures
per time unit) for queue length $i+1$. In case of a pure birth-death
process without an environment $\lambda(i)$, $\mu(i+1)$ \textbf{are}
the respective rates.

The conclusion is that both rates are diminished by the influence
of the environment by the same portion. It seems to be contra intuition
to us that the reduction of $\lambda(i)$ goes in parallel to that
of $\mu(i+1)$, while in the running system under queue length $i$
due to $Y$ entering $K_{B}$ arrivals at rate $\lambda(i)$ are interrupted
in parallel to services of rate $\mu(i)$.

The similar problem was noticed already for the case of queueing-inventory
processes with state independent service and arrival rates in Remark
2.8 in \cite{schwarz;sauer;daduna;kulik;szekli:06}, but in this setting
clearly the problem of $\lambda(i)$ versus $\mu(i+1)$ is still hidden.
\\

\end{rem}
\label{rem:LS-BaD2} The statement of the theorem can be interpreted
as a strong \textbf{insensitivity property} of the system: As long
as ergodicity is maintained, the environment can change drastically
without changing the \textbf{steady state of the queue length at any
fixed time point}. An intuitive interpretation of this result seems
to be hard. Especially, this insensitivity can not be a consequence
of the form of the control of the inventory or the availability.

We believe that there is intuitive explanation of a part of the result.
The main observation with respect to this is:

\emph{Whenever a customer is admitted to the queue, i.e. not lost,
he observes the service process as that in a conventional $M/M/1/\infty$
queue with state dependent service and arrival rates, as long as the
blocking periods are skipped over.}

\emph{Saying it the other way round, whenever the environment enters
$K_{B}$ and blocks the service process, the arrival process is blocked
as well, i.e. the queueing system is completely frozen and is revived
immediately when the environment enters $K_{W}$ next.}

Skipping the problem of $i$ versus $i+1$ discussed in \prettyref{rem:LS-BaD1}
this observation might explain the form of the marginal stationary
distribution of the customer process $X$, but it does by no means
explain the product form of limiting distribution $\pi(n,k)=\xi(n)\theta(k)$.\\

A similar observation was utilized in \cite{saffari;asmussen;haji:13}
in a queueing-inventory system (with state independent service and
arrival rates) to construct a related system which obviously has the
stationary distribution of $X$ and it is argued that from this follows
that the original system shows the same marginal queue length distribution.\\
%Lyx-Krücke

\label{rem:LS-BaD3} The proven insensitivity does not mean, that
the time development of the queue length processes with fixed $\lambda(n)$
and $\mu(n)$ is the same under different environment behaviour. This
can be seen by considering the stationary sojourn time of admitted
customers, which is strongly dependent of the interruption time distributions
(= sojourn time distribution of $Y$ in $K_{B}$).

Similarly, multidimensional stationary probabilities for $(X(t_{1}),X(t_{2}),\dots,X(t_{n}))$
will clearly depend on the occurrence frequency of the event $(Y\in K_{B})$. 
\begin{proof}
\emph{of \prettyref{thm:LS-productform}} \textbf{(a)} Utilizing the
stochastic matrix property $R\mathbf{e}=\mathbf{e}$ and generator
property $V\mathbf{e}=0$ we get:

\begin{equation}
\tilde{Q}(n)\mathbf{e}=\left(\lambda(n)I_{W}(R-I)-\V\right)\mathbf{e}=\lambda(n)I_{W}(\underbrace{\underbrace{R\mathbf{e}}_{=\mathbf{e}}-\underbrace{I\mathbf{e}}_{=\mathbf{e}}}_{=0})+\underbrace{\V\mathbf{e}}_{=0}=0\label{eq:LS-Q-tilde-zero-sum}
\end{equation}

Using the fact that all entries of $R$ and all non-diagonal entries
of $V$ are non-negative we see that for all $k\neq m$ it holds 

\begin{eqnarray}
\left(\tilde{Q}(n)\right)_{km} & = & \left(\underbrace{\lambda(n)I_{W}R}_{\geq0}\right)_{km}-\underbrace{\left(\lambda(n)I_{W}I\right)_{km}}_{=0}+\underbrace{\left(\V\right)_{km}}_{\geq0}\geq0\,.\label{eq:LS-Q-tilde-non-negative-non-diagonal}
\end{eqnarray}

\prettyref{eq:LS-Q-tilde-zero-sum} and \prettyref{eq:LS-Q-tilde-non-negative-non-diagonal}
together show that the $\tilde{Q}(n)$ are generator matrices. The
explicit representation \prettyref{eq:LS-q-tilde-n-definition} of
the matrix $\tilde{Q}(n)$ is calculating directly.\\

\textbf{(b)} \textbf{(ii)} $\mathbf{\Rightarrow}$ \textbf{(i):} \\
 The global balance equations of the Markov process $(X,Y)$ are for
$(n,k)\in E$ 
\begin{eqnarray}
 &  & \pi(n,k)\left(1_{[k\in K_{W}]}\lambda(n)+\sum_{m\in K\backslash\{k\}}\v(k,m)+1_{[k\in K_{W}]}1_{[n\intpositive]}\mu(n)\right)\nonumber \\
 & = & \pi(n-1,k)1_{[k\in K_{W}]}1_{[n\intpositive]}\lambda(n-1)+\sum_{m\in K_{W}}\pi(n+1,m)\Rentry mk\mu(n+1)\nonumber \\
 &  & +\sum_{m\in K\backslash\{k\}}\pi(n,m)\v(m,k)\label{eq:LS-lba-general-2-2}
\end{eqnarray}

Inserting the proposed product form solution (\ref{eq:LS-product-form})
for $\pi(n,k)$ \index{pi(n,k)@$\pi(n,k)$} into the global balance
(\ref{eq:LS-lba-general-2-2}) equations, canceling $C^{-1}$ yields
\begin{eqnarray}
 &  & \theta(k)\prod_{i=0}^{n-1}\frac{\lambda(i)}{\mu(i+1)}\left(1_{[k\in K_{W}]}\lambda(n)+\sum_{m\in K\backslash\{k\}}\v(k,m)+1_{[k\in K_{W}]}1_{[n\intpositive]}\mu(n)\right)\nonumber \\
 & = & \theta(k)\prod_{i=0}^{n-2}\frac{\lambda(i)}{\mu(i+1)}1_{[k\in K_{W}]}1_{[n\intpositive]}\lambda(n-1)+\sum_{m\in K_{W}}\theta(m)\prod_{i=0}^{n}\frac{\lambda(i)}{\mu(i+1)}\Rentry mk\mu(n+1)\nonumber \\
 & + & \sum_{m\in K\backslash\{k\}}\theta(m)\prod_{i=0}^{n-1}\frac{\lambda(i)}{\mu(i+1)}\v(m,k)\,,\label{eq:LS-GB}
\end{eqnarray}
 and multiplication with $\prod_{i=0}^{n-1}\left(\frac{\lambda(i)}{\mu(i+1)}\right)^{-1}$
yields

\begin{eqnarray*}
 &  & \theta(k)\left(1_{[k\in K_{W}]}\lambda(n)+\sum_{m\in K\backslash\{k\}}\v(k,m)+1_{[k\in K_{W}]}1_{[n\intpositive]}\mu(n)\right)\\
 & = & \theta(k)\frac{\mu(n)}{\lambda(n-1)}1_{[k\in K_{W}]}1_{[n\intpositive]}\lambda(n-1)+\sum_{m\in K_{W}}\theta(m)\frac{\lambda(n)}{\mu(n+1)}\Rentry mk\mu(n+1)\\
 &  & +\sum_{m\in K\backslash\{k\}}\theta(m)\v(m,k)
\end{eqnarray*}

\begin{eqnarray*}
\Longleftrightarrow &  & \theta(k)\left(1_{[k\in K_{W}]}\lambda(n)+\sum_{m\in K\backslash\{k\}}\v(k,m)+1_{[k\in K_{W}]}1_{[n\intpositive]}\mu(n)\right)\\
 & = & \theta(k)\mu(n)1_{[k\in K_{W}]}1_{[n\intpositive]}+\sum_{m\in K_{W}}\theta(m)\lambda(n)\Rentry mk\\
 &  & +\sum_{m\in K\backslash\{k\}}\theta(m)\v(m,k)
\end{eqnarray*}

\begin{eqnarray*}
\Longleftrightarrow0 & = & -\theta(k)\left(1_{[k\in K_{W}]}\lambda(n)+\sum_{m\in K\backslash\{k\}}\v(k,m)\right)\\
 &  & +\sum_{m\in K_{W}}\theta(m)\lambda(n)\Rentry mk+\sum_{m\in K\backslash\{k\}}\theta(m)\v(m,k)
\end{eqnarray*}

\begin{eqnarray}
\Longleftrightarrow0 & = & \theta(k)\underbrace{\left\{ -\left(1_{[k\in K_{W}]}\lambda(n)(1-\Rentry kk)+\sum_{m\in K\backslash\{k\}}\v(k,m)\right)\right\} }_{=:\tilde{q}(n,k,k)}\label{eq:LS-equal-solution}\\
 &  & +\sum_{m\in K\backslash\{k\}}\theta(m)\underbrace{\left(\lambda(n)\Rentry mk1_{[m\in K_{W}]}+\v(m,k)\right)}_{=:\tilde{q}(n,m,k)}\nonumber 
\end{eqnarray}

\[
\Longleftrightarrow\theta\tilde{Q}(n)=0\,,
\]

which is (for all $n\in\mathbb{N}_{0}$) the condition \eqref{eq:LS-q-tilde-0-conditon}
and \eqref{eq:LS-q-tilde-n-conditon}.

By assumption (\ref{eq:LS-q-tilde-0-conditon}) there exists a stochastic
solution to $\theta\cdot\tilde{Q}(0)=0$, which according to requirement
(\ref{eq:LS-q-tilde-n-conditon}) is a solution of $\theta\tilde{Q}(n)=0$.

Setting $\theta:=\theta_{0}$ in \eqref{eq:LS-GB} provides a solution
of the global balance equations \eqref{eq:LS-lba-general-2-2}. Therefore,
the steady state equations of $(X,Y)$ admit a stochastic solution,
and so $(X,Y)$ is ergodic and we have identified the unique stochastic
solution of \eqref{eq:LS-lba-general-2-2}.\\

\textbf{(b)} \textbf{(i)} $\mathbf{\Rightarrow}$ \textbf{(ii):} \\
 Because $\pi$ is stochastic, summability \eqref{eq:LS-normalization1}
holds. Insert the stochastic vector of product form \eqref{eq:LS-product-form}
into \eqref{eq:LS-lba-general-2-2}. As shown in the part \textbf{(ii)}
$\mathbf{\Rightarrow}$ \textbf{(i)} of the proof this leads to \eqref{eq:LS-equal-solution}
and we have found a solution of \eqref{eq:LS-q-tilde-0-conditon}
which solves \eqref{eq:LS-q-tilde-n-conditon} for all $n\in\mathbb{N}$
as well. \end{proof}
\begin{cor}
\label{cor:LS-lambda-constant} If in the framework of \prettyref{thm:LS-productform}
the arrival stream is a Poisson-$\lambda$ stream (which is interrupted
when the environment process stays in $K_{B}$) then the stationary
distribution in case of ergodic $(X,Y)$ is of product form 
\begin{equation}
\pi(n,k)=C^{-1}\frac{\lambda^{n}}{\prod_{i=0}^{n-1}\mu(i+1)}\theta(k)\quad(n,k)\in E,\label{eq:LS-constant-lambda-product-form}
\end{equation}
 with normalization constant $C$. 

$\theta$ is the the unique stochastic strictly positive solution
of the equation \index{Q @ $\tilde{Q}$}\marginpar{$\tilde{Q}$}

\begin{equation}
\theta\underbrace{\left(\lambda\left(R_{W}-I_{W}\right)+\V\right)}_{=:\tilde{Q}}=0\,.\label{eq:LS-constant-lambda-theta-matrix-equation}
\end{equation}

with 
\begin{equation}
R_{W}:=I_{W}\cdot R=\left(\begin{array}{c|cc}
 & K_{W} & K_{B}\\
\hline K_{W} & R|_{K_{W}\times K_{W}} & R|_{K_{W}\times K_{B}}\\
K_{B} & 0 & 0
\end{array}\right)%=I_{W}R
\label{eq:LS-IWR}
\end{equation}

the matrix with $K_{W}$-rows the rows of $R$ and with $K_{B}$-rows
with only zeros.\end{cor}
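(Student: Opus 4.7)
The plan is to derive this corollary directly from Theorem \ref{thm:LS-productform} by exploiting the simplification that occurs when $\lambda(n) \equiv \lambda$ is constant.

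First, I would substitute $\lambda(n) = \lambda$ into the definition \eqref{eq:LS-q-tilde-matrix-representation} of $\tilde{Q}(n)$ and observe that $\tilde{Q}(n) = \lambda I_W(R - I) + V$ no longer depends on $n$. A short matrix computation then shows $I_W(R - I) = I_W R - I_W = R_W - I_W$ (since by definition $R_W = I_W R$ zeros the $K_B$-rows of $R$ and leaves the $K_W$-rows unchanged, and $I_W I = I_W$). Hence
\[
\tilde{Q}(n) \;=\; \lambda(R_W - I_W) + V \;=\; \tilde{Q}\qquad\text{for every } n\in\mathbb{N}_0,
\]
which immediately collapses the two conditions \eqref{eq:LS-q-tilde-0-conditon} and \eqref{eq:LS-q-tilde-n-conditon} into the single equation $\theta \tilde{Q} = 0$.

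Next, I would verify the summability condition \eqref{eq:LS-normalization1} with $\lambda(i) = \lambda$: it reduces to
\[
C \;=\; \sum_{n=0}^{\infty} \frac{\lambda^n}{\prod_{i=0}^{n-1}\mu(i+1)} \;<\; \infty,
\]
which holds by the assumed ergodicity of $(X,Y)$.

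Finally, applying part (b) of Theorem \ref{thm:LS-productform} yields the product form \eqref{eq:LS-product-form}, which with $\lambda(i) = \lambda$ becomes exactly \eqref{eq:LS-constant-lambda-product-form}. The uniqueness and strict positivity of the stochastic solution $\theta$ of \eqref{eq:LS-constant-lambda-theta-matrix-equation} transfers verbatim from the theorem's hypothesis on $\tilde{Q}(0)$. There is no real obstacle here — the only thing to check carefully is the algebraic identity $I_W(R-I) = R_W - I_W$, which is routine but worth stating in block form \eqref{eq:LS-IWR} to make the structure of $R_W$ transparent.
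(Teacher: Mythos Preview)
Your proposal is correct and follows essentially the same approach as the paper's proof: both observe that with $\lambda(n)\equiv\lambda$ the matrices $\tilde{Q}(n)$ from \eqref{eq:LS-q-tilde-matrix-representation} coincide for all $n$, so condition \eqref{eq:LS-q-tilde-n-conditon} becomes trivially equivalent to \eqref{eq:LS-q-tilde-0-conditon}, and then Theorem~\ref{thm:LS-productform} applies directly. You are merely a bit more explicit than the paper in spelling out the identity $I_W(R-I)=R_W-I_W$ and in noting that summability follows from the assumed ergodicity.
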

\begin{proof}
Because of $\lambda(n)=\lambda$ for all $n$ holds $Q(0)=Q(n)$ and
the condition \eqref{eq:LS-q-tilde-n-conditon} is trivially valid.
Equation \eqref{eq:LS-constant-lambda-theta-matrix-equation} is the
condition \eqref{eq:LS-q-tilde-0-conditon} expressed via matrix representation
\eqref{eq:LS-q-tilde-matrix-representation} of $\tilde{Q(0)}$. \end{proof}
\begin{cor}
\label{cor:LS-PF-K-finite} If in the framework of \prettyref{thm:LS-productform}
the environment state space $K$ is finite, then the equations (\ref{eq:LS-q-tilde-0-conditon})
and (\ref{eq:LS-q-tilde-n-conditon}) always admit stochastic solutions,
and the stationary distribution of $(X,Y)$ is of product form 
\begin{equation}
\pi(n,k)=C^{-1}\prod_{i=0}^{n-1}\frac{\lambda(i)}{\mu(i+1)}\theta(k)\,,\label{eq:LS-finite-K-product-form}
\end{equation}
 whenever \eqref{eq:LS-q-tilde-0-conditon} and \eqref{eq:LS-q-tilde-n-conditon}
have a common solution. \end{cor}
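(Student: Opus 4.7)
The plan is to reduce this corollary almost directly to Theorem~\ref{thm:LS-productform}; the only additional input is a standard existence result for invariant distributions of finite generator matrices.

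First I would verify the existence claim: for every $n \in \mathbb{N}_0$ the equation $\theta \tilde{Q}(n) = 0$ has a stochastic solution. By part (a) of Theorem~\ref{thm:LS-productform}, $\tilde{Q}(n)$ is a bona fide generator matrix on the finite set $K$. I would then invoke the standard fact that every finite-state continuous-time Markov generator admits at least one invariant probability vector. A quick justification: for $\epsilon > 0$ chosen small enough that $1 + \epsilon \tilde{q}(n;k,k) \geq 0$ for all $k \in K$, the matrix $P_\epsilon := I + \epsilon \tilde{Q}(n)$ is a finite row-stochastic matrix, hence by Brouwer's fixed point theorem (or Perron--Frobenius) it admits a fixed left probability vector $\theta_n$; any such $\theta_n$ automatically satisfies $\theta_n \tilde{Q}(n) = 0$. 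This settles both (\ref{eq:LS-q-tilde-0-conditon}) and (\ref{eq:LS-q-tilde-n-conditon}).

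Next, assuming a common stochastic solution $\theta$ to (\ref{eq:LS-q-tilde-0-conditon}) and (\ref{eq:LS-q-tilde-n-conditon}) is available, the product form assertion follows from the direction (ii)~$\Rightarrow$~(i) of Theorem~\ref{thm:LS-productform}(b). The expression in (\ref{eq:LS-finite-K-product-form}) only defines a probability measure once the summability (\ref{eq:LS-normalization1}) holds on the queue-length coordinate (this is independent of $K$ being finite); granting this, the calculation carried out in the theorem's proof applies verbatim with the common $\theta$ plugged in, and the global balance equations (\ref{eq:LS-lba-general-2-2}) are satisfied.

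The main obstacle I anticipate is the ``unique strictly positive'' clause in Theorem~\ref{thm:LS-productform}(b)(ii). This is not automatic from finiteness of $K$ alone, because $\tilde{Q}(n)$ need not be irreducible: in principle (\ref{eq:LS-q-tilde-0-conditon}) may have several linearly independent stochastic solutions, or solutions vanishing on part of $K$. I would handle this using the standing irreducibility assumption on $(X,Y)$ made at the beginning of Section~\ref{sect:LS-MM1Inf-model}: any stationary distribution for $(X,Y)$ is then strictly positive and unique on $E$, so if such a distribution exists in product form $\pi(n,k) = \xi(n)\theta(k)$ the marginal $\theta$ must itself be strictly positive and unique among common solutions. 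Thus the existence of a common solution is precisely what activates condition (ii) of Theorem~\ref{thm:LS-productform}, and the stated product form follows.
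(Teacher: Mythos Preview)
The paper states this corollary without an explicit proof, treating it as an immediate consequence of Theorem~\ref{thm:LS-productform}. Your proposal supplies exactly the details one would expect: existence of stochastic solutions to each $\theta\tilde Q(n)=0$ via the finite-state generator argument, and then an appeal to the implication (ii)$\Rightarrow$(i) of Theorem~\ref{thm:LS-productform}(b) once a common solution is assumed. This is correct and is the intended reasoning.

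Your discussion of the ``unique strictly positive'' clause is a useful addition that the paper glosses over. You are right that the proof of (ii)$\Rightarrow$(i) in Theorem~\ref{thm:LS-productform} does not actually use uniqueness or strict positivity of $\theta$ --- it merely checks that the product-form ansatz satisfies the global balance equations --- so any common stochastic solution suffices to produce a stationary measure; irreducibility of $(X,Y)$ then forces uniqueness and strict positivity a posteriori. Your observation that summability \eqref{eq:LS-normalization1} is still needed (and is not implied by finiteness of $K$) is also correct; the corollary tacitly assumes it via the normalization constant $C$.
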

\begin{rem}
\label{rem:LS-BaD4} The proof of Theorem \ref{thm:LS-productform}
reveals that the solution of the equation $\theta\cdot\tilde{Q}(0)=0$
(see \eqref{eq:LS-q-tilde-0-conditon}) does not depend on the values
$\mu(n)$. So, changing the service capacity of the queueing system
will not change the steady state of the environment, as long as the
system remains stable (ergodic). 
\end{rem}
The next examples comment on different forms of establishing product
form equilibrium which may arise in the realm of Theorem \ref{thm:LS-productform}.
\begin{example}
\label{ex:LS-non-constant-lambda} There exist non trivial loss systems
with non constant (i.e., state dependent) arrival rates $\lambda(n)$
in a random environment which have a product form steady state distribution.
This is verified by the following example. We have environment 
\[
K=\{1,2\}=K_{W}\,,
\]
 and for some $c,d\in(0,1)$ the routing matrix 
\[
R=\left(\begin{array}{cc}
1-c & c\\
d & 1-d
\end{array}\right)\,,
\]
 whereas $\V$ is the matrix of only zeros. It follows

\[
\tilde{Q}(n)=\left(\begin{array}{cc}
-\lambda(n)c & \lambda(n)c\\
\lambda(n)d & -\lambda(n)d
\end{array}\right)=\lambda(n)\left(\begin{array}{cc}
-c & c\\
d & -d
\end{array}\right)
\]
 It follows that $Q(n+1)=c_{n+1}Q(n)$ holds for suitable $c_{n+1},n\in\mathbb{N}_{0}$,
which immediately shows that \eqref{eq:LS-q-tilde-0-conditon} and
\eqref{eq:LS-q-tilde-n-conditon} have a common solution, which is
\[
\theta=\left(\frac{d}{d+c},\frac{c}{d+c}\right)\,.
\]
 
\end{example}
%Lyx-Krücke
\begin{example}
\label{ex:LS-constant-lambda} There exist non trivial ergodic loss
systems in a random environment which have a product form steady state
distribution if and only if the arrival rates are independent of the
queue lengths, i.e. $\lambda(n)\equiv\lambda$ . This is verified
by the following example (which describes a queueing-inventory system
under $(r,S)$ policy with $(r=1,S=2)$, as will be seen in Section
\ref{sect:LS-inventories}, Definition \ref{def:LS-open}). We have
an environment 
\[
K=\{0,1,2\},\quad\text{with blocking set}~~K_{B}=\{0\}\,,
\]
 stochastic matrix $R$ and and the generator matrix $\V$ given as
\[
R=\left(\begin{array}{ccc}
1 & 0 & 0\\
1 & 0 & 0\\
0 & 1 & 0
\end{array}\right)\,,\qquad\V=\left(\begin{array}{ccc}
-\nu & 0 & \nu\\
0 & -\nu & \nu\\
0 & 0 & 0
\end{array}\right)\,.
\]

It follows 
\[
\tilde{Q}(n)=\left(\begin{array}{ccc}
-\nu & 0 & \nu\\
\lambda(n) & -(\lambda(n)+\nu) & \nu\\
0 & \lambda(n) & -\lambda(n)
\end{array}\right)
\]
 Clearly, if $\lambda(n)\equiv\lambda$ are equal, the equations 
\begin{equation}
\theta\cdot\tilde{Q}(n)=0,\quad n\in\mathbb{N}_{0}\,,\label{eq:LS-lambda-const-thata-q-tilde-n-eq-0}
\end{equation}
 have a common stochastic solution.\\

On the other hand, the solutions of \eqref{eq:LS-lambda-const-thata-q-tilde-n-eq-0}
are
\begin{equation}
\theta_{n}=(\theta_{n}(0),\theta_{n}(1),\theta_{n}(2))=C_{n}^{-1}\left(\frac{\lambda(n)}{\nu},1,\frac{\lambda(n)+\nu}{\lambda(n)}\right)\,,\quad n\in\mathbb{N}_{0}\,.\label{eq:LS-lambda-const-thata-q-tilde-n-eq-0-solution}
\end{equation}
 We conclude 
\[
\forall n\in\mathbb{N}_{0}:\theta_{n}=\theta_{n+1}\Longrightarrow\frac{\theta_{n}(0)}{\theta_{n}(1)}=\frac{\theta_{n+1}(0)}{\theta_{n+1}(1)}\Longleftrightarrow\lambda(n)=\lambda(n+1)\,.
\]
\end{example}
\begin{rem}
In Section \ref{sect:LS-finite-capacity} we will show in the course
of proving a companion of Theorem \ref{thm:LS-productform} for loss
systems with finite waiting room that more restrictive conditions
on the environment are needed. It turns out that the construction
in the proof of the Theorem \ref{thm:LS-productform-finite} will
provide us with more general constructions for examples as those given
here, see Remark \ref{rem:LS-another-example1} below.
\end{rem}

\subsection{Finite capacity loss systems\label{sect:LS-finite-capacity} }

In this section we study the systems from Section \ref{sect:LS-MM1Inf-model}
under the additional restriction that the capacity of the waiting
room is finite. That is, we now consider loss systems in the traditional
sense with the additional feature of losses due to the environment's
restrictions on customers' admission and service

Recall, that for the pure exponential single server queueing system
with state dependent rates and $N\geq0$ waiting places the state
space is $E=\{0,1,\dots,N,N+1\}$ and the queueing process $X$ is
ergodic with stationary distribution $\pi=(\pi(n):n\in E$ of the
form 
\begin{equation}
\pi(n)=C^{-1}\prod_{i=0}^{n-1}\frac{\lambda(i)}{\mu(i+1)}\,,\quad n\in E\,.\label{eq:LS-MM1N-1}
\end{equation}
 If the queueing system with infinite waiting room and the same rates
$\lambda(i)$, $\mu(i)$ is ergodic, the stationary distribution $\pi$
in \eqref{eq:LS-MM1N-1} is simply obtained by conditioning the stationary
distribution of this infinite system onto $E$. (Note, that ergodicity
in the finite waiting room case is granted by free, without referring
to the infinite system.)

We will show, that a similar construction by conditioning is in general
not possible for the loss system in a random environment. The structure
of the environment process will play a crucial role for enabling such
a conditioning procedure.

We take the interaction between the queue length process $X$ and
the environment process $Y$ of the same form as in Section \ref{sect:LS-MM1Inf-model},
with $R$ and $\V$ of the same form, and $\lambda(i)>0$ for $i=0,\dots,N$,
and $\mu(i)>0$ for $i=1,\dots,N+1$. The state space is $E:=\{0,\dots,N+1\}\times K$.
The non negative transition rates of $(X,Y)$ are for $(n,k)\in E$
\begin{eqnarray*}
q((n,k)\qsep(n+1,k)) & = & \lambda(n)\qquad k\in K_{W},n<N+1\\
q((n,k)\qsep(n-1,m)) & = & \mu(n)\Rentry km\qquad k\in K_{W},n\intpositive\\
q((n,k)\qsep(n,m)) & = & \v(k,m)\in\mathbb{R}_{0}^{+},~~~k\neq m\\
q((n,k)\qsep(i,m)) & = & 0\qquad\text{otherwise for}~~(n,k)\neq(i,m)\in E
\end{eqnarray*}

The first step of the investigation is nevertheless completely parallel
to Theorem \ref{thm:LS-productform}. 
\begin{thm}
\label{thm:LS-productform-finite} 
\begin{itemize}
\item [(a)]\textbf{ }Denote for $n\in\{0,\dots,N+1\}$ 
\begin{eqnarray}
\tilde{q}(n,k,k) & = & -(1_{[k\in K_{W}]}\cdot1_{[n\in\{0,\dots,N\}]}\lambda(n)(1-\Rentry kk)+\sum_{m\in K\backslash\{k\}}\v(k,m))\nonumber \\
\tilde{q}(n,k,m) & = & \lambda(n)\Rentry km1_{[k\in K_{W}]}\cdot1_{[n\in\{0,\dots,N\}]}+\v(k,m)\qquad k\neq m\label{eq:LS-q-tilde-n-definition-finite}
\end{eqnarray}
 and 
\[
\tilde{Q}(n)=(\tilde{q}(n,k,m):k,m\in K)\,.
\]
 Then the matrices $\tilde{Q}(n)$ are generator matrices for some
homogeneous Markov processes.
\item [(b)]\textbf{ }If the process $(X,Y)$ is ergodic denote its unique
steady state distribution by 
\[
\pi=(\pi(n,k):(n,k)\in E:=\{0,\dots,N+1\}\times K).
\]
Then the following three properties are equivalent:

\begin{itemize}
\item [(i)] $(X,Y)$ is ergodic on $E$ with product form steady state
\begin{equation}
\pi(n,k)={C^{-1}\prod_{i=0}^{n-1}\frac{\lambda(i)}{\mu(i+1)}}\theta(k)\,\quad n\in\{0,\dots,N+1\},k\in K\label{eq:LS-product-form-finite}
\end{equation}

\item [(ii)] The equation 
\begin{equation}
\theta\cdot\tilde{Q}(0)=0\label{eq:LS-q-tilde-0-conditon-finite}
\end{equation}
 admits a strict positive stochastic solution $\theta=(\theta(k):k\in K)$
which solves also 
\begin{equation}
\forall n\in\{0,\dots,N+1\}:\theta\cdot\tilde{Q}(n)=0\,.\label{eq:LS-q-tilde-n-conditon-finite}
\end{equation}

\item [(iii)] The equation 
\begin{equation}
\eta\cdot\V=0\label{eq:LS-upsilon-finite}
\end{equation}
 admits a strict positive stochastic solution.\\
The set $K_{W}\subseteq K$ is a closed set for the Markov chain on
state space $K$ with transition matrix $R$, i.e., 
\[
\forall k\in K_{W}:\quad\sum_{m\in K_{W}}\Rentry km=1,
\]
 and the restriction $\eta^{(W)}:=(\eta(m):m\in K_{W})$ of $\eta$
to $K_{W}$ solves the equation 
\begin{equation}
\eta^{(W)}=\eta^{(W)}\cdot R^{(W)},\label{eq:LS-R-W-finite}
\end{equation}
 where 
\begin{equation}
R^{(W)}:=(\Rentry km:k,m\in K_{W})\label{eq:LS-R-W-finite-1}
\end{equation}
 is the restriction of $R$ to $K_{W}$. 
\end{itemize}
\end{itemize}
\end{thm}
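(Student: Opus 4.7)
The proof separates into three blocks: part (a) is a routine check that the $\tilde Q(n)$ are generators; block (b) (i)$\Leftrightarrow$(ii) mirrors the argument of Theorem \ref{thm:LS-productform}; block (b) (ii)$\Leftrightarrow$(iii) is the genuinely new content of the finite-capacity case, and is where I would put the main effort.

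For part (a) I would copy the computation from the proof of Theorem \ref{thm:LS-productform}(a) almost verbatim, the only change being the extra indicator $1_{[n\in\{0,\dots,N\}]}$ which turns off arrivals at capacity. Row sums still vanish by $R\mathbf{e}=\mathbf{e}$ and $V\mathbf{e}=0$, off-diagonals are non-negative for the same reasons, so each $\tilde Q(n)$ is a generator. The only feature to highlight for later use is that $\tilde Q(N+1)=V$, while $\tilde Q(n)=\lambda(n)I_W(R-I)+V$ for $n\leq N$.

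For (i)$\Leftrightarrow$(ii) I would write the global balance equations at each $(n,k)\in E$, noting that the boundary at $n=0$ has no arrival-out term and that the boundary at $n=N+1$ has no arrival-in term. Substituting the ansatz $\pi(n,k)=\xi(n)\theta(k)$ with $\xi(n)=C^{-1}\prod_{i=0}^{n-1}\lambda(i)/\mu(i+1)$ and using the birth--death ratio $\xi(n+1)\mu(n+1)=\xi(n)\lambda(n)$, the $\lambda$- and $\mu$-contributions telescope exactly as in the proof of Theorem \ref{thm:LS-productform}, reducing the equation at level $n$ to $\theta\tilde Q(n)=0$. At the boundary $n=N+1$ both $\lambda$-terms are absent and the reduction collapses straight to $\theta V=0=\theta\tilde Q(N+1)$, confirming that the ansatz solves global balance iff \eqref{eq:LS-q-tilde-0-conditon-finite} and \eqref{eq:LS-q-tilde-n-conditon-finite} hold. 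Irreducibility of $(X,Y)$ on the countable state space $E$ together with existence of a strictly positive stochastic $\pi$ then upgrades this to ergodicity; the summability in \eqref{eq:LS-normalization1} is automatic because the level index is finite.

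For the essential part (ii)$\Leftrightarrow$(iii) I would exploit the decomposition $\tilde Q(n)=\lambda(n)I_W(R-I)+V$ for $n\leq N$ and $\tilde Q(N+1)=V$. Assuming (ii): from $\theta\tilde Q(N+1)=0$ we read off $\theta V=0$, so $\eta:=\theta$ solves \eqref{eq:LS-upsilon-finite}. Subtracting this from $\theta\tilde Q(n)=0$ for any $n\leq N$ and dividing by $\lambda(n)>0$ gives $\theta I_W(R-I)=0$, i.e. $\theta I_W R=\theta I_W$. The vector $\theta I_W$ equals $\theta$ on $K_W$ and vanishes on $K_B$, so evaluating $\theta I_W R=\theta I_W$ at a coordinate $m\in K_B$ yields $\sum_{k\in K_W}\theta(k)R(k,m)=0$; strict positivity of $\theta$ forces $R(k,m)=0$ for all $k\in K_W,m\in K_B$, which is closedness of $K_W$ under $R$. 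Evaluating the same identity on $m\in K_W$ then gives $\eta^{(W)}R^{(W)}=\eta^{(W)}$. Conversely, from (iii) set $\theta=\eta$; closedness of $K_W$ makes $I_W R$ have zero columns over $K_B$, so the invariance $\eta^{(W)}R^{(W)}=\eta^{(W)}$ extended by the zeros on $K_B$ is exactly $\theta I_W R=\theta I_W$. Combined with $\theta V=0$, this yields $\theta\tilde Q(n)=0$ for every $n\in\{0,\dots,N+1\}$.

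The main obstacle is conceptual rather than computational: one has to see that the finite horizon forces a qualitatively new structural constraint. Because there are no arrivals at capacity, the equation at $n=N+1$ decouples the $V$-part of the dynamics from the $R$-part, and this decoupling, together with the interior equations, is what pins down both $\eta V=0$ and the closedness of $K_W$ under $R$. This closedness has no counterpart in Theorem \ref{thm:LS-productform}, and it is the reason why a finite-capacity product form cannot in general be obtained by simply conditioning the infinite-capacity stationary distribution onto $\{0,\dots,N+1\}\times K$; that restriction is the content I would emphasize in the paragraph following the proof.
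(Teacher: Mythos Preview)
Your proposal is correct and follows essentially the same route as the paper: (a) and (i)$\Leftrightarrow$(ii) are handled as in Theorem~\ref{thm:LS-productform}, and for (ii)$\Leftrightarrow$(iii) the paper likewise reads off $\theta V=0$ from the level-$(N+1)$ equation and then cancels the $V$-terms in the interior equations to obtain the coordinate-wise version of your identity $\theta I_W R=\theta I_W$, from which closedness of $K_W$ and $\eta^{(W)}R^{(W)}=\eta^{(W)}$ follow by strict positivity of $\theta$. Your matrix formulation via $\tilde Q(n)=\lambda(n)I_W(R-I)+V$ is a slightly more compact rendering of the paper's elementwise computation, but the logic is identical.
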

\begin{proof}
The proof of \textbf{(a)} is similar to that of Theorem \ref{thm:LS-productform}\textbf{(a)},
and in \textbf{(b)} the equivalence of \textbf{(i)} and \textbf{(ii)}
is proven in almost identical way as that of Theorem \ref{thm:LS-productform}
\textbf{(b)} (with the obvious slight changes due to having the $X$-component
finite) and are therefore omitted.\\

We next show

\textbf{(b)} \textbf{(i)} $\mathbf{\Rightarrow}$ \textbf{(iii):}
\\
 The global balance equations of the Markov process $(X,Y)$ are for
$(n,k)\in E$ 
\begin{eqnarray}
 &  & \pi(n,k)\left(1_{[k\in K_{W}]}\cdot1_{[n\in\{0,\dots,N\}]}\lambda(n)+\sum_{m\in K\backslash\{k\}}\v(k,m)+1_{[k\in K_{W}]}1_{[n\intpositive]}\mu(n)\right)\nonumber \\
 & = & \pi(n-1,k)1_{[k\in K_{W}]}1_{[n\intpositive]}\lambda(n-1)+\sum_{m\in K_{W}}\pi(n+1,m)\Rentry mk\mu(n+1)\cdot1_{[n\in\{0,\dots,N\}]}\nonumber \\
 &  & +\sum_{m\in K\backslash\{k\}}\pi(n,m)\v(m,k)\label{eq:LS-lba-general-2-2-finite}
\end{eqnarray}

Inserting the proposed product form solution (\ref{eq:LS-product-form-finite})
for $\pi(n,k)$ \index{pi(n,k)@$\pi(n,k)$} into the global balance
equations (\ref{eq:LS-lba-general-2-2-finite}) and proceeding in
the same way as in the proof of Theorem \ref{thm:LS-productform}
yields 

\begin{eqnarray}
0 & = & -\theta(k)\left(1_{[k\in K_{W}]}\cdot1_{[n\in\{0,\dots,N\}]}\lambda(n)+\sum_{m\in K\backslash\{k\}}\v(k,m)\right)\label{eq:LS-equal-solution-1-finite}\\
 &  & +\sum_{m\in K_{W}}\theta(m)\lambda(n)\Rentry mk\cdot1_{[n\in\{0,\dots,N\}]}+\sum_{m\in K\backslash\{k\}}\theta(m)\v(m,k)\nonumber 
\end{eqnarray}

For $n\to N+1$ \eqref{eq:LS-equal-solution-1-finite} turns to 
\begin{eqnarray}
\theta(k)\left(\sum_{m\in K\backslash\{k\}}\v(k,m)\right) & = & \sum_{m\in K\backslash\{k\}}\theta(m)\v(m,k)\,,\label{eq:LS-equal-solution-2-finite}
\end{eqnarray}
 which verifies \eqref{eq:LS-upsilon-finite} with $\eta:=\theta$.

For $n<N+1$ \eqref{eq:LS-equal-solution-1-finite} turns to

\begin{eqnarray*}
\theta(k)1_{[k\in K_{W}]}\lambda(n)+\underbrace{\theta(k)\sum_{m\in K\backslash\{k\}}\v(k,m)}_{(*)}=\sum_{m\in K_{W}}\theta(m)\lambda(n)\Rentry mk+\underbrace{\sum_{m\in K\backslash\{k\}}\theta(m)\v(m,k)}_{(**)}\,,
\end{eqnarray*}
 where from \eqref{eq:LS-equal-solution-2-finite} the expressions
$(**)$ and $(*)$ cancel and we arrive at 
\begin{eqnarray}
\theta(k)1_{[k\in K_{W}]}\lambda(n) & = & \sum_{m\in K_{W}}\theta(m)\lambda(n)\Rentry mk\,.\label{eq:LS-equal-solution-3-finite}
\end{eqnarray}
 Because $(X,Y)$ is ergodic, $\theta$ is strictly positive, and
we conclude (set $k\in K_{B}$ in \eqref{eq:LS-equal-solution-3-finite}
which makes the left side zero) 
\[
\Rentry mk=0,\quad\forall m\in K_{W},~k\in K_{B}\,,
\]
 which shows that $K_{W}$ is a closed set for the Markov chain governed
by $R$.

Now set $k\in K_{W}$ in \eqref{eq:LS-equal-solution-3-finite} which
makes the left side strictly positive and realize that this after
canceling $\lambda(n)$ is exactly \eqref{eq:LS-R-W-finite}.\\
 This part of the proof is finished.\\

\textbf{(b)} \textbf{(iii)} $\mathbf{\Rightarrow}$ \textbf{(ii):}
\\

For proving the reversed direction we reconsider the previous part
\textbf{(i)} $\mathbf{\Rightarrow}$ \textbf{(iii)} of the proof:
The strict positive stochastic solution of 
\begin{equation}
\eta\cdot\V=0\,,\label{eq:LS-V-finite-2}
\end{equation}
 which is given by assumption \eqref{eq:LS-upsilon-finite}, yields
the required solution for $n\to N+1$ of 
\[
\theta\cdot\tilde{Q}(N+1)=0\,.
\]

If $K_{W}\subseteq K$ is a closed set for the Markov chain on state
space $K$ with transition matrix $R$ we obtain 
\[
\Rentry mk=0,\quad\forall m\in K_{W},~k\in K_{B}\,,
\]
 and therefore for all $n\in\{0,1,\dots,N\}$ 
\[
\theta\cdot\tilde{Q}(n)=0
\]
 reduces for $k\in K_{B}$ to the respective expression in 
\[
\eta\cdot\V=0\,.
\]
 It remains for all $n\in\{0,1,\dots,N\}$ and for $k\in K_{W}$ to
show that for $k\in K_{W}$ the respective expression in 
\[
\theta\cdot\tilde{Q}(n)=0
\]
 is valid. This follows by considering 
\begin{eqnarray*}
\eta(k)1_{[k\in K_{W}]}\lambda(n)+\underbrace{\eta(k)\sum_{m\in K\backslash\{k\}}\v(k,m)}_{(*)}=\sum_{m\in K_{W}}\eta(m)\lambda(n)\Rentry mk+\underbrace{\sum_{m\in K\backslash\{k\}}\eta(m)\v(m,k)}_{(**)}\,,
\end{eqnarray*}
 and remembering that the expressions $(**)$ and $(*)$ cancel. The
residual terms are equal by the assumption \eqref{eq:LS-R-W-finite}.\\
 This finishes the proof. 
\end{proof}
The interesting insight is that from the existence of the product
form steady state $\pi$ on $E=\{0,\dots,N+1\}\times K$ implicitly
restrictions on the form of the movements of the environment emerge
which are not necessary in the case of infinite waiting rooms. (As
indicated above, such restrictions are not necessary too in the pure
queueing system framework.)

The proof of Theorem \ref{thm:LS-productform-finite} has brought
out the following additional, somewhat surprising, insensitivity property. 
\begin{cor}
\label{cor:LS-productform-finite} Whenever $(X,Y)$ is ergodic with
product form steady state 
\[
\pi(n,k)={C^{-1}\prod_{i=0}^{n-1}\frac{\lambda(i)}{\mu(i+1)}}\theta(k)\,\quad n\in\{0,\dots,N+1\},k\in K
\]
 for some (positive) parameter setting $({\lambda(i)}:i=0,1,\dots,N)$,
$({\mu(i)}:i=1,\dots,N+1)$ with an environment characterized by $(K,K_{B},\V,R)$,
then for this same environment $(X,Y)$ is ergodic with product form
steady state with the same $\theta$ for any (positive) parameter
setting for the arrival and service rates. \end{cor}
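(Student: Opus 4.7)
The plan is to exploit the equivalence chain (i) $\Leftrightarrow$ (ii) $\Leftrightarrow$ (iii) established in Theorem \ref{thm:LS-productform-finite}, observing that condition (iii) is the cleanest form of the requirement because it refers exclusively to quantities that describe the environment, namely $K$, $K_W$, $K_B$, $V$, and $R$, and involves neither the arrival rates $\lambda(i)$ nor the service rates $\mu(i)$.

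First, I would apply (i) $\Rightarrow$ (iii) to the given parameter setting. This yields a strictly positive stochastic vector $\eta$ on $K$ with $\eta\cdot V = 0$, together with the structural condition that $K_W$ is closed under the transition matrix $R$, and that the restriction $\eta^{(W)}$ satisfies $\eta^{(W)} = \eta^{(W)}\cdot R^{(W)}$. Crucially, the vector $\eta$ produced in the proof of Theorem \ref{thm:LS-productform-finite} can be identified with $\theta$ itself, since the equation $\theta \cdot \tilde{Q}(N+1)=0$ collapses to $\theta\cdot V = 0$ (no $\lambda$-term is present when $n=N+1$).

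Next, I would take an arbitrary new setting of positive rates $(\lambda'(i))_{i=0}^{N}$, $(\mu'(i))_{i=1}^{N+1}$ attached to the same environment $(K,K_B,V,R)$. Since condition (iii) of Theorem \ref{thm:LS-productform-finite} refers only to $V$, $R$ and the partition $K=K_W+K_B$, it holds verbatim for the primed system with the same witness $\eta = \theta$. Applying the direction (iii) $\Rightarrow$ (i) of Theorem \ref{thm:LS-productform-finite} then produces a product form stationary distribution for the primed system whose environmental marginal is exactly this $\theta$.

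The only step that needs a brief verification is ergodicity of the primed process, but this is essentially free: the state space $\{0,\dots,N+1\}\times K$ carries the product candidate $\pi'(n,k) = (C')^{-1}\prod_{i=0}^{n-1}\frac{\lambda'(i)}{\mu'(i+1)}\,\theta(k)$, and since $\theta$ is stochastic and the $n$-product runs over finitely many indices, the normalization constant $C'$ is finite. Combined with the standing irreducibility and non-explosiveness assumption on $(X,Y)$, the existence of a normalizable stationary solution of the global balance equations gives ergodicity, completing the argument. I do not anticipate any real obstacle; the whole point of the corollary is that the hard analytical content has already been absorbed into the environment-only condition (iii) of the main theorem.
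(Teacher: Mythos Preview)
Your proposal is correct and takes essentially the same approach as the paper: both rely on the fact that the environment-only condition (iii) of Theorem~\ref{thm:LS-productform-finite} (equivalently, equation~\eqref{eq:LS-equal-solution-3-finite} after cancelling $\lambda(n)$, with the $\mu(i)$ already cancelled earlier) is independent of the arrival and service rates, so it transfers unchanged to any new positive parameter setting. The paper's proof is simply a terser pointer to this step, while you spell out the (i)$\Rightarrow$(iii)$\Rightarrow$(i) passage and the identification $\eta=\theta$ explicitly.
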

\begin{proof}
This becomes obvious at the step where we arrived at \eqref{eq:LS-equal-solution-3-finite}
and we see that the specific shape of the sequence of the ${\lambda(i)}$
do not matter. The specific $\mu(i)$ are canceled in the early steps
of the proof already. \end{proof}
\begin{example}
\label{ex:LS-productform-finite-1} We describe a class of examples
of environments which guarantee that the conditions of Theorem \ref{thm:LS-productform-finite}
are fulfilled. The construction is in three steps. 

Take for $\V$ a generator of an irreducible Markov process on $K$
with stationary distribution $\theta$, which fulfills for all $k\in K_{W}$
the \emph{partial balance condition} 
\begin{equation}
\theta(k)\sum_{m\in K_{W}}\v(k,m)=\sum_{m\in K_{W}}\theta(m)\v(m,k)\label{eq:LS-PB1}
\end{equation}
 and $\sup(-\v(k,k):k\in K_{W})<\infty.$ 

Denote by $\V^{(W)}$ the restriction of $\V$ onto $K_{W}$ which
has stationary distribution $\theta^{(W)}:=(\theta(k)/(\sum_{m\in K_{W}}\theta(m)):k\in K_{W})$,
see \cite{kelly:79}{[}Exercise 1.6.2, p. 27{]}. 

Take for $R^{(W)}$ (see \eqref{eq:LS-R-W-finite-1}) a uniformization
chain of $\V^{(W)}$, see \cite{keilson:79}{[}Chapter 2, Section
2.1{]}, e.g., (with $I$ the identity matrix on $K_{W}$) 
\[
R^{(W)}:=I+{\sup(-\v(k,k):k\in K_{W})}^{-1}\V^{(W)}\,,
\]
 which is stochastic and has equilibrium distribution $\theta^{(W)}:=(\theta(k)/(\sum_{m\in K_{W}}\theta(m)):k\in K_{W})$
as well.\\
 $(\Rentry km:k\in K_{B},m\in K)$ can be arbitrarily selected, e.g.
the identity matrix on $K_{B}$. 

This construction ensures that the restriction $\eta^{(W)}:=(\eta(m):m\in K_{W})$
of $\eta$ to $K_{W}$ solves the equation \eqref{eq:LS-R-W-finite}
\[
\eta^{(W)}=\eta^{(W)}\cdot R^{(W)}.
\]
\end{example}
\begin{rem}
The construction in Example \ref{ex:LS-productform-finite-1} may
seem to produce a narrow class of examples, but this is not so: All
reversible $\V$ fulfill the partial balance condition \eqref{eq:LS-PB1}. 
\end{rem}
%Lyx-Krücke
\begin{rem}
\label{rem:LS-another-example1} The construction above produces another
example contributing to the discussion at the end of Section \ref{sect:LS-steady-state}
on the question which particular product forms can occur, and which
form of the environment and the arrival and service rate patterns
may interact to result in product form equilibrium for loss systems
with infinite waiting room. We only have to notice that the equations
for $n<N+1$ are exactly those which occur for all $n\in\mathbb{N}_{0}$
in the setting of Theorem \ref{thm:LS-productform}.

The cautious reader will already have noticed that the conditions
in \textbf{(b)(iii)} of Theorem \ref{thm:LS-productform-finite} provide
a similar more abstract example for the discussion on Theorem \ref{thm:LS-productform}
at the end of Section \ref{sect:LS-steady-state}. 
\end{rem}
%Lyx-Krücke
\begin{rem}
\label{rem:LS-finite-queue-lost-sales1} We should point out that
in \cite{schwarz;sauer;daduna;kulik;szekli:06}{[}Section 6{]} queueing-inventory
models with finite waiting room are investigated with a resulting
''quasi product form'' steady state. The respective theorems there
do not fit into the realm of \prettyref{thm:LS-productform-finite}
because the state space is \textbf{not} a product space as in \prettyref{thm:LS-productform-finite},
where we have irreducibility on $E=\{0,1,\dots,N,N+1\}$.

The difference is that in \cite{schwarz;sauer;daduna;kulik;szekli:06}{[}Section
6{]} the element (in notation of the present paper) $(N+1,0)$ is
not a feasible state.

The results there can be considered as a truncation property of the
equilibrium of the system with infinite waiting room onto the feasible
state space under restriction to finite queues. 
\end{rem}

\section{Applications\label{sect:LS-applications}}

\subsection{Inventory models}

\label{sect:LS-inventories} In the following we describe an $M/M/1/\infty$-system
with inventory management as it is investigated in \cite{schwarz;sauer;daduna;kulik;szekli:06}. 
\begin{defn}
\label{def:LS-open} An M/M/1/$\infty$-system with inventory management
is a single server with infinite waiting room under FCFS regime and
an attached inventory.

There is a Poisson-$\lambda$-arrival stream, $\lambda>0$. Customers
request for an amount of service time which is exponentially distributed
with mean 1. Service is provided with intensity $\mu>0$. \\
 The server needs for each customer exactly one item from the attached
inventory. The on-hand inventory decreases by one at the moment of
service completion. If the inventory is decreased to the reorder point
$r\geq0$ after the service of a customer is completed, a replenishment
order is instantaneously triggered. The replenishment lead times are
i.i.d. with distribution function $B=(B(t);t\geq0)$. The size of
the replenishment depends on the policy applied to the system. We
consider two standard policies from inventory management, which lead
to an M/M/1/$\infty$-system with either $(r,Q)$-policy (size of
the replenishment order is always $Q>r$) or with $(r,S)$-policy
(replenishment fills the inventory up to maximal inventory size $S>r$).

During the time the inventory is depleted and the server waits for
a replenishment order to arrive, no customers are admitted to join
the queue (\textquotedbl{}lost sales\textquotedbl{}).\\
 All service, interarrival and lead times are assumed to be independent.\\
 Let $X(t)$ denote the number of customers present at the server
at time $t\geq0$, either waiting or in service (queue length) and
let $Y(t)$ denote the on-hand inventory at time $t\geq0$. Then $((X(t),Y(t)),t\geq0)$,
the \emph{queueing-inventory process} is a continuous-time Markov
process for the M/M/1/$\infty$-system with inventory management.
The state space of $(X,Y)$ is $E=\{(n,k):n\in\mathbb{N}_{0},k\in K\},$
where $K=\mathbb{N}_{0}$ or $K=\{0,1,\dots,\kappa\}$, where $\kappa<\infty$
is the maximal size of the inventory at hand. 
\end{defn}
The system described above generalizes the lost sales case of classical
inventory management where customer demand is not backordered but
lost in case there is no inventory on hand (see Tersine \cite{tersine:94}
p. 207). \\

The general \prettyref{thm:LS-productform} produces as special applications
the following results on product form steady states in integrated
queueing inventory systems from \cite{schwarz;sauer;daduna;kulik;szekli:06}.

\begin{center}
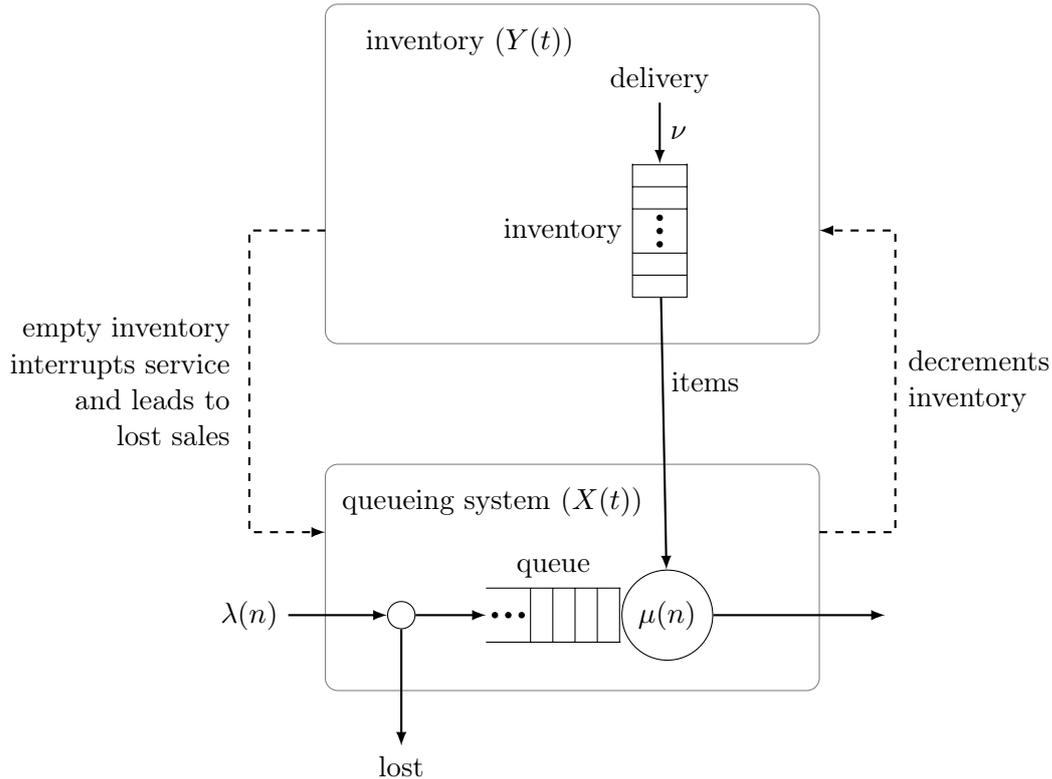
\begin{figure}
\begin{tikzpicture}
% Draw queing system
\begin{scope}[shift={(1,-3.1)}]
  \node()[] at (1.2,1.5) {queueing system ($X(t)$) };
  \draw[rounded corners=1ex,color=gray] (-1,-1) rectangle (5.5,2);
  \node(InputNode)[] at (-2,0) {$\lambda(n)$};
  \node(LossDecision)[draw, shape=circle] at (0,0) {};
  \node(InputLoss)[] at (0,-2) {lost};
  \node(Server)[shape=circle, draw,
  	minimum width=33pt, minimum height=30pt] at (3.5,0){$\mu(n)$};
  \node(Q)[shape=queue, draw, queue head=east, queue ellipsis=true,
	    label=above:queue,
		minimum width=50pt,minimum height=20pt] at (2,0) {};
  \node(OutputNode)[] at (6.5,0) {};
  %\node(output-label)[] at (4.6,0.3){$\mu(n)$};
  \draw (InputNode)edge[arrows={-latex},thick] node[]{}(LossDecision);
  \draw[arrows={-latex}, thick]
    (LossDecision) -- (Q);
  \draw[arrows={-latex}, thick]
    (LossDecision) -- (InputLoss);
  \draw[arrows={-latex}, thick]
    (Server) -- (OutputNode);
\end{scope}

% Draw inventory
\begin{scope}[shift={(4.4,2)}]
  \node()[] at (-2.5,2.5) {inventory ($Y(t)$) };
  \draw[rounded corners=1ex,color=gray] (-4.4,-1.5) rectangle (2.1,3);
   \node(InventoryInputNode)[] at (0,2) {delivery}; 
   \node(InventoryQ)[shape=queue, draw, queue head=south, queue size=bounded,
	    label=left:inventory,
		minimum width=20pt,minimum height=50pt] at (0,0) {};
  \draw[arrows={-latex}, thick]
    (InventoryInputNode) -- node[auto]{$\nu$}(InventoryQ);

\end{scope}

% Draw arrows between queue and environment
  \draw[arrows={-latex}, thick, near start]
    (InventoryQ) --(Server);
  \node()[] at (5,0) {items};
  \draw[arrows={-latex}, thick, dashed]
    (6.5,-2) --(7.5,-2)--(7.5,2)--(6.5,2);
  \node()[align=left] at (8.6,0){decrements \\
    inventory};
  \draw[arrows={-latex}, thick, dashed]
    (0,2) --(-1,2)--(-1,-2)--(0,-2);
  \node()[align=right] at (-2.7,0){empty inventory\\
    interrupts service  \\ and leads to \\ lost sales};

\end{tikzpicture}\caption{$M/M/1/\infty$ inventory model with lost sales.}

\end{figure}

\par\end{center}

\begin{center}
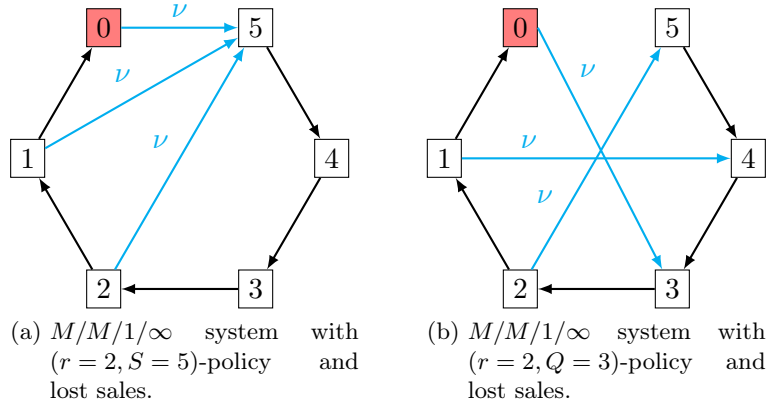
\begin{figure}
\begin{centering}
\subfloat[\label{fig:LS-r-S-example-diagram}$M/M/1/\infty$ system with ${(r=2,S=5)}$-policy
and lost sales.]{\begin{tikzpicture}
  \path
	(60:2) node (Y5)[shape=rectangle,draw] {$5$}
	(0:2) node (Y4)[shape=rectangle,draw] {$4$}
	(-60:2) node (Y3)[shape=rectangle,draw] {$3$}
	(-120:2) node (Y2)[shape=rectangle,draw] {$2$}
	(-180:2) node (Y1)[shape=rectangle,draw] {$1$}
	(-240:2) node (Y0)[shape=rectangle,draw,fill=blocked.bg] {$0$};
  \draw[arrows={-latex}, thick]
    (Y5) -- (Y4);
  \draw[arrows={-latex}, thick]
    (Y4) -- (Y3);
  \draw[arrows={-latex}, thick]
    (Y3) -- (Y2);
  \draw[arrows={-latex}, thick]
    (Y2) -- (Y1);
  \draw[arrows={-latex}, thick]
    (Y1) -- (Y0);
  \draw[arrows={-latex}, thick, color=vratecolor]
    (Y0) -- node[auto]{$\nu$}(Y5);
  \draw[arrows={-latex}, thick, color=vratecolor]
    (Y1) -- node[auto]{$\nu$}(Y5);
  \draw[arrows={-latex}, thick, color=vratecolor]
    (Y2) -- node[auto]{$\nu$}(Y5);
\end{tikzpicture}

}~~~~~~~\subfloat[\label{fig:LS-r-Q-example-diagram}$M/M/1/\infty$ system with ${(r=2,Q=3)}$-policy
and lost sales.]{\begin{tikzpicture}
  \path 	(60:2) node (Y5)[shape=rectangle,draw] {$5$}
	(0:2) node (Y4)[shape=rectangle,draw] {$4$}
	(-60:2) node (Y3)[shape=rectangle,draw] {$3$}
	(-120:2) node (Y2)[shape=rectangle,draw] {$2$}
	(-180:2) node (Y1)[shape=rectangle,draw] {$1$}
	(-240:2) node (Y0)[shape=rectangle,draw,fill=blocked.bg] {$0$};
  \draw[arrows={-latex}, thick]
    (Y5) -- (Y4);
  \draw[arrows={-latex}, thick]
    (Y4) -- (Y3);
  \draw[arrows={-latex}, thick]
    (Y3) -- (Y2);
  \draw[arrows={-latex}, thick]
    (Y2) -- (Y1);
  \draw[arrows={-latex}, thick]
    (Y1) -- (Y0);
  \draw[arrows={-latex},near start, thick, color=vratecolor]
    (Y0.east) -- node[auto]{$\nu$}(Y3);
  \draw[arrows={-latex},near start, thick, color=vratecolor]
    (Y1) -- node[auto]{$\nu$}(Y4);
  \draw[arrows={-latex},near start, thick, color=vratecolor]
    (Y2) -- node[auto]{$\nu$}(Y5);
\end{tikzpicture}}\caption{{\Etid} for lost sales environment systems. The environment process
counts the number of items in inventory.}

\par\end{centering}

\end{figure}

\par\end{center}
\begin{example}
\label{ex:LS-original-r-S}\cite{schwarz;sauer;daduna;kulik;szekli:06}
$M/M/1/\infty$ system with $(r,S)$-policy\marginpar{$(r,S)$-policy}\index{r-S-policy@$(r,S)$-policy},
%\index{$(r,S)$-policy}
 exp($\nu$)-distributed lead times, and lost sales\marginpar{lost sales}\index{lost sales}.
The inventory management process under $(r,S)$-policy fits into the
definition of the environment process by setting 
\begin{eqnarray*}
K=\{0,1,...,S\},\qquad &  & K_{B}=\{0\},\\
\Rentry 00=1,\,\,\,\,\Rentry k{k-1}=1,\quad1\leq k\leq S\,,\qquad &  & \v(k,m)=\begin{cases}
\nu, & \text{if}~~0\leq k\leq r,m=S\\
0, & \text{otherwise for}~~k\neq m\,.
\end{cases}
\end{eqnarray*}
 The queueing-inventory process $(X,Y)$ is ergodic iff ${\lambda}<{\mu}$.
The steady state distribution $\pi=(\pi(n,k):(n,k)\in E)$ of $(X,Y)$
has product form 
\[
\pi(n,k)=\left(1-\frac{\lambda}{\mu}\right)\left(\frac{\lambda}{\nu}\right)^{n}\theta(k),
\]
 where $\theta=(\theta(k):k\in K)$ with normalization constant $C$
is 
\begin{eqnarray}
\theta(k) & =\begin{cases}
C^{-1}(\frac{\lambda}{\nu}) & \qquad k=0,\\
C^{-1}(\frac{\lambda+\nu}{\lambda})^{k-1} & \qquad k=1,...,r,\\
C^{-1}(\frac{\lambda+\nu}{\lambda})^{r} & \qquad k=r+1,...,S.
\end{cases}\label{eq:LS-mm1-r-S-theta}
\end{eqnarray}
 
\end{example}
%Lyx Workaround
\begin{example}
\label{ex:LS-original-r-Q}\cite{schwarz;sauer;daduna;kulik;szekli:06}
$M/M/1/\infty$ system with $(r,Q)$-policy\marginpar{$(r,Q)$-policy}\index{r-Q-policy@$(r,Q)$-policy},
exp($\nu$)-distributed lead times, and lost sales. The inventory
management process under $(r,Q)$-policy fits into the definition
of the environment process by setting

\begin{eqnarray*}
K=\{0,1,...,r+Q\}\qquad &  & K_{B}=\{0\}\\
\Rentry 00=1,~~~\Rentry k{k-1}=1,\quad1\leq k\leq S\qquad &  & \v(k,m)=\begin{cases}
\nu, & \text{if}~~0\leq k\leq r,m=k+Q\\
0, & \text{otherwise for}~~k\neq m.
\end{cases}
\end{eqnarray*}

The queueing-inventory process $(X,Y)$ is ergodic iff ${\lambda}<{\mu}$.
The steady state distribution $\pi=(\pi(n,k):(n,k)\in E$ of $(X,Y)$
has product form 
\[
\pi(n,k)=\left(1-\frac{\lambda}{\mu}\right)\left(\frac{\lambda}{\nu}\right)^{n}\theta(k),
\]
 where $\theta=(\theta(k):k\in K)$ with normalization constant $C$
is

\begin{eqnarray*}
\theta(0) & = & C^{-1}\frac{\lambda}{\nu},\\
\theta(k) & = & C^{-1}\left(\frac{\lambda+\nu}{\lambda}\right)^{k-1},\qquad k=1,...,r,\\
\theta(k) & = & C^{-1}\left(\frac{\lambda+\nu}{\lambda}\right)^{r},\qquad k=r+1,...,Q,\\
\theta(k+Q) & = & C^{-1}\left(\frac{\lambda+\nu}{\lambda}\right)^{r}-\left(\frac{\lambda+\nu}{\lambda}\right)^{k-1},\qquad k=1,...,r.
\end{eqnarray*}
 
\end{example}
%Lyx Workaround
\begin{example}
\label{ex:LS-r-Q-r-S-revisit-extension}Recently Krishnamoorthy, Manikandan,
and Lakshmy \cite{krishnamoorthy;manikandan;lakshmy:13} analyzed
an extension of the $(r,S)$ and $(r,Q)$ inventory systems with lost
sales where the service time has a general distribution, and at the
end of the service the customer receives with probability $\gamma$
one item from the inventory while and with probability $(1-\gamma)$
the inventory level stays unchanged. The authors calculate the steady
state distribution of the system, which has a product form, and give
necessary and sufficient condition for stability. In the case of exponential
service time our model from \prettyref{sect:LS-MM1Inf-model} encompasses
this system.
\end{example}
%Lyx Workaround
\begin{example}
\label{inventory-production} This example is taken from \cite{krishnamoorthy;viswanath:13},
the notation is adapted to that used in Section \ref{sect:LS-MM1Inf-model}:
The authors study an inventory system under $(r,S)$-policy, which
provides items for a server who processes and forwards the items in
an on-demand production scheme. The processing time of each service
is exponentially-$\mu$ distributed. The demand occurs in a Poisson-$\lambda$
stream.

If demand arrives when the inventory is depleted it is rejected and
lost to the system forever (lost sales).

The complete system is a supply chain where new items are added to
the inventory through a second production process which is interrupted
whenever the inventory at hand reaches $S$. The production process
is resumed each time the inventory level goes down to $r$ and continues
to be on until inventory level reaches $S$ again. The times required
to add one item into the inventory (processing time + lead time) when
the production is on, are exponential-$\nu$ random variables.

All inter arrival times, service times, and production times are mutually
independent.

For a Markovian description we need to record the queue length of
not fulfilled demand $(\in\mathbb{N}_{0})$, the inventory on stock
$(\in\{0,1,\dots,S\})$, and a binary variable which indicates when
the inventory level is in $\{r+1,\dots,S\}$ whether the second production
process is on (=1) or off (= 0). (Note, that the second production
process is always on, when the inventory level is in $\{0,1,\dots,r\}$,
and is always off, when the inventory level is $S$.)

To fit this model into the framework of Section \ref{sect:LS-MM1Inf-model}
we define a Markov process $(X,Y)$ in continuous time with state
space 
\[
E:=\mathbb{N}_{0}\times K,\quad\text{with}~~K:=\{0,1,\dots,r\}\cup\{S\}\cup(\{r+1,\dots,S-1\}\times\{0,1\})\quad\text{and}~~K_{B}=\{0\}.
\]
 The environment therefore records the inventory size and the status
of the second production process, and blocking of the production system
occurs due to stock out with lost sales regime.
\end{example}
Starting from Example \ref{ex:LS-original-r-Q}, Saffari, Haji, and
Hassanzadeh \cite{saffari;haji;hassanzadeh:11} proved that under
$(r,Q)$ policy the integrated queueing-inventory $M/M/1/\infty$
system with hyper-exponential lead times (= mixtures of exponential
distributions) has a product-form distribution. The proof is done
by solving directly the steady state equations. In \cite{saffari;asmussen;haji:13},
Saffari, Asmussen, and Haji generalized this result to general lead
time distributions. The proof of product form uses some intuitive
arguments from related simplified systems and the marginal probabilities
for the inventory position are derived using regenerative arguments.\\

In the following example we show that our models encompasses queueing-inventory
systems with general replenishment lead times under $(r,S)$ policy.
This will allow us directly to conclude that for the ergodic system
the steady state has product form and this will enable us to generalize
the theorem (here Example \ref{ex:LS-original-r-S}) of \cite{schwarz;sauer;daduna;kulik;szekli:06}
to incorporate generally distributed lead times.

In a second step we will show that the results of Saffari, Haji, and
Hassanzadeh \cite{saffari;haji;hassanzadeh:11} and of Saffari, Asmussen,
and Haji \cite{saffari;asmussen;haji:13} for queueing-inventory systems
under $(r,Q)$ policy can be obtained by our method as well and can
even be slightly generalized.\\

We will consider lead time distributions of the following phase-type
which are sufficient versatile to approximate any distribution on
$\mathbb{R}_{+}$ arbitrary close.
\begin{defn}
[Phase-type distributions] \label{def:LS-theo8.2} For $k\in\mathbb{N}$
and $\beta>0$ let 
\[
\Gamma_{\beta,k}(s)=1-e^{-\beta s}\sum_{i=0}^{k-1}\frac{(\beta s)^{i}}{i!},\quad s\geq0,
\]
 denote the cumulative distribution function of the $\Gamma$--distribution
with parameters $\beta$ and $k.$ $k$ is a positive integer and
serves as a phase-parameter for the number of independent exponential
phases, each with mean $\beta^{-1},$ the sum of which constitutes
a random variable with distribution $\Gamma_{\beta,k}.$ ($\Gamma_{\beta,k}$
is called a $k$--stage Erlang distribution with shape parameter $\beta.$)

We consider the following class of distributions on $\mathbb{R}_{+}$,
which is dense with respect to the topology of weak convergence of
probability measures in the set of all distributions on $(\mathbb{R}_{+},\mathbb{B}_{+})$\,
(\cite{schassberger:73}, section I.6). For $\beta\in(0,\infty),$
$L\in\mathbb{N},$ and probability $b$ on $\{1,\dots,L\}$ with $b(L)>0$
let the cumulative distribution function 
\begin{equation}
B(s)=\sum_{\ell=1}^{L}b(\ell)\Gamma_{\beta,\ell}(s),\quad s\geq0,\label{eq:LS-ph-distr}
\end{equation}

denote a phase-type distribution function. With varying $\beta,$
$L,$ and $b$ we can approximate any distribution on $(\mathbb{R}_{+},\mathbb{B}_{+})$
sufficiently close. 
\end{defn}
To incorporate replenishment lead time distributions of phase-type
we apply the supplemented variable technique. This leads to enlarging
the phase space of the system, i.e. the state space of the inventory
process $Y$. Whenever there is an ongoing lead time, i.e., when inventory
at hand is less than $r+1$ we count the number of residual successive
i.i.d. exp($\beta$)-distributed lead time phases which must expire
until the replenishment arrives at the inventory.

The state space of $(X,Y)$ then is $E=\mathbb{N}_{0}\times K$ with
\[
K=\{r+1,r+2,...,S\}\cup\left(\{0,1,...r\}\times\{L,\dots,1\}\right)\,,
\]
 and $(X,Y)$ is irreducible on $E$.
\begin{prop}
\label{prop:LS-rS-PH}$M/M/1/\infty$ system with $(r,S)$-policy,
phase-type replenishment lead time, state dependent service rates
$\mu(n)$, and lost sales.

The lead time distribution has a distribution function $B$ from \eqref{eq:LS-ph-distr}.
We assume that $(X,Y)$ is positive recurrent and denote its steady
state distribution by 
\[
\pi=(\pi(n,k):n\in\mathbb{N}_{0}\times K).
\]
 The steady state $\pi$ of $(X,Y)$ is of product form. With normalization
constant $C$ 
\begin{equation}
\pi(n,k)={C^{-1}\prod_{i=0}^{n-1}\frac{\lambda}{\mu(i+1)}}\cdot\theta(k)\,\label{eq:LS-PFPH}
\end{equation}
where $\theta=(\theta(k):k\in K)$ is for $r=0$ 
\begin{eqnarray}
\theta(j,\ell) & = & G^{-1}\left(\frac{\lambda+\beta}{\lambda}\right)^{j-1}\sum_{i=\ell}^{L}b(i)\left(\frac{\beta}{\lambda+\beta}\right)^{i-\ell}{{i-\ell+r-j} \choose {r-j}},\label{eq:LS-PFPH1}\\
 &  & j=1,2,\dots,r,\ell=1,\dots,L\nonumber \\
\theta(0,\ell) & = & G^{-1}\frac{\lambda}{\beta}\left[\sum_{i=\ell}^{L}\left(\sum_{g=i}^{L}b(g)\right)\left(\frac{\beta}{\lambda+\beta}\right)^{i-\ell}{{i-\ell+r-1} \choose {r-1}}\right].\label{eq:LS-PFPH2}\\
 &  & \ell=1,\dots,L,\nonumber \\
\theta(r+1) & = & \theta(r+2)=\dots=\theta(S)=G^{-1}\left(\frac{\lambda+\nu}{\lambda}\right)^{r},\label{eq:LS-PFPH3}
\end{eqnarray}
 where the normalization constant $G$ is chosen such that 
\[
\sum_{k\in K}\theta(k)=1.
\]
 For $r=0$ we obtain $\theta=(\theta(k):k\in K)$ with normalization
constant $G$ as 
\begin{eqnarray}
\theta(0,\ell) & = & G^{-1}\frac{\lambda}{\beta}\left[\sum_{i=\ell}^{L}b(i)\right],\quad\ell=1,\dots,L,\ \label{eq:LS-PFPH2-0}\\
\theta(1) & = & \theta(2)=\dots=\theta(S)=G^{-1},\label{eq:LS-PFPH3-0}
\end{eqnarray}
 \end{prop}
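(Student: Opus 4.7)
The plan is to derive this by specializing Corollary \ref{cor:LS-lambda-constant} (since the arrival stream is Poisson-$\lambda$ with constant rate). First I would encode the environment $(K,K_{B},R,V)$ explicitly. On the high-inventory block $\{r+1,\dots,S\}$ no replenishment is outstanding, so service completions just decrement inventory, $R(j,j-1)=1$, except that in state $r+1$ the completion both decrements inventory and triggers a new lead time: $R(r+1,(r,\ell))=b(\ell)$. For $j\in\{1,\dots,r\}$, service completion keeps the residual-phase coordinate fixed, $R((j,\ell),(j-1,\ell))=1$. The blocking set is $K_{B}=\{(0,\ell):\ell=1,\dots,L\}$ (no service, arrivals lost). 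The generator $V$ encodes only the phase clock: $v((j,\ell),(j,\ell-1))=\beta$ for $\ell\ge 2$ and $v((j,1),S)=\beta$ (replenishment arrival at the end of the last phase).

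Next I would write out the linear system $\theta\tilde{Q}=0$ with $\tilde{Q}=\lambda(R_{W}-I_{W})+V$ given by \eqref{eq:LS-constant-lambda-theta-matrix-equation} according to the three categories of states. For $k\in\{r+2,\dots,S\}$ the equation degenerates (since $V$ has no outflow there) and yields $\theta(r+1)=\theta(r+2)=\cdots=\theta(S)$; this will fix the constant in \eqref{eq:LS-PFPH3}. For $k=(j,\ell)$ with $1\le j\le r$ and $2\le \ell\le L$, balance reads
\[
(\lambda+\beta)\,\theta(j,\ell) = \lambda\,\theta(j+1,\ell)\,\mathbf{1}_{[j+1\le r]}+\lambda\,\theta(r+1)\,b(\ell)\,\mathbf{1}_{[j=r]}+\beta\,\theta(j,\ell+1)\,\mathbf{1}_{[\ell<L]},
\]
and analogously for $\ell=1$ (outflow $\beta$ lands in the high-inventory block). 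For $k=(0,\ell)$ only the phase clock and the service inflow from $(1,\ell)$ contribute: $\beta\theta(0,\ell)=\lambda\theta(1,\ell)+\beta\theta(0,\ell+1)\mathbf{1}_{[\ell<L]}$. For $r=0$ the middle block disappears and one gets the simpler chain \eqref{eq:LS-PFPH2-0}, \eqref{eq:LS-PFPH3-0}.

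I would solve the system in two stages. Telescoping the $(0,\ell)$ equations downward in $\ell$ reduces $\theta(0,\ell)$ to a sum $\sum_{i=\ell}^{L}\theta(1,i)$ (up to the factor $\lambda/\beta$ and, for $r\geq 1$, augmented by the $b(i)$ coming from the $(r,\cdot)$ recursion). The recursion for $\theta(j,\ell)$ with $1\le j\le r$ is, after dividing by $\beta$, a two-dimensional lattice recursion with a constant forcing term $(\lambda/\beta)\theta(r+1)b(\ell)$ appearing only at $j=r$. Solving this by descending induction on $\ell$ and ascending induction on $j$ (or, equivalently, by a generating-function substitution in the variable $z=\beta/(\lambda+\beta)$) will produce the closed form with the geometric factor $((\lambda+\beta)/\lambda)^{j-1}$ and the binomial coefficient ${i-\ell+r-j\choose r-j}$ of \eqref{eq:LS-PFPH1}; then substituting into the $(0,\ell)$-equation gives \eqref{eq:LS-PFPH2}.

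The step I expect to be the main obstacle is verifying the binomial-sum identity at the transition $j=r\to (r,\ell)$, where the $b(i)$-mixture couples with the geometric decay. The cleanest way I see is to define the partial sums $T_{r-j}(\ell)=\sum_{i=\ell}^{L}b(i)z^{i-\ell}{i-\ell+r-j\choose r-j}$ (with $z=\beta/(\lambda+\beta)$) and check by direct algebra that the relation $T_{r-j-1}(\ell)=\frac{1}{1-z}(T_{r-j}(\ell)-zT_{r-j}(\ell+1))$ propagates the recursion in $j$, while the Pascal identity ${m+1\choose k}={m\choose k}+{m\choose k-1}$ propagates it in $\ell$. Once these two identities are in hand the verification that \eqref{eq:LS-PFPH1}--\eqref{eq:LS-PFPH3} solve $\theta\tilde{Q}=0$ is mechanical, ergodicity is assumed and supplies normalization, and \prettyref{cor:LS-lambda-constant} delivers the product form \eqref{eq:LS-PFPH}. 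The $r=0$ formulas \eqref{eq:LS-PFPH2-0}--\eqref{eq:LS-PFPH3-0} follow by the same telescoping after deleting the middle block.
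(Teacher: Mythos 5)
Your proposal is correct and follows essentially the same route as the paper: you set up $(K,K_B,R,V)$ exactly as the paper does (phase clock in $V$, service-triggered inventory drops and the $b(\ell)$-mixture at $r+1$ in $R$), invoke Corollary~\ref{cor:LS-lambda-constant} to reduce everything to $\theta\tilde Q=0$, and then solve the resulting family of balance equations by telescoping in $\ell$ and a two-dimensional recursion in $(j,\ell)$, with the binomial form ultimately pinned down by Pascal's identity. This is the same computation the paper carries out; the paper proposes the closed form and verifies by two-step induction, while you phrase it as deriving the closed form through the partial sums $T_{r-j}(\ell)$, but the algebra and the key identity are the same.

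One small slip worth flagging: plugging $\theta(j,\ell)=\theta(r+1)\bigl(\tfrac{\lambda}{\lambda+\beta}\bigr)^{r+1-j}T_{r-j}(\ell)$ into the balance equation $(\lambda+\beta)\theta(j,\ell)=\lambda\theta(j+1,\ell)+\beta\theta(j,\ell+1)$ (for $1\le j<r$, $1\le\ell<L$) gives
\[
T_{r-j-1}(\ell)=T_{r-j}(\ell)-z\,T_{r-j}(\ell+1),\qquad z=\tfrac{\beta}{\lambda+\beta},
\]
without the $\tfrac{1}{1-z}$ factor you wrote; this identity is exactly Pascal's rule applied coefficient-wise, since
\[
T_m(\ell)-zT_m(\ell+1)=b(\ell)+\sum_{i>\ell}b(i)z^{i-\ell}\Bigl[\tbinom{i-\ell+m}{m}-\tbinom{i-\ell-1+m}{m}\Bigr]=T_{m-1}(\ell).
\]
With that correction the verification goes through as you outline, and the $r=0$ case follows by deleting the middle block as you say.
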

\begin{proof}
The inventory management process under $(r,S)$-policy with distribution
function $B$ of the lead times fits into the definition of the environment
process by setting

\[
K=\{r+1,r+2,...,S\}\cup\left(\{0,1,...r\}\times\{L,\dots,1\}\right),\quad K_{B}=\{0\}\times\{L,\dots,1\}\,.
\]

The non negative transition rates of $(X,Y)$ are for $(n,k)\in E$
\begin{eqnarray*}
q((n,k)\qsep(n+1,k)) & = & \lambda\qquad k\in K_{W},n\geq0\\
q((n,k)\qsep(n-1,m)) & = & \mu(n)\Rentry km\qquad k\in K_{W},m\in K,n\intpositive,\\
q((n,k)\qsep(n,m)) & = & \v(k,m)\in\mathbb{R}_{0}^{+},~~~k\neq m,~~k,m\in K,\\
q((n,k)\qsep(i;m)) & = & 0\qquad\text{otherwise for}~~(n,k)\neq(i;m)\in E;
\end{eqnarray*}
 where 
\begin{eqnarray*}
\Rentry k{k-1}=1 & \text{if} & k\in\{r+2,...,S\},\\
\Rentry{r+1}{(r,\ell)}=b{(\ell)} & \text{if} & \ell\in\{L,\dots,1\},\\
\Rentry{(j,\ell)}{(j-1,\ell)}=1 & \text{if} & (j,\ell)\in\{1,...r\}\times\{L,\dots,1\}\,,\\
\Rentry kj=0 & \text{if} & k,j\in K,~\text{otherwise},
\end{eqnarray*}
 and 
\begin{eqnarray*}
\v((j,\ell),(j,\ell-1))=\beta & \text{if} & j\in\{0,1,...r\},\ell\in\{L,\dots,2\}\\
\v((j,1),S)=\beta & \text{if} & j\in\{0,1,...r\},\\
\v{(k,j)}=0 & \text{if} & k,j\in K,~\text{otherwise},
\end{eqnarray*}

Because $\lambda(n)=\lambda$ for all $n$, Theorem \ref{thm:LS-productform}
applies and we know that the steady state of the ergodic system is
of product form 
\begin{equation}
\pi(n,k)=C^{-1}\frac{\lambda^{n}}{\prod_{i=0}^{n-1}\mu(i+1)}\theta(k)\quad n.k)\in E,\label{eq:LS-PF1}
\end{equation}
 according to \prettyref{cor:LS-lambda-constant}. We have to solve
\eqref{eq:LS-q-tilde-n-conditon} which is independent of $n$ in
the present setting. By definition this is (with $\Rentry kk=0,\,\forall k\in K\backslash\{0\},\,\Rentry 00=1$)
\begin{eqnarray}
 &  & \theta(k)\left(1_{[k\in K_{W}]}\lambda+\sum_{m\in K\backslash\{k\}}\v(k,m)\right)\label{eq:LS-PF2}\\
 & = & \sum_{m\in K_{W}\backslash\{k\}}\theta(m)\left(\lambda(n)\Rentry mk+\v(m,k)\right)+\sum_{m\in K_{B}\backslash\{k\}}\theta(m)\v(m,k)\nonumber 
\end{eqnarray}
 \textbf{(I)} For $r>0$, \eqref{eq:LS-PF2} translates into 
\begin{eqnarray}
\theta(S)\cdot\lambda & = & \sum_{j=0}^{r}\theta(j,1)\cdot\beta,\label{eq:LS-PFspecial-1}\\
\theta(k)\cdot\lambda & = & \theta(k+1)\cdot\lambda,\qquad k=r+1,\dots,S-1\label{eq:LS-PFspecial-2}\\
\theta(r,\ell)\cdot(\lambda+\beta) & = & \theta(r+1)\cdot\lambda b(\ell)+\theta(r,\ell+1)\cdot\beta,\qquad1\leq\ell<L,\label{eq:LS-PFspecial-3}\\
\theta(r,L)\cdot(\lambda+\beta) & = & \theta(r+1)\cdot\lambda b(L),\label{eq:LS-PFspecial-4}\\
\theta(j,L)\cdot(\lambda+\beta) & = & \theta(j+1,L)\cdot\lambda,\qquad1\leq j<r\label{eq:LS-PFspecial-5}\\
\theta(j,\ell)\cdot(\lambda+\beta) & = & \theta(j+1,\ell)\cdot\lambda+\theta(j,\ell+1)\cdot\beta,~~1\leq j<r,1\leq\ell<L,\label{eq:LS-PFspecial-6}\\
\theta(0,\ell)\cdot\beta & = & \theta(1,\ell)\cdot\lambda+\theta(0,\ell+1)\cdot\beta,\qquad1\leq\ell<L,\label{eq:LS-PFspecial-7}\\
\theta(0,L)\cdot\beta & = & \theta(1,L)\cdot\lambda.\label{eq:LS-PFspecial-8}
\end{eqnarray}
 From \eqref{eq:LS-PFspecial-2} follows 
\begin{equation}
\theta(S)=\theta(S-1)=\dots=\theta(r+1),\label{eq:LS-PFspecial-L1}
\end{equation}
 and from \eqref{eq:LS-PFspecial-4} and \eqref{eq:LS-PFspecial-5}
follows 
\begin{equation}
\theta(j,L)=\theta(r+1)b(L)\left(\frac{\lambda}{\lambda+\beta}\right)^{r+1-j}.\label{eq:LS-PFspecial-L4+5}
\end{equation}
 From \eqref{eq:LS-PFspecial-L4+5} (for j=r) and \eqref{eq:LS-PFspecial-3}
follows directly 
\begin{equation}
\theta(r,\ell)=\theta(r+1)\frac{\lambda}{\lambda+\beta}\sum_{i=\ell}^{L}b(i)\left(\frac{\beta}{\lambda+\beta}\right)^{i-\ell},~~1\leq\ell<L\,.\label{eq:LS-PFspecial-L3}
\end{equation}
 Up to now we obtained the expressions for the north and west border
line of the array $(\theta(j,\ell):1\leq j\leq r,1\leq\ell\leq L)$
which can be filled step by step via \eqref{eq:LS-PFspecial-6}. The
proposed solution is 
\begin{equation}
\theta(r-h,\ell)=\theta(r+1)\left(\frac{\lambda}{\lambda+\beta}\right)^{h+1}\sum_{i=\ell}^{L}b(i)\left(\frac{\beta}{\lambda+\beta}\right)^{i-\ell}{{i-\ell+h} \choose {h}},\label{eq:LS-PFspecial-L6}
\end{equation}
 for $h=0,1,\dots,r-1,\ell=1,\dots,L$ fits with \eqref{eq:LS-PFspecial-L3}
($h=0$ with ${{i-l} \choose {0}}=1$) and \eqref{eq:LS-PFspecial-L4+5}.
Inserting \eqref{eq:LS-PFspecial-L6} into \eqref{eq:LS-PFspecial-6}
verifies \eqref{eq:LS-PFspecial-L6} by a two-step induction with
help by the elementary formula ${{a} \choose {n}}+{{a} \choose {n-1}}={{a+1} \choose {n}}$.\\

For computing the residual boundary probabilities $\theta(0,\ell)$
we need some more effort. The proposed solution is for $\ell=1,\dots,L,$
\begin{equation}
\theta(0,\ell)=\theta(r+1)\left(\frac{\lambda}{\lambda+\beta}\right)^{r}\frac{\lambda}{\beta}\left[\sum_{i=\ell}^{L}\sum_{g=i}^{L}b(g)\left(\frac{\beta}{\lambda+\beta}\right)^{i-\ell}{{i-\ell+r-1} \choose {r-1}}\right].\label{eq:LS-PFspecial-L7+8}
\end{equation}
 From \eqref{eq:LS-PFspecial-8} and \eqref{eq:LS-PFspecial-L4+5}
we obtain 
\begin{equation}
\theta(0,L)=\theta(r+1)\left(\frac{\lambda}{\lambda+\beta}\right)^{r}\frac{\lambda}{\beta}b(L),\label{eq:LS-PFspecial-L8}
\end{equation}
 which fits into \eqref{eq:LS-PFspecial-L7+8}, and it remains to
check the recursion \eqref{eq:LS-PFspecial-7}. This amounts to compute
\begin{eqnarray*}
 &  & \theta(1,\ell)\cdot\frac{\lambda}{\beta}+\theta(0,\ell+1)\\
 & = & \theta(r+1)\left(\frac{\lambda}{\lambda+\beta}\right)^{r}\sum_{i=\ell}^{L}b(i)\left(\frac{\beta}{\lambda+\beta}\right)^{i-\ell}{{i-\ell+r-1} \choose {r-1}}\cdot\frac{\lambda}{\beta}+\\
 &  & +\theta(r+1)\left(\frac{\lambda}{\lambda+\beta}\right)^{r}\frac{\lambda}{\beta}\left[\sum_{i=\ell+1}^{L}\sum_{g=i}^{L}b(g)\left(\frac{\beta}{\lambda+\beta}\right)^{i-(\ell+1)}{{i-(\ell+1)+r-1} \choose {r-1}}\right]\\
 & = & \theta(r+1)\left(\frac{\lambda}{\lambda+\beta}\right)^{r}\frac{\lambda}{\beta}\left[\sum_{i=\ell+1}^{L}\left\{ \sum_{g=i}^{L}b(g)\left(\frac{\beta}{\lambda+\beta}\right)^{i-(\ell+1)}{{\overbrace{i-(\ell+1)}^{=(i-1)-\ell}+r-1} \choose {r-1}}\right.\right.\\
 &  & \qquad\qquad\qquad\qquad\qquad\left.+b(i-1)\left(\frac{\beta}{\lambda+\beta}\right)^{(i-1)-\ell}{{(i-1)-\ell+r-1} \choose {r-1}}\right\} +\\
 &  & \qquad\qquad\qquad\qquad\qquad\qquad\qquad\left.+b(L)\left(\frac{\beta}{\lambda+\beta}\right)^{L-\ell}{{L-\ell+r-1} \choose {r-1}}\right]\\
 & = & \theta(r+1)\left(\frac{\lambda}{\lambda+\beta}\right)^{r}\frac{\lambda}{\beta}\left[\sum_{i=\ell+1}^{L}\left\{ \sum_{g=i-1}^{L}b(g)\left(\frac{\beta}{\lambda+\beta}\right)^{(i-1)-\ell}{{(i-1)-\ell+r-1} \choose {r-1}}\right\} \right.\\
 &  & \qquad\qquad\qquad\qquad\qquad\qquad\qquad\left.+b(L)\left(\frac{\beta}{\lambda+\beta}\right)^{L-\ell}{{L-\ell+r-1} \choose {r-1}}\right]\\
 & = & \theta(r+1)\left(\frac{\lambda}{\lambda+\beta}\right)^{r}\frac{\lambda}{\beta}\left[\sum_{i=\ell}^{L-1}\left\{ \sum_{g=i}^{L}b(g)\left(\frac{\beta}{\lambda+\beta}\right)^{i-\ell}{{i-\ell+r-1} \choose {r-1}}\right\} \right.\\
 &  & \qquad\qquad\qquad\qquad\qquad\qquad\qquad\left.+b(L)\left(\frac{\beta}{\lambda+\beta}\right)^{L-\ell}{{L-\ell+r-1} \choose {r-1}}\right]\\
 & = & \theta(0,\ell).
\end{eqnarray*}
 Setting 
\[
\theta(r+1)=G^{-1}\left(\frac{\lambda+\nu}{\lambda}\right)^{r}
\]
 completes the proof in case of $r=0$.

\textbf{(II)} For $r>0$, \eqref{eq:LS-PF2} translates into 
\begin{eqnarray}
\theta(S)\cdot\lambda & = & \theta(0,1)\cdot\beta,\label{eq:LS-PFspecial-1-0}\\
\theta(k)\cdot\lambda & = & \theta(k+1)\cdot\lambda,\qquad k=1,\dots,S-1\label{eq:LS-PFspecial-2-0}\\
\theta(0,\ell)\cdot\beta & = & \theta(1)\cdot\lambda\cdot b(L)+\theta(0,\ell+1)\cdot\beta,\qquad1\leq\ell<L,\label{eq:LS-PFspecial-7-0}\\
\theta(0,L)\cdot\beta & = & \theta(1)\cdot\lambda\cdot b(L).\label{eq:LS-PFspecial-8-0}
\end{eqnarray}
 From \eqref{eq:LS-PFspecial-2-0} follows 
\begin{equation}
\theta(S)=\theta(S-1)=\dots=\theta(1),\label{eq:LS-PFspecial-L1-0}
\end{equation}
 and we will show that 
\begin{equation}
\theta(0,\ell)=\theta(1)\cdot\left(\frac{\lambda}{\beta}\right)\left[\sum_{i=\ell}^{L}b(i)\right],\quad\ell=1,\dots,L,\label{eq:LS-PFspecial-L2-0}
\end{equation}
 holds. For $\ell=L$ this is immediate from \eqref{eq:LS-PFspecial-8-0},
and for $\ell<L$ it follows by induction from \eqref{eq:LS-PFspecial-7-0}.
Setting $\theta(1)=G^{-1}$ completes the proof in case of $r=0$. \end{proof}
\begin{rem}
For $r>0$ we can write \eqref{eq:LS-PFPH2} as 
\begin{eqnarray*}
\theta(0,\ell) & = & G^{-1}\frac{\lambda}{\beta}\left[\sum_{i=\ell}^{L}\left(\sum_{g=i}^{L}b(g)\right)\left(\frac{\beta}{\lambda+\beta}\right)^{i-\ell}{{i-\ell+r-1} \choose {i-\ell}}\right].\\
 &  & \ell=1,\dots,L,
\end{eqnarray*}
 and can extend this formula to the case $r=0$. This yields with
${{-1} \choose {0}}=1$ explicitly 
\begin{eqnarray*}
\theta(0,\ell) & = & G^{-1}\frac{\lambda}{\beta}\left[\sum_{i=\ell}^{L}b(i)\right],\quad\ell=1,\dots,L,
\end{eqnarray*}
 \end{rem}
\begin{cor}
In steady state the marginal probabilities for the inventory at hand
have the following simple representation.

Denote by $\nu^{-1}$ the expected lead time.

Let $U$ denote a random variable distributed according to $b=(b(\ell):1\leq\ell\leq L)$,
and let $U_{e}$ denote a random variable distributed according to
the ''equilibrium distribution'' of $U$, resp. $b$, i,e. 
\[
P(U_{e}=i)=\frac{1}{E(U)}\sum_{g=i}^{L}b(g),\quad1\leq i\leq L.
\]

Let $W(u,\alpha)$ denote a random variable distributed according
to a negative binomial distribution $Nb^{0}(u,\alpha)$ with parameters
$u\in\mathbb{N}$ and $\alpha\in(0,1)$, i.e., 
\[
P(W(u,\alpha)=i)={i+u-1 \choose u-1}\alpha^{u}(1-\alpha)^{i},\quad i\in\mathbb{N}.
\]
 Let $I$ denote a random variable distributed according to the marginal
steady state probability for the inventory size. Then for $j=1,\dots,r$
\begin{equation}
P(I=j)=G^{-1}\left(\frac{\lambda+\beta}{\lambda}\right)^{r}\cdot P(W(r+1-j,\frac{\lambda}{\lambda+\beta})<U),\label{eq:LS-inventory1}
\end{equation}
 and 
\begin{equation}
P(I=0)=G^{-1}\frac{\lambda}{\nu}\left(\frac{\lambda+\beta}{\lambda}\right)^{r}\cdot P(W(r,\frac{\lambda}{\lambda+\beta})<U_{e}).\label{eq:LS-inventory0}
\end{equation}
 For $j=r+1,\dots,S$ \eqref{eq:LS-PFPH3} applies directly: 
\[
P(I=r+1)=\dots=P(I=S)=G^{-1}\left(\frac{\lambda+\nu}{\lambda}\right)^{r}.
\]
\end{cor}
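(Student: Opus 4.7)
The plan is to compute $P(I=j) = \sum_{\ell=1}^L \theta(j,\ell)$ directly from the formulas \eqref{eq:LS-PFPH1}, \eqref{eq:LS-PFPH2} in the preceding proposition, and then to recognize the resulting sums as tail probabilities of a negative binomial (respectively, of the equilibrium version of $U$). The case $j=r+1,\dots,S$ is trivial from \eqref{eq:LS-PFPH3}, so only $j=0,1,\dots,r$ needs work.

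First I would treat $j\in\{1,\dots,r\}$. Starting from
\[
P(I=j)=G^{-1}\Bigl(\tfrac{\lambda+\beta}{\lambda}\Bigr)^{j-1}\sum_{\ell=1}^{L}\sum_{i=\ell}^{L}b(i)\Bigl(\tfrac{\beta}{\lambda+\beta}\Bigr)^{i-\ell}\binom{i-\ell+r-j}{r-j},
\]
I swap the order of summation so that $i$ runs from $1$ to $L$ and $\ell$ from $1$ to $i$, and then substitute $k=i-\ell$ to obtain
\[
P(I=j)=G^{-1}\Bigl(\tfrac{\lambda+\beta}{\lambda}\Bigr)^{j-1}\sum_{i=1}^{L}b(i)\sum_{k=0}^{i-1}\Bigl(\tfrac{\beta}{\lambda+\beta}\Bigr)^{k}\binom{k+r-j}{r-j}.
\]
Since $P(W(r+1-j,\tfrac{\lambda}{\lambda+\beta})=k)=\binom{k+r-j}{r-j}\bigl(\tfrac{\lambda}{\lambda+\beta}\bigr)^{r+1-j}\bigl(\tfrac{\beta}{\lambda+\beta}\bigr)^{k}$, the inner sum equals $\bigl(\tfrac{\lambda+\beta}{\lambda}\bigr)^{r+1-j}P(W(r+1-j,\tfrac{\lambda}{\lambda+\beta})\le i-1)$, i.e.\ $\bigl(\tfrac{\lambda+\beta}{\lambda}\bigr)^{r+1-j}P(W(r+1-j,\tfrac{\lambda}{\lambda+\beta})<i)$. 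Combining the two powers of $(\lambda+\beta)/\lambda$ and interpreting the outer sum as expectation against the law of $U$ yields \eqref{eq:LS-inventory1}.

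For $j=0$ the same interchange-and-substitute idea applies to \eqref{eq:LS-PFPH2}:
\[
P(I=0)=G^{-1}\tfrac{\lambda}{\beta}\sum_{i=1}^{L}\Bigl(\sum_{g=i}^{L}b(g)\Bigr)\sum_{k=0}^{i-1}\Bigl(\tfrac{\beta}{\lambda+\beta}\Bigr)^{k}\binom{k+r-1}{r-1},
\]
and the inner sum now gives $(\tfrac{\lambda+\beta}{\lambda})^{r}P(W(r,\tfrac{\lambda}{\lambda+\beta})<i)$. The key extra ingredient is the identity $\sum_{g=i}^{L}b(g)=E[U]\,P(U_e=i)$, which is just the definition of the equilibrium distribution, together with the relation $E[U]/\beta=\nu^{-1}$, i.e.\ $\lambda E[U]/\beta=\lambda/\nu$. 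Substituting these two facts converts the outer sum into $P(W(r,\tfrac{\lambda}{\lambda+\beta})<U_e)$ and reshapes the prefactor to the form asserted in \eqref{eq:LS-inventory0}.

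The only genuinely non-mechanical step is the second one: correctly identifying the normalization factor of the negative binomial so that the prefactors collapse into $((\lambda+\beta)/\lambda)^r$, and spotting the $E[U]/\beta=\nu^{-1}$ relation that lets the inner double sum be rewritten using $U_e$. Everything else is bookkeeping: an order swap, the substitution $k=i-\ell$, and the identity $\binom{k+r-j}{r-j}=\binom{k+r-j}{k}$ to recognize the negative binomial mass function. No new input from the dynamics of $(X,Y)$ is needed; the corollary is purely a restatement of Proposition~\ref{prop:LS-rS-PH} in probabilistic language.
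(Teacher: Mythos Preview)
Your proposal is correct and follows essentially the same argument as the paper: both proofs sum $\theta(j,\ell)$ over $\ell$, interchange the order of summation, substitute $k=i-\ell$ (the paper uses $g$), and then recognize the inner sum as the cumulative distribution of a negative binomial after pulling out the factor $(\lambda/(\lambda+\beta))^{r+1-j}$. For $j=0$ the paper likewise rewrites $\sum_{g=i}^{L}b(g)=E(U)\,P(U_e=i)$ and uses $\tfrac{\lambda}{\beta}E(U)=\tfrac{\lambda}{\nu}$, exactly as you do.
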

\begin{proof}
For $j=1,\dots,r$ we have 
\begin{eqnarray*}
 &  & P(I=j)=G^{-1}\left(\frac{\lambda+\beta}{\lambda}\right)^{j-1}\sum_{\ell=1}^{L}\sum_{i=\ell}^{L}b(i)\left(\frac{\beta}{\lambda+\beta}\right)^{i-\ell}{{i-\ell+r-j} \choose {r-j}}\\
 & = & G^{-1}\left(\frac{\lambda+\beta}{\lambda}\right)^{j-1}\sum_{i=1}^{L}b(i)\sum_{\ell=1}^{i}\left(\frac{\beta}{\lambda+\beta}\right)^{i-\ell}{{i-\ell+r-j} \choose {r-j}}\\
 & = & G^{-1}\left(\frac{\lambda+\beta}{\lambda}\right)^{j-1}\sum_{i=1}^{L}b(i)\sum_{g=0}^{i-1}\left(\frac{\beta}{\lambda+\beta}\right)^{g}{{g+r-j} \choose {r-j}}\\
 & = & G^{-1}\left(\frac{\lambda+\beta}{\lambda}\right)^{j-1}\left(\frac{\lambda+\beta}{\lambda}\right)^{r+1-j}\\
 &  & \qquad\qquad\qquad\sum_{i=1}^{L}b(i)\sum_{g=0}^{i-1}{{g+(r+1-j)-1} \choose {(r+1-j)-1}}\left(\frac{\lambda}{\lambda+\beta}\right)^{r+1-j}\left(\frac{\beta}{\lambda+\beta}\right)^{g}\\
 & = & G^{-1}\left(\frac{\lambda+\beta}{\lambda}\right)^{r}\sum_{i=1}^{L}b(i)\cdot P(W(r+1-j,\frac{\lambda}{\lambda+\beta})<i),
\end{eqnarray*}
 and for $j=0$ we have 
\begin{eqnarray*}
 &  & P(I=0)=G^{-1}\left(\frac{\lambda}{\beta}\right)\sum_{\ell=1}^{L}\sum_{i=\ell}^{L}\sum_{g=i}^{L}b(g)\left(\frac{\beta}{\lambda+\beta}\right)^{i-\ell}{{i-\ell+r-1} \choose {r-1}}\\
 & = & G^{-1}\left(\frac{\lambda}{\beta}\right)\sum_{i=1}^{L}\sum_{\ell=1}^{i}\left(\frac{\beta}{\lambda+\beta}\right)^{i-\ell}{{i-\ell+r-1} \choose {r-1}}\sum_{g=i}^{L}b(g)\\
 & = & G^{-1}\underbrace{\left(\frac{\lambda}{\beta}\cdot E(V)\right)}_{=\lambda/\nu}\left(\frac{\lambda+\beta}{\lambda}\right)^{r}\sum_{i=1}^{L}\underbrace{\left(\frac{1}{E(U)}\sum_{g=i}^{L}b(g)\right)}_{=:P(V_{e}=i)}\sum_{f=0}^{i-1}{{f+r-1} \choose {r-1}}\left(\frac{\lambda}{\lambda+\beta}\right)^{r}\left(\frac{\beta}{\lambda+\beta}\right)^{f}\\
 & = & G^{-1}\left(\frac{\lambda}{\nu}\right)\left(\frac{\lambda+\beta}{\lambda}\right)^{r}P(W(r+1-1,\frac{\lambda}{\lambda+\beta})<U_{e}).
\end{eqnarray*}
 
\end{proof}
We now revisit the results from \cite{saffari;haji;hassanzadeh:11}
and \cite{saffari;asmussen;haji:13} for queueing-inventory systems
under $(r,Q)$ policy. We allow additionally the service rate of the
server to depend on the queue length of the system. We assume that
the lead time distribution is of phase type.

We enlarge the phase space of the system, i.e. the state space of
the inventory process $Y$. Whenever there is an ongoing lead time,
i.e., when inventory at hand is less than $r+1$, we count the number
of residual successive i.i.d. exp($\beta$)-distributed lead time
phases which must expire until the replenishment arrives at the inventory.

The state space of $(X,Y)$ then is $E=\mathbb{N}_{0}\times K$ with
\[
K=\{r+1,r+2,...,r+Q\}\cup\left(\{0,1,...r\}\times\{L,\dots,1\}\right),
\]
 and $(X,Y)$ is irreducible on $E$. 
\begin{prop}
\label{prop:LS-lemma-rQ-PH} $M/M/1/\infty$ system with $(r,Q)$-policy,
phase-type replenishment lead time, state dependent service rates
$\mu(n)$, and lost sales.

The lead time distribution has a distribution function $B$ from \eqref{eq:LS-ph-distr}.
We assume that $(X,Y)$ is positive recurrent and denote its steady
state distribution by 
\[
\pi=(\pi(n,k):n\in\mathbb{N}_{0}\times K).
\]
 The steady state $\pi$ of $(X,Y)$ is of product form. With normalization
constant $C$ 
\begin{equation}
\pi(n,k)={C^{-1}\prod_{i=0}^{n-1}\frac{\lambda}{\mu(i+1)}}\cdot\theta(k)\,,\,\label{eq:LS-PFPH-SAH}
\end{equation}

where $\theta=(\theta(k):k\in K)$ can be obtained from formula (3)
in \cite{saffari;asmussen;haji:13}, and the subsequent formulas (4)
- (10) there. \end{prop}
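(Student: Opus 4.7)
The plan is to cast this $(r,Q)$ model with phase-type lead times into the framework of Section \ref{sect:LS-MM1Inf-model} and then invoke Corollary \ref{cor:LS-lambda-constant}, since the arrival rate $\lambda$ is constant. The overall structure will mirror the proof of Proposition \ref{prop:LS-rS-PH}, with a single structural modification: replenishment jumps now increase the inventory by a fixed amount $Q$ rather than filling it up to $S$.

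First, I would describe the environment explicitly. The state space is
\[
K = \{r+1,\ldots,r+Q\} \cup \bigl(\{0,1,\ldots,r\}\times\{L,\ldots,1\}\bigr),
\qquad K_B = \{0\}\times\{L,\ldots,1\}.
\]
The departure-driven matrix $R$ has entries $R(k,k-1)=1$ for $k\in\{r+2,\ldots,r+Q\}$, $R(r+1,(r,\ell))=b(\ell)$ for $\ell\in\{1,\ldots,L\}$ (a service completion that lowers inventory to the reorder level $r$ initiates the replenishment, whose remaining number of exp($\beta$)-phases is drawn according to $b$), and $R((j,\ell),(j-1,\ell))=1$ for $j\in\{1,\ldots,r\}$. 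The background generator $V$ advances phases, $v((j,\ell),(j,\ell-1))=\beta$ for $\ell\geq 2$, and replaces the last phase by a replenishment arrival, $v((j,1),j+Q)=\beta$ for $j\in\{0,\ldots,r\}$. The blocking rule is the standard one: in the states of $K_B$ no service and no arrivals take place.

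Second, I would verify that the Markovian description defined by these rates correctly models the $(r,Q)$ system with lead time distribution \eqref{eq:LS-ph-distr}: the branching probability $b(\ell)$ at the start of a lead time, combined with the successive exp($\beta$)-phases, realizes precisely the mixture of Erlang distributions in \eqref{eq:LS-ph-distr}. Then, since $\lambda(n)\equiv\lambda$, Corollary \ref{cor:LS-lambda-constant} reduces the problem to exhibiting a strictly positive stochastic solution $\theta$ of
\[
\theta\,\tilde Q = \theta\bigl(\lambda(R_W - I_W) + V\bigr) = 0,
\]
and the product form \eqref{eq:LS-PFPH-SAH} then follows immediately with this $\theta$.

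The actual work, therefore, is to check that the $\theta$ assembled from formulas (3)--(10) of \cite{saffari;asmussen;haji:13} solves $\theta\,\tilde Q=0$. Writing out $\theta\,\tilde Q=0$ component-wise yields a system structurally identical to \eqref{eq:LS-PFspecial-1}--\eqref{eq:LS-PFspecial-8}, with only two changes: the boundary equation for $\theta(S)$ becomes one equation per state in $\{r+1,\ldots,r+Q\}$, and the replenishment-arrival flux $\theta(j,1)\beta$ now enters $\theta(j+Q)$ instead of $\theta(S)$. The main obstacle is bookkeeping. The Saffari--Asmussen--Haji formulas involve sums and binomial coefficients of the same flavour as those appearing in \eqref{eq:LS-PFspecial-L6} and \eqref{eq:LS-PFspecial-L7+8}, and matching them term-by-term with the flow-balance equations requires careful index manipulation, especially in separating the two regimes of states (with and without an ongoing replenishment). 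As an alternative route avoiding this algebra, one may appeal to the fact that \cite{saffari;asmussen;haji:13} derive their probabilities via a regenerative argument for the marginal inventory process, which coincides with the Markov chain generated by $\tilde Q$; hence $\theta\,\tilde Q=0$ holds by construction, and the proposition follows.
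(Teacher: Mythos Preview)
Your setup is correct and matches the paper: cast the $(r,Q)$ model with phase-type lead times into the environment framework, observe that $\lambda(n)\equiv\lambda$, and invoke Corollary~\ref{cor:LS-lambda-constant} to reduce everything to solving $\theta\tilde Q=0$.

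The difference lies in how you handle the remaining step of identifying $\theta$. You propose either a term-by-term verification of the Saffari--Asmussen--Haji formulas against the balance equations, or an appeal to their regenerative construction. Both would work, but the paper takes a shorter route that sidesteps all of this algebra: it simply observes that the equation $\theta\tilde Q=0$ (equivalently, \eqref{eq:LS-q-tilde-n-conditon}) does not involve the service rates $\mu(n)$ at all. Since \cite{saffari;asmussen;haji:13} already found the marginal inventory distribution $\theta$ for the same system with \emph{constant} service rate $\mu$, and that $\theta$ satisfies exactly this $\mu$-free equation, it is automatically the solution in the present setting with state-dependent $\mu(n)$ as well. This turns the ``actual work'' you identify into a one-line observation, at the cost of relying on the cited paper as a black box rather than reverifying its formulas.
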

\begin{proof}
The proof is in its first part similar to that of \prettyref{prop:LS-rS-PH}
because the inventory management process under $(r,Q)$-policy with
distribution function $B$ of the lead times fits into the definition
of the environment process by setting 
\[
K=\{r+1,r+2,...,r+Q\}\cup\left(\{0,1,...r\}\times\{L,\dots,1\}\right),\quad K_{B}=\{0\}\times\{L,\dots,1\}\,.
\]

Because $\lambda(n)=\lambda$ for all $n$, Theorem \ref{thm:LS-productform}
applies and we know that the steady state of the ergodic system is
of product form 
\begin{equation}
\pi(n,k)=C^{-1}\frac{\lambda^{n}}{\prod_{i=0}^{n-1}\mu(i+1)}\cdot\theta(k)\quad(n,k)\in E,\label{eq:LS-PF1-SAH}
\end{equation}
 according to \prettyref{cor:LS-lambda-constant}. Thus the product
form statement is proven with the required marginal queue length distribution.

In a second part we have to compute the $\theta(k)$ which is to solve
\eqref{eq:LS-q-tilde-n-conditon}. This equation is independent of
$n$, especially independent of the $\mu(n)$.

Therefore the solution in the case of state independent service rates
$(\mu(n)\to\mu)$ from \cite{saffari;asmussen;haji:13} must be the
solution in the present slightly more general setting as well. 
\end{proof}

\subsection{Unreliable servers}

\label{sect:unreliable-server} In \cite{sauer;daduna:03} networks
of queues with unreliable servers were investigated which admit product
form steady states in twofold way: The joint queue length vector of
the system (which in general is not a Markov process) is of classical
product form as in Jackson's Theorem and the availability status of
the nodes as a set valued supplementary variable process constitutes
an additional product factor attached to the joint queue length vector.

We show for the case of a single server which is unreliable and breaks
down due to influences from an environment that a similar product
form result follows from our Theorem \ref{thm:LS-productform}. We
allow for a much more complicated breakdown and repair process as
that investigated in \cite{sauer;daduna:03}.\\

\begin{example}
There is a single exponential server with with Poisson-$\lambda$
arrival stream and state dependent service rates $\mu(n)$. The server
acts in a random environment which changes over time. The server breaks
down with rates depending on the state of the environment and is repaired
after a breakdown with repair intensity depending on the state of
the environment as well. Whenever the server is broken down, new arrivals
are not admitted and are lost to the system forever.

The system is described by a two-dimensional Markov process $(X,Y)=((X(t),Y(t)):t\in[0,\infty))$
with state space $E=\mathbb{N}_{0}\times K$. $K$ is the (countable)
environment space of the process, whereas $\mathbb{N}_{0}$ denotes
the queue length. $(X,Y)$ is assumed to be irreducible.

The environment space of the process is partitioned into disjoint
nonempty components $K:=K_{W}+K_{B}$, and whenever $Y$ enters $K_{B}$
the server breaks down immediately, and will be repaired when $Y$
enters $K_{W}$ again.\\

The non negative transition rates of $(X,Y)$ are for $(n,k)\in E$
\begin{eqnarray}
q((n,k)\qsep(n+1,k)) & = & \lambda\qquad k\in K_{W},\nonumber \\
q((n,k)\qsep(n-1,m)) & = & \mu(n)\Rentry km\qquad k\in K_{W},n\intpositive,\label{eq:LS-breakdown-service}\\
q((n,k)\qsep(n,m)) & = & \v(k,m)\in\mathbb{R}_{0}^{+},~~~k\neq m,\nonumber \\
q((n,k)\qsep(l,m)) & = & 0\qquad\text{otherwise for}~~(n,k)\neq(l,m),\nonumber 
\end{eqnarray}
 and from Theorem \ref{thm:LS-productform} we directly obtain in
case of ergodicity the product form steady state distribution.

An interesting observation is, that we can model general distributions
for the successive times the system is functioning and similarly for
the repair times.

By suitably selected structures for the $\v(\cdot,\cdot)$ we can
incorporate dependent up and down times.\\

The distinctive feature which sets the difference to the breakdown
mechanism in \cite{sauer;daduna:03} is that breakdowns can be directly
connected with expiring service times via the stochastic matrix $R$,
which is visible from \eqref{eq:LS-breakdown-service}. This widens
applicability of the mechanism considerably. 
\end{example}

\subsection{Active-sleep model for nodes in wireless sensor networks}

\label{sect:sensor-nodes} Modeling of wireless sensor networks (WSN)
is a challenging task due to specific restrictions imposed on the
network structure and the principles the nodes have to follow to survive
without the possibility of external renewal of a node or repair. A
specific task is that usually battery power cannot be renewed which
strongly requires to control energy consumption. A typical way to
resolve this problem is to reduce energy consumption by laying a node
in sleep status whenever this is possible. In sleep status all activities
of the node are either completely or almost completely interrupted.

In active mode the node undertakes several activities: Gathering data
and putting the resulting data packets into its queue, receiving packets
from other nodes which are placed in its queue (and relaying these
packets when they arrive at the head of the node's queue), and processing
the packets in the queue (usually in a FCFS regime).

The modeling approach to undertake analytical performance analysis
of WSN in the literature is to first investigate a single (''referenced'')
node and thereafter to combine by some approximation procedure the
results to investigate the behaviour of interacting nodes, for a review
see \cite{wang;dang;wu:07}. More recent and a more detailed study
of a specific node model is \cite{li:11}, other typical examples
for the described procedure are \cite{liu;tong-lee:05}, \cite{zhang;li:11}.

We will report here only on the first step of the procedure and follow
mainly the model of a node found in \cite{liu;tong-lee:05}. The functioning
of the referenced node is governed by the following principles which
incorporate three processes. 
\begin{itemize}
\item Length of the packet queue of the node ($\in\mathbb{N}_{0}$), 
\item mode of the node (active $=A$, sleep $=S$) 
\item status of the nodes with which the referenced node is able to communicate;
these nodes are called the ''outer environment'' and their behaviour
with respect to the referenced node is summarized in a binary variable
(on $=1$, off $=0$), where ''on'' $=1$ indicates that there is
another active node in the neighborhood of the referenced node, while
''off'' $=0$ indicates that all nodes in the neighborhood of the
referenced node are in sleep mode. 
\end{itemize}
It follows that the referenced node can communicate with other nodes
if and only if the outer environment is on $=1$ \textbf{and} the
node itself is active $=A$.

Transforming the described behaviour into the formalism of Section
\ref{sect:LS-MM1Inf-model}, we end with an environment 
\[
K=\{A,S\}\times\{0,1\},\quad K_{W}=\{(A,1)\}.
\]
 and state space $E:=\mathbb{N}_{0}\times K$ of the joint process
$(X,Y)$.

In \cite{liu;tong-lee:05} the authors assume that when the referenced
node is active $(=A)$ and outer environment on $(=1)$, the stream
of packets arriving at the packet queue of the node is the superposition
of data gathering and receiving packets from other nodes. The superposition
process is a Poisson-$\lambda$ process, and processing a packet in
the queue needs an exponential-$\mu$ distributed time.

For simplicity of the model we assume here that whenever the node
is in sleep mode or the outer environment is off, all activities of
the node are frozen. This is different from \cite{liu;tong-lee:05},
who allow during this periods data gathering by the node.

Because the overwhelming part of battery capacity reduction stems
from the other two activities (to relay and processing packets), with
respect to battery control the additional simplification can be justified
for raw first approximations, especially if this is rewarded by obtaining
simple to evaluate closed form expressions.

This reward is obtained by following \cite{liu;tong-lee:05} and assuming
that the on-off $(1-0)$ process of the outer environment is an alternating
renewal process with exponential-$\alpha$, resp., exponential-$\beta$
holding times, and that the active-sleep $(A-S)$ process of the node
is an alternating renewal process with exponential-$a$, resp., exponential-$s$
holding times.

Fixing the usual overall independence assumption for all these holding
times and the processing and inter arrival times, we see that this
model fits precisely into the framework of our general model from
Section \ref{sect:LS-MM1Inf-model} and that the Theorem \ref{thm:LS-productform}
provides us with an explicit steady state distribution.\\

Results on battery consumption obtained from the steady state distribution
(similarly obtained to that in \cite{liu;tong-lee:05}) clearly will
then produce lower bounds for the energy consumption (which are weaker
than those obtained in \cite{liu;tong-lee:05} - which are obtained
with expense of more computational effort).

\subsection{Tandem system with finite intermediate buffer}

\label{sect:tandem}Modeling multi-stage production lines by serial
tandem queues is standard technique. In the simplest case with Poisson
arrivals and with exponential production times for one unit in each
stage the model fits into the realm of Jackson network models as long
as the buffers between the stages have infinite capacity. Consequently,
ergodic systems under this modeling approach have a product form steady
state distribution.

With respect to steady state analysis the picture changes completely
if the buffers between the stages have only finite capacity, no simple
solutions are available. Direct numerical analysis or simulations
are needed, or we have to resort to approximations. A common procedure
is to use product form approximations which are developed by decomposition
methods. A survey on general networks with blocking is \cite{balsamo;denittopersone;onvural:01},
special emphasis to modeling manufacturing flow lines is given in
the survey \cite{dallery;gershwin:92}.

The same class of problems and solutions are well known in teletraffic
networks where finite buffers are encountered, for surveys see \cite{onvural:90}
and \cite{perros:90}.

A systematic study of how to use product form networks as upper or
lower bounds (in a specified performance metric) is given in \cite{dijk:93}.
A closed 3-station model which is related to the one given below is
discussed in \cite{dijk:93}{[}Section 4.5.1{]}, where product form
lower and upper bounds are proposed.\\

Van Dijk \cite[p.44]{dijk:11} describes a tandem system with $\mu(n)=\mu$,
$\nu(k)=\nu$ which leads to a product form. We extend this model
by allowing more general service rates $\mu(n)$ and $\nu_{k}$.

\begin{center}
\begin{figure}[H]
\centering{}\begin{tikzpicture}
% Draw queueing system
\begin{scope}[shift={(1,-3.1)}]
 \node()[] at (2,1.5) {queueing system ($X(t)$) };
 \draw[rounded corners=1ex,color=gray] (-1,-1) rectangle (5.5,2);
 \node(input-node)[] at (-2,0) {$\lambda(n)$};
 \node(loss-decision)[draw, shape=circle] at (0,0) {};
 \node(input-loss)[] at (0,-2) {lost};
 \node(server)[shape=circle, draw,
	  label=above:server 1,
  	minimum width=33pt, minimum height=30pt] at (3.5,0){$\mu(n)$};
 \node(Q)[shape=queue, draw, queue head=east,
	    label=above:queue,
		queue ellipsis=true,
		minimum width=50pt,minimum height=20pt] at (2,0) {};
 \node(output-node)[] at (6.5,0) {};
 %\node(output-label)[] at (4.6,0.3){$\mu(n)$};
 \draw (input-node)edge[->,thick] node[]{}(loss-decision);
 \draw (loss-decision)edge[->,thick] node[]{}(Q);
 \draw (loss-decision)edge[->,thick] node[]{}(input-loss);

\end{scope}

% Draw finite buffer as environment.
\begin{scope}[shift={(8,-3.1)}]
  \draw[rounded corners=1ex,color=gray] (0,-1) rectangle (5.5,2);
  \node()[] at (2.5,1.5) {items in buffer ($Y(t)$) };
  \node(BufferQ)[shape=queue, draw, queue head=east, queue size=bounded,
	    label=above:buffer,
		minimum width=50pt,minimum height=20pt] at (2,0) {};
  \node(BufferServer)[shape=circle, draw,
	label=above:server 2,
    minimum width=33pt, minimum height=30pt] at (3.5,0){$\nu_n$};
  \node(BufferOutputNode)[] at (6.5,0){};
%  \node(BufferOutputLabel)[] at (4.6,0.3){$\nu_n$};
  \draw (BufferServer)edge[->,thick] node[]{}(BufferOutputNode);
\end{scope}

\draw (server)edge[->,thick] node[]{}(BufferQ);

% Draw arrows
\begin{scope}[shift={(2,0)}]
 \draw [->, thick, dashed, anchor=east]
  (8,-1)--(8,0)--(0.25,0)--(0.25,-1)
  node[align=center,midway]{};
  % Arrow label
  \node()[align=center] at (4,0.5){
   full buffer interrupts service and arrival at server 1 / resumes otherwise
  };

  \draw [->, thick, dashed, anchor=east]
    (0.25,-4.1)--(0.25,-5)--(8,-5)--(8,-4.1)
    node[align=center,midway]{};
  % Arrow label.
  \node()[align=center] at (4,-5.5){
    increments buffer contents
  };
\end{scope}
\end{tikzpicture}\caption{\label{fig:LS-finite-buffer-tandem}Tandem system with finite intermediate
buffer of size $N$.}
\end{figure}
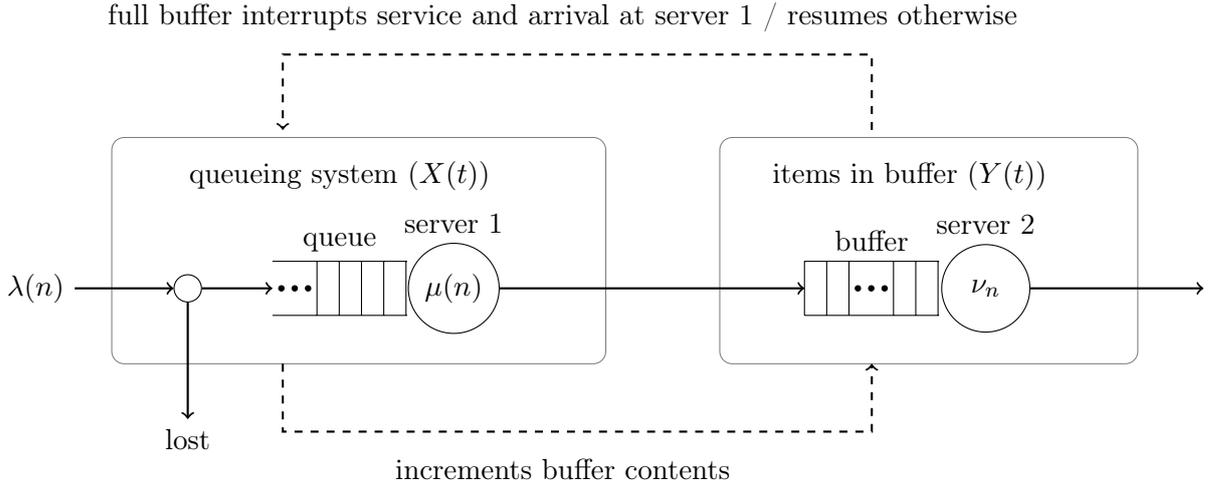

\par\end{center}

\begin{center}
\begin{figure}[H]
\centering{}\begin{tikzpicture}
  \path
	(0,0) node (B0)[shape=rectangle,draw,, minimum width=1.5cm] {$0$}
	(2.5,0) node (B1)[shape=rectangle,draw, minimum width=1.5cm] {$1$}
	(5,0) node (B2)[shape=rectangle,draw, minimum width=1.5cm] {$2$}
	(7.5,0) node (BIntermediate)[shape=rectangle, minimum width=1.5cm, minimum height=0.5cm] {...}
	(10,0) node (BN-1)[shape=rectangle,draw, minimum width=1.5cm] {$N-1$}
	(12.5,0) node (BN)[shape=rectangle,draw, fill=blocked.bg, minimum width=1.5cm] {$N$};
  \draw
    (B0.north) edge[out=45,arrows={-latex}, thick] (B1);
  \draw
    (B1.north) edge[out=45,arrows={-latex}, thick] (B2);
  \draw
    (B2.north) edge[out=45,arrows={-latex}, thick] (BIntermediate.north west);
  \draw
    (BIntermediate.north east) edge[out=45,arrows={-latex}, thick] (BN-1);
  \draw
    (BN-1.north) edge[out=45,arrows={-latex}, thick] (BN);
  \draw
    (BN.south) edge[out=-135, in=-45, arrows={-latex}, thick, color=vratecolor]
    node[auto]{$\nu_N$} (BN-1);
  \draw
    (BN-1.south) edge[out=-135, in=-45, arrows={-latex}, thick, color=vratecolor]
    node[auto]{$\nu_{N-1}$} (BIntermediate.south east);
  \draw
    (BIntermediate.south west) edge[out=-135, in=-45, arrows={-latex}, thick, color=vratecolor]
     node[auto]{$\nu_{3}$} (B2);
  \draw
    (B2.south) edge[out=-135, in=-45, arrows={-latex}, thick, color=vratecolor]
    node[auto]{$\nu_{2}$} (B1);
  \draw
    (B1.south) edge[out=-135, in=-45, arrows={-latex}, thick, color=vratecolor]
    node[auto]{$\nu_{1}$} (B0);
\end{tikzpicture}\caption{\label{fig:LS-tamdem-buffer-diagram}{\Etid}  of $M/M/1/\infty$
tandem system with finite intermediate buffer of size $N$.}
\end{figure}

\par\end{center}

We consider a two-stage single server tandem queueing system where
the first station has ample waiting space while the buffer between
the stages has only $N\geq0$ waiting places, $N<\infty$, i.e. there
can at most $N+1$ units be stored in the system which have been processed
at the first stage. It follows that for the system must be determined
a blocking regime, which enforces the first station to stop production
when the intermediate buffer reaches its capacity $N+1$. We apply
the \textbf{blocking-before-service} regime \cite{perros:90}{[}p.
455{]}: Whenever the second station is full, the server at the first
station does not start serving the next customer. When a departure
occurs from the second station, the first station is unblocked immediately
and resumes its service. Additionally, we require that the first station,
when blocked, does not accept new customers, i.e., it is completely
blocked.

The arrival stream is Poisson-$\lambda$, service rates are state
dependent with $\mu(n)$ at the first station and $\nu_{k}$ at the
second. To emphasize the modeling of the second server as an environment
we use the notation $\nu_{k}$ instead of well known from the literature
notation $\nu(k)$ for the service rate of the second server with
$k$ present customers. The standard independence assumption are assumed
to hold, service at both stations is on FCFS basis. 

This makes the joint queue length process $(X,Y)$ Markov with state
space $E:=\mathbb{N}_{0}\times\{0,1,\dots,N,N+1\}$. The non negative
transition rates are 
\begin{eqnarray*}
q((n,k)\qsep(n+1,k)) & = & \lambda,\qquad k\leq N\,,\\
q((n,k)\qsep(n-1,k+1)) & = & \mu(n),\qquad n\intpositive,k\leq N\,,\\
q((n,k)\qsep(n,k-1)) & = & \nu_{k},\qquad1\leq k\leq N+1\,,\\
q((n,k)\qsep(j,m)) & = & 0,\qquad\text{otherwise for}~~(n,k)\neq(j,m).
\end{eqnarray*}

We fit this model into the formalism of Section \ref{sect:LS-MM1Inf-model}
by setting 
\begin{eqnarray*}
K=\{0,1,...,N+1\},\qquad &  & K_{B}=\{N+1\},\\
\Rentry k{k+1}=1,0\leq k\leq N,\,\Rentry{N+1}{N+1}=1\,,~ &  & \v(k,m)=\begin{cases}
\nu_{k}, & \text{if}~1\leq k\leq N+1,\\
 & ~~~\text{and}~m=k-1\\
0, & \text{otherwise for}~k\neq m\,.
\end{cases}
\end{eqnarray*}
 From \prettyref{thm:LS-productform} and \prettyref{cor:LS-lambda-constant}
we conclude that for the ergodic process $(X,Y)$ the steady state
distribution has product form

\begin{equation}
\pi(n,k)=C^{-1}\frac{\lambda^{n}}{\prod_{i=0}^{n-1}\mu(i+1)}\theta(k)\quad(n,k)\in E,\label{eq:LS-constant-alpha-product-form-tandem}
\end{equation}
 with probability distribution $\theta$ on $K$ and normalization
constant 
\begin{eqnarray*}
C & = & \sum_{n=0}^{\infty}\frac{\lambda^{n}}{\prod_{i=0}^{n-1}\mu(i+1)}\,.
\end{eqnarray*}

It remains to determine $\theta$ from \prettyref{cor:LS-lambda-constant},
which is \eqref{eq:LS-constant-lambda-theta-matrix-equation}.

So, the $\tilde{Q}$ matrix is explicitly

\[
\tilde{Q}=\left(\begin{array}{c|cccccc}
 & 0 & 1 & 2 &  & N & N+1\\
\hline 0 & -\lambda & \lambda\\
1 & \nu_{1} & -(\nu_{1}+\lambda) & \lambda\\
2 &  & \nu_{2} & \ddots & \ddots\\
 &  &  & \ddots & \ddots & \ddots\\
N &  &  &  & \ddots & -(\nu_{N}+\lambda) & \lambda\\
N+1 &  &  &  &  & \nu_{N} & -\nu_{N}
\end{array}\right)
\]
 This is exactly the transition rate matrix of an $M/M/1/N$ queue
with Poisson-$\lambda$ arrivals and service rates $\nu_{k}$ and
we have immediately

\[
\theta(k)=G^{-1}\prod_{h=1}^{k}\frac{\lambda}{\nu_{h}}\qquad0\leq k\leq N+1
\]
with normalization constant 
\[
G=\sum_{h=0}^{N+1}(\prod_{h=1}^{k}\frac{\lambda}{\nu_{h}})\,.
\]

\begin{rem}
The result 
\[
\pi(n,k)=C^{-1}\prod_{i=0}^{n-1}\frac{\lambda^{n}}{\mu(i+1)}\cdot G^{-1}\prod_{h=1}^{k}\frac{\lambda}{\nu_{h}}\,,\quad(n,k)\in E,
\]
 is surprising, because it looks like an independence result with
marginal distributions of two queues fed by Poisson-$\lambda$ streams.
Due to the interruptions, neither the arrival process at the first
station nor the departure stream from the first node, which is the
arrival stream to the second, is Poisson-$\lambda$. There seems to
be no intuitive explanation of the results. 
\end{rem}

\subsection{Unreliable $M/M/1/\infty$ queueing system with control of repair
and maintenance}

In her PhD-thesis \cite[Section 3.2]{sauer:06} Cornellia Sauer introduced
degradable networks where failure behaviour was coupled with a service
``counter''. The counter is a special environment variable which
is decreased right after a service and can be reseted by a repair
or a preventive maintenance depending on its current value. Sauer
discussed in Remark 3.2.8 there similarities between degradable network
models with service counter and networks with inventories.

In this section we will analyze a queueing system, which utilizes
a counter to control repair and preventive maintenance and for modeling
of failure behaviour. 

We consider a queueing system, where the server wears down during
service. As a consequence the failure probability can change. We do
not require the failure probability to increase. In some systems it
is observed that whenever the server survives an initial period its
reliability stays constant or even increases.

When the system breaks down it is repaired and thereafter resumes
work as good as new. Furthermore, to prevent break downs, the system
will be maintained after a prescribed (maximal) number $N$ of services
since the most recent repair or maintenance. During repair or maintenance
the system is blocked, i.e., no service is provided and no new job
may join the system. These rejected jobs are \emph{lost} to the system. 

\emph{Subject to optimization} is $N$ - the maximal number of services,
after which the system needs to be maintained.

\begin{center}
\begin{figure}
\begin{tikzpicture}
% Draw queueing system
\begin{scope}[shift={(1,-3.1)}]
  \node()[] at (1.2,1.5) {queueing system ($X(t)$) };
  \draw[rounded corners=1ex,color=gray] (-1,-1) rectangle (5.5,2);
  \node(InputNode)[] at (-2,0) {$\lambda(n)$};
  \node(LossDecision)[draw, shape=circle] at (0,0) {};
  \node(InputLoss)[] at (0,-2) {lost};
  \node(Server)[shape=circle, draw,
  	minimum width=33pt, minimum height=30pt] at (3.5,0){$\mu(n)$};
  \node(Q)[shape=queue, draw, queue head=east, queue ellipsis=true,
	    label=above:queue,
		minimum width=50pt,minimum height=20pt] at (2,0) {};
  \node(OutputNode)[] at (6.5,0) {};
%  \node(output-label)[] at (4.6,0.3){$\mu(n)$};
  \draw (InputNode)edge[->,thick] node[]{}(LossDecision);
  \draw[arrows={-latex}, thick]
    (LossDecision) -- (Q);
  \draw[arrows={-latex}, thick]
    (LossDecision) -- (InputLoss);
  \draw[arrows={-latex}, thick]
    (Server) -- (OutputNode);
\end{scope}

% Draw environment
\begin{scope}[shift={(4.4,2)}]
  \node()[] at (-1.,2.5) {counter/maintenance/repair ($Y(t)$) };
  \draw[rounded corners=1ex,color=gray] (-4.4,-2.) rectangle (2.1,3);
  \node(ServiceCounter)[rounded corners=3pt, draw,align=center] at (0,0)
    {service\\counter};
  \node(Failure)[rectangle, draw,align=center] at (-3.3,-1)
    {repare};
  \node(Maintainance)[rectangle, draw,align=center] at (-3,1)
    {maintenance};
  % Define some auxiliary nodes for drawing.
  \node(AF)[] at (-5.4,-1) {};
  \node(AM)[] at (-5.4,1) {};

  \draw[arrows={-latex}, thick, dashed, near end]
    (ServiceCounter) --node[below]{starts}(Maintainance);
  \draw[arrows={-latex}, thick, dashed, near end]
    (ServiceCounter) --node[above]{on failure}(Failure);
  \draw[arrows={-latex}, thick, dashed]
    (Failure) --(-0.3,-1)--(ServiceCounter);
  \node()[] at (-0.3,-1.2){resets when finished};
%    (Failure) --node[below]{resets when finished}(-0.3,-1)--(ServiceCounter);
  \draw[arrows={-latex}, thick, dashed]
    (Maintainance) --(-0.3,1)--(ServiceCounter);
%    (Maintainance) --node[above]{resets when finished}(-0.3,1)--(ServiceCounter);
 \node()[] at (0.1,1.3){resets when finished};
\end{scope}

% Draw arrows between queue and environment
   \draw[arrows={-latex}, thick, dashed]
     (Server) --(ServiceCounter);
  \node()[] at (5.5,-0.3) {increments};
  \draw[thick, dashed]
    (Failure) --(AF);
  \draw[arrows={-latex}, thick, dashed]
    (Maintainance) --(AM)--(-1,-2)--(0,-2);
  \node()[align=right] at (-3.0,0){interrupts during\\repair or maintanance,\\thereafter resumes};

\end{tikzpicture}\caption{$M/M/1/\infty$ unreliable loss system.}
\end{figure}
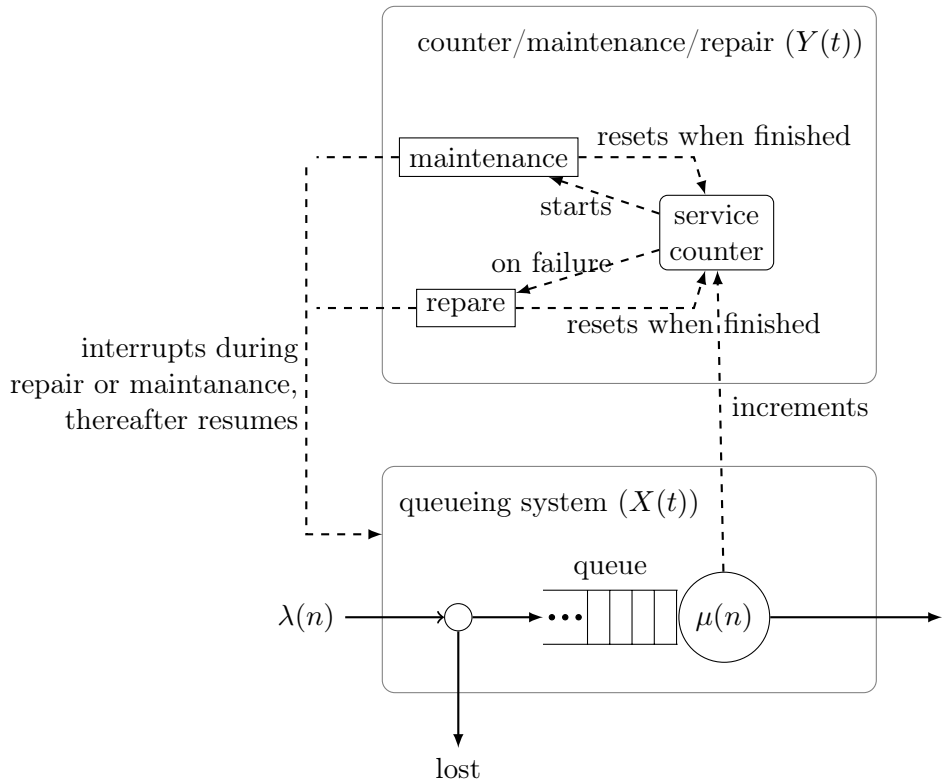

\par\end{center}

\begin{center}
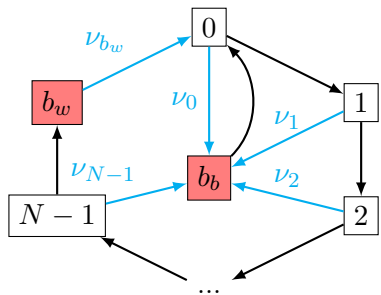
\begin{figure}
\begin{centering}
\begin{tikzpicture}
% Environment
\begin{scope}
	\path
		(0,0.5) node (Y0)[shape=rectangle,draw] {$0$}
		(2,-0.5) node (Y1)[shape=rectangle,draw] {$1$}
		(2,-2) node (Y2)[shape=rectangle,draw] {$2$}
		(0,-3.0) node (Y3)[] {...}
		(-2,-2) node (YN-1)[shape=rectangle,draw] {$N-1$}
		(-2,-0.5) node (b-w)[shape=rectangle,draw,fill=blocked.bg] {$b_w$}
		(0,-1.5) node (b-b)[shape=rectangle,draw,fill=blocked.bg] {$b_b$};
	\draw (Y0){}
      edge[arrows={-latex},thick] node[auto]{}(Y1);
	\draw (Y1){} edge[arrows={-latex},thick] node[auto]{}(Y2);
	\draw (Y2){} edge[arrows={-latex},thick] node[auto]{}(Y3);
	\draw (Y3){} edge[arrows={-latex},thick] node[auto]{}(YN-1);
	\draw (YN-1){} edge[arrows={-latex},thick] node[auto]{}(b-w);
	\draw (b-w){}
        edge[arrows={-latex},color=vratecolor,thick] node[auto]{$\nu_{b_w}$}(Y0);
	% Failures
	\draw (Y0){}
      edge[arrows={-latex},color=vratecolor,thick] node[auto, anchor=east]{$\nu_0$}(b-b);
	\draw (Y1){} edge[arrows={-latex},color=vratecolor,thick] node[auto, anchor=south]{$\nu_1$}(b-b);
	\draw (Y2){} edge[arrows={-latex},color=vratecolor,thick] node[auto, anchor=south]{$\nu_2$}(b-b);
	\draw (YN-1){}
      edge[arrows={-latex},color=vratecolor,thick] node[auto]{$\nu_{N-1}$}(b-b);
	% Repair
	\draw (b-b){} edge[arrows={-latex},thick,out=45,in=-45] node[auto]{}(Y0);
\end{scope}
\end{tikzpicture}
\par\end{centering}

\caption{{\Etid} for the $M/M/1/\infty$ unreliable system. The environment
describes the service counter, repair state and maintenance state.}
\end{figure}

\par\end{center}

\subsubsection{Model\label{sect:unreliable-math-model}}

We consider a production system which is modeled as an $M/M/1/\infty$
loss system. That is with $\lambda$ Poisson input rate, service rates
$\vec{\mu}:=(\mu(n):n\in\mathbb{N})$ depending on the number of customers
in the system, $FCFS$ service regime, and environment states $K=K_{W}+K_{B}$.

The state space of the system is $E=\mathbb{N}_{0}\times(\{0,1,\ldots,N-1\}\cup\{b_{m},...,b_{r}\})$.
The environment states $K_{W}=\{0,1,...N-1\}$ indicate the number
of services completed since the last repair or maintenance (service
counter). $N$ is the maximal number of services before maintenance
is required. The additional environment states $K_{B}=\{b_{m},b_{r}\}$
indicate when there is an ongoing maintenance ($b_{m}$) or repair
($b_{r}$).

We define a stochastic matrix $R\in[0,1]^{K\times K}$ which determines
the behaviour of the ``service counter''. Transition rates $\Rentry k{k+1}=1$
for $0\leq k\leq N-2$ govern counter increment and transition rate
$\Rentry{N-1}{b_{m}}=1$ enforces mandatory maintenance after $N$
services.

We use infinitesimal generator $V\in\mathbb{R}^{K\times K}$ to control
failure, maintenance, and repair rates: $\v(k,b_{r})=\nu_{k}$ are
failure rates after $k$ complete services, $\v(b_{m},0)=\nu_{m}$
is the maintenance rate and $\v(b_{r},0)=\nu_{r}$ is the repair rate.

We define by $Z=(X,Y)$ the joint queue length and environment process
of this system and make the usual independence assumptions for the
queue and the environment. The $Z$ is Markov process, which we assume
to be ergodic.

The total costs of the system is determined by specific cost constants
per unit of time: maintenance costs $c_{m}$, repair costs $c_{r}$,
costs of non-availability $c_{b}$, and waiting costs in queue and
in service per customer $c_{w}$. Therefore the cost function per
unit of time in the respective states is
\begin{eqnarray*}
f:\mathbb{N}_{0}\times K & \longrightarrow & \mathbb{R}\\
f(n,k) & = & \begin{cases}
c_{w}\cdot n+c_{b}+c_{m}, & \qquad k=b_{m},\\
c_{w}\cdot n+c_{b}+c_{r}, & \qquad k=b_{r},\\
c_{w}\cdot n, & \qquad k\in K\backslash\{b_{m},b_{r}\}.
\end{cases}
\end{eqnarray*}

Our aim is to analyze the long-run system behaviour and to minimize
the long-run average costs.
\begin{prop}
The steady state distribution of the system described above is
\end{prop}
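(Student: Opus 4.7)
The plan is to verify that the described system fits directly into the framework of Section \ref{sect:LS-MM1Inf-model} and then invoke Corollary \ref{cor:LS-lambda-constant} (since arrivals form a Poisson-$\lambda$ stream) to obtain the product form
\[
\pi(n,k)=C^{-1}\frac{\lambda^{n}}{\prod_{i=0}^{n-1}\mu(i+1)}\,\theta(k),\qquad (n,k)\in E,
\]
with $C=\sum_{n\geq 0}\lambda^{n}/\prod_{i=0}^{n-1}\mu(i+1)$, reducing everything to determining the environment distribution $\theta$ as the unique strictly positive stochastic solution of $\theta\tilde{Q}=0$ with $\tilde{Q}=\lambda(R_W-I_W)+V$.

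To set this up, I identify $K_W=\{0,1,\dots,N-1\}$, $K_B=\{b_m,b_r\}$, routing $R(k,k+1)=1$ for $0\leq k\leq N-2$, $R(N-1,b_m)=1$ and $R(b_m,\cdot)=R(b_r,\cdot)$ arbitrary (only the restriction of $R$ to $K_W$-rows enters $\tilde{Q}$ via $R_W=I_W R$, cf.~\eqref{eq:LS-IWR}), and environment generator entries $v(k,b_r)=\nu_k$ for $k\in K_W$, $v(b_m,0)=\nu_m$, $v(b_r,0)=\nu_r$, with the appropriate diagonal corrections. This matches the rates listed in Section \ref{sect:unreliable-math-model} verbatim. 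Since $K$ is finite with $N+2$ states, ergodicity of $(X,Y)$ (assumed) together with Corollary \ref{cor:LS-PF-K-finite} guarantees that $\theta$ exists as soon as a single strictly positive solution to $\theta\tilde Q=0$ is exhibited.

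Next I would write out $\theta\tilde{Q}=0$ componentwise. The equations at the working states $k=1,\dots,N-1$ take the purely recursive form $\theta(k)(\lambda+\nu_k)=\lambda\theta(k-1)$, which immediately gives
\[
\theta(k)=\theta(0)\prod_{i=1}^{k}\frac{\lambda}{\lambda+\nu_{i}},\qquad k=0,1,\dots,N-1.
\]
The equation at $b_m$ reads $\nu_m\theta(b_m)=\lambda\theta(N-1)$, yielding $\theta(b_m)=(\lambda/\nu_m)\theta(N-1)$; the equation at $b_r$ reads $\nu_r\theta(b_r)=\sum_{k=0}^{N-1}\nu_k\theta(k)$, giving $\theta(b_r)$. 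The remaining equation at state $0$ is then automatic, because the rows of $\tilde Q$ sum to zero (shown in part (a) of Theorem \ref{thm:LS-productform}), so the $N+1$ independent equations above, together with the normalization $\sum_{k\in K}\theta(k)=1$, determine $\theta$ uniquely. Fixing $\theta(0)$ via this normalization gives the explicit distribution.

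The main obstacle is essentially bookkeeping rather than structural: one must verify that this $\theta$ is strictly positive (clear since $\lambda,\nu_k,\nu_m,\nu_r>0$) and that the five groups of balance equations above are mutually consistent, so that Corollary \ref{cor:LS-lambda-constant} applies and the product form \eqref{eq:LS-constant-lambda-product-form} yields exactly the proposed steady-state. Once the recursion is solved and normalized, plugging into \eqref{eq:LS-constant-lambda-product-form} delivers the claimed formula for $\pi(n,k)$. I would finally remark, as in Remark \ref{rem:LS-BaD4}, that $\theta$ is independent of the service rates $\mu(n)$, so the optimization of $N$ against the cost function $f$ decouples cleanly into a $\theta$-only computation for the availability/cost terms and an independent queue-length factor for the waiting cost.
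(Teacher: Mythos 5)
Your proof plan follows essentially the same route as the paper's: identify $K_W$, $K_B$, $V$, $R$, invoke \prettyref{cor:LS-lambda-constant} to reduce the problem to solving $\theta\tilde Q=0$ with $\tilde Q=\lambda I_W(R-I)+V$, then solve componentwise via the recursion $\theta(k)(\lambda+\nu_k)=\lambda\theta(k-1)$ at working states and the boundary equations at $b_m$, $b_r$, $0$. The only cosmetic difference is the choice of which balance equation to treat as redundant: you use the equation at $b_r$ to get $\theta(b_r)=\nu_r^{-1}\sum_{k=0}^{N-1}\nu_k\theta(k)$ and discard the equation at $0$; the paper does the reverse, reading $\theta(b_r)$ off the $k=0$ equation. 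The two are equivalent — a telescoping sum using $\nu_k\theta(k)=\lambda(\theta(k-1)-\theta(k))$ shows $\sum_{k=0}^{N-1}\nu_k\theta(k)=(\nu_0+\lambda)\theta(0)-\lambda\theta(N-1)$, which is exactly the paper's expression after substituting $\nu_m\theta(b_m)=\lambda\theta(N-1)$. One small remark in your favor: your product $\theta(k)=\theta(0)\prod_{i=1}^{k}\lambda/(\lambda+\nu_i)$ is the correct iteration of the recursion; the paper's displayed $\prod_{i=1}^{k}\bigl(\lambda/(\nu_i+\lambda)\bigr)^{i}$ with the spurious exponent $i$ appears to be a typographical slip, since the recursion $\theta(k)=\frac{\lambda}{\nu_k+\lambda}\theta(k-1)$ plainly does not produce those powers.
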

\[
\lim_{t\rightarrow\infty}P(X(t)=n,Y(t)=k)=:\pi(n,k)=\xi(n)\theta(k),\text{ with}
\]

\begin{align*}
\xi(n) & =\prod_{i=1}^{n}\frac{\lambda}{\mu(i)}\xi(0),\text{ and}
\end{align*}

\begin{align*}
\theta(k) & =\prod_{i=1}^{k}\left(\frac{\lambda}{\nu_{i}+\lambda}\right)^{i}\theta(0)\qquad0\leq k\leq N-1\\
\theta(b_{m}) & =\frac{\lambda}{\nu_{m}}\theta(N-1)=\frac{\lambda}{\nu_{m}}\prod_{i=1}^{N-1}\left(\frac{\lambda}{\nu_{i}+\lambda}\right)^{i}\theta(0)\\
\theta(b_{r}) & =\left(\frac{(\nu_{0}+\lambda)}{\nu_{r}}-\frac{\lambda}{\nu_{r}}\prod_{i=1}^{N-1}\left(\frac{\lambda}{\nu_{i}+\lambda}\right)^{i}\right)\theta(0)\\
\theta(0) & =\frac{1}{\left(\frac{(\nu_{0}+\lambda)}{\nu_{r}}+\left(\frac{\lambda}{\nu_{m}}-\frac{\lambda}{\nu_{r}}\right)\prod_{i=1}^{N-1}\left(\frac{\lambda}{\nu_{i}+\lambda}\right)^{i}\right)+\sum_{k=0}^{N-1}\prod_{i=1}^{k}\left(\frac{\lambda}{\nu_{i}+\lambda}\right)^{i}}
\end{align*}

\begin{proof}
The $M/M/1/\infty$ system with unreliable server, maintenance and
repair is a loss system in a random environment with 

\begin{eqnarray*}
 & V=\\
 & \tiny\left(\begin{array}{c|ccccccccc}
 & 0 & 1 & 2 & \ldots & N-3 & N-2 & N-1 & b_{m} & b_{r}\\
\hline 0 & -\nu_{0} & 0 & 0 &  & 0 & 0 & 0 & 0 & \nu_{0}\\
1 & 0 & -\nu_{1} & 0 &  & 0 & 0 & 0 & 0 & \nu_{1}\\
2 & 0 & 0 & -\nu_{2} &  & 0 & 0 & 0 & 0 & \nu_{2}\\
\vdots &  &  &  & \ddots &  &  &  &  & \vdots\\
N-3 & 0 & 0 & 0 & \ldots & -\nu_{N-3} & 0 & 0 & 0 & \nu_{N-3}\\
N-2 & 0 & 0 & 0 & \ldots & 0 & -\nu_{N-2} & 0 & 0 & \nu_{N-2}\\
N-1 & 0 & 0 & 0 & \ldots & 0 & 0 & -\nu_{N-1} &  & \nu_{N-1}\\
b_{m} & \nu_{m} & 0 & 0 & \ldots & 0 & 0 &  & -\nu_{m} & 0\\
b_{r} & \nu_{r} & 0 & 0 & \ldots & 0 & 0 &  & 0 & -\nu_{r}
\end{array}\right)
\end{eqnarray*}
and
\begin{eqnarray*}
 & R=\\
 & \tiny\left(\begin{array}{c|ccccccccc}
 & 0 & 1 & 2 & \ldots & N-3 & N-2 & N-1 & b_{m} & b_{b}\\
\hline 0 & 0 & 1 & 0 & \ldots & 0 & 0 & 0 & 0 & 0\\
1 & 0 & 0 & 1 &  & 0 & 0 & 0 & 0 & 0\\
2 & 0 & 0 & 0 & \ddots & 0 & 0 & 0 & 0 & 0\\
\vdots &  &  &  &  & \ddots &  &  &  & \vdots\\
N-3 & 0 & 0 & 0 & \ldots & 0 & 1 & 0 & 0 & 0\\
N-2 & 0 & 0 & 0 & \ldots & 0 & 0 & 1 & 0\\
N-1 & 0 & 0 & 0 & \ldots & 0 & 0 & 0 & 1 & 0\\
b_{m} & 0 & 0 & 0 & \ldots & 0 & 0 & 0 & 1 & 0\\
b_{r} & 0 & 0 & 0 & \ldots & 0 & 0 & 0 & 0 & 1
\end{array}\right)
\end{eqnarray*}

The matrix $\tilde{Q}$ from \prettyref{cor:LS-lambda-constant} is 

\begin{eqnarray*}
 & \tilde{Q}=\lambda I_{W}(R-I)+V=\\
 & \tiny\left(\begin{array}{c|ccccccccc}
 & 0 & 1 & 2 & \ldots & N-3 & N-2 & N-1 & b_{m} & b_{r}\\
\hline 0 & -\nu_{0}-\lambda & \lambda & 0 &  & 0 & 0 & 0 & 0 & \nu_{0}\\
1 & 0 & -\nu_{1}-\lambda & \lambda &  & 0 & 0 & 0 & 0 & \nu_{1}\\
2 & 0 & 0 & -\nu_{2} & \ddots & 0 & 0 & 0 & 0 & \nu_{2}\\
\vdots &  &  &  & \ddots & \ddots &  &  &  & \vdots\\
N-3 & 0 & 0 & 0 & \ldots & -\nu_{N-3}-\lambda & \lambda & 0 & 0 & \nu_{N-3}\\
N-2 & 0 & 0 & 0 & \ldots & 0 & -\nu_{N-2}-\lambda & \lambda & 0 & \nu_{N-2}\\
N-1 & 0 & 0 & 0 & \ldots & 0 & 0 & -\nu_{N-1}-\lambda & \lambda & \nu_{N-1}\\
b_{m} & \nu_{m} & 0 & 0 & \ldots & 0 & 0 & 0 & -\nu_{m} & 0\\
b_{r} & \nu_{r} & 0 & 0 & \ldots & 0 & 0 & 0 & 0 & -\nu_{r}
\end{array}\right)
\end{eqnarray*}
and the steady state solution of the system has a product form with
marginal distribution $\theta$ solving $\theta\tilde{Q}=0$

We now solve the equation $\theta\tilde{Q}=0$.

For $k\in\{1,2,...,N-1\}$ it follows directly
\begin{eqnarray*}
\lambda\theta(k-1)-(\nu_{k}+\lambda)\theta(k) & = & 0\\
\Longrightarrow\theta(k) & = & \frac{\lambda}{\nu_{k}+\lambda}\theta(k-1)\\
\Longrightarrow\theta(k) & = & \prod_{i=1}^{k}\left(\frac{\lambda}{\nu_{i}+\lambda}\right)^{i}\theta(0)
\end{eqnarray*}

For $k=b_{m}$ we have
\begin{eqnarray*}
\lambda\theta(N-1)-\nu_{m}\theta(b_{m}) & = & 0\\
\theta(b_{m}) & = & \frac{\lambda}{\nu_{m}}\theta(N-1)=\frac{\lambda}{\nu_{m}}\prod_{i=1}^{N-1}\left(\frac{\lambda}{\nu_{i}+\lambda}\right)^{i}\theta(0)
\end{eqnarray*}

Finally, for $k=0$ we obtain

\begin{eqnarray*}
-(\nu_{0}+\lambda)\theta(0)+\nu_{m}\theta(b_{m})+\nu_{r}\theta(b_{r}) & = & 0\\
\Longrightarrow\theta(b_{r}) & = & \frac{(\nu_{0}+\lambda)}{\nu_{r}}\theta(0)-\frac{\nu_{m}}{\nu_{r}}\theta(b_{m})\\
 & = & \left((\nu_{0}+\lambda)-\lambda\prod_{i=1}^{N-1}\left(\frac{\lambda}{\nu_{i}+\lambda}\right)^{i}\right)\frac{1}{\nu_{r}}\theta(0)
\end{eqnarray*}

This leads to 
\[
\theta(0)=\frac{1}{\left(\frac{(\nu_{0}+\lambda)}{\nu_{r}}+\left(\frac{\lambda}{\nu_{m}}-\frac{\lambda}{\nu_{r}}\right)\prod_{i=1}^{N-1}\left(\frac{\lambda}{\nu_{i}+\lambda}\right)^{i}\right)+\sum_{k=0}^{N-1}\prod_{i=1}^{k}\left(\frac{\lambda}{\nu_{i}+\lambda}\right)^{i}}
\]
.
\end{proof}

\subsubsection{Average costs}

We will analyze average long term costs of the system with different
$N$ - the maximal number of services, after which system needs to
be maintained. In order to distinguish the steady state distribution
of the systems with different parameters $N$ we will denote them
$\pi_{N}$ and $\theta_{N}$.
\begin{lem}
\label{lem:LS-unreliable-optimal-consts} The optimal solution for
the problem described in \prettyref{sect:unreliable-math-model} is
\end{lem}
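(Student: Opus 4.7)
The plan is to exploit the product form $\pi_N(n,k)=\xi(n)\theta_N(k)$ established in the preceding proposition to reduce the optimization problem to a one-dimensional discrete minimization in $N$. The long-run average cost per unit of time in steady state equals
\[
C(N) \;=\; \sum_{(n,k)\in E} f(n,k)\,\pi_N(n,k) \;=\; c_{w}\,\bar{X} \;+\; (c_{b}+c_{m})\,\theta_N(b_m) \;+\; (c_{b}+c_{r})\,\theta_N(b_r),
\]
where $\bar{X}:=\sum_{n\in\mathbb{N}_0} n\,\xi(n)$. The crucial observation is that $\xi$ does \emph{not} depend on $N$: the formula $\xi(n)=\prod_{i=1}^{n}\frac{\lambda}{\mu(i)}\,\xi(0)$ together with $\sum_n \xi(n)=1$ determines $\xi(0)$ in terms of the $\mu(i)$ and $\lambda$ only. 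Consequently the first term is a constant, and the problem reduces to minimizing $g(N):=(c_{b}+c_{m})\,\theta_N(b_m)+(c_{b}+c_{r})\,\theta_N(b_r)$ over $N\in\mathbb{N}$.

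Next, I would substitute the explicit formulas from the preceding proposition. Writing $p(N):=\prod_{i=1}^{N-1}\bigl(\tfrac{\lambda}{\nu_i+\lambda}\bigr)^{i}$ and $S(N):=\sum_{k=0}^{N-1}\prod_{i=1}^{k}\bigl(\tfrac{\lambda}{\nu_i+\lambda}\bigr)^{i}$, the quantities $\theta_N(b_m)$ and $\theta_N(b_r)$ become affine in $p(N)$, while the normalizer $\theta_N(0)^{-1}$ is affine in $p(N)$ plus $S(N)$. Hence $g(N)$ is a rational function in the pair $(p(N),S(N))$. Here $p(\cdot)$ is strictly decreasing in $N$ and $S(\cdot)$ is strictly increasing, which reflects the natural trade-off: enlarging $N$ reduces the maintenance frequency (decreasing $\theta_N(b_m)$) but increases the probability of reaching a repair state (increasing $\theta_N(b_r)$).

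Finally, I would characterize the optimal $N^{*}$ through the forward difference $\Delta g(N):=g(N+1)-g(N)$. Clearing positive denominators, $\Delta g(N)\ge 0$ reduces to an inequality of the form $A\cdot(p(N)-p(N+1))\ \lessgtr\ B\cdot(S(N+1)-S(N))$ for explicit positive constants $A,B$ depending on the cost parameters and the $\nu_i,\nu_m,\nu_r,\lambda$. Because both sides are strictly positive sequences and the ratio $(p(N)-p(N+1))/(S(N+1)-S(N))$ is monotone in $N$ (a consequence of the geometric form of the factors), the inequality changes sign at most once, so $g$ is unimodal and $N^{*}$ is the smallest $N$ for which $\Delta g(N)\ge 0$, with the boundary cases $N^{*}=1$ and $N^{*}=\infty$ handled separately.

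The main obstacle will be the sign analysis of $\Delta g(N)$. A clean proof of unimodality requires a log-concavity/log-convexity argument for $p(N)$ derived from the specific form $\bigl(\tfrac{\lambda}{\nu_i+\lambda}\bigr)^{i}$, and the ratio $c_{b}+c_{r}$ versus $c_{b}+c_{m}$ enters nontrivially. Once unimodality is established, the closed-form characterization of $N^{*}$ follows by solving $\Delta g(N^{*})\ge 0 > \Delta g(N^{*}-1)$ explicitly; since $p(N)$ and $S(N)$ satisfy simple recursions, this last step should yield a transparent threshold criterion on $N$.
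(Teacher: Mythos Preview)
Your first paragraph is correct and is essentially the paper's entire proof: invoke ergodicity to write the long-run average cost as $\bar f(N)=\sum_{(n,k)}f(n,k)\pi_N(n,k)$, use the product form $\pi_N(n,k)=\xi(n)\theta_N(k)$ to split this as $(c_b+c_m)\theta_N(b_m)+(c_b+c_r)\theta_N(b_r)+c_w\sum_n n\,\xi(n)$, observe that $\xi$ does not depend on $N$, and conclude $\arg\min \bar f(N)=\arg\min g(N)$. That is all the lemma claims.

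Everything after your first paragraph is over-reach. The lemma does \emph{not} assert any closed-form characterization of the minimizer, nor unimodality of $g$; it only says the optimization reduces to minimizing $g(N)$. The paper stops exactly where your first paragraph ends. Your subsequent attempt to prove unimodality for general $\nu_i$ via a sign analysis of $\Delta g(N)$ is not required and, as you yourself note, would need an additional log-concavity argument that is not supplied. In fact the paper only analyses the sign of $\Delta g(N)$ in a later corollary and only for the special case of \emph{constant} $\nu$, where it shows $g$ is strictly monotone (or identically zero); for varying $\nu_k$ it relies on numerical examples and makes no unimodality claim.
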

\[
\arg\min(g(N))
\]

with

\[
g(N):=\left(c_{b}+c_{m}\right)\theta_{N}(b_{m})+\left(c_{b}+c_{r}\right)\theta_{N}(b_{r})
\]

\begin{proof}
Due to ergodicity it holds 
\[
\lim_{T\rightarrow\infty}\frac{1}{T}\int_{0}^{T}f\left(X(\omega)_{t},Y_{t}(\omega)\right)dt=\sum_{(n,k)}f(n,k)\pi_{N}(n,k)=:\bar{f}(N),\qquad P.a.s
\]

Using product form properties of the system we get 

\begin{eqnarray*}
\bar{f}(N) & = & \left(c_{b}+c_{m}\right)\theta_{N}(b_{m})+\left(c_{b}+c_{r}\right)\theta_{N}(b_{r})+\underbrace{c_{w}\sum_{n=1}^{\infty}n\xi(n)}_{\text{independent of }N}
\end{eqnarray*}

\[
\Longrightarrow\arg\min(\bar{f}(N))=\arg\min(g(N))
\]
\end{proof}
\begin{example}
\label{ex:LS-failure-system-linear-break-downs}We consider two unreliable
systems with parameters $\lambda=1$, $\mu=1.5$, $\nu_{m}=0.3$,
$\nu_{r}=0.1$, costs $c_{m}=1$, $c_{r}=2$, $c_{b}=1$ and different
linear increasing functions $\nu_{k}$ determining wearout.

In case $\nu_{k}=0.01k$, the optimal number of services after which
maintenance should be performed $N=\arg\min(g(N))=6$. See \prettyref{fig:LS-failure-system-linear-break-downs-0.01k}.

In the case $\nu_{k}=0.001k$ the optimal value is $N=23$. See \prettyref{fig:LS-failure-system-linear-break-downs-0.001k}.

\begin{figure}[H]
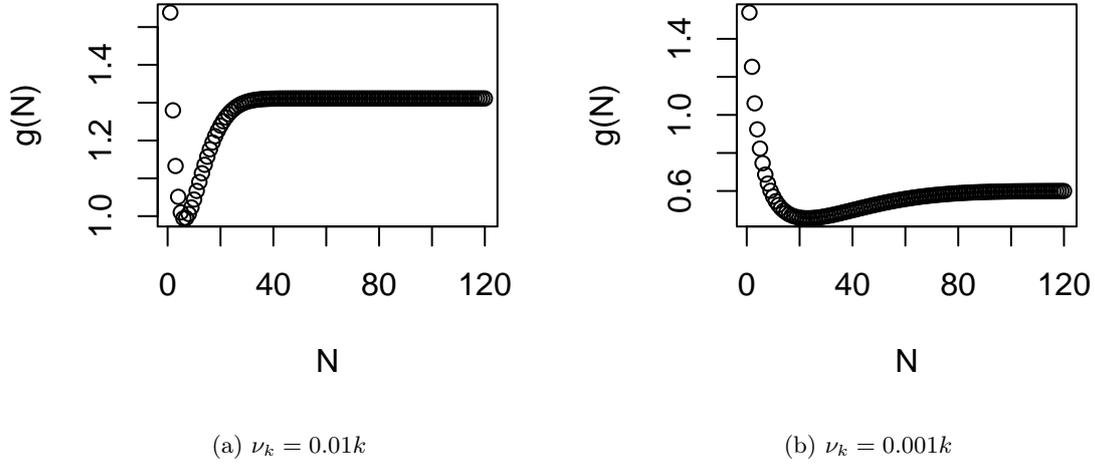

\subfloat[\label{fig:LS-failure-system-linear-break-downs-0.01k}$\nu_{k}=0.01k$]{\includegraphics{g-linear-failure-v0\lyxdot 01k}}\subfloat[\label{fig:LS-failure-system-linear-break-downs-0.001k}$\nu_{k}=0.001k$]{\includegraphics{g-linear-failure-v0\lyxdot 001k}}\caption{Function $g(N)$ for linear break down rates $\nu_{k}$ in Example
\ref{ex:LS-failure-system-linear-break-downs}.}

\end{figure}

\end{example}
The case of constant failure rate $\nu$ (independent of $k$, i.
e., of no aging) is often of particular interest. For example it is
common practice to substitute complex varying system parameters by
average values. In the following corollary and the remark we will
investigate the properties of such a system in detail. 
\begin{cor}
\label{cor:LS-constant-failure-rates}If $\nu$ is constant the function
$g(N)$ is either strictly monotone increasing, or strictly monotone
decreasing, or zero.\end{cor}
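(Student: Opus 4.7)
The plan is to reduce $g(N)$ to a linear-fractional (Möbius) function of the single quantity $x:=\rho^{N}$, where $\rho:=\lambda/(\lambda+\nu)\in(0,1)$, and then exploit the fact that a Möbius transformation has derivative of constant sign on each connected interval of its domain. Rather than grind through the closed forms for $\theta_N(k)$, $\theta_N(b_m)$, $\theta_N(b_r)$ and the normalization $\theta_N(0)$ derived above, I would prefer a renewal-theoretic derivation that makes the dependence on the single parameter $\rho^N$ transparent. Consider cycles of the environment process that begin and end at working state $0$. On the jump chain restricted to the working states, the successive transitions are Bernoulli trials with success probability $\rho$ (move one step forward) and failure probability $1-\rho$ (break down to $b_r$); a cycle therefore terminates in maintenance with probability $p_m=\rho^N$ (i.e.\ $N$ consecutive successes) and in repair with probability $1-\rho^N$. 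Summing a geometric series gives expected working-time per cycle equal to $(1-\rho^N)/\nu$, whence the mean cycle length is
\[
\bar T_N \;=\; \frac{1-\rho^N}{\nu} + \frac{\rho^N}{\nu_m} + \frac{1-\rho^N}{\nu_r},
\]
and the renewal-reward theorem yields $\theta_N(b_m)=(\rho^N/\nu_m)/\bar T_N$ and $\theta_N(b_r)=((1-\rho^N)/\nu_r)/\bar T_N$.

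Substituting into the definition of $g$ and writing $x:=\rho^N\in(0,1)$, the cost functional takes the form
\[
g(N) \;=\; \frac{(c_b+c_m)\,x/\nu_m + (c_b+c_r)\,(1-x)/\nu_r}{x/\nu_m+(1/\nu+1/\nu_r)(1-x)} \;=\; \frac{\alpha x+\beta}{\gamma x+\delta},
\]
with strictly positive denominator on $[0,1]$. The derivative of such a Möbius function in $x$ equals $(\alpha\delta-\beta\gamma)/(\gamma x+\delta)^2$, so it has constant sign on $(0,1)$ determined by the sign of $\alpha\delta-\beta\gamma$. Since $N\mapsto\rho^N$ is itself strictly monotone decreasing (as $\rho\in(0,1)$), composition yields the claimed trichotomy: $g(N)$ is strictly increasing in $N$ if $\alpha\delta-\beta\gamma<0$, strictly decreasing if $\alpha\delta-\beta\gamma>0$, and constant (``zero'' slope) if $\alpha\delta-\beta\gamma=0$. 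A brief calculation gives
\[
\alpha\delta-\beta\gamma \;=\; \frac{1}{\nu_m}\Bigl(\frac{c_b+c_m}{\nu}+\frac{c_m-c_r}{\nu_r}\Bigr),
\]
which exhibits all three cases explicitly in terms of the cost and rate parameters.

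The only real conceptual obstacle is recognising the Möbius structure that is hidden behind the somewhat unwieldy expressions for $\theta_N(0)$ coming out of the normalization: direct manipulation of those formulas tends to obscure the fact that the whole of $g(N)$ depends on $N$ solely through $\rho^N$. The renewal decomposition above isolates this parameter cleanly and eliminates the need to track $\theta_N(0)$ at all, after which the conclusion is pure one-variable calculus.
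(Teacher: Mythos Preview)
Your proof is correct and takes a genuinely different route from the paper. The paper proceeds by brute force: it specializes the closed-form expression for $\theta_N(0)$ from the preceding proposition to constant $\nu$, writes $\theta_N(b_m)$ and $\theta_N(b_r)$ as explicit fractions with a common denominator $Den(N)$, and then computes the difference $g(N+1)-g(N)$ by hand, factoring the resulting numerator as a strictly positive $N$-dependent factor times a bracketed expression that turns out to be independent of $N$. The sign of that bracket then determines the trichotomy.

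Your approach bypasses all of this algebra. By recognizing the $\tilde Q$-process as regenerative at state~$0$ and computing the cycle decomposition directly, you obtain $\theta_N(b_m)$ and $\theta_N(b_r)$ without ever touching $\theta_N(0)$ or the normalizing constant, and the dependence on $N$ is manifestly through the single quantity $\rho^N$. The Möbius observation then replaces a page of symbolic manipulation with a one-line calculus argument, and as a bonus your closed form $\alpha\delta-\beta\gamma=\frac{1}{\nu_m}\bigl((c_b+c_m)/\nu+(c_m-c_r)/\nu_r\bigr)$ gives a much cleaner criterion for the three cases than the paper's bracketed polynomial in $a$. The paper's method, on the other hand, stays entirely within the explicit framework already set up in the proposition and requires no auxiliary probabilistic reasoning. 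Note also that your reading of ``or zero'' in the statement as ``or constant'' (i.e.\ zero increment) is the correct one; when $\alpha\delta-\beta\gamma=0$ the function $g$ is constant but not identically zero.
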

\begin{proof}
We analyze the difference 
\[
g(N+1)-g(N)=\left(c_{b}+c_{m}\right)\left(\theta_{N+1}(b_{m})-\theta_{N}(b_{m})\right)+\left(c_{b}+c_{r}\right)\left(\theta_{N}(b_{r})-\theta_{N}(b_{r})\right)
\]

We calculate $\theta_{N}(0)$ for constant $\nu_{i}\equiv\nu$, we
will use $a:=\left(\frac{\lambda}{\nu+\lambda}\right)$ to simplify
the notation

\begin{eqnarray*}
\theta_{N}(0) & = & \frac{1}{\left(\frac{(\nu+\lambda)}{\nu_{r}}-\frac{\lambda}{\nu_{r}}a^{N-1}\right)+\frac{\lambda}{\nu_{m}}a^{N-1}+\sum_{k=0}^{N-1}a^{k}}\\
 & = & \frac{1}{\left(\frac{(\nu+\lambda)}{\nu_{r}}+\left(\frac{\lambda}{\nu_{m}}-\frac{\lambda}{\nu_{r}}\right)a^{N-1}\right)+\frac{\left(1-a^{N}\right)}{(1-a)}}\\
 & = & \frac{(1-a)}{\left(\frac{(v+\lambda)}{\nu_{r}}+\left(\frac{\lambda}{\nu_{m}}-\frac{\lambda}{\nu_{r}}\right)a^{N-1}\right)(1-a)+\left(1-a^{N}\right)}
\end{eqnarray*}

Using the result above we calculate $\theta_{N}(b_{m})$ 
\begin{eqnarray*}
\theta_{N}(b_{m}) & = & \frac{\frac{\lambda}{\nu_{m}}\left(\frac{\lambda}{\nu+\lambda}\right)^{N-1}(1-a)}{\left(\frac{(v+\lambda)}{\nu_{r}}+\left(\frac{\lambda}{\nu_{m}}-\frac{\lambda}{\nu_{r}}\right)a^{N-1}\right)(1-a)+\left(1-a^{N}\right)}\\
 & = & \frac{\overbrace{\left(\frac{\lambda}{\nu_{m}}a^{N-1}\right)(1-a)}^{=:Nom(N,b_{m})}}{\underbrace{\left(\frac{(v_{0}+\lambda)}{\nu_{r}}+\left(\frac{\lambda}{\nu_{m}}-\frac{\lambda}{\nu_{r}}\right)a^{N-1}\right)(1-a)+\left(1-a^{N}\right)}_{=:Den(N)}}\\
 & = & \frac{Nom(N,b_{m})}{Den(N)}
\end{eqnarray*}

\[
\theta_{N+1}(b_{m})-\theta_{N}(b_{m})=\frac{Nom(N+1,b_{m})Den(N)-Nom(N,b_{m})Den(N+1)}{Den(N+1)Den(N)}.
\]
The common denominator of the difference $Den(N+1)Den(N)$ is positive,
therefore we focus on nominator $Nom(N+1,b_{m})Den(N)-Nom(N,b_{m})Den(N+1)=$

\[
=\frac{\lambda(1-a)a^{N-1}}{\nu_{m}\nu_{r}}\left(-a^{2}\nu+2a\nu-\nu+a\nu_{r}-v_{r}-\lambda a^{2}+2\lambda a-\lambda\right)
\]

Similarly we analyze the sign of the difference $\theta_{N+1}(b_{r})-\theta_{N}(b_{r})$:

\begin{eqnarray*}
\theta_{N}(b_{r}) & = & \left(\frac{(\nu+\lambda)}{\nu_{r}}-\frac{\lambda}{\nu_{r}}\left(\frac{\lambda}{\nu+\lambda}\right)^{N-1}\right)\theta_{N}(0)\\
 & = & \frac{\left(\frac{(v+\lambda)}{\nu_{r}}-\frac{\lambda}{\nu_{r}}a^{N-1}\right)}{\left(\frac{(\nu+\lambda)}{\nu_{r}}-\frac{\lambda}{\nu_{r}}a^{N-1}\right)+\frac{\lambda}{\nu_{m}}a^{N-1}+\sum_{k=0}^{N-1}a^{k}}\\
 & = & \frac{\overbrace{\left(\frac{(v+\lambda)}{\nu_{r}}-\frac{\lambda}{\nu_{r}}a^{N-1}\right)(1-a)}^{=:Nom(N,b_{r})}}{\underbrace{\left(\frac{(v+\lambda)}{\nu_{r}}+\left(\frac{\lambda}{\nu_{m}}-\frac{\lambda}{\nu_{r}}\right)a^{N-1}\right)(1-a)+\left(1-a^{N}\right)}_{=:Den(N)}}
\end{eqnarray*}

\[
\theta_{N+1}(b_{r})-\theta_{N}(b_{r})=\frac{Nom(N+1,b_{r})Den(N)-Nom(N,b_{r})Den(N+1)}{Den(N+1)Den(N)}.
\]
with $Nom(N+1,b_{r})Den(N)-Nom(N,b_{r})Den(N+1)=$

\[
\frac{(1-a)a^{N-1}}{\nu_{m}\nu_{r}}\left(a^{2}\nu_{m}\nu-a\nu_{m}\nu+\lambda a^{2}\nu-2\lambda a\nu+\lambda\nu+\lambda a^{2}\nu_{m}-2\lambda a\nu_{m}+\lambda\nu_{m}+\lambda^{2}a^{2}-2\lambda^{2}a+\lambda^{2}\right)
\]
and therefore

\begin{eqnarray*}
 &  & g(N+1)-g(N)\\
 & = & \frac{(1-a)a^{N-1}}{\nu_{m}\nu_{r}Den(N+1)Den(N)}\cdot\\
 &  & \left[(c_{b}+c_{m})\lambda(-a^{2}\nu+2a\nu-\nu+a\nu_{r}-v_{r}-\lambda a^{2}+2\lambda a-\lambda)+(c_{b}+c_{r})\cdot\right.\\
 &  & \left.(a^{2}\nu_{m}\nu-a\nu_{m}\nu+\lambda a^{2}\nu-2\lambda a\nu+\lambda\nu+\lambda a^{2}\nu_{m}-2\lambda a\nu_{m}+\lambda\nu_{m}+\lambda^{2}a^{2}-2\lambda^{2}a+\lambda^{2})\right]
\end{eqnarray*}

The sign of the difference depends only on the expression in the square
brackets which is independent of $N$.

The results can be explained by the memoryless property of the exponential
failure time distribution: If the failure rate is constant, the system
behaviour stays the same, no matter how much time elapsed (or how
many services are completed) since the last maintenance.\end{proof}
\begin{rem}
The most important consequence of \prettyref{cor:LS-constant-failure-rates}
is that the costs function determines only three type of solutions:
\begin{itemize}
\item Maintain immediately (cost function is strictly monotone increasing).
\item Never maintain (cost function is strictly monotone decreasing).
\item Maintain at any time (cost function stays constant).
\end{itemize}
\end{rem}
As a consequence we see that models with constant failure rate are
not well suited for drawing conclusions about, e.g., models with linear
failure rates, see \prettyref{ex:LS-failure-system-linear-break-downs}.

\part{Embedded Markov chains analysis }

\global\long\def\V{V}
\global\long\def\v{v}
\global\long\def\qsep{,}
\global\long\def\intpositive{>0}
\global\long\def\Rentry#1#2{R(#1,#2)}

We continue our investigations described in \prettyref{part:LS-continous-time}.
The systems live in continuous time and we now observe them at departure
instants only, which results in considering an embedded Markov chain.
We find that the behaviour of the embedded Markov chain is often considerably
different from that of the original continuous time Markov process
investigated in \prettyref{part:LS-continous-time}.

Our aim is to identify conditions which guarantee that even for the
embedded Markov chain a product form equilibrium exists in the discrete
observation time points as well.

For exponential queueing systems we show that there is a product form
equilibrium under rather general conditions. For systems with non-exponential
service times more restrictive constraints are needed, which we prove
by a counter example where the environment represents an inventory
attached to an $M/D/1$ queue. Such integrated queueing-inventory
systems are dealt with in the literature previously. Further applications
are, e.g., in modeling unreliable queues.

For investigating $M/G/1/\infty$ queues embedded Markov chains provide
a standard procedure to avoid using supplementary variable technique.
Embedded chain analysis was applied by Vineetha \cite{vineetha:08}
who extended the theory of integrated queueing-inventory models with
exponential service times to systems with service times which are
i.i.d. and follow a general distribution. Our investigations which
are reported in this paper were in part motivated by her investigations.

In Section \ref{sect:LS-EMC-MG1} we revisit some of Vineetha's \cite{vineetha:08}
queueing-inventory systems, using similarly embedded Markov chain
techniques. In the course of these investigations we found that there
arise problems even for purely exponential systems, which we describe
in Section \ref{sect:LS-EMC} and \ref{sect:LS-EMC-steadystate} first,
before describing the $M/G/1/\infty$ queue in a random environment
and its structural properties.\\

To emphasize the problems arising from the interaction of the two
components of integrated systems, we remind the reader, that for ergodic
$M/M/1/\infty$ queues the limiting and stationary distribution of
the continuous time queue length process and the Markov chains embedded
at either arrival instants or departure instants are the same.

Our first finding is, that even in the case of $M/M/1/\infty$ queues
with attached inventory this in general does not hold. This especially
implies, that the product form results obtained in \prettyref{part:LS-continous-time}
do not carry over immediately to the case of loss systems in a random
environment observed at departure times from the queue (downward jumps
of the generalized birth-and-death process).

A striking observation is that for a system which is ergodic in the
continuous time Markovian description the Markov chain embedded at
departure instants may be not ergodic. The reason for this is two-fold.
Firstly, the embedded Markov chain may have inessential states due
to the specified interaction rules. Secondly, even when we delete
all inessential states, the resulting single positive recurrent class
may be periodic.

We study this problem in depth in Section \ref{sect:LS-EMC-steadystate}
for purely exponential systems, and provide a set of examples which
elucidate the problems which one is faced with. Our main result in
this section proves the existence of a product form steady state distribution
(which is not necessary a limiting distribution) for the Markov chain
embedded at departure instants and provides a precise connection between
the steady states of the continuous time process and the embedded
chain (\prettyref{thm:LCS-EMC-product-form-exp-service}).\\

It turns out, that a similar result in the setting with $M/G/1/\infty$
queues is not valid. We are able to give sufficient conditions for
the structure of the environment, which guarantee the existence of
product form equilibria (\prettyref{thm:MG1-PF-result}).

Unfortunately enough, an analogue to \prettyref{thm:LCS-EMC-product-form-exp-service}
is not valid for systems with non exponential service times. We prove
this by constructing a counterexample which is an $M/D/1/\infty$
queue with an attached environment in Section \ref{sect:MD1-1}.\\

Most of our results for systems rely strongly on non-singularity of
a certain matrix which reflects important aspects of the system. For
systems with a finite environment the regularity of that matrix is
proved in an Appendix as a useful lemma which is of interest in its
own. This lemma generalizes the well known theorem of invertibility
of M-matrices which are irreducible to the case where irreducibility
is not required, but only a certain flow condition prevails.\\

\section{M/M/1/$\infty$ queueing system in a random environment\label{sect:LS-EMC-exponential}}

Recall that the paths of $Z$ are cadlag. With $\tau_{0}=\sigma_{0}=\zeta_{0}=0$
and 
\[
\tau_{n+1}:=\inf(t>\tau_{n}:X(t)<X(\tau_{n})),\quad n\in\mathbb{N}\,.
\]
denote the sequence of departure times of customers by $\tau=(\tau_{0},\tau_{1},\tau_{2},\dots)$,
and with 
\[
\sigma_{n+1}:=\inf(t>\sigma_{n}:X(t)>X(\sigma_{n})),\quad n\in\mathbb{N}\,,
\]
denote by $\sigma=(\sigma_{0},\sigma_{1},\sigma_{2},\dots)$ the sequence
of instants when arrivals are admitted to the system (because the
environment is in states of $K_{W}$, i.e., not blocking)\\
 and with 
\[
\zeta_{n+1}:=\inf(t>\zeta_{n}:Z(t)\neq Z(\zeta_{n})),\quad n\in\mathbb{N}\,,
\]
denote by $\zeta=(\zeta_{0},\zeta_{1},\zeta_{2},\dots)$ the sequence
of jump times of $Z$.\\

The following lemmata will be used in the sequel. They refer to the
structure of the continuous time process. We emphasize that the generator
$\V$ is not necessarily irreducible.
\begin{lem}
\label{lem:LS-EMC-non-zero-path} Let $Z$ be ergodic. Then for any
non-empty subset $\tilde{K}_{B}\subset K_{B}$ the overall $\V$-transition
rate from $\tilde{K_{B}}$ to its complement $\tilde{K}_{B}^{c}=K\setminus\tilde{K_{B}}$
is positive, i.e., 
\begin{equation}
\forall~~~\tilde{K}_{B}\subset K_{B},\tilde{K}_{B}\neq\emptyset:\qquad\exists~~~k\in\tilde{K}_{B},m\in\tilde{K}_{B}^{c}:\v(k,m)>0\label{eq:LS-EMC-non-zero-path-lemma}
\end{equation}
\end{lem}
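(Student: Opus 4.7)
The plan is to argue by contradiction, exploiting the fact that the environment is frozen with respect to $R$-transitions (and that both arrivals and services are suppressed) whenever $Y$ sits inside $K_B$. Concretely, assume there exists a non-empty $\tilde{K}_B\subset K_B$ violating \eqref{eq:LS-EMC-non-zero-path-lemma}, i.e.\ $\v(k,m)=0$ for every $k\in\tilde{K}_B$ and every $m\in K\setminus\tilde{K}_B$. I would then show that the subset $\{n\}\times\tilde{K}_B$ is closed for the continuous time Markov process $Z$ for each fixed $n\in\mathbb{N}_0$, which contradicts the standing irreducibility assumption (and hence the ergodicity) of $Z$.

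To establish closedness, fix any $(n,k)\in\{n\}\times\tilde{K}_B$ and enumerate the possible $Q$-transitions using the rate table in Section \ref{sect:LS-MM1Inf-model}. Since $k\in K_B$, the arrival rate $\lambda(n)\cdot 1_{[k\in K_W]}$ vanishes, and the service driven rate $\mu(n)\,R(k,m)\cdot 1_{[k\in K_W]}$ vanishes as well; so neither the level $n$ can change nor can an $R$-transition of the environment be triggered. The only remaining positive out-rates are the $\v(k,m)$ with $m\neq k$, and by the contradiction hypothesis those are concentrated on $\tilde{K}_B$. Therefore every transition out of $(n,k)$ lands in $\{n\}\times\tilde{K}_B$, which makes this set absorbing for $Z$.

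Finally, since $\tilde{K}_B$ is non-empty, pick $k_0\in\tilde{K}_B$; then $(0,k_0)\in E$ and the absorbing property just shown implies that $(0,k_0)$ cannot communicate with any state $(n',k')$ having $n'\neq 0$ or $k'\notin\tilde{K}_B$. This contradicts the irreducibility of $Z$ on $E=\mathbb{N}_0\times K$ that is part of ergodicity, and the lemma follows. The whole argument is essentially a one-line ``trap'' observation; there is no real technical obstacle, so long as one is careful to note that all three families of out-transitions (arrivals, service-induced $R$-transitions, and $V$-transitions) must simultaneously be inspected when writing down the restricted out-rate set.
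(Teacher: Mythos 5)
Your proposal is correct and takes essentially the same approach as the paper: both proofs hinge on the observation that while $Y\in K_B$ neither arrivals nor $R$-driven service transitions can occur, so only $\V$-transitions can move the environment, and ergodicity (irreducibility) then forces a $\V$-edge out of $\tilde K_B$. You simply organize this as a ``closed-set contradicts irreducibility'' argument, whereas the paper argues directly from $Z(0)=(0,k)$ that the path to the first admitted arrival must cross from $\tilde K_B$ into $\tilde K_B^c$.
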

\begin{rem*}
Consider the directed transition graph of $\V$, with vertices $K$
and edges ${\cal E}$ defined by $km\in{\cal E}\Longleftrightarrow\v(k,m)>0\,.$
Then the condition \eqref{eq:LS-EMC-non-zero-path-lemma} guarantees
the existence of a path from any vertex in $K_{B}$ to a some vertex
in $K_{W}$. See \prettyref{rem:LS-EMC-inverse-flow-graph-interpretation}.\end{rem*}
\begin{proof}
(of Lemma \ref{lem:LS-EMC-non-zero-path}) Fix $\tilde{K}_{B}$ and
suppose the system is ergodic and it is started with $Z(0)=(0,k)$,
for some $k\in\tilde{K}_{B}$, i.e., with an empty queue and in an
environment state $k$ which blocks the arrival process. From ergodicity
it follows that for some $m\in K_{W}$ must hold 
\[
P(Z(\sigma_{1})=(1,m)|Z(0)=(0,k))>0,
\]
because there is a positive probability for the first arrival of some
customer admitted into the system.

Because no arrival is possible if $m\in K_{B}$, necessarily $m\in K_{W}$
holds, and because up to $\sigma_{1}-$ no departure or arrival could
happen, the only possibility to enter $m$ is by a sequence of transitions
triggered by $\V$. Because $Z$ is regular this sequence is finite
with probability $1$. The path from $k\in\tilde{K}_{B}$ to $m\in K_{W}$
of the directed transition graph of $\V$ contains an edge $k_{1}k_{2}\in\mathcal{E}$
with $k_{1}\in\tilde{K}_{B}$ and $k_{2}\in\tilde{K}_{B}^{c}$.\end{proof}
\begin{lem}
\label{lem:LS-EMC-diag-inv} For any strictly positive $\eta\in\mathbb{R}^{+}$
the matrix $(-diag(\V)+\eta I_{W})$ is invertible.\end{lem}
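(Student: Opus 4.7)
The plan is to observe that $-diag(\V) + \eta I_W$ is itself a diagonal matrix, since both summands are, so invertibility reduces to showing that every diagonal entry is non-zero (in fact strictly positive), with the inverse being the diagonal matrix of reciprocals. The $k$-th diagonal entry equals $-\v(k,k) + \eta$ if $k \in K_W$ and $-\v(k,k)$ if $k \in K_B$. Because $\V$ is a generator, $\v(k,k) \leq 0$ for every $k \in K$, so the $K_W$-block is bounded below by $\eta > 0$ and poses no difficulty.

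The main obstacle is the $K_B$-block: I must exclude $\v(k,k) = 0$ for $k \in K_B$, even though the remark following Theorem \ref{thm:LS-productform} warned that $\V$ may in general have zero rows. My argument exploits the standing irreducibility assumption on $Z = (X,Y)$. Suppose for contradiction that $\v(k,k) = 0$ for some $k \in K_B$. Then at every state $(n,k)$ all transition mechanisms vanish simultaneously: arrivals are suppressed because $k \in K_B$, services are suppressed because $k \in K_B$, and the $\V$-driven transitions out of $k$ have total rate $-\v(k,k) = 0$. Hence $(n,k)$ would be absorbing in $Z$, contradicting irreducibility. (One could alternatively apply Lemma \ref{lem:LS-EMC-non-zero-path} with the singleton $\tilde{K}_{B} = \{k\}$ to obtain $\v(k,m) > 0$ for some $m \neq k$ and therefore $\v(k,k) < 0$; but that route requires ergodicity of $Z$, whereas the direct irreducibility argument uses only the standing assumption.)

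With strict positivity of every diagonal entry established, the inverse is explicitly
\[
(-diag(\V) + \eta I_W)^{-1} = diag\Big((-\v(k,k) + \eta\cdot 1_{[k\in K_W]})^{-1} : k \in K\Big),
\]
and no further analytic subtlety arises because the operator acts coordinatewise on $K$, so invertibility as a linear map on formal row/column vectors indexed by $K$ is automatic once the entries are non-zero.
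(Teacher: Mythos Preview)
Your proof is correct and follows essentially the same approach as the paper: both recognize that $-diag(\V)+\eta I_W$ is diagonal and show every diagonal entry is strictly positive, handling $K_W$ via $-\v(k,k)\ge 0$ and $K_B$ by excluding $\v(k,k)=0$. The only difference is that the paper invokes Lemma~\ref{lem:LS-EMC-non-zero-path} (hence ergodicity) for the $K_B$ case, whereas you give the direct absorbing-state argument from irreducibility; you already note this alternative yourself, and your route uses the weaker standing hypothesis.
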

\begin{proof}
For any $k\in K_{W}$ the corresponding diagonal element of the matrix
$(-diag(\V)+\eta I_{W})$ is greater than $\eta$ because $-\v(k,k)\geq0$.

If $k\in K_{B}$, we utilize the ergodicity of $Z$ in continuous
time and apply Lemma \ref{lem:LS-EMC-non-zero-path} with $\widetilde{K}_{B}:=\{k\}$.
The lemma implies that there is some $m\neq k$ with $\v(k,m)>0$.
It follows $-\v(k,k)>0$.

We conclude that the diagonal matrix $(-diag(\V)+\eta I_{W})$ has
only strictly positive values on its diagonal and therefore it is
invertible. 
\end{proof}

\subsection{Observing the system at departure instants}

\label{sect:LS-EMC} Recall that the paths of $Z$ are cadlag and
that $\tau=(\tau_{0},\tau_{1},\tau_{2},\dots)$ with $\tau_{0}=0$
denotes the sequence of departure times of customers. Then with $\hat{X}(n):=X(\tau_{n})$
and $\hat{Y}(n):=Y(\tau_{n})$ for $n\in\mathbb{N}_{0}$ it is easy
to see that \marginpar{$\hat{X}$}\index{X hat@$\hat{X}$, embedded Markov chain}\marginpar{$\hat{Y}$}\index{Y hat@$\hat{Y}$, embedded Markov chain}\marginpar{$\hat{Z}$}\index{Z hat@$\hat{Z}=(\hat{X},\hat{Y})$, embedded Markov chain}
\begin{equation}
\hat{Z}=((\hat{X}(n),\hat{Y}(n)):n\in\mathbb{N}_{0})\label{eq:LS-EMC-mg1-1}
\end{equation}
is a homogeneous Markov chain on state space $E=\mathbb{N}_{0}\times K$.
If $\hat{Z}$ has a unique stationary distribution, this will be denoted
by $\hat{\pi}$.
\begin{defn}
If the embedded Markov chain $((\hat{X}(n),\hat{Y}(n):n\in\mathbb{N}_{0})$
of the loss system in a random environment with state space $E:=\mathbb{N}_{0}\times K$
has a unique steady state distribution, we denote this steady state
\marginpar{$\hat{\pi}$}\index{pi-hat@$\hat{\pi}$ - steady state distribution of the embedded Markov
chain} 

\begin{eqnarray*}
\hat{\pi} & := & (\hat{\pi}(n,k):(n,k)\in E:=\mathbb{N}_{0}\times K).
\end{eqnarray*}

and the marginal steady state distributions \marginpar{$\hat{\xi}$}
\index{xi-hat@$\hat{\xi}$ - steady state distribution of the embedded Markov
chain for queueing system} 
\begin{eqnarray*}
\hat{\xi} & := & (\xi(n):n\in\mathbb{N}_{0}),\qquad\hat{\xi}(n):=\sum_{k\in K}\hat{\pi}(n,k)
\end{eqnarray*}

\marginpar{$\hat{\theta}$}\index{theta-hat@ $\hat{\theta}$ - steady state distribution of the embedded
Markov chain for environment}
\begin{eqnarray*}
\hat{\theta} & := & (\hat{\theta}(k):k\in K),\qquad\hat{\theta}(k):=\sum_{n\in\mathbb{N}_{0}}\hat{\pi}(n,k)
\end{eqnarray*}

\end{defn}
It will turn out that this Markov chain exhibits interesting structural
properties of the loss systems in random environments. E.g., with
$\xi$ from \eqref{eq:LS-BaD1} we will prove that

\[
\hat{\pi}(n,k)=\hat{\xi}(n)\cdot\hat{\theta}(k)=\xi(n)\cdot\hat{\theta}(k),
\]
holds, but in general we do not have $\hat{\pi}(n,k)>0$ on the global
state space $E$, because $\hat{\theta}(k)=0$ may occur. Especially,
in general it holds $\theta\neq\hat{\theta}$.

The reason for this seems to be the rather general vice-versa interaction
of the queueing system and the environment. Of special importance
is the fact that we consider the continuous time systems at departure
instants where we have the additional information that right now the
influence of the queueing systems on the change of the environment
is in force (described by the stochastic matrix $R$).\\

The dynamics of $\hat{Z}$ will be described in a way that resembles
the $M/G/1$ type matrix analytical models. Recall that the state
space $E$ carries an order structure which will govern the description
of the transition matrix and, later on, of the steady state vector.
\begin{defn}
\label{def:LS-EMC-P-A-n-B-n}We define the one-step transition matrix
$\mathbf{P}$ of $\hat{Z}$ by \marginpar{$\mathbf{P}$}\index{P transition-matrix@$\mathbf{P}$, transition matrix}
\begin{eqnarray*}
 &  & \left(\mathbf{P}_{(i,k),(j,m)}:(i,k),(j,m)\in E\right)\\
 & := & \left(P(Z(\tau_{1})=(j,m)|Z(0)=(i,k)):(i,k),(j,m)\in E\right),
\end{eqnarray*}
and introducing matrices $A^{(i,n)}\in\mathbb{R}^{K\times K}$ and
$B^{(n)}\in\mathbb{R}^{K\times K}$ \index{A in@$A^{(i,n)}$, loss system}
\marginpar{$A^{(i,n)}$}\index{B n@$B^{(n)}$, loss system}\marginpar{$B^{(n)}$}
by 
\begin{eqnarray}
B_{km}^{(n)} & := & P(Z(\tau_{1})=(n,m)|Z(0)=(0,k))\label{eq:LS-EMC-B-n-definition}\\
A_{km}^{(i,n)} & := & P(Z(\tau_{1})=(i+n-1,m)|Z(0)=(i,k)),\qquad1\leq i\label{eq:LS-EMC-A-n-i-definition}
\end{eqnarray}
for $k,m\in K$, the matrix $\mathbf{P}$ has the form

\begin{equation}
\mathbf{P}=\left(\begin{array}{ccccc}
B^{(0)} & B^{(1)} & B^{(2)} & B^{(3)} & \ldots\\
A^{(1,0)} & A^{(1,1)} & A^{(1,2)} & A^{(1,3)} & \ldots\\
0 & A^{(2,0)} & A^{(2,1)} & A^{(2,2)} & \ldots\\
0 & 0 & A^{(3,0)} & A^{(3,1)} & \ldots\\
\vdots & \vdots & \vdots & \vdots
\end{array}\right),\label{eq:LS-EMC-P-matrix}
\end{equation}
which exploits the structure of the state space as a product of level
variables in $\mathbb{N}_{0}$ and phase variables in $K$. We emphasize
that $K$ is ordered for its own, see \eqref{Eorder1}.
\end{defn}
For the loss system in a random environment we will solve the equation
\begin{equation}
\hat{\pi}\mathbf{P}=\hat{\pi}\label{eq:LS-chain-steady-state-equation}
\end{equation}
for a stochastic solution $\hat{\pi}$ which is a steady state distribution
of the embedded Markov chain $\hat{Z}$. Because $\hat{Z}$ is in
general not irreducible on $E$ there are some subtleties with respect
to the uniqueness of a stochastic solution of the equation.

For further calculations it will be convenient to group $\hat{\pi}$
according to the queue length:
\begin{defn}
We write $\hat{\pi}$ as 
\begin{equation}
\hat{\pi}=(\hat{\pi}^{(0)},\hat{\pi}^{(1)},\hat{\pi}^{(2)},\dots)\label{eq:LS-EMC-pistructure5}
\end{equation}
with 
\begin{equation}
\hat{\pi}^{(n)}=(\hat{\pi}(n,k):k\in K_{W},\hat{\pi}(n,k):k\in K_{B}),\quad n\in\mathbb{N}_{0}\,,\label{eq:LS-EMC-pistructure6}
\end{equation}
where we agree that the representation of $(\hat{\pi}(n,k):k\in K_{W},\hat{\pi}(n,k):k\in K_{B})$
respects the ordering of $K$. Especially we write for $(n,k)\in E$
\[
\hat{\pi}^{(n)}(k):=\hat{\pi}(n,k)
\]

\end{defn}
An immediate consequence of this definition is that the steady state
equation \prettyref{eq:LS-chain-steady-state-equation} can be written
as 
\begin{equation}
\hat{\pi}^{(0)}B^{(n)}+\sum_{i=1}^{n+1}\hat{\pi}^{(i)}A^{(i,n-i+1)}=\hat{\pi}^{(n)},\qquad n\in\mathbb{N}_{0}\,.\label{eq:LS-EMC-steady-state-equation-grouped}
\end{equation}

\subsection{Steady state for the system observed at departure instants}

\label{sect:LS-EMC-steadystate} We start our investigation with a
detailed analysis of the one-step transition matrix \eqref{eq:LS-EMC-P-matrix}
and will express the matrices $B^{(n)}$ and $A^{(i,n)}$ from  \prettyref{def:LS-EMC-P-A-n-B-n}
by means of auxiliary matrices $W$ and $U^{(i,n)}$, which reflect
the dynamics of the system.

It turns out that the matrix $(\lambda I_{W}-\V)$ plays a central
role in this analysis and that we shall need especially its inverse.
We therefore set in force for the rest of the paper the technical\\

\noindent \textbf{Overall Assumption (I)}, that the matrix $(\lambda I_{W}-\V)$
is \textbf{invertible}.\\

This assumption is not restrictive for modeling purposes as the following
proposition reveals. Further examples and a discussion can be found
in the Appendix.
\begin{prop}
\label{prop:LS-EMC-matrix-inversibility}Let $Z$ be ergodic with
finite environment space $K$, and $\V$ be the associated generator
driving the continuous changes of the environment. Then for any $\lambda>0$
the matrix $(\lambda I_{W}-\V)$ is invertible. \end{prop}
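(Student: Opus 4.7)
The plan is to show that $M := \lambda I_W - V$ has trivial real kernel; since $K$ is finite this is equivalent to invertibility. The approach rests on two structural ingredients: $M$ is a Z-matrix (non-positive off-diagonals), and the ergodicity hypothesis on $Z$, through \prettyref{lem:LS-EMC-non-zero-path}, rules out any non-empty subset of $K_B$ being $V$-closed. In effect this should be the finite-environment manifestation of the appendix lemma advertised in the introduction, where the flow-type condition \eqref{eq:LS-EMC-non-zero-path-lemma} replaces the usual irreducibility assumption in the classical invertibility theorems for M-matrices.

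First I would record the row structure: the off-diagonal entries satisfy $M_{km} = -v(k,m) \le 0$, and $M\mathbf{e} = \lambda I_W \mathbf{e} - V\mathbf{e}$ equals $\lambda$ on $K_W$-coordinates and $0$ on $K_B$-coordinates. Writing $Mu = 0$ componentwise then produces
\[
\left(\lambda\cdot 1_{[k\in K_W]} + \sum_{m\ne k} v(k,m)\right) u(k) = \sum_{m\ne k} v(k,m)\, u(m), \qquad k\in K.
\]

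Next I would argue by contradiction. Suppose $u\in\mathbb{R}^{K}$ satisfies $Mu = 0$ with $u\not\equiv 0$. After replacing $u$ by $-u$ if needed, put $u^{\ast} := \max_{k} u(k) > 0$ and $S := \{k\in K : u(k) = u^{\ast}\}$. If some $k^{\ast}\in S$ lay in $K_W$, the row equation at $k^{\ast}$ combined with $u(m) \le u^{\ast}$ would force $\lambda u^{\ast}\le 0$, which is impossible. Hence $S\subseteq K_B$. For $k\in S$ the row equation reduces to $\left(\sum_{m\ne k} v(k,m)\right) u^{\ast} = \sum_{m\ne k} v(k,m)\, u(m)$, and the equality-in-the-triangle-inequality argument forces $u(m) = u^{\ast}$, i.e.\ $m\in S$, whenever $v(k,m) > 0$; thus $S$ admits no positive $V$-rate to its complement.

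The main obstacle --- and the place where ergodicity enters decisively --- is to rule out this ``$V$-closed subset of $K_B$'' scenario. Here I would invoke \prettyref{lem:LS-EMC-non-zero-path} with $\tilde K_B := S$: it delivers $k\in S$ and $m\in K\setminus S$ with $v(k,m) > 0$, contradicting the conclusion just derived. This forces $\ker M = \{0\}$ and so $M$ is invertible, completing the argument.
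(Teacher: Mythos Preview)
Your proof is correct and is essentially the same maximum-principle argument the paper uses: the paper's own proof simply invokes \prettyref{lem:LS-EMC-non-zero-path} to supply the flow condition and then the appendix \prettyref{lem:LS-EMC-finite-K-M-invertible}, whose proof is exactly the ``maximizer set is contained in $K_B$ and is $V$-closed, contradicting the flow condition'' argument you wrote out in detail. The only cosmetic difference is that the appendix lemma works with $|x_k|$ and $\|x\|_\infty$ rather than a signed maximum, but the substance is identical.
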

\begin{proof}
Follows from \prettyref{lem:LS-EMC-non-zero-path} and \prettyref{lem:LS-EMC-finite-K-M-invertible}
from the Appendix.\end{proof}
\begin{prop}
\label{prop:LS-EMC-matrix-inverse-proof-inf} Let $Z$ be ergodic
with environment space $K$ partitioned according to $K=K_{W}+K_{B}$,
with $K_{W}\neq\emptyset$, and with $|K_{B}|<\infty$, and $\lambda>0$
such that $\lambda I_{W}-\V$ is surjective on $\ell_{\infty}(\mathbb{R}^{K})$.

Let the generator matrix $\V:=(\v(k,m):k,m\in K)\in\mathbb{R}^{K\times K}$
be uniformizable, i.e. it holds $\inf_{k\in K}\v(k,k)>-\infty$.

Then the matrix $\lambda I_{W}-\V$ is invertible. \end{prop}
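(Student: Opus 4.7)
The plan is to reduce invertibility to injectivity---surjectivity on $\ell_\infty(\mathbb{R}^K)$ being supplied as a hypothesis---and then to settle injectivity by a probabilistic argument via uniformization.

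First I would exploit uniformizability. Choose $\alpha \geq \sup_{k\in K}(-\v(k,k))$, finite by assumption, and put $P := I + \alpha^{-1}\V$, which is then a stochastic matrix on $K$. A direct calculation gives
\[
\lambda I_W - \V \;=\; \alpha\bigl(D - P\bigr) \;=\; \alpha D\bigl(I - \tilde P\bigr), \qquad D := I + \tfrac{\lambda}{\alpha} I_W, \quad \tilde P := D^{-1} P.
\]
Here $D$ is diagonal and invertible, and $\tilde P$ is substochastic: row $k$ has mass $\alpha/(\alpha+\lambda)<1$ for $k\in K_W$ and mass $1$ for $k\in K_B$. Therefore invertibility of $\lambda I_W - \V$ is equivalent to invertibility of $I - \tilde P$, and surjectivity of the former yields surjectivity of the latter. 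It remains to prove injectivity of $I - \tilde P$ on $\ell_\infty$.

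Next I would interpret $\tilde P$ probabilistically: it is the sub-transition matrix of the $P$-chain killed, at every visit to $K_W$, with probability $p := \lambda/(\alpha+\lambda)$. Adjoining a cemetery state $\Delta$ and writing $(\tilde X_n)$ for the killed chain with absorption time $\tau$, one has $\tilde P^n(k,m)=P_k(\tilde X_n=m,\tau>n)$. The central claim is that $P_k(\tau<\infty)=1$ for every $k\in K$. For this I would first show that the unkilled $P$-chain visits $K_W$ infinitely often almost surely. By \prettyref{lem:LS-EMC-non-zero-path} no non-empty $P$-closed subset can be contained in $K_B$, for any such set $\widetilde K_B$ would force $\v(k,m)=0$ whenever $k\in\widetilde K_B$ and $m\notin\widetilde K_B$, contradicting the lemma. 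Since $K_B$ is finite, the sub-transition matrix $P|_{K_B\times K_B}$ therefore has spectral radius strictly less than one, so starting from any $k\in K$ the chain reaches $K_W$ in finite time almost surely, and by the strong Markov property at each subsequent excursion into $K_B$ the chain visits $K_W$ infinitely often a.s. Denoting by $N_n$ the number of such visits in $\{0,\dots,n-1\}$, the conditional survival probability of the killing is $(1-p)^{N_n}$, and $N_n\to\infty$ a.s.\ yields $P_k(\tau=\infty)=0$.

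Finally, injectivity of $I-\tilde P$ follows by dominated convergence: if $x\in\ell_\infty$ satisfies $(\lambda I_W-\V)x=0$ then $\tilde Px=x$, so iterating gives, for every $n$,
\[
x(k) \;=\; E_k\bigl[\,x(\tilde X_n)\,\mathbf 1_{\{\tau>n\}}\bigr],
\]
and as $n\to\infty$ the indicator tends to $0$ a.s.\ while the integrand stays bounded by $\|x\|_\infty$, forcing $x(k)=0$. Combined with the assumed surjectivity, $\lambda I_W-\V$ is bijective and hence invertible. The main obstacle is the absorption claim, and within it the step ruling out that the $P$-chain remains in $K_B$ forever; this genuinely needs the finiteness of $K_B$ to exclude closed subsets via \prettyref{lem:LS-EMC-non-zero-path}, because $K_W$ is permitted to be infinite so the compact-finite-state route used in \prettyref{prop:LS-EMC-matrix-inversibility} is no longer available.
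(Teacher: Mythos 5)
Your proof is correct, and it takes a genuinely different route from the paper's. The paper proves the proposition by reducing it to the purely analytic Lemma~\ref{lem:LS-EMC-M-invertible-inf}, a maximum-principle argument on $\ell_\infty$: if $Mx=0$ with $x\neq 0$, then the uniform strict diagonal dominance on $K_W$ forces the set $J$ of indices attaining $\|x\|_\infty$ to lie in $K_B$ (finiteness of $K_B$ guarantees $J\neq\emptyset$), and then the flow condition produces the contradiction $M_{km}=0$ for $k\in J$, $m\in J^c$. You instead uniformize, write $\lambda I_W - \V = \alpha D(I-\tilde P)$ with $D$ diagonal and boundedly invertible and $\tilde P$ substochastic, and prove injectivity of $I-\tilde P$ by interpreting $\tilde P$ as the sub-transition kernel of a chain killed with probability $\lambda/(\alpha+\lambda)$ at each visit to $K_W$. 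Lemma~\ref{lem:LS-EMC-non-zero-path} then rules out $P$-closed subsets of the finite set $K_B$, so the $P$-chain hits $K_W$ from anywhere in a.s.\ finite time and hence visits $K_W$ infinitely often; the killing therefore is a.s.\ finite, and dominated convergence applied to $x(k)=E_k[x(\tilde X_n)\mathbf 1_{\{\tau>n\}}]$ kills any bounded fixed point. Both arguments use the same ingredients (uniformizability, finiteness of $K_B$, the flow condition from ergodicity, the assumed surjectivity) and both prove exactly injectivity, but yours is probabilistic and makes the role of each hypothesis transparent: uniformizability lets you build $P$, finiteness of $K_B$ controls the excursion lengths, the flow condition is precisely what makes the killed chain die. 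One incidental benefit of your route is that it bypasses the slightly awkward infimum--supremum comparison in condition~\eqref{eq:LS-EMC-inverse-inf-3} of the paper's lemma, which in the verification ``$\varepsilon(K_W)=\lambda$'' silently requires the diagonal of $\V$ to be uniformly controlled on $K_W$ beyond bare uniformizability; your decomposition needs only the single uniform killing probability $\lambda/(\alpha+\lambda)>0$, which holds whenever $\alpha<\infty$.
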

\begin{proof}
It is immediate, that $\lambda I_{W}-\V$ fulfills the assumptions
\eqref{eq:LS-EMC-inverse-inf-1}, \eqref{eq:LS-EMC-inverse-inf-2},
and \eqref{eq:LS-EMC-inverse-inf-3} of Lemma \ref{lem:LS-EMC-M-invertible-inf}
with $\varepsilon(K_{W})=\lambda$. The \underline{flow condition}
holds in this setting from the ergodicity of the continuous time process
with arguments similar to those in the proof of Lemma \ref{lem:LS-EMC-diag-inv}.
We conclude that $M$ is injective.
\end{proof}
In a first step we analyze the dynamics incorporated in the matrix
$A^{(i,n)}$ and $B^{(n)}$.
\begin{lem}
\label{lem:LS-EMC-ABUW} Recall that $\tau_{1}$ denotes the first
departure instant, that $\sigma_{1}$ denotes the first arrival instant
of a customer, and that $Y(\sigma_{1})\in K_{W}$ holds. 

For $k\in K,m\in K_{W}$, we define\index{U in-km@$U_{km}^{(i,n)}$}\marginpar{$U_{km}^{(i,n)}$}
\begin{equation}
U_{km}^{(i,n)}:=P\left(\left(X(\tau_{1}),Y(\tau_{1}^{-})\right)=(n+i-1,m)|Z(0)=(i,k)\right),\qquad1\leq i,n\in\mathbb{N}_{0}\,,\label{eq:LS-EMC-U-definition-1}
\end{equation}
and for $k\in K$ and $m\in K_{B}$ we prescribe by definition $U_{km}^{(i,n)}=0$.

Similarly, for $k\in K,m\in K_{W}$, we define\index{W km@$W_{km}$}\marginpar{$W_{km}$}
\begin{equation}
W_{km}:=P(Z(\sigma_{1})=(1,m)|Z(0)=(1,k))\,,\label{eq:LS-EMC-W-definition-1}
\end{equation}
and for $k\in K$ and $m\in K_{B}$ prescribe by definition $W_{km}=0$.

Then it holds for $A^{(i,n)}$ and $B^{(n)}$ from Definition \ref{def:LS-EMC-P-A-n-B-n}
\begin{eqnarray}
A^{(i,n)} & = & U^{(i,n)}R\,,\label{eq:LS-EMC-A-UR}\\
B^{(n)} & = & WA^{(1,n)}=WU^{(1,n)}R\,.\label{eq:LS-EMC-B-W-A}
\end{eqnarray}
\end{lem}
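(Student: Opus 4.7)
The plan is to factor each transition of $\hat{Z}$ into two stages that reflect the two distinct mechanisms by which the environment evolves: continuous $V$-driven changes (captured by $U^{(i,n)}$ or $W$) that produce the state just before the service completion, and the instantaneous $R$-jump at the service completion itself. Throughout, I would use that $Y(\tau_1^-)\in K_W$ (services only complete when the environment is working) and that $Y(\sigma_1)\in K_W$ (arrivals are only admitted when the environment is working), which accounts for the prescribed zeros in the $K_B$-columns of $U^{(i,n)}$ and $W$ and guarantees that the matrix products $U^{(i,n)}R$ and $WA^{(1,n)}$ coincide with the desired probabilities.

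First I would prove $A^{(i,n)}=U^{(i,n)}R$ for $i\geq 1$. Starting from $Z(0)=(i,k)$ with $i\geq 1$, the chain decreases by exactly one at $\tau_1$, so $X(\tau_1^-)=i+n$ when $X(\tau_1)=i+n-1$. Condition on the environment value $Y(\tau_1^-)=m$ immediately before the departure; this forces $m\in K_W$. By the dynamics of Section~\ref{sect:LS-MM1Inf-model}, the jump of $Y$ triggered by the service completion is governed by $R$ independently of the pre-departure history given $m$. Applying the law of total probability yields
\[
A^{(i,n)}_{km'} \;=\; \sum_{m\in K_W} U^{(i,n)}_{km}\,R(m,m'),
\]
which extends to a sum over $m\in K$ using the convention $U^{(i,n)}_{km}=0$ for $m\in K_B$, and is therefore $(U^{(i,n)}R)_{km'}$.

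Next I would prove $B^{(n)}=WA^{(1,n)}$. Starting from $Z(0)=(0,k)$, no service can complete while $X=0$, so $\tau_1>\sigma_1$ almost surely, and during $[0,\sigma_1)$ the environment evolves solely via $V$. At $\sigma_1$ the chain enters a state $(1,m)$ with $m\in K_W$ (since $\sigma_1$ is a time of admitted arrival), with probability $W_{km}$ by definition. The strong Markov property applied at the stopping time $\sigma_1$ then allows us to restart the process from $(1,m)$, and the first subsequent departure is $\tau_1$ itself; thus the conditional probability of reaching $(n,m')$ at $\tau_1$ equals $A^{(1,n)}_{mm'}$. Summing over $m$ gives $B^{(n)}_{km'}=\sum_{m\in K_W}W_{km}A^{(1,n)}_{mm'}=(WA^{(1,n)})_{km'}$, and substituting the already-proved identity for $A^{(1,n)}$ yields $B^{(n)}=WU^{(1,n)}R$.

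The main subtlety is not in the calculation but in verifying measurability and the correct use of the strong Markov property at $\sigma_1$ (which is a stopping time because it is the first hitting time of $\{1\}\times K$ by $X$), and in checking that the row-and-column conventions for $U^{(i,n)}$ and $W$ are consistent so that the matrix products genuinely capture all paths and no spurious contribution from $K_B$-columns arises.
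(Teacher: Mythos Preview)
Your proposal is correct and follows essentially the same approach as the paper: for $A^{(i,n)}$ you condition on $Y(\tau_1^-)$ and apply the $R$-jump, and for $B^{(n)}$ you use $\sigma_1<\tau_1$ a.s.\ from an empty queue, condition on $Z(\sigma_1)$, and invoke the strong Markov property---exactly as in the paper's proof. Your additional remarks on the $K_B$-column conventions and on $\sigma_1$ being a stopping time are sound and simply make explicit what the paper leaves implicit.
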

\begin{proof}
Using the fact, that the paths of the system in continuous time almost
sure have left limits, we get for $i\intpositive,~n\geq0$ and $k,m\in K$
\begin{eqnarray*}
A_{km}^{(i,n)} & = & P\left(\left(X(\tau_{1}),Y(\tau_{1})\right)=(i+n-1,m)|Z(0)=(i,k)\right)\\
 & = & \sum_{h\in K}P\left(\left(X(\tau_{1}),Y(\tau_{1}^{-})\right)=(i+n-1,h)|Z(0)=(i,k)\right)\Rentry hm\\
 & = & \sum_{h\in K}U_{kh}\Rentry hm\,,
\end{eqnarray*}
which in matrix form is \prettyref{eq:LS-EMC-A-UR}.

For the property \prettyref{eq:LS-EMC-B-W-A} we will use the fact,
that if the system starts with an empty queue, then the first arrival
occurs always before the first departure, $P(\sigma_{1}<\tau_{1})=1$.
We obtain for $n\geq0$ and $k,m\in K$ 
\begin{eqnarray*}
B_{km}^{(n)}: & = & P(\left(X(\tau_{1}),Y(\tau_{1})\right)=(n,m)|Z(0)=(0,k))\\
 & = & \sum_{h\in K}P\left(\left(X(\tau_{1}),Y(\tau_{1})\right)=(n,m)\cap Z(\sigma_{1})=(1,h)|Z(0)=(0,k)\right)\\
 & = & \sum_{h\in K}P\left(\left(X(\tau_{1}),Y(\tau_{1})\right)=(1+n-1,m)|Z(\sigma_{1})=(1,h)\cap Z(0)=(0,k)\right)\\
 &  & \cdot P\left(Z(\sigma_{1})=(1,h)|Z(0)=(0,k)\right)\\
 & \overset{SM}{=} & \sum_{h\in K}\underbrace{P\left(\left(X(\tau_{1}),Y(\tau_{1})\right)=(1+n-1,m)|Z(0)=(1,h)\right)}_{=U_{hm}^{(1,n)}}\\
 &  & \cdot\underbrace{P\left(Z(\sigma_{1})=(1,h)|Z(0)=(0,k)\right)}_{=W_{kh}}\\
 & = & \sum_{h\in K}W_{kh}A_{hm}^{(1,n)}\,,
\end{eqnarray*}
which proves \prettyref{eq:LS-EMC-B-W-A}.
\end{proof}
The proof of Lemma \ref{lem:LS-EMC-ABUW} reveals that the stochastic
matrix $W$ describes the system's development (queue length $\hat{X}$
and environment $\hat{Y}$ process) if it is started empty, until
the next customer enters the system.

The matrix $U^{(i,n)}$ describes the system's development from start
of the ongoing service time of the, say, n-th admitted customer, until
time $\tau_{n}-$; to be more precise, we describe an ongoing service
and the subsequent departure but without the immediately following
jump of the environment triggered by $R$. 

We will use the following properties of the system and its describing
process $Z$: 
\begin{itemize}
\item the strong Markov (\emph{SM}) property of $Z$, 
\item skip free to the left (\emph{SF}) property of the system 
\begin{equation}
P(Z(\zeta_{1})=(n+j,m)|Z(0)=(n,k))=0\qquad\forall j\geq2.\label{eq:LS-EMC-skip-free-property}
\end{equation}

\item cadlag paths; in particular we are interested in the values of $Y(\tau_{1}{-})$,
just before departure instants. 
\end{itemize}
We furthermore have to take into account that matrix multiplication
in general is not commutative. We write therefore $\prod_{j=i}^{n+1}B_{i}$
by definition for $B_{i}B_{i+1}...B_{n+1}$.
\begin{prop}
\label{prop:LCS-W-matrix}For the matrix $W=(W_{km}:k,m\in K)$ from
Lemma \ref{lem:LS-EMC-ABUW} it holds 
\[
W=\lambda(\lambda I_{W}-\V)^{-1}I_{W}
\]
\end{prop}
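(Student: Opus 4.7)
The plan is to derive a first-step equation for the entries $W_{km}$ by conditioning on the first event after time $0$ when the system is started empty in environment state $k$, and then to rewrite the resulting linear system in matrix form so that the inverse postulated by the Overall Assumption (I) appears directly. The key observation is that while the queue is empty, services are not possible, so the only events are arrivals (rate $\lambda$, admitted iff $k\in K_W$) and $\V$-driven environment transitions (rate $\v(k,k')$ from $k$ to $k'\neq k$). Hence the total exit rate from $(0,k)$ is $\lambda\cdot 1_{[k\in K_W]}-\v(k,k)$, and the competing-exponentials mechanism yields the probability of each ``first event''.

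Concretely, I would first write, for $k\in K_W$ and $m\in K_W$,
\[
W_{km}=\frac{\lambda}{\lambda-\v(k,k)}\,\delta_{km}+\sum_{k'\neq k}\frac{\v(k,k')}{\lambda-\v(k,k)}\,W_{k'm},
\]
since with probability $\lambda/(\lambda-\v(k,k))$ the next event is the admitted arrival (ending the excursion in state $(1,k)$), and otherwise the environment jumps to some $k'\neq k$ and by the strong Markov property the remaining evolution is governed by $W_{k'm}$. For $k\in K_B$ the same identity holds with the $\delta_{km}$-term deleted, because arrivals are blocked; this is compactly captured by multiplying the identity term by $I_W$. Multiplying through by $\lambda-\v(k,k)$ on the $K_W$-rows and by $-\v(k,k)$ on the $K_B$-rows gives, using $D:=-\mathrm{diag}(\V)+\lambda I_W$,
\[
D\,W=\lambda I_W+(\V-\mathrm{diag}(\V))\,W,
\]
which after cancelling $-\mathrm{diag}(\V)\,W$ from both sides rearranges to the clean matrix equation $(\lambda I_W-\V)\,W=\lambda I_W$.

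Finally, I would invoke the Overall Assumption (I) that $\lambda I_W-\V$ is invertible (this is justified in the ergodic finite-$K$ case by \prettyref{prop:LS-EMC-matrix-inversibility} and in the uniformizable case by \prettyref{prop:LS-EMC-matrix-inverse-proof-inf}) to conclude $W=\lambda(\lambda I_W-\V)^{-1}I_W$. I would also remark that the condition ``$W_{km}=0$ for $m\in K_B$'' built into the definition is automatically respected by the right-hand side, since the factor $I_W$ on the right zeroes out the $K_B$-columns; this is the formal counterpart of the fact that $\sigma_1$ can only occur while $Y\in K_W$.

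The only nontrivial step is the passage from the scalar first-step equations to the matrix identity $(\lambda I_W-\V)W=\lambda I_W$: one must be careful to treat the $K_W$- and $K_B$-rows uniformly and to recognise that the distinction between them is exactly what the factor $I_W$ on the right-hand side encodes. Beyond that, the proof is purely a strong-Markov-plus-competing-exponentials computation, and the regularity of $Z$ (granted from the outset) ensures that the excursion to $\sigma_1$ is a.s.\ finite so that the first-step decomposition is legitimate.
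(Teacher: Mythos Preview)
Your proof is correct and follows essentially the same route as the paper: both condition on the first jump from $(0,k)$ using competing exponentials and the strong Markov property, arrive at the matrix equation $(\lambda I_W-\V)W=\lambda I_W$ via $D\,W=\lambda I_W+(\V-\mathrm{diag}(\V))W$ with $D=-\mathrm{diag}(\V)+\lambda I_W$, and then invoke the invertibility assumption. The paper writes the indicator $1_{[k\in K_W]}$ explicitly in the scalar equations where you split into $K_W$- and $K_B$-rows, but the content is identical.
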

\begin{proof}
Recall that Let $\sigma_{1}$ denote the arrival time of the first
customer which is admitted to the system, which implies that at time
$\sigma_{1}$ the environment is in a non-blocking state, and $\zeta_{1}$
is the first jump time of the system which can be triggered only by
$\V$ or by an arrival conditioned on $\hat{Y}$ being in $K_{W}$.
It follows for $m\in K_{W}$ 
\begin{eqnarray*}
 &  & W_{km}\\
 & = & P(Z(\sigma_{1})=(1,m)|Z(0)=(0,k))\\
 & = & \sum_{h\in K\backslash\{k\}}P(Z(\sigma_{1})=(1,m)\cap Z(\zeta_{1})=(0,h)|Z(0)=(0,k))\\
 &  & +\delta_{km}P(Z(\zeta_{1})=(1,m)|Z(0)=(0,k))\\
 & = & \sum_{h\in K\backslash\{k\}}P(Z(\sigma_{1})=(1,m)|Z(\zeta_{1})=(0,h),Z(0)=(0,k))P(Z(\zeta_{1})=(0,h)|Z(0)=(0,k))\\
 &  & +\delta_{km}P(Z(\zeta_{1})=(1,m)|Z(0)=(0,k))\\
 & \overset{SM}{=} & \sum_{h\in K\backslash\{k\}}\underbrace{P(Z(\sigma_{1})=(1,m)|Z(0)=(0,h))}_{W_{hm}}\underbrace{P(Z(\zeta_{1})=(0,h)|Z(0)=(0,k))}_{=\frac{\v({k,h})}{-\v({k,k})+\lambda1_{[k\in K_{W}]}}}\\
 &  & +\delta_{km}\underbrace{P(Z(\zeta_{1})=(1,m)|Z(0)=(0,k))}_{=\frac{\lambda}{-\v({k,k})+\lambda1_{[k\in K_{W}]}}1_{[k\in K_{W}]}}
\end{eqnarray*}

The equation above can be written in matrix form 
\begin{eqnarray*}
W & = & (-diag(\V)+\lambda I_{W})^{-1}\left((\V-diag(\V))W+\lambda I_{W}\right)\\
\Longleftrightarrow(-diag(\V)+\lambda I_{W})W & = & (\V-diag(\V))W+\lambda I_{W}\\
\Longleftrightarrow(\lambda I_{W}-\V)W & = & \lambda I_{W}\Longrightarrow W=\lambda(\lambda I_{W}-\V)^{-1}I_{W}
\end{eqnarray*}
\end{proof}
\begin{prop}
\label{prop:LCS-U-matrix}For the matrices $U^{(i,n)}=(U_{km}^{(i,n)}:k,m\in K)$
from Lemma \ref{lem:LS-EMC-ABUW} it holds 
\begin{equation}
U^{(i,0)}=((\lambda+\mu(i))I_{W}-\V)^{-1}\mu(i)I_{W},\quad1\leq i,\label{eq:LS-EMC-U-n-rekursiv-rekursion-begin}
\end{equation}
and for $1\leq i,n\in\mathbb{N}_{0}$, 
\begin{eqnarray}
U^{(i,n+1)} & = & U^{(i,n)}\left(\frac{\lambda}{\mu(n+i)}\right)\mu(n+1+i)\left(\lambda I_{W}+\mu(n+1+i)I_{W}-\V\right)^{-1},\label{eq:LS-EMC-U-n-rekursiv}
\end{eqnarray}
\end{prop}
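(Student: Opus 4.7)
The plan is to establish both identities by first-step (infinitesimal) analysis of the continuous-time process $Z$, combined with the strong Markov property and the invertibility of matrices of the form $(\lambda+\mu)I_W-V$ guaranteed by \prettyref{lem:LS-EMC-diag-inv} (applied with $\eta=\lambda+\mu(i)>0$).

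For the base case \eqref{eq:LS-EMC-U-n-rekursiv-rekursion-begin}, I would condition the process started at $(i,k)$ on its first jump. From $(i,k)$ with $k\in K_W$ three competing exponentials govern the evolution: an arrival at rate $\lambda$, a service at rate $\mu(i)$, and, for each $h\neq k$, a $V$-transition at rate $v(k,h)$; from $(i,k)$ with $k\in K_B$ only the $V$-transitions are active. Since the event $\{X(\tau_1)=i-1\}$ forbids any arrival prior to $\tau_1$, the arrival branch contributes nothing to $U^{(i,0)}_{km}$. The service branch contributes $\mu(i)\delta_{km}\mathbf{1}_{[k\in K_W]}$ (a departure occurs immediately with $Y(\tau_1^-)=k$), and each $V$-branch contributes $v(k,h)\,U^{(i,0)}_{hm}$ by the strong Markov property. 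Clearing jump-rate denominators and assembling the $k$-indexed equations into a single matrix identity yields
\[
((\lambda+\mu(i))I_W-V)\,U^{(i,0)}=\mu(i)I_W,
\]
from which \eqref{eq:LS-EMC-U-n-rekursiv-rekursion-begin} follows by inversion.

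For the recursion \eqref{eq:LS-EMC-U-n-rekursiv}, the key observation is that the event $\{X(\tau_1)=n+i\}$ with $n\geq 0$ forces at least one admitted arrival before $\tau_1$. Let $\sigma_1\in[0,\tau_1)$ denote the first such arrival instant, and apply the strong Markov property at $\sigma_1$ to split the trajectory into a pre-$\sigma_1$ piece and a post-$\sigma_1$ piece. The pre-$\sigma_1$ piece is encoded by an auxiliary matrix $\Phi^{(i)}$ with entries $\Phi^{(i)}_{kh}=P(Y(\sigma_1)=h,\sigma_1<\tau_1\mid Z(0)=(i,k))$; a first-step analysis formally identical to the base case, but with the arrival playing the role of the terminating event, gives $((\lambda+\mu(i))I_W-V)\Phi^{(i)}=\lambda I_W$, hence $\Phi^{(i)}=((\lambda+\mu(i))I_W-V)^{-1}\lambda I_W$. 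Because the arrival leaves $Y$ unchanged and promotes the queue from $i$ to $i+1$, the post-$\sigma_1$ piece is precisely $U^{(i+1,n)}$, producing the intermediate left-multiplicative recursion
\[
U^{(i,n+1)}=((\lambda+\mu(i))I_W-V)^{-1}\lambda I_W\,U^{(i+1,n)}.
\]
Iterating this down to the base case and regrouping the resulting telescoping product in $j=0,1,\dots,n$ should yield the right-multiplicative form of \eqref{eq:LS-EMC-U-n-rekursiv}.

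The main obstacle is precisely this algebraic rearrangement: the scalar factor $\lambda/\mu(n+i)$ appearing on the right of $U^{(i,n)}$ in the statement is the bookkeeping quantity that converts the ``service-terminating'' resolvent implicit in the rightmost block of $U^{(i,n)}$ (carrying a factor $\mu(n+i)I_W$) into an ``arrival-terminating'' factor $\lambda I_W$, after which the residual block $\mu(n+1+i)((\lambda+\mu(n+1+i))I_W-V)^{-1}$ implements the terminal service from the enlarged queue length $n+1+i$. The delicate point is to carry out this reshuffling while preserving the convention $U^{(i,n+1)}_{km}=0$ for $m\in K_B$, since the projector $I_W$ appears asymmetrically in the two recursive forms; the invertibility hypothesis on $\lambda I_W-V$ from the Overall Assumption (I), together with \prettyref{lem:LS-EMC-diag-inv}, is what makes the two normalizations compatible.
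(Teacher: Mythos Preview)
Your proposal is correct and follows essentially the same route as the paper. Both arguments establish the base case by a first-jump analysis yielding $((\lambda+\mu(i))I_W-V)U^{(i,0)}=\mu(i)I_W$, and both derive the left-multiplicative recursion $U^{(i,n+1)}=((\lambda+\mu(i))I_W-V)^{-1}\lambda I_W\,U^{(i+1,n)}$ before iterating to the product form and reading off the right-multiplicative recursion \eqref{eq:LS-EMC-U-n-rekursiv}.

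The only cosmetic difference is that the paper conditions directly on the first jump $\zeta_1$ of $Z$ (so the arrival and the $V$-transitions appear as alternative branches at a single step), whereas you organize the argument around the first admitted arrival $\sigma_1$ and package the preceding $V$-moves into the auxiliary matrix $\Phi^{(i)}$; since you then compute $\Phi^{(i)}$ by the same first-jump analysis, the two derivations coincide line by line. Your remark about the asymmetric placement of $I_W$ is well taken: the paper's proof in fact arrives at $U^{(i,n+1)}=U^{(i,n)}\frac{\lambda}{\mu(n+i)}\mu(n+1+i)((\lambda+\mu(n+1+i))I_W-V)^{-1}I_W$ with a trailing projector, which is what guarantees the $K_B$-columns vanish.
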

\begin{proof}
Note that $\tau_{1}$ is the first departure time and$\zeta_{1}$
is the first jump time of the system, and if this jump is triggered
by a departure than $\zeta_{1}=\tau_{1}$.

For $U^{(i,0)}$ with $i\intpositive$ it holds for $k\in K$ and
$m\in K_{W}$:

\begin{eqnarray*}
 &  & U_{km}^{(i,0)}\\
 & = & P\left(\left(X(\tau_{1}),Y(\tau_{1}^{-})\right)=(i-1,m)|Z(0)=(i,k)\right)\\
 & = & \sum_{h\in K\backslash\{k\}}P\left(\left(X(\tau_{1}),Y(\tau_{1}^{-})\right)=(i-1,m)\cap Z(\zeta_{1})=(i,h)|Z(0)=(i,k)\right)\\
 &  & +\delta_{km}P\left(\left(X(\tau_{1}),Y(\tau_{1}^{-})\right)=(i-1,k)|Z(0)=(i,k)\right)\\
 & = & \sum_{h\in K\backslash\{k\}}P\left(\left(X(\tau_{1}),Y(\tau_{1}^{-})\right)=(i-1,m)|Z(\zeta_{1})=(i,h),Z(0)=(i,k)\right)\\
 &  & \cdot P\left(Z(\zeta_{1})=(i,h)|Z(0)=(i,k)\right)\\
 &  & +\delta_{km}P\left(\left(X(\tau_{1}),Y(\tau_{1}^{-})\right)=(i-1,k)|Z(0)=(i,k)\right)\\
 & = & \sum_{h\in K\backslash\{k\}}P\left(\left(X(\tau_{1}),Y(\tau_{1}^{-})\right)=(i-1,m)|Z(0)=(i,h)\right)\frac{\v({k,h})}{-\v({k,k})+(\lambda+\mu(i))1_{[k\in K_{W}]}}\\
 &  & +\delta_{km}\frac{\mu(i)}{-\v({k,k})+(\lambda+\mu(i))1_{[k\in K_{W}]}}\\
 & = & \frac{1}{-\v({k,k})+(\lambda+\mu(i))1_{[k\in K_{W}]}}\left(\sum_{h\in K\backslash\{k\}}\v({k,h})U_{hk}^{(i,0)}+\delta_{km}\mu(i)1_{[k\in K_{W}]}\right)
\end{eqnarray*}

We write the equation above in a matrix form

\begin{eqnarray*}
U^{(i,0)}~~=~~\left(-diag(\V)+(\lambda+\mu(i))I_{W}\right)^{-1} & \cdot & \left((\V-diag(\V))U^{(i,0)}+\mu(i)I_{W}\right)\\
\Longleftrightarrow(-diag(\V)+(\lambda+\mu(i))I_{W})U^{(i,0)} & = & ((\V-diag(\V))U^{(i,0)}+\mu(i)I_{W})\\
\Longleftrightarrow((\lambda+\mu(i))I_{W}-\V)U^{(i,0)} & = & \mu(i)I_{W}\\
\Longrightarrow U^{(i,0)} & = & ((\lambda+\mu(i))I_{W}-\V)^{-1}\mu(i)I_{W}
\end{eqnarray*}

Next we calculate for $n\geq0$ and $1\leq i$ the elements of the
matrix $U_{km}^{(i,n+1)}$

\begin{eqnarray*}
 &  & U_{km}^{(i,n+1)}\\
 & = & P\left(\left(X(\tau_{1}),Y(\tau_{1}^{-})\right)=(n+1+i-1,m)|Z(0)=(i,k)\right)\\
 & = & \sum_{j=0}^{n+1}\sum_{h\in K}1_{[(j,h)\neq(i,k)]}P\left(\left(X(\tau_{1}),Y(\tau_{1}^{-})\right)=(n+i,m)\cap Z(\zeta_{1})=(j,h)|Z(0)=(i,k)\right)\\
 & \overset{SF}{=} & \sum_{h\in K\backslash\{h\}}P\left(\left(X(\tau_{1}),Y(\tau_{1}^{-})\right)=(n+i,m)\cap Z(\zeta_{1})=(i,h)|Z(0)=(i,k)\right)\qquad\\
 &  & +P\left(\left(X(\tau_{1}),Y(\tau_{1}^{-})\right)=(n+i,m)\cap Z(\zeta_{1})=(i+1,k)|Z(0)=(i,k)\right)\\
 & = & \sum_{h\in K\backslash\{h\}}P\left(\left(X(\tau_{1}),Y(\tau_{1}^{-})\right)=(n+i,m)|Z(\zeta_{1})=(i,h),Z(0)=(i,k)\right)\\
 &  & \cdot P\left(Z(\zeta_{1})=(i,h)|Z(0)=(i,k)\right)\\
 &  & +P\left(\left(X(\tau_{1}),Y(\tau_{1}^{-})\right)=(n+i,m)|Z(\zeta_{1})=(i+1,k),Z(0)=(i,k)\right)\\
 &  & \cdot P\left(\left(X(\tau_{1}),Y(\tau_{1}^{-})\right)=(i+1,k)|Z(0)=(i,k)\right)\\
 & \overset{SM}{=} & \sum_{h\in K\backslash\{h\}}P\left(\left(X(\tau_{1}),Y(\tau_{1}^{-})\right)=(n+i,m)|Z(0)=(i,h)\right)P(\left(Z(\zeta_{1})=(i,h)|Z(0)=(i,k)\right))\\
 &  & +P\left(\left(X(\tau_{1}),Y(\tau_{1}^{-})\right)=(n+i,m)|Z(\zeta_{1})=(i+1,k)\right)P\left(Z(\zeta_{1})=(i+1,k)|Z(0)=(i,k)\right)\\
 & = & \sum_{h\in K\backslash\{h\}}P(\left(X(\tau_{1}),Y(\tau_{1}^{-})\right)=(n+i,m)|Z(0)=(i,h))\\
 &  & \cdot\frac{\v({k,h})}{-\v({k,k})+(\mu(i)+\lambda)1_{[k\in K_{W}]}}\\
 &  & +P(\left(X(\tau_{1}),Y(\tau_{1}^{-})\right)=(n+i+1-1,m)|Z(0)=(i+1,k))\\
 &  & \cdot1_{[k\in K_{W}]}\frac{\lambda}{-\v({k,k})+(\mu(i)+\lambda)1_{[k\in K_{W}]}}
\end{eqnarray*}

The last equation can be written in matrix form as

\begin{eqnarray*}
U^{(i,n+1)}=\left(-diag(\V)+(\lambda+\mu(i))I_{W}\right)^{-1} & \cdot & \left((\V-diag(\V)U^{(i,n+1)}+\lambda I_{W}U^{(i+1,n)}\right)\\
\Longleftrightarrow\left(-diag(\V)+(\lambda+\mu(i))I_{W}\right)U^{(i,n+1)} & = & \left(\V-diag(\V)\right)U^{(i,n+1)}+\lambda I_{W}U^{(i+1,n)}\\
\Longleftrightarrow((\lambda+\mu(i))I_{W}-\V)U^{(i,n+1)} & = & \lambda I_{W}U^{(i+1,n)}\\
\Longrightarrow U^{(i,n+1)} & = & ((\lambda+\mu(i))I_{W}-\V)^{-1}\lambda I_{W}U^{(i+1,n)}
\end{eqnarray*}

Iterating the last equation $n$-times and then applying \eqref{eq:LS-EMC-U-n-rekursiv-rekursion-begin}
leads with $I_{W}^{2}=I_{W}$ to

\begin{eqnarray}
U^{(i,n+1)} & = & \prod_{j=i}^{n+1+i}\left[\lambda\left((\lambda+\mu(j))I_{W}-\V\right)^{-1}I_{W}\right]\frac{\mu(n+1+i)}{\lambda}\Longrightarrow\nonumber \\
U^{(i,n+1)} & = & U^{(i,n)}\frac{\lambda}{\mu(n+i)}\mu(n+1+i)\left((\lambda+\mu(n+1+i))I_{W}-\V\right)^{-1}I_{W}\label{eq:LS-EMC-U-n-rekursiv-2}
\end{eqnarray}

Finally we verify that the recursion \prettyref{eq:LS-EMC-U-n-rekursiv-2}
is compatible with \eqref{eq:LS-EMC-U-n-rekursiv-rekursion-begin}:

\begin{eqnarray*}
U^{(i,1)} & = & \underbrace{\lambda\left((\lambda+\mu(i))I_{W}-\V\right)^{-1}}_{U^{(i,0)}\frac{\lambda}{\mu(i)}}I_{W}\lambda\left((\lambda+\mu(i+1))I_{W}-\V\right)^{-1}I_{W}\frac{\mu(1+i)}{\lambda}\\
 & = & U^{(i,0)}\frac{\lambda}{\mu(i)}\mu(1+i)\left((\lambda+\mu(1+i))I_{W}-\V\right)^{-1}I_{W}
\end{eqnarray*}

\end{proof}
We are now ready to evaluate the steady state equations \prettyref{eq:LS-EMC-steady-state-equation-grouped}
of $\hat{Z}$. Because we have a Poisson-$\lambda$ arrival stream,
the marginal steady state \eqref{eq:LS-BaD1} of the continuous time
queue length process $X$ is 
\[
\xi=\left(\xi(n):=C^{-1}\prod_{i=1}^{n}\frac{\lambda}{\mu(i)}:n\in\mathbb{N}_{0}\right)
\]
Recall \eqref{eq:LS-EMC-steady-state-equation-grouped} 
\[
\hat{\pi}^{(0)}B^{(n)}+\sum_{i=1}^{n+1}\hat{\pi}^{(i)}A^{(i,n-i+1)}=\hat{\pi}^{(n)},\qquad n\in\mathbb{N}_{0}\,,
\]
and the decomposition from Lemma \ref{lem:LS-EMC-ABUW}: 
\[
A^{(i,n)}=U^{(i,n)}R,~~~\text{and}~~~B^{(n)}=WU^{(1,n)}R\,.
\]

The \emph{conjectured product form steady state} will eventually be
realized as 
\[
\hat{\pi}(n,k)=\xi(n)\cdot\hat{\theta}(k),~\text{for}~~(n,k)\in E,~\text{and}~~\hat{\pi}^{n}=\xi(n)\cdot\hat{\theta},~\text{for}~~n\in\mathbb{N}_{0},
\]
with $\hat{\theta}(k)=0$ for some $k\in K.$

The \emph{idea of the proof }is: The steady state equation is transformed
into 
\begin{equation}
\xi(n)\cdot\hat{\theta}=\xi(0)\cdot\hat{\theta}\cdot W\cdot U^{(1,n)}\cdot R+\sum_{i=1}^{n+1}\xi(n)\cdot\hat{\theta}\cdot U^{(i,n-i+1)}\cdot R,~~~n\in\mathbb{N}_{0}.\label{steadyst1}
\end{equation}
We insert $\xi(n)$, cancel $C^{-1}$, and obtain the ''\emph{environment
equations}'' 
\begin{eqnarray}
\hat{\theta} & = & \hat{\theta}\cdot W\cdot U^{(1,0)}\cdot R+\left(\frac{\lambda}{\mu(1)}\right)\cdot\hat{\theta}\cdot U^{(i,n-i+1)}\cdot R\,,\label{steadyst2}\\
 &  & \text{and for}~~n\intpositive\nonumber \\
\left(\prod_{i=1}^{n}\frac{\lambda}{\mu(i)}\right)\cdot\hat{\theta} & = & \hat{\theta}\cdot W\cdot U^{(1,n)}\cdot R+\sum_{i=1}^{n+1}\left(\prod_{j=1}^{i}\frac{\lambda}{\mu(i)}\right)\cdot\hat{\theta}\cdot U^{(i,n-i+1)}\cdot R,,\label{steadyst2a}
\end{eqnarray}
which we may consider as a sequence of equations with vector of unknowns
$\hat{\theta}$. The obvious problem with this system, namely, having
an infinite sequence of equations for the same solution, is resolved
by the following lemma.
\begin{lem}
\label{lem:LS-EMC-Mn}For $n\in\mathbb{N}_{0}$ denote 
\[
M^{(n)}:=WU^{(1,n)}+\sum_{i=1}^{n+1}\left(\prod_{j=1}^{i}\frac{\lambda}{\mu(i)}\right)\cdot U^{(i,n-i+1)}\,.
\]
Then it holds 
\begin{eqnarray}
M{}^{(0)} & = & \lambda(\lambda I_{W}-\V)^{-1}I_{W}\label{eq:LS-EMC-steadyst4}\\
M^{(n)} & = & \left(\prod_{i=1}^{n}\frac{\lambda}{\mu(i)}\right)\lambda(\lambda I_{W}-\V)^{-1}I_{W},\quad n\intpositive\,,\label{eq:LS-EMC-M-n-explicite}
\end{eqnarray}
and consequently 
\begin{equation}
M^{(n)}=\left(\prod_{i=1}^{n}\frac{\lambda}{\mu(i)}\right)M^{(0)},\qquad n\intpositive\,.\label{eq:LS-EMC-M-n-explicite-with-xi}
\end{equation}
\end{lem}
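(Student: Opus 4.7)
The plan is to reduce every block in $M^{(n)}$ to a product in the single family of matrices $H_j := \lambda\bigl((\lambda+\mu(j))I_W-V\bigr)^{-1}I_W$, with the convention $\mu(0):=0$ so that $H_0 = W$ by \prettyref{prop:LCS-W-matrix}. Invertibility of $\lambda I_W - V$ is Overall Assumption~(I); for $j\geq 1$ the matrix $(\lambda+\mu(j))I_W - V$ is invertible by the same argument, since adding $\mu(j)I_W$ only strengthens the $K_W$-diagonal (cf.~\prettyref{prop:LS-EMC-matrix-inversibility}, \prettyref{prop:LS-EMC-matrix-inverse-proof-inf}). Iterating the recursion of \prettyref{prop:LCS-U-matrix} down to the initial value $U^{(i+n,0)} = \frac{\mu(i+n)}{\lambda}H_{i+n}$ yields the closed form
\[
U^{(i,n)} \;=\; \frac{\mu(i+n)}{\lambda}\,H_i H_{i+1}\cdots H_{i+n},\qquad i\geq 1,\ n\geq 0.
\]
Thus $WU^{(1,n)} = \frac{\mu(n+1)}{\lambda}H_0 H_1\cdots H_{n+1}$ and $U^{(i,n-i+1)} = \frac{\mu(n+1)}{\lambda}H_i\cdots H_{n+1}$, so every summand defining $M^{(n)}$ shares the common right suffix $H_i\cdots H_{n+1}$.

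The second ingredient is the algebraic identity
\[
\lambda H_\mu + \mu H_0 H_\mu \;=\; \lambda H_0,\qquad\text{equivalently}\qquad H_0 H_\mu \;=\; \frac{\lambda}{\mu}(H_0 - H_\mu),
\]
valid for every $\mu\geq 0$. It is derived from $((\lambda+\mu)I_W - V)H_\mu = \lambda I_W$ by isolating $(\lambda I_W - V)H_\mu = \lambda I_W - \mu I_W H_\mu$, pre-multiplying by $(\lambda I_W - V)^{-1}$, and using $(\lambda I_W - V)^{-1}\lambda I_W = H_0$ together with $\mu(\lambda I_W - V)^{-1}I_W = \frac{\mu}{\lambda}H_0$. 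This identity is the lever that converts every $H_0 H_\mu$ into a pure combination of $H_0$ and $H_\mu$, which is what makes the induction telescope.

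I would then conclude by induction on $n$. Base case $n=0$: $M^{(0)} = H_0 U^{(1,0)} + \frac{\lambda}{\mu(1)}U^{(1,0)} = \frac{\mu(1)}{\lambda}H_0 H_1 + H_1$, and the identity with $\mu=\mu(1)$ gives $M^{(0)} = H_0 = \lambda(\lambda I_W - V)^{-1}I_W$, which is~\eqref{eq:LS-EMC-steadyst4}. Inductive step: from $U^{(i,n-i+2)} = U^{(i,n-i+1)}\cdot\frac{\mu(n+2)}{\mu(n+1)}H_{n+2}$ for $1\leq i\leq n+1$ and $U^{(n+2,0)} = \frac{\mu(n+2)}{\lambda}H_{n+2}$, pulling $H_{n+2}$ out on the right yields the recursion
\[
M^{(n+1)} \;=\; M^{(n)}\,\frac{\mu(n+2)}{\mu(n+1)}\,H_{n+2} \;+\; \frac{\lambda^{n+1}}{\prod_{j=1}^{n+1}\mu(j)}\,H_{n+2}.
\]
Substituting $M^{(n)} = \bigl(\prod_{i=1}^n\frac{\lambda}{\mu(i)}\bigr)H_0$ and applying the identity with $\mu = \mu(n+2)$ collapses the right-hand side to $\bigl(\prod_{i=1}^{n+1}\frac{\lambda}{\mu(i)}\bigr)H_0$, which is~\eqref{eq:LS-EMC-M-n-explicite}; the consequence~\eqref{eq:LS-EMC-M-n-explicite-with-xi} is then immediate.

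The main obstacle is organizational rather than algebraic: one has to spot the uniform factorization of every summand through the $H_j$, and set up the recursion so that exactly one new factor $H_{n+2}$ appears at each step, to which the identity $\lambda H_\mu + \mu H_0 H_\mu = \lambda H_0$ can be applied. Once the closed form $U^{(i,n)} = \frac{\mu(i+n)}{\lambda}H_i\cdots H_{i+n}$ is in hand and the key identity is isolated, the two-term recursion telescopes in a single stroke.
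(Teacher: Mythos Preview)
Your proof is correct and follows essentially the same route as the paper: compute $M^{(0)}$ directly, then derive the one-step recursion $M^{(n+1)}=M^{(n)}\frac{\mu(n+2)}{\mu(n+1)}H_{n+2}+\bigl(\prod_{j=1}^{n+1}\frac{\lambda}{\mu(j)}\bigr)H_{n+2}$ from the $U$-recursion and collapse it via the inductive hypothesis. Your packaging is tidier---naming the blocks $H_j$ and isolating the identity $\lambda H_\mu+\mu H_0H_\mu=\lambda H_0$ makes the telescoping transparent---but the paper performs exactly the same algebra inline without the shorthand.
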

\begin{proof}
We show that \eqref{eq:LS-EMC-steadyst4} holds and compute directly

\begin{eqnarray*}
M{}^{(0)} & = & WU^{(1,0)}+\frac{\lambda}{\mu(1)}U^{(1,0)}=\left(W+\frac{\lambda}{\mu(1)}I\right)U^{(1,0)}\\
 & = & \left(\lambda(\lambda I_{W}-\V)^{-1}I_{W}+\frac{\lambda}{\mu(1)}I\right)\left(-\V+(\mu(1)+\lambda)I_{W}\right)^{-1}\mu(1)I_{W}\\
 & = & \frac{\lambda}{\mu(1)}(\lambda I_{W}-\V)^{-1}\left(\mu(1)I_{W}-(\lambda I_{W}-\V)\right)(\V-(\mu(1)+\lambda)I_{W})^{-1}\mu(1)I_{W}\\
 & = & \lambda(\lambda I_{W}-\V)^{-1}I_{W}
\end{eqnarray*}
Assume now, that for $n\geq0$ \prettyref{eq:LS-EMC-M-n-explicite}
holds for $M^{(n)}$. Then 
\begin{eqnarray*}
 &  & M^{(n+1)}\\
 & = & WU^{(n+1,1)}+\sum_{i=1}^{n+2}\left(\prod_{j=1}^{i}\frac{\lambda}{\mu(j)}\right)U^{(i,n-i+2)}\\
 & = & \underbrace{\left(WU^{(1,n)}+\sum_{i=1}^{n+1}\left(\prod_{j=1}^{i}\frac{\lambda}{\mu(j)}\right)^{n}U^{(i,n-i+1)}\right)\frac{\mu(n+2)}{\mu(n+1)}\lambda}_{=M^{(n)}\frac{\mu(n+2)}{\mu(n+1)}\lambda}\left(\lambda I_{W}+\mu(n+2)I_{W}-\V\right)^{-1}I_{W}\\
 &  & +\left(\prod_{i=1}^{n+2}\frac{\lambda}{\mu(i)}\right)\mu(n+2)(\lambda I_{W}+\mu(n+2)I_{W}-\V)^{-1}I_{W}\\
 & = & \left(\frac{\lambda}{\mu(n+1)}\mu(n+2)M^{(n)}+\left(\prod_{i=1}^{n+2}\frac{\lambda}{\mu(i)}\right)\mu(n+2)I\right)\left(\lambda I_{W}+\mu(n+2)I_{W}-\V\right)^{-1}I_{W}\\
 & = & \left(\prod_{i=1}^{n+1}\frac{\lambda}{\mu(i)}\right)\left(\lambda(\lambda I_{W}-\V)^{-1}I_{W}\mu(n+2)+\lambda I\right)\left(\lambda I_{W}+\mu(n+2)I_{W}-\V\right)^{-1}I_{W}\\
 & = & \lambda\left(\prod_{i=1}^{n+1}\frac{\lambda}{\mu(i)}\right)(\lambda I_{W}-\V)^{-1}\left(I_{W}\mu(n+2)+(\lambda I_{W}-\V)\right)\left(\lambda I_{W}+\mu(n+2)I_{W}-\V\right)^{-1}I_{W}\\
 & = & \lambda\left(\prod_{i=1}^{n+1}\frac{\lambda}{\mu(i)}\right)(\lambda I_{W}-\V)^{-1}\left(\mu(n+2)I_{W}+(\lambda I_{W}-\V)\right)\left(\mu(n+2)I_{W}+(\lambda I_{W}-\V)\right)^{-1}I_{W}\\
 & = & \lambda\left(\prod_{i=1}^{n+1}\frac{\lambda}{\mu(i)}\right)(\lambda I_{W}-\V)^{-1}I_{W}
\end{eqnarray*}

\end{proof}
Note, that with the definitions in Lemma \ref{lem:LS-EMC-Mn} the
sequence of \emph{environment equations} \eqref{steadyst2} and \eqref{steadyst2a}
reduces to

\[
\hat{\theta}=\hat{\theta}\cdot M^{(0)}\cdot R\,,\quad\text{and for}~~n\intpositive:~~~\left(\prod_{i=1}^{n}\frac{\lambda}{\mu(i)}\right)\cdot\hat{\theta}=\hat{\theta}\cdot M^{(n)}\cdot R\,,
\]
and the result in \eqref{eq:LS-EMC-M-n-explicite-with-xi} says, that
all these equations are compatible, in fact, they are the same. Therefore
the next lemma will open the path to our main theorem by providing
a common solution to all the environment equations. 
\begin{lem}
\label{lem:LS-EMC-theta-theta-hat-releationship} \textbf{(a)} The
matrix $M^{(0)}$ is stochastic, i.e., it holds 
\begin{equation}
M^{(0)}\mathbf{e}=\mathbf{e}\quad\text{and}\quad M_{km}^{(0)}\geq0,\qquad\forall k,m\in K\,.\label{eq:LCS-M0-row-sums}
\end{equation}

\textbf{(b)} If the continuous time process $Z$ is ergodic with product
form steady state $\pi$ with 
\begin{equation}
{\pi}(n,k)=\xi(n){\theta}(k),\quad(n,k)\in E\,,\label{eq:LCS-mm1inf-pi-hat-product-form-11}
\end{equation}
then, with the marginal stationary distribution $\theta$ of $Y$
in continuous time, 
\begin{equation}
\hat{\theta}=(\theta I_{W}\mathbf{e})^{-1}\cdot\theta I_{W}R\label{eq:LCS-theta-vs-theta-check}
\end{equation}
is a stochastic solution of the equation 
\begin{equation}
\hat{\theta}\underbrace{\lambda(\lambda I_{W}-\V)^{-1}I_{W}}_{=M^{(0)}}R=\hat{\theta}\,.\label{eq:LCS-existence-theta-hat-M0-R-eq-theta-hat}
\end{equation}
If $\left(I_{W}-\frac{1}{\lambda}V\right)^{-1}$ is injective, the
stochastic solution $\hat{\theta}$ of the equation \eqref{eq:LCS-existence-theta-hat-M0-R-eq-theta-hat}
is unique.

\textbf{(c)} Let $L:=\{k\in K:\exists~~m\in K_{W}:R_{m,k}>0\}$ the
set of states of the environment which can be reached from $K_{W}$
by a one-step jump governed by $R$. Then it holds 
\begin{equation}
\hat{\theta}(k)=0,\quad\forall k\in K\setminus L\,.\label{eq:stochsolution2}
\end{equation}
\textbf{(d)} If $\hat{\theta}$ is a stochastic solution of \eqref{eq:LCS-existence-theta-hat-M0-R-eq-theta-hat}
then $x$ defined as

\begin{eqnarray}
x & := & \hat{\theta}\left(I_{W}-\frac{1}{\lambda}\V\right)^{-1}\label{eq:LCS-x-theta-hat}
\end{eqnarray}

is a solution of the steady state equation \eqref{eq:LS-constant-lambda-theta-matrix-equation}
of the continuous time process $(X,Y)$. Therefore the uniquely determined
stationary distribution of \eqref{eq:LS-constant-lambda-theta-matrix-equation}
is

\begin{eqnarray}
\theta & = & \left(\hat{\theta}\left(I_{W}-\frac{1}{\lambda}\V\right)^{-1}\mathbf{e}\right)^{-1}\hat{\theta}\left(I_{W}-\frac{1}{\lambda}\V\right)^{-1}\label{eq:LCS-finit-K-unique-theta-hat}
\end{eqnarray}
\end{lem}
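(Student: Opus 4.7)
My plan is to dispatch the four parts in order, leveraging the identifications already in the paper: the formula $M^{(0)}=\lambda(\lambda I_W-V)^{-1}I_W$ from Lemma \ref{lem:LS-EMC-Mn} agrees with the matrix $W$ from Proposition \ref{prop:LCS-W-matrix}, and $W$ has the probabilistic interpretation given by \eqref{eq:LS-EMC-W-definition-1}. For part (a), non-negativity is then immediate from this probabilistic interpretation. The row-sum identity $M^{(0)}\mathbf{e}=\mathbf{e}$ reduces, after multiplying by $\lambda I_W-V$ on the left, to $\lambda I_W\mathbf{e}=(\lambda I_W-V)\mathbf{e}$, i.e.\ $V\mathbf{e}=0$, which holds since $V$ is a generator.

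For part (b), I would first check directly that the vector in \eqref{eq:LCS-theta-vs-theta-check} is stochastic: it is a normalized non-negative combination of the $K_W$-rows of $R$, so its entries are non-negative and sum to one because $R\mathbf{e}=\mathbf{e}$ and $I_W R\mathbf{e}=I_W\mathbf{e}$. The key algebraic input for the fixed-point equation is that, by Corollary \ref{cor:LS-lambda-constant}, $\theta$ satisfies $\theta\tilde Q=0$ with $\tilde Q=\lambda I_W(R-I)+V$. Rearranging this yields the pivotal identity
\[
\theta(I_W - V/\lambda) \;=\; \theta I_W R,
\qquad\text{equivalently}\qquad
\theta I_W R\cdot\lambda(\lambda I_W - V)^{-1} \;=\; \theta.
\]
Multiplying on the right by $I_W R$ and inserting the normalization constant $(\theta I_W\mathbf{e})^{-1}$ gives $\hat\theta\,M^{(0)}R=\hat\theta$. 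Part (c) is then a direct corollary: $\hat\theta(k)=(\theta I_W\mathbf{e})^{-1}\sum_{m\in K_W}\theta(m)R(m,k)$ vanishes exactly when $k\notin L$, since then every summand is zero.

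For part (d), given any stochastic solution $\hat\theta$ I set $x:=\hat\theta(I_W-V/\lambda)^{-1}$, which is well-defined by Overall Assumption (I). Using $\lambda(\lambda I_W-V)^{-1}=(I_W-V/\lambda)^{-1}$, substitution into $\hat\theta M^{(0)}R=\hat\theta$ and cancellation gives
\[
x\,I_W R \;=\; x(I_W - V/\lambda),
\]
which rearranges to $x(\lambda I_W(R-I)+V)=x\tilde Q=0$. Invoking Theorem \ref{thm:LS-productform} under the ergodicity hypothesis, $\theta$ is the unique strictly positive stochastic solution of $\cdot\tilde Q=0$, so $x$ is a scalar multiple of $\theta$ and normalization yields \eqref{eq:LCS-finit-K-unique-theta-hat}. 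Uniqueness of $\hat\theta$ in part (b) then follows: if $(I_W-V/\lambda)^{-1}$ is injective, the map $\hat\theta\mapsto x$ is one-to-one, and the scalar multiple $x=c\theta$ is pinned down by stochasticity of $\hat\theta=x(I_W-V/\lambda)=c\,\theta I_W R$.

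The main obstacle I anticipate is purely bookkeeping: several matrix identities involving $I_W$, $V$, $R$, $\tilde Q$, and the two equivalent inverses $(\lambda I_W-V)^{-1}$ and $\lambda^{-1}(I_W-V/\lambda)^{-1}$ must be kept straight, and one has to be careful that $V$ is not invertible (so we cannot divide through by it) while $\lambda I_W-V$ is. Once the identity $\theta(I_W-V/\lambda)=\theta I_W R$ is isolated, however, all four parts collapse into short substitutions, and no deeper fact beyond ergodicity of $Z$ and the generator property $V\mathbf{e}=0$ is needed.
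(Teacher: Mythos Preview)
Your proposal is correct and follows essentially the same approach as the paper's proof: the same key identity $\theta(\lambda I_W-V)=\lambda\theta I_WR$ derived from $\theta\tilde Q=0$, the same right-multiplication by $(\lambda I_W-V)^{-1}I_WR$ to obtain the fixed-point for $\theta I_WR$, and the same inversion $x=\hat\theta(I_W-V/\lambda)^{-1}$ in part (d). Two minor variations worth noting: for non-negativity in (a) you use directly that $M^{(0)}=W$ has a probabilistic definition, which is slightly cleaner than the paper's route via $M^{(0)}=WU^{(1,0)}+\tfrac{\lambda}{\mu(1)}U^{(1,0)}$; for (c) you read off the vanishing from the explicit formula $\hat\theta=(\theta I_W\mathbf e)^{-1}\theta I_WR$, whereas the paper argues from the fixed-point structure $\hat\theta=\hat\phi R$ with $\hat\phi$ supported on $K_W$, which applies to any solution of \eqref{eq:LCS-existence-theta-hat-M0-R-eq-theta-hat} rather than just the one constructed in (b).
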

\begin{proof}
\textbf{(a)} Recall, that the matrix $\V$ is a generator, so is $\V\mathbf{e}=0$
and 
\begin{equation}
(\lambda I_{W}-\V)\mathbf{e}=\lambda I_{W}\mathbf{e}\label{eq:LCS-exitstenz-lambda-I-w}
\end{equation}
Applying \prettyref{eq:LCS-exitstenz-lambda-I-w} to 
\[
M^{(0)}\mathbf{e}=(\lambda I_{W}-\V)^{-1}\lambda I_{W}\mathbf{e}=(\lambda I_{W}-\V)^{-1}(\lambda I_{W}-\V)\mathbf{e}
\]
proves the first part of the statement \prettyref{eq:LCS-M0-row-sums}.

In the \prettyref{lem:LS-EMC-Mn} we defined the matrix $M{}^{(0)}$
as $WU^{(0,1)}+\frac{\lambda}{\mu(1)}U^{(0,1)}$, where the entries
of $W$ and $U^{(0,1)}$ are probabilities. Because $\frac{\lambda}{\mu(1)}$
is positive, the matrix $M^{(0)}$ is non negative, and $M^{(0)}\cdot R$
as well. 

\textbf{(b)} Due to ergodicity of $Z$ with product form steady state,
$\theta$ is the unique stochastic solution of (see \eqref{eq:LS-constant-lambda-theta-matrix-equation}
in \prettyref{cor:LS-lambda-constant}) 
\begin{equation}
\theta\left(\lambda\left(I_{W}R-I_{W}\right)+\V\right)=0\,.\label{steadystatecontinuous12}
\end{equation}
To prove the existence of a stochastic solution of \eqref{eq:LCS-existence-theta-hat-M0-R-eq-theta-hat}
we rewrite \eqref{steadystatecontinuous12} as 
\[
\theta(\lambda(I_{W}R-I_{W})+\V)=0~~~\Longleftrightarrow~~~\lambda\theta I_{W}R=\theta(\lambda I_{W}-\V)
\]
Multiplying both sides of the last equation with $(\lambda I_{W}-\V)^{-1}I_{W}R$
leads to 
\begin{eqnarray*}
\lambda\theta I_{W}R(\lambda I_{W}-\V)^{-1}I_{W}R & = & \theta(\lambda I_{W}-\V)(\lambda I_{W}-\V)^{-1}I_{W}R\\
\Longrightarrow({\theta I_{W}R})\lambda(\lambda I_{W}-\V)^{-1}I_{W}R & = & ({\theta I_{W}R})
\end{eqnarray*}
One can see that $\theta I_{W}R$ solves the steady state equation
\eqref{eq:LCS-existence-theta-hat-M0-R-eq-theta-hat} and is therefore
after normalization a stationary distribution of $M^{(0)}\cdot R$.
The normalization constant is 
\[
\theta I_{W}\mathbf{e}=\theta I_{W}\underbrace{R\mathbf{e}}_{=\mathbf{e}}\,.
\]
To prove uniqueness of $\hat{\theta}$ we assume that $\hat{\theta}_{1}$
and $\hat{\theta}_{2}$ are different non-zero solutions of the equation
\eqref{eq:LCS-existence-theta-hat-M0-R-eq-theta-hat} and define 
\begin{eqnarray}
x_{1} & := & \hat{\theta}_{1}\left(I_{W}-\frac{1}{\lambda}\V\right)^{-1}\\
x_{2} & := & \hat{\theta}_{2}\left(I_{W}-\frac{1}{\lambda}\V\right)^{-1}
\end{eqnarray}

Both $x_{1}$ and $x_{2}$ are solutions of the continuous time steady
state equation \eqref{eq:LS-constant-lambda-theta-matrix-equation}.
Due to ergodicity of the process $(X,Y)$ and its product form stationary
distribution there exits some constant $c$ such that $x_{1}=cx_{2}$
holds.

\begin{eqnarray}
x_{1}-cx_{2} & = & 0\\
\Longleftrightarrow(\hat{\theta}_{1}-c\hat{\theta}_{2})\left(I_{W}-\frac{1}{\lambda}\V\right)^{-1} & = & 0
\end{eqnarray}

Because $\left(I_{W}-\frac{1}{\lambda}\V\right)^{-1}$ is injective
it follows $\hat{\theta}_{1}-c\hat{\theta}_{2}=0$ and thus the uniqueness
of the stochastic solution $\hat{\theta}$ of \eqref{eq:LCS-existence-theta-hat-M0-R-eq-theta-hat}.

\textbf{(c)} Denote $\hat{\phi}:=\hat{\theta}{\lambda(\lambda I_{W}-\V)^{-1}I_{W}}.$
Because $I_{W}$ has zero $K_{B}$-columns, the matrix ${\lambda(\lambda I_{W}-\V)^{-1}I_{W}}$
has the same property and therefore $\hat{\phi}(k)=0$ for all $k\in K_{B}$.
It follows for all $k\in K$ 
\[
\hat{\theta}(k)=\sum_{m\in K}\hat{\phi}(m)\Rentry mk=\sum_{m\in K_{W}}\hat{\phi}(m)\Rentry mk\,,
\]
which is by definition not zero only if $k\in L$.

\textbf{(d)} We show that $x$ defined in \eqref{eq:LCS-x-theta-hat}
is a solution of the continuous time steady state equation \eqref{eq:LS-constant-lambda-theta-matrix-equation}.

We write \eqref{eq:LCS-existence-theta-hat-M0-R-eq-theta-hat} in
the following form and assume that $\hat{\theta}$ is any stochastic
solution 
\begin{eqnarray}
\hat{\theta}\left(I_{W}-\frac{1}{\lambda}V\right)^{-1}I_{W}R & = & \hat{\theta}\label{eq:LCS-uniqueness-theta-hat-equation-2}
\end{eqnarray}

We multiply at the right-hand side of the equation the identity matrix,
and obtain

\[
\hat{\theta}\left(I_{W}-\frac{1}{\lambda}\V\right)^{-1}I_{W}R=\hat{\theta}\underbrace{\left(I_{W}-\frac{1}{\lambda}\V\right)^{-1}\left(I_{W}-\frac{1}{\lambda}\V\right)}_{=I}
\]

and rewrite it as 
\begin{equation}
xI_{W}R=x\left(I_{W}-\frac{1}{\lambda}\V\right)\label{eq:LCS-x-equation}
\end{equation}
with 
\[
x:=\hat{\theta}\left(I_{W}-\frac{1}{\lambda}\V\right)^{-1}
\]

The equation \eqref{eq:LCS-x-equation} can be transformed directly
into the continuous steady state equation \eqref{eq:LS-constant-lambda-theta-matrix-equation}.

\[
\Longleftrightarrow x\left(I_{W}R-I_{W}+\frac{1}{\lambda}\V\right)=0\Longleftrightarrow x\left(\lambda(R_{W}-I_{W})+\V\right)=0
\]
\end{proof}
\begin{thm}
\label{thm:LCS-EMC-product-form-exp-service}Consider the ergodic
Markov process $Z=(Z(t):t\geq0)$ which describes the $M/M/1/\infty$
loss system in a random environment.

\textbf{(a)} The Markov chain $\hat{Z}=(\hat{Z}(n):n\in\mathbb{N}_{0})$
embedded at departure instants of $Z$ has a stationary distribution
$\hat{\pi}$ of product form 
\begin{equation}
\hat{\pi}(n,k)=\xi(n)\hat{\theta}(k),\quad(n,k)\in E\,.\label{eq:LCS-mm1inf-pi-hat-product-form}
\end{equation}
Here $\xi=(\xi(n):n\in\mathbb{N}_{0})$ is the probability distribution
\begin{eqnarray}
\xi(n) & := & C^{-1}\left(\prod_{i=1}^{n}\frac{\lambda}{\mu(i)}\right),~~n\in\mathbb{N}_{0},\label{eq:LCS-mm1inf-xi-n-equations}
\end{eqnarray}
with normalization constant $C^{-1}$ and $\hat{\theta}$ is the stochastic
solution \eqref{eq:LCS-theta-vs-theta-check} of the equation 
\begin{equation}
\hat{\theta}{\lambda}({\lambda}I_{W}-\V)^{-1}I_{W}R=\hat{\theta}\,,\label{eq:LCS-mm1inf-theta-hat-equations}
\end{equation}
which is independent of the values of $\mu(n)$.

\textbf{(b)} Let $L:=\{k\in K:\exists~~m\in K_{W}:R_{m,k}>0\}$ the
set of states of the environment which can be reached from $K_{W}$
by a one-step jump governed by $R$. Then the states in $\mathbb{N}_{0}\times(K\setminus L)$
are inessential for $\hat{Z}$ and consequently for all $n\in\mathbb{N}_{0}$
\begin{equation}
\hat{\pi}(n,k)=0,\quad\forall k\in(K\setminus L)\label{inessential1}
\end{equation}
\end{thm}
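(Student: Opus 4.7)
The plan is to substitute the conjectured product form $\hat{\pi}^{(n)}=\xi(n)\hat{\theta}$ into the level-indexed steady state equations
\[
\hat{\pi}^{(0)}B^{(n)}+\sum_{i=1}^{n+1}\hat{\pi}^{(i)}A^{(i,n-i+1)}=\hat{\pi}^{(n)},\qquad n\in\mathbb{N}_{0},
\]
and use the decompositions $A^{(i,n)}=U^{(i,n)}R$ and $B^{(n)}=WU^{(1,n)}R$ from Lemma \ref{lem:LS-EMC-ABUW} to reduce the infinite family of balance equations to a single eigen-equation for $\hat{\theta}$. Concretely, after inserting $\xi(i)=\xi(0)\prod_{j=1}^{i}\lambda/\mu(j)$ and factoring out $\xi(0)\hat{\theta}$, the $n$-th equation becomes
\[
\hat{\theta}\Bigl(WU^{(1,n)}+\sum_{i=1}^{n+1}\prod_{j=1}^{i}\tfrac{\lambda}{\mu(j)}\,U^{(i,n-i+1)}\Bigr)R=\prod_{i=1}^{n}\tfrac{\lambda}{\mu(i)}\,\hat{\theta},
\]
i.e.\ $\hat{\theta}M^{(n)}R=\bigl(\prod_{i=1}^{n}\lambda/\mu(i)\bigr)\hat{\theta}$ with $M^{(n)}$ defined as in Lemma \ref{lem:LS-EMC-Mn}.

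Next I invoke Lemma \ref{lem:LS-EMC-Mn}, which states $M^{(n)}=\bigl(\prod_{i=1}^{n}\lambda/\mu(i)\bigr)M^{(0)}$ with $M^{(0)}=\lambda(\lambda I_{W}-V)^{-1}I_{W}$. This collapses every one of the equations above into the single identity $\hat{\theta}M^{(0)}R=\hat{\theta}$, i.e.\ precisely \eqref{eq:LCS-mm1inf-theta-hat-equations}. Lemma \ref{lem:LS-EMC-theta-theta-hat-releationship}(a),(b) guarantees that $M^{(0)}R$ is stochastic and that $\hat{\theta}:=(\theta I_{W}\mathbf{e})^{-1}\theta I_{W}R$ is a stochastic solution, where $\theta$ is the known marginal environment distribution of the ergodic continuous time process $(X,Y)$ from Corollary \ref{cor:LS-lambda-constant} (whose existence is granted by the hypothesis of ergodicity of $Z$). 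The normalization $\theta I_{W}\mathbf{e}>0$ is non-trivial only if $K_{W}\neq\emptyset$, which is guaranteed by ergodicity. Summability of $\sum_{n}\xi(n)$ is inherited from the ergodicity of $Z$ together with Corollary \ref{cor:LS-lambda-constant}, so $\hat{\pi}$ is a genuine probability measure on $E$.

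For part \textbf{(b)} I argue via the dynamics of $\hat{Z}$ rather than the balance equations. At any departure instant $\tau_{n}$ one has $Y(\tau_{n}^{-})\in K_{W}$ (services occur only when the environment is working), and the environment then jumps according to $R$, so $\hat{Y}(n)=Y(\tau_{n})$ takes values in $L=\{k\in K:\exists\, m\in K_{W}\text{ with }R_{m,k}>0\}$ almost surely for every $n\geq 1$, regardless of the initial state. Hence for each $k\in K\setminus L$ and each $(i,l)\in E$ we have $\mathbf{P}^{n}_{(i,l),(n',k)}=0$ for all $n\geq 1$ and all $n'\in\mathbb{N}_{0}$, which means the states $(n',k)$, $k\in K\setminus L$, cannot be reached from any state after one transition, and so are inessential; any stationary measure must assign them mass zero. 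This is consistent with $\hat{\theta}(k)=0$ for $k\in K\setminus L$ from Lemma \ref{lem:LS-EMC-theta-theta-hat-releationship}(c), and establishes \eqref{inessential1}.

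The main obstacle is the algebraic collapse encoded in Lemma \ref{lem:LS-EMC-Mn}: once that identity is available, the product-form ansatz turns the infinite sequence of balance equations into the single finite-dimensional equation $\hat{\theta}M^{(0)}R=\hat{\theta}$, and the rest is bookkeeping. The interaction between the $R$-jump at departure and the fact that $W$, $U^{(i,n)}$ all factor through $(\lambda I_{W}-V)^{-1}I_{W}$ (via Propositions \ref{prop:LCS-W-matrix} and \ref{prop:LCS-U-matrix}) is what makes this collapse possible, and thereby explains why the queue-marginal $\xi$ at departures coincides with the continuous-time marginal while the environment marginal $\hat{\theta}$ differs from $\theta$ by the transformation \eqref{eq:LCS-theta-vs-theta-check}.
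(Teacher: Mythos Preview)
Your proof is correct and follows essentially the same route as the paper: substitute the product form ansatz into the level-indexed balance equations, use the decompositions $A^{(i,n)}=U^{(i,n)}R$, $B^{(n)}=WU^{(1,n)}R$ and Lemma~\ref{lem:LS-EMC-Mn} to collapse everything to the single equation $\hat{\theta}M^{(0)}R=\hat{\theta}$, then appeal to Lemma~\ref{lem:LS-EMC-theta-theta-hat-releationship} for existence of a stochastic solution; part~(b) is handled identically via the observation that $Y(\tau_{n}^{-})\in K_{W}$ forces $\hat{Y}(n)\in L$ for $n\geq1$. Your write-up is in fact slightly more careful than the paper's in justifying the positivity of the normalizing constant $\theta I_{W}\mathbf{e}$ and the summability of $\xi$.
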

\begin{proof}
We show that the product form distribution \prettyref{eq:LCS-mm1inf-pi-hat-product-form}
with marginal distributions \prettyref{eq:LCS-mm1inf-xi-n-equations}
and the solution $\hat{\theta}$ of \prettyref{eq:LCS-mm1inf-theta-hat-equations}
solves the steady state equations \prettyref{eq:LS-EMC-steady-state-equation-grouped}
for $n=0$.

\[
\hat{\pi}^{(0)}B^{(0)}+\hat{\pi}^{(1)}A^{(0,1)}=\hat{\pi}^{(0)}~~~\Longleftrightarrow~~~\hat{\theta}\big(\xi(0)B^{(0)}+\xi(1)A^{(0,1)}\big)=\xi(0)\hat{\theta}
\]

With matrices $W$, $U^{(0,1)}$, $R$ and $M^{(0)}$ this equation
can be written as 
\begin{equation}
\xi(0)\hat{\theta}WU^{(0,1)}R+\xi(1)\hat{\theta}U^{(0,1)}R=\xi(0)\hat{\theta}~~~\Longleftrightarrow~~~\hat{\theta}M^{(0)}R=\hat{\theta}\,,\label{eq:LCS-chain-theta-m0-r-solution}
\end{equation}
which has a stochastic solution $\hat{\theta}$ according to Lemma
\ref{lem:LS-EMC-theta-theta-hat-releationship}~\textbf{(a)}.

We finally show that $\hat{\pi}^{(n)}=\xi(n)\hat{\theta}$ solves
all remaining equations for $n\intpositive$:

\begin{eqnarray*}
\hat{\pi}^{(0)}B^{(1,n)}+\sum_{i=1}^{n+1}\hat{\pi}^{(n)}A^{(i,n-i+1)} & = & \hat{\pi}^{(n)}\\
\Longleftrightarrow\xi(0)WU^{(1,n)}R+\sum_{i=1}^{n+1}\xi(n)\hat{\theta}U^{(i,n-i+1)}R & = & \xi(n)\hat{\theta}\\
\Longleftrightarrow\xi(0)\hat{\theta}(M^{(n)})R & = & \xi(n)\hat{\theta}
\end{eqnarray*}

Using the property \prettyref{eq:LS-EMC-M-n-explicite-with-xi} of
the matrix $M^{(n)}$ the last equation becomes 
\[
\xi(0)\hat{\theta}\left(\prod_{i=1}^{n}\frac{\lambda}{\mu(i)}\right)M^{(0)}R=\xi(n)\hat{\theta}~~~\Longleftrightarrow~~~\hat{\theta}M^{(0)}R=\hat{\theta}
\]
which is again \prettyref{eq:LCS-chain-theta-m0-r-solution}. Substituting
for $M^{(0)}$ the expression \eqref{eq:LS-EMC-steadyst4} finally
proves the rest of part \textbf{(a)}.

For part \textbf{(b)} of the theorem we realize from the dynamics
of the system that $\mathbb{N}_{0}\times L$ are the only states that
can be entered just after a departure instant. So, if $\hat{Z}$ is
started in some state $\mathbb{N}_{0}\times(K\setminus L)$ these
states states will never be visited again by $\hat{Z}$ and are therefore
inessential, which is in accordance with \eqref{eq:stochsolution2}. 
\end{proof}
Part \textbf{(b)} of \prettyref{thm:LCS-EMC-product-form-exp-service}
shows that in case of $K\setminus L\neq\emptyset$ $\hat{Z}$ is not
irreducible on $E$, hence not ergodic, although $Z$ is ergodic on
$E$. Furthermore, in general $\hat{Z}$ is even on the reduced state
space $\hat{E}:=\mathbb{N}_{0}\times L$ not ergodic. The reason is,
that $\hat{Z}$ may have periodic classes as the following example
shows.
\begin{example}
\label{ex:0S-1}\cite{schwarz;sauer;daduna;kulik;szekli:06}(See also
\prettyref{sect:LS-inventories}.) We consider an M/M/1/$\infty$-system
with attached inventory, i.e. a single server with infinite waiting
room under FCFS regime and an attached inventory under $(r,S)$-policy,
which is set in this example to $r=0$.

There is a Poisson-$\lambda$-arrival stream, $\lambda>0$. Customers
request for an amount of service time which is exponentially distributed
with mean $\mu>0$.

The server needs for each customer exactly one item from the inventory.
The on-hand inventory decreases by one at the moment of service completion.
If the inventory is decreased to the reorder point $r=0$ after the
service of a customer is completed, a replenishment order is instantaneously
triggered. The replenishment lead times are i.i.d. exponentially distributed
with parameter $\nu>0$. The replenishment fills the inventory up
to maximal inventory size $S>0$.

During the time the inventory is depleted and the server waits for
a replenishment order to arrive, no customers are admitted to join
the queue (\textquotedbl{}lost sales\textquotedbl{}).\\
 All service, interarrival and lead times are assumed to be independent.\\
 $X(t)$ is the number of customers present at the server at time
$t\geq0$, and $Y(t)$ is the on-hand inventory at time $t\geq0$.

The state space of $(X,Y)$ is $E=\{(n,k):n\in\mathbb{N}_{0},k\in K\},$
with $K=\{S,S-1,\dots,1,0\}$. where $S<\infty$ is the maximal size
of the inventory at hand.

The inventory management process under $(0,S)$-policy fits into the
definition of the environment process by setting 
\begin{eqnarray*}
K=\{S,S-1,\dots,1,0\},\qquad &  & K_{B}=\{0\},\\
\Rentry 00=1,~~~\Rentry k{k-1}=1,\quad1\leq k\leq S\,,\qquad &  & \v(k,m)=\begin{cases}
\nu, & \text{if}~~k=0,m=S\\
0, & \text{otherwise for}~~k\neq m\,.
\end{cases}
\end{eqnarray*}
The queueing-inventory process $Z=(X,Y)$ in continuous time is ergodic
iff ${\lambda}<{\mu}$. The steady state distribution $\pi=(\pi(n,k):(n,k)\in E)$
of $(X,Y)$ has product form 
\[
\pi(n,k)=\left(1-\frac{\lambda}{\mu}\right)\left(\frac{\lambda}{\nu}\right)^{n}\theta(k),
\]
where $\theta=(\theta(k):k\in K)$ with normalization constant $C$
is 
\begin{eqnarray}
\theta(k) & =\begin{cases}
C^{-1}(\frac{\lambda}{\nu}) & \qquad k=0,\\
C^{-1}(\frac{\lambda+\nu}{\lambda})^{k-1} & \qquad k=1,...,r,\\
C^{-1}(\frac{\lambda+\nu}{\lambda})^{r} & \qquad k=r+1,...,S.
\end{cases}\label{eq:mm1-r-S-theta}
\end{eqnarray}
For the Markov chain $\hat{Z}$ embedded in $Z$ at departure instants
we have $L=\{0,1,\dots,S-1\}$ and therefore the states $\mathbb{N}_{0}\times\{S\}$
are inessential.

From the dynamics of the system determined by the inventory management
follows directly that $\hat{Z}$ is periodic with period $S$ and
that $\mathbb{N}_{0}\times L$ is an irreducible closed set (the single
essential class), which is positive recurrent iff ${\lambda}<{\mu}$
holds. $\mathbb{N}_{0}\times L$ is partitioned into $S$ subclasses
$\mathbb{N}_{0}\times\{k\}$ which are periodically visited 
\[
\ldots\to\mathbb{N}_{0}\times\{S-1\}\to\mathbb{N}_{0}\times\{S-2\}\to\ldots\to\mathbb{N}_{0}\times\{0\}\to\mathbb{N}_{0}\times\{S-1\}\ldots
\]

\end{example}
The following corollary and examples demonstrate the versatility of
the class of models under consideration and consequences for the interplay
of $\theta$ for the continuous time setting and $\hat{\theta}$ for
the embedded Markov chain due to special settings of the environment.
\begin{cor}
\label{cor:RW1} Consider an ergodic $M/M/1/\infty$ loss system in
a random environment with any $\lambda$, $\mu(n)$, $\V$, and $R$
as defined in \prettyref{sect:LS-MM1Inf-model}.\\

\begin{itemize}
\item [(a)] If $R=I$, then the conditional distribution $\hat{\theta}$
of $\theta$ conditioned on $L$, 
\[
\hat{\theta}(k)=\begin{cases}
\frac{\theta(k)}{\theta(L)} & \quad\text{if}~~k\in L,\\
0 & \quad\text{if}~~k\in K\setminus L,
\end{cases}\quad\text{with}~~~\theta(L):=\sum_{m\in L}\theta(m)\,,
\]
solves \eqref{eq:LCS-mm1inf-theta-hat-equations}, which shows that
the embedded chain in this case reveals only the behaviour of the
environment on $L$, i.e. we loose information incorporated in the
continuous time description of the process.
\item [(b)] If $\V=0$ then the set $K_{B}$ of blocking states is empty,
and therefore $I_{W}=I$ holds. Furthermore, $R$ is irreducible and
positive recurrent.\\
The marginal steady state distribution $\theta$ of $Y$ in continuous
time is the stationary distribution of $R$, i.e., the solution of
$\theta R=\theta$.\\
And finally it holds $\theta=\hat{\theta}$, i.e., $\theta$ solves
on $K$ \eqref{eq:LCS-mm1inf-theta-hat-equations}, which shows that
the embedded chain exploits in this case the full information about
the possible environment of the system. 
\end{itemize}
\end{cor}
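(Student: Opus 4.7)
The plan is to reduce both statements to direct substitutions into the explicit formula for $\hat{\theta}$ provided by \prettyref{lem:LS-EMC-theta-theta-hat-releationship}\textbf{(b)}, namely
\[
\hat{\theta} = (\theta I_W \mathbf{e})^{-1}\,\theta I_W R,
\]
together with the characterization of $\theta$ as the unique stochastic solution of \eqref{eq:LS-constant-lambda-theta-matrix-equation}, i.e.\ $\theta(\lambda(R_W - I_W) + V) = 0$. The main point in each case is to identify how the special choice of $R$, respectively $V$, collapses the general expression into a familiar one; no further algebra beyond what is already available in \prettyref{thm:LS-productform}, \prettyref{cor:LS-lambda-constant}, and \prettyref{thm:LCS-EMC-product-form-exp-service} will be needed.

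For part \textbf{(a)}, first I would identify $L$ explicitly: with $R = I$ we have $R_{m,k} > 0$ iff $k = m$, so the condition ``$\exists m \in K_W$ with $R_{m,k} > 0$'' reduces to $k \in K_W$, giving $L = K_W$. Plugging $R = I$ into the formula above yields
\[
\hat{\theta} = (\theta I_W \mathbf{e})^{-1}\,\theta I_W,
\]
and since $(\theta I_W)(k) = \theta(k)\mathbf{1}_{[k\in K_W]}$ and $\theta I_W \mathbf{e} = \sum_{m\in K_W}\theta(m) = \theta(L)$, this is exactly the conditional distribution of $\theta$ given $L$. By \prettyref{lem:LS-EMC-theta-theta-hat-releationship}\textbf{(b)} this vector solves \eqref{eq:LCS-mm1inf-theta-hat-equations}, which is the claim.

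For part \textbf{(b)}, the first step is to justify $K_B = \emptyset$: if $V = 0$ then no transitions out of $K_B$ are possible at all (service is blocked in $K_B$, new arrivals are lost, and $V$ provides no spontaneous jumps), so any state in $K_B$ would be absorbing for $Y$, contradicting ergodicity of $Z$ on $E = \mathbb{N}_0 \times K$ unless $K_B = \emptyset$. Hence $K = K_W$, $I_W = I$, and $R_W = R$. Irreducibility and positive recurrence of $R$ then follow because every transition of $Y$ must be governed by $R$ at some departure epoch, and ergodicity of $(X,Y)$ forces the $Y$-marginal's transition structure to be irreducible recurrent. The continuous-time balance equation \eqref{eq:LS-constant-lambda-theta-matrix-equation} reduces to $\theta\lambda(R - I) = 0$, i.e.\ $\theta R = \theta$, while \eqref{eq:LCS-mm1inf-theta-hat-equations} becomes $\hat{\theta}\,\lambda(\lambda I)^{-1}I R = \hat{\theta}$, i.e.\ $\hat{\theta}R = \hat{\theta}$. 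Both equations thus have the same unique stochastic solution, so $\theta = \hat{\theta}$. The only potential obstacle here is the informal argument for $K_B = \emptyset$; it is perhaps cleanest to invoke \prettyref{lem:LS-EMC-non-zero-path}, which forces a positive $V$-transition rate out of any non-empty subset of $K_B$ under ergodicity and immediately yields $K_B = \emptyset$ when $V = 0$.
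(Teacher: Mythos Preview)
Your proposal is correct and follows essentially the same approach as the paper. In part~\textbf{(a)} you use $R=I$ to identify $L=K_W$ and then read off $\hat{\theta}$ from the formula \eqref{eq:LCS-theta-vs-theta-check}, exactly as the paper does; in part~\textbf{(b)} you argue $K_B=\emptyset$ from ergodicity (the paper uses the same trapping argument, and your suggestion to invoke \prettyref{lem:LS-EMC-non-zero-path} is a clean alternative), and then reduce both \eqref{eq:LS-constant-lambda-theta-matrix-equation} and \eqref{eq:LCS-mm1inf-theta-hat-equations} to $\theta R=\theta$, which is precisely the paper's route.
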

\begin{proof}
\textbf{(a)} is a direct consequence of \eqref{eq:LCS-theta-vs-theta-check}
because of $R=I$, and therefore $L=K_{W}$.

\textbf{(b)} If $\V=0$ and $K_{B}\neq\emptyset$ , then from ergodicity
the environment process $Y$ must enter $K_{B}$ in finite time, but
once the system entered a blocking state $k$ it can never leave this
because of $\v(k,m)=0$ for all $m\in K$. Furthermore, from ergodicity
of $Z$ with a similar argument, $R$ must be irreducible and positive
recurrent.

\eqref{eq:LS-constant-lambda-theta-matrix-equation} then reduces
to $\theta(\lambda(R-I))=0$ which is the steady state equation for
$R$.

We substitute $I_{W}=I$ and $\V=0$ into into the left side of equation
\prettyref{eq:LCS-mm1inf-theta-hat-equations} and obtain 
\[
\hat{\theta}\lambda(\lambda{I_{W}}-{\V})^{-1}{I_{W}}R=\hat{\theta}R\,,
\]
which reduces equation \prettyref{eq:LCS-mm1inf-theta-hat-equations}
to $\hat{\theta}R=\hat{\theta}$, which from irreducibility and positive
recurrence has a unique stochastic solution $\theta$. \end{proof}
\begin{example}
\label{ex:information-los-from-time-to-chain}We consider an $M/M/1/\infty$
system with arrival rate $\lambda$, service rate $\mu$, with $\lambda<\mu$,
in a random environment. The following examples will address the interrelations
between $\theta$ and $\hat{\theta}$. \\

\textbf{(a)} The first example provides a continuum of different environments,
which in continuous time have different marginal stationary distributions
$\theta$, but all of them have the same $\hat{\theta}$.

The environment is $K=\{1,2\}$ with $K_{B}=\{2\}$, and 
\begin{eqnarray*}
\V & = & \left(\begin{array}{c|cc}
 & 1 & 2\\
\hline 1 & 0 & 0\\
2 & \nu & -\nu
\end{array}\right)\qquad\quad R=\left(\begin{array}{c|cc}
 & 1 & 2\\
\hline 1 & 0 & 1\\
2 & 0 & 1
\end{array}\right)
\end{eqnarray*}

According to \eqref{eq:LS-constant-lambda-theta-matrix-equation}
the marginal steady state $\theta$ of the environment in continuous
time is the solution of the equation $\theta(\lambda(R-I)+\V)=0$,
which is 
\begin{eqnarray*}
\theta\left(\begin{array}{c|cc}
 & 1 & 2\\
\hline 1 & -\lambda & \lambda\\
2 & \nu & -\nu
\end{array}\right) & = & 0
\end{eqnarray*}

It follows that $\theta=(\frac{\nu}{\lambda+\nu},\frac{\lambda}{\lambda+\nu})$,
which depends on both, $\lambda$ and $\nu$.

On the other side we have $L=\{2\}$, and therefore the marginal steady
state distribution for the environment in the embedded chain is $\hat{\theta}=(0,1)$\emph{
for all} $\lambda$ and $\nu$.\\

\textbf{(b)} The second example provides two different environments
with the same environment space $K=\{1,2\}$. The point of interest
is that in continuous time both have the same marginal stationary
distribution of the environment ($\theta$), but the embedded chains
have different marginal stationary distributions of the environment
($\hat{\theta}$). For both systems holds $\nu=\lambda$.

\textbf{(b1)} The first system is a special case of $(a)$ with $K=\{1,2\}$,
blocking set $K_{1,B}=\{2\}$, and $\nu=\lambda$, i.e., 
\begin{eqnarray*}
\V_{1} & = & \left(\begin{array}{c|cc}
 & 1 & 2\\
\hline 1 & 0 & 0\\
2 & \lambda & -\lambda
\end{array}\right)\qquad\quad R_{1}=\left(\begin{array}{c|cc}
 & 1 & 2\\
\hline 1 & 0 & 1\\
2 & 0 & 1
\end{array}\right)
\end{eqnarray*}
Using the results from example \textbf{(a)}, we immediately get $\theta_{1}=\left(\frac{1}{2},\frac{1}{2}\right)$
and $\hat{\theta}_{1}=(0,1)$.

\textbf{(b2)} The second system has environment space $K=\{1,2\}$,
blocking set $K_{2,B}=\{1\}$, and $\nu=\lambda$, i.e., 
\begin{eqnarray*}
\V_{2} & = & \left(\begin{array}{c|cc}
 & 1 & 2\\
\hline 1 & \lambda & -\lambda\\
2 & 0 & 0
\end{array}\right)\,,\qquad\qquad R_{2}=\left(\begin{array}{c|cc}
 & 1 & 2\\
\hline 1 & 1 & 0\\
2 & 1 & 0
\end{array}\right)\,.
\end{eqnarray*}

The steady state equation $\theta_{2}(\lambda(R_{2}-I)+\V_{2})=0$
for $\theta$ of the second system is 
\begin{eqnarray*}
\theta_{2}\left(\begin{array}{c|cc}
 & 1 & 2\\
\hline 1 & -\lambda & \lambda\\
2 & \lambda & -\lambda
\end{array}\right) & = & 0
\end{eqnarray*}
which is solved by $\theta_{2}=\left(\frac{1}{2},\frac{1}{2}\right)$.
Using the same argumentation as in example (a) and the fact that $L_{2}=\{1\}$,
the marginal steady state distribution of the embedded Markov chain
is $\hat{\theta}_{2}=(1,0)$.
\end{example}
\begin{figure}[h]
\centering{} \subfloat[\label{fig:information-lost-dynamic-to-markov-chain}Environment states
in Example \prettyref{ex:information-los-from-time-to-chain} a.]{\begin{tikzpicture}
  \path
    (90:1) node (Y1)[shape=rectangle,draw] {$1$}
    (-90:1) node (Y2)[shape=rectangle,draw] {$2$}
    (180:2) node (DummyNode)[] {} %force wide picture.
    (0:2) node (DummyNode2)[] {} %force wide picture.
  ;
  \draw (Y1){} edge[arrows={-latex}, thick,out=0,in=0]
    node[auto]{}(Y2);
  \draw (Y2){} edge[arrows={-latex}, thick,color=vratecolor,out=180,in=180]
    node[auto]{$\nu$}(Y1);
\end{tikzpicture}

}$\qquad$\subfloat[\label{fig:information-lost-markov-chain-dynamic}Environment states
in Examples \prettyref{ex:information-los-from-time-to-chain} b1,b2.]{\begin{tikzpicture}
  \path
    (90:1) node (Y1)[shape=rectangle,draw] {$1$}
    (-90:1) node (Y2)[shape=rectangle,draw] {$2$}
  ;
  \draw (Y1){} edge[arrows={-latex}, thick,out=0,in=0]
     node[auto]{}(Y2);
  \draw (Y2){} edge[arrows={-latex}, thick,color=vratecolor,out=180,in=180]
     node[auto]{$\lambda$}(Y1);
\end{tikzpicture}

\begin{tikzpicture}
  \path 	    (90:1) node (Y1)[shape=rectangle,draw] {$2$}
    (-90:1) node (Y2)[shape=rectangle,draw] {$1$};
  \draw (Y1){} edge[arrows={-latex}, thick,out=0,in=0]
      node[auto]{}(Y2);
  \draw (Y2){} edge[arrows={-latex}, thick,color=vratecolor,out=180,in=180]
      node[auto]{$\lambda$}(Y1);
\end{tikzpicture}

}

\caption{{\Etid} from \prettyref{ex:information-los-from-time-to-chain}. }
\end{figure}

\begin{example}
We consider an $M/M/1/\infty$ system in a random environment with
arrival rate $\lambda$, service rate $\mu$, with $\lambda<\mu$,
in a random environment. The system is ergodic in continuous time
and the Markov chain observed at departure instants is ergodic as
well. There are no blocking states and therefore no loss of customers
occurs, i.e. the stream of admitted customers is Poissonian.

The environment is constructed in a way that the stationary distributions
of the of the environment of the continuous time process and of the
embedded Markov chains are distinct: $\hat{\theta}\neq\theta$.

We set $K=\{1,2\}$, $K_{W}=K$ and $K_{B}=\emptyset$, and with $\nu_{1},\nu_{2}>0$,
$\nu_{1}\neq\nu_{2}$ the matrices which govern the environment are
$\V=\left(\begin{array}{cc}
-\nu_{1} & \nu_{1}\\
\nu_{2} & -\nu_{2}
\end{array}\right)$ and $R=\left(\begin{array}{cc}
0 & 1\\
1 & 0
\end{array}\right)$.

Then it holds for the generator $\tilde{Q}(n)=:\tilde{Q},$ (which
is independent of $n$), see \eqref{eq:LS-q-tilde-matrix-representation},

\[
\tilde{Q}=(\lambda(R-I)+\V)=\left(\begin{array}{cc}
-\lambda-\nu_{1} & \lambda+\nu_{1}\\
\lambda+\nu_{2} & -\lambda-\nu_{2}
\end{array}\right)
\]

We calculate $\theta$, which solves $\theta\tilde{Q}=0$ (see \eqref{eq:LS-constant-lambda-theta-matrix-equation})
and obtain 
\[
\theta(1)=\frac{\lambda+\nu_{2}}{(\lambda+\nu_{1})+(\lambda+\nu_{2})}\,,\qquad\theta(2)=\frac{\lambda+\nu_{1}}{(\lambda+\nu_{1})+(\lambda+\nu_{2})}\quad\overset{\nu_{1}\neq\nu_{2}}{\Longrightarrow}\theta(1)\neq\theta(2)
\]

The system in this example is ergodic and $|K_{B}|=0<\infty$, therefore
according to \ref{prop:LS-EMC-matrix-inverse-proof-inf} $\left(\lambda I_{W}-V\right)$
it is invertible. The matrix $\left(\lambda I_{W}-V\right)^{-1}$
is also injective since $K$ is finite and with \prettyref{lem:LS-EMC-theta-theta-hat-releationship}
it follows existence of a unique solution with

\[
\hat{\theta}=(\theta I_{W}\mathbf{e})^{-1}\cdot\theta I_{W}R=\theta R=(\theta(2),\theta(1))\neq\theta
\]

because of $\theta(1)\neq\theta(2)$.
\end{example}

\section{$M/G/1/\infty$ queueing system in a random environment\label{sect:LS-EMC-MG1} }

Vineetha \cite{vineetha:08} extended the theory of integrated queueing-inventory
models with exponential service times to systems with i.i.d. service
times which follow a general distribution. The lead time is exponential
and during stock-out periods lost sales occur. Her approach was classical
in that she considered the continuous time Markovian state process
at departure instants of customers.

In this section we revisit some of Vineetha's \cite{vineetha:08}
models. We prove some of our results for queues with general environments
from the previous sections on $M/M/1/\infty$ systems in the $M/G/1/\infty$
framework, which includes an extension of Vineetha's queueing-inventory
systems to queues with state dependent service speeds and with non-exponential
service times.\\

Our framework is as in \prettyref{sect:LS-EMC-exponential}: Consider
the system at departure instants and utilize Markov chain analysis.

Our main aim is to identify conditions which enforce the systems to
stabilize in a way that the queue and the environment decouple in
the sense that the stationary queue length and environment of the
embedded Markov Chain behave independently, i.e., a product form equilibrium
exists.

It will come out that this is not always possible, but we are able
to provide sufficient conditions for the existence of product form
equilibria.

\subsection{$M/G/1/\infty$ queueing systems with state dependent service intensities}

\label{sect:MG1-1} We first describe a pure queueing model in continuous
time which is of $M/G/1/\infty$ type, under FCFS regime, where the
single server works with different queue length dependent speeds (''service
intensities''), and the customers' service requests are queue length
dependent as well.

A review of $M/G/1/\infty$ queueing systems with state dependent
arrival and service intensities, which are related to the model described
here, and their asymptotic and equilibrium behaviour is provided in
the survey of Dshalalow \cite{dshalalow:97}.

\label{pageMGs1}The arrival stream is Poisson-$\lambda$. When a
customer enters the single server seeing $n-1\geq0$ customers behind
him, i.e., the queue length is $n$, his amount of requested service
time is drawn according to a distribution function $B_{n}:[0,\infty)\to[0,1]$
with $B_{n}(0)=0$. The set of all interarrival times and service
time requests is an independent collection of variables.

The server works with queue length dependent service speeds $c(n)>0$,
i.e., when at time $t\geq0$ there are $X(t)=n>0$ customers in the
system ($n$ including the one in service), and if the residual service
request of the customer in service at time $t$ is $R(t)=r>0$, then
at time $t+\varepsilon$ his residual service request is 
\[
R(t+\varepsilon)=r-\varepsilon\cdot c(n),\quad\text{if this is}>0\,,
\]
otherwise at time $t+\varepsilon$ his service expired and he has
already departed from the system.

It is a standard observation that the process 
\[
(X,R)=((X(t),R(t)):t\geq0)
\]
is a homogeneous strong Markov process on state space $\mathbb{N}_{0}\times\mathbb{R}_{0}^{+}$
(with cadlag paths).

With $\tau_{0}=0$ we will denote as in the previous sections by $\tau=(\tau_{0},\tau_{1},\dots)$
the sequence of departure times of customers. It is a similar standard
observation that the process 
\[
\hat{X}=(\hat{X}(n):=(X(\tau_{n}),R(\tau_{n}-)):n\in\mathbb{N}_{0})
\]
is a homogeneous Markov chain on state space $\mathbb{N}_{0}\times\{0\}$.
Because of $R(\tau_{n}-)=0~\forall n$, we prefer to use for this
Markov chain on state space $\mathbb{N}_{0}$ the description 
\[
\hat{X}=(\hat{X}(n):=X(\tau_{n}):n\in\mathbb{N}_{0})\,.
\]

A little reflection shows that the one-step transition matrix of $\hat{X}$
is a matrix which has the usual skip-fee to the left property, i.e.,
with \index{p-tilde(i,n)@$\tilde{p}(i,n)$} \marginpar{$\tilde{p}^{(i,n)}$}
$\tilde{p}(i,n)$ defined as 
\begin{equation}
\tilde{p}(i,n):=P\left(X(\tau_{1})=i+n-1|X(0)=i\right)\,,\label{eq:p-n-i-definition}
\end{equation}

it is of the form (empty entries are zero) 
\begin{eqnarray}
\tilde{P} & := & \left(\begin{array}{ccccc}
\tilde{p}(1,0) & \tilde{p}(1,1) & \tilde{p}(1,2) & \tilde{p}(1,3) & \ldots\\
\tilde{p}(1,0) & \tilde{p}(1,1) & \tilde{p}(1,2) & \tilde{p}(1,3) & \ldots\\
 & \tilde{p}(2,0) & \tilde{p}(2,1) & \tilde{p}(2,2) & \ldots\\
 &  & \tilde{p}(3,0) & \tilde{p}(3,1) & \ldots\\
\\
\end{array}\right)\,,\label{MGs1-1}
\end{eqnarray}
which is an upper Hessenberg matrix. A similar one-step transition
matrix arises in \cite{dshalalow:97}{[}p.68{]} where the service
requests are state dependent, but no speeds are incorporated.

So for $\tilde{P}$ the row index $i$ indicates the number of customers
in system when a service commences (and the service request is drawn
according to $B_{n}$), and the (varying in row number) column index
$n$ indicates the number of customers who arrived during the ongoing
service.

Note, that although we have used an intuitive notation for the non
zero entries of $\tilde{P}$, the matrix is a fairly general upper
Hessenberg matrix: The only restrictions are strict positivity of
the $\tilde{p}(i,n)$ and row sum $1$.

We will not go into the details of computing $\tilde{P}$, but recall
the classical result for state independent service speeds ($=1$)
in the following subsection.

\subsubsection{$M/D/1/\infty$ queueing systems}

\label{sect:MD1-1} The classical situation is as follows (See \cite[177+]{kleinrock:75}). 
\begin{prop}
For the $M/G/1/\infty$ queuing system with service time distribution
$B:[0,\infty)\rightarrow[0,1]$, the transition probabilities $\tilde{p}(i,n)$
are independent of $i$ and the transition matrix $\tilde{P}$ has
the form 
\begin{equation}
\tilde{P}:=\left(\begin{array}{ccccc}
\tilde{p}(0) & \tilde{p}(1) & \tilde{p}(2) & \tilde{p}(3) & \ldots\\
\tilde{p}(0) & \tilde{p}(1) & \tilde{p}(2) & \tilde{p}(3) & \ldots\\
 & \tilde{p}(0) & \tilde{p}(1) & \tilde{p}(2) & \ldots\\
 &  & \tilde{p}(0) & \tilde{p}(1) & \ldots\\
\\
\end{array}\right)\label{MG1-P}
\end{equation}
with \index{p-tilde(n)@$\tilde{p}(n)$} \marginpar{$\tilde{p}(n)$}
\[
p(n):=\int_{0}^{\infty}e^{-\lambda t}\frac{(\lambda t)^{n}}{n!}dB(t)
\]
With $\mu^{-1}<\infty$ we denote the mean service time. Then under
$\lambda\mu^{-1}<1$ the continuous time process and the chain embedded
at departure instants are ergodic. 
\end{prop}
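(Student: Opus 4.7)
The plan is to verify the three ingredients one by one: the row-independence of $\tilde{p}(i,n)$, the explicit integral representation of the entries, and the ergodicity criterion under $\lambda\mu^{-1}<1$.

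First I would derive the formula for $\tilde{p}(n)$ by the standard conditioning argument. Fix $i\geq 1$ and condition on the service time $T$ of the customer at the head of the queue, which has distribution $B$ and is independent of the Poisson-$\lambda$ arrival stream. Conditional on $T=t$, the number of arrivals during the service is Poisson$(\lambda t)$, so
\[
P(X(\tau_1)=i-1+n\mid X(0)=i,\,T=t)=e^{-\lambda t}\frac{(\lambda t)^n}{n!}.
\]
Integrating against $B$ gives $\tilde p(i,n)=\int_0^\infty e^{-\lambda t}(\lambda t)^n/n!\,dB(t)$, which is independent of $i$, justifying the notation $\tilde p(n)$ and the shift structure visible in the rows of $\tilde P$.

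Next I would treat the exceptional first row $i=0$. Starting empty at a departure instant, the system waits for the first arrival, then begins a service, and the next departure $\tau_1$ leaves behind exactly the customers who arrived during that service. By the strong Markov property at the first arrival, and the independence of the interarrival time from the subsequent arrival counts, this count is again distributed as $\tilde p(n)$. Hence the $i=0$ row of $\tilde P$ coincides with the $i=1$ row, which is precisely the block structure of \eqref{MG1-P}.

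For ergodicity of the embedded chain I would apply Foster's criterion with the linear test function $V(n)=n$. A direct computation gives $\sum_{n\geq 0} n\,\tilde p(n)=\lambda\int_0^\infty t\,dB(t)=\lambda\mu^{-1}=:\rho$, so for every $i\geq 1$ the mean one-step drift is $E[\hat X(1)\mid\hat X(0)=i]-i=\rho-1<0$. Since the chain is irreducible on $\mathbb{N}_0$ (the entries $\tilde p(0)>0$ and the skip-free-to-the-left Hessenberg shape imply every state communicates with $0$) and aperiodic (the self-loop at $0$ has positive probability $\tilde p(0)>0$), positive recurrence follows from the negative mean drift outside a finite set. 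For the continuous time process $(X,R)$ I would use a regenerative argument: the return times of $X$ to $0$ form a renewal sequence whose mean coincides, up to an additive $\lambda^{-1}$, with the mean return time of $\hat X$ to $0$; the mean busy period is finite iff $\rho<1$, and under this condition the key renewal theorem yields convergence to a unique stationary distribution. The mildly delicate step is precisely this last passage from the discrete-skeleton recurrence to the existence of a limiting distribution for the two-dimensional continuous time Markov process with the supplementary residual-service variable, but it is handled cleanly by the standard M/G/1 regeneration construction.
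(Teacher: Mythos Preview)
The paper does not give its own proof of this proposition; it simply records the classical $M/G/1$ facts and cites Kleinrock \cite[p.~177ff.]{kleinrock:75}. Your argument is the standard textbook derivation---conditioning on the service time for the formula $\tilde p(n)$, the strong Markov property at the first arrival for the boundary row $i=0$, and Foster's criterion plus the regenerative busy-period structure for ergodicity---and it is correct. There is nothing to compare here beyond noting that you have supplied what the paper deliberately outsourced to a reference.
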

We now recall well known results for standard $M/D/1/\infty$ queues
where the service time is deterministic of length ${\frac{1}{\mu}}$,
i.e., the distribution function is $B=\delta_{\frac{1}{\mu}}$ (Dirac
measure). We assume $\rho:=\lambda/\mu<1$. Then the queue length
process $\hat{X}=(\hat{X}(n)):n\in\mathbb{N}_{0})$ at departure times
is an ergodic Markov chain with one-step transition matrix \eqref{MG1-P}
with 
\begin{equation}
\tilde{p}(n):=\int_{0}^{\infty}e^{-\lambda t}\frac{(\lambda t)^{n}}{n!}d\delta_{\frac{1}{\mu}}(t)=e^{-\frac{\lambda}{\mu}}\frac{(\frac{\lambda}{\mu})^{n}}{n!}\label{ptilde}
\end{equation}

We denote as usual the stationary distribution of of $\hat{X}$ by
$\hat{\xi}$, which is the unique stochastic solution of the equation
\begin{equation}
\hat{\xi}\tilde{P}=\hat{\xi}\,.\label{MD1steadystateeq}
\end{equation}
We will utilize later on some special values of $\hat{\xi}$ (see
\cite[p. 241]{gross;harris:74}) 
\begin{equation}
\hat{\xi}(0)=(1-\rho)\qquad\hat{\xi}(1)=\left(1-\rho\right)(e^{\rho}-1)\qquad\hat{\xi}(2)=\left(1-\rho\right)e^{\rho}(e^{\rho}-\rho-1)\label{MG1steadystate012}
\end{equation}

\subsection{$M/D/1/\infty$ system with inventory under lost sales}

\label{sect:MD1}

We analyze an $M/D/1/\infty$ queueing system with an attached inventory
under $(r,S)$-policy with lost sales, which is similar to Example
\ref{ex:0S-1}, but with deterministic service times. We summarize
the system's parameters:\\
 Poisson-$\lambda$ input, deterministic-${\frac{1}{\mu}}$ service
times, $\rho:=\lambda/\mu<1$. Lead times are exponential-$\nu$.
All service, interarrival, and lead times constitute an independent
family.\\
 Order policy is $(r,S)$ with $r=1$ and $S=2$. When the inventory
is depleted no service is provided and new arrivals are rejected (lost
sales).\\

The Markovian state process of the integrated queueing-inventory system
relies on the description of the $M/D/1/\infty$ queueing system,
given at the beginning of Section \ref{sect:MD1-1}.

For the system's description in continuous time we use the supplemented
queue length process $(X,R)$, where the $R$ process on $[0,\mu^{-1}]$
denotes the residual service time of the ongoing service as the supplementary
variable. We enlarge this process by adding the inventory size $Y$.

The joint queueing-inventory process with supplementary variable $R$
will be denoted by $Z=(X,R,Y)$, and lives on state space $\mathbb{N}_{0}\times[0,\mu^{-1}]\times\{2,1,0\}$.
We consider the system at departure instants, which leads to a one-step
transition matrix similar to \eqref{MG1-P}.

The dynamics of of the Markov chain $\hat{Z}$ embedded into $Z$
at departure instants will be described in a way that resembles the
$M/G/1$ type matrix analytical models.

From the structure of the embedding, we know, that $R(\tau_{n}-)=0$
and whenever $X(\tau_{n})=0$ we see $R(\tau_{n})=0$, resp. whenever
$X(\tau_{n})>0$ we see $R(\tau_{n})=1/\mu$. We therefore can, without
loss of information, delete the $R$-component of the process, to
obtain a Markov chain embedded at departure times 
\[
\hat{Z}=(\hat{X},\hat{Y})=((\hat{X}(n),\hat{Y}(n)):n\in\mathbb{N}_{0})\,,~~\text{with}~~\hat{Z}(n):=(\hat{X}(n),\hat{Y}(n)):=(X(\tau_{n}),Y(\tau_{n}))\,.
\]
The state space of $\hat{Z}$ is $E=\mathbb{N}_{0}\times\{2,1,0\}$
where $K=\{2,1,0\}$ is partitioned into $K=K_{W}+K_{B}$ with $K_{B}=\{0\}$
and carries the reversed natural order structure.\\

We proceed with nomenclature similar to Definition \ref{def:LS-EMC-P-A-n-B-n}
with the obvious modifications, which stem from the observation, that
for $i\intpositive$ the probabilities $P(Z(\tau_{1})=(i+n-1,m)|Z(0)=(i,k))$
do not depend on $i$, because service is provided with an intensity
which is independent of the queue length. We reuse several of the
previous notations but there will be no danger of misinterpretation
in this section. Recall, that $(\tau_{n}:n\in\mathbb{N}_{0})$ is
the sequence of departure instants
\begin{defn}
\label{def:P-A-n-B-n-D1}We introduce matrices $A^{(n)}\in\mathbb{R}^{K\times K}$
\index{$A^{(n)}$} \marginpar{$A^{(n)}$} by 
\begin{eqnarray}
A_{km}^{(n)} & := & P(Z(\tau_{1})=(i+n-1,m)|Z(0)=(i,k)),\qquad1\leq i\label{eq:A-n-i-definitionD1}
\end{eqnarray}
for $k,m\in K$, then one-step transition matrix $\mathbf{P}$ defined
according to \prettyref{def:LS-EMC-P-A-n-B-n} has the form 

\begin{equation}
\mathbf{P}=\left(\begin{array}{ccccc}
B^{(0)} & B^{(1)} & B^{(2)} & B^{(3)} & \ldots\\
A^{(0)} & A^{(1)} & A^{(2)} & A^{(3)} & \ldots\\
0 & A^{(0)} & A^{(1)} & A^{(2)} & \ldots\\
0 & 0 & A^{(0)} & A^{(1)} & \ldots\\
\vdots & \vdots & \vdots & \vdots
\end{array}\right)\,.\label{eq:LCS-EMC-P-matrix-level-independent}
\end{equation}

\end{defn}
We will clarify the structure of the solution of the equation $\hat{\pi}\mathbf{P}=\hat{\pi}$.
So, $\hat{\pi}$ is the steady state distribution of the embedded
Markov chain $\hat{Z}$. It will become clear that $\hat{Z}$ is in
general not irreducible on $E$.

It will be convenient to group $\hat{\pi}$ as 
\begin{equation}
\hat{\pi}=(\hat{\pi}^{(0)},\hat{\pi}^{(1)},\hat{\pi}^{(2)},\dots)\label{eq:LS-EMC-pistructure5-1}
\end{equation}
with 
\begin{equation}
\hat{\pi}^{(n)}=(\hat{\pi}(n,2),\hat{\pi}(n,1),\hat{\pi}(n,0)),\qquad n\in\mathbb{N}_{0}\,.\label{eq:LS-EMC-pistructure6-1}
\end{equation}

An immediate consequence is that the steady state equation can be
written as 
\begin{equation}
\hat{\pi}^{(0)}B^{(n)}+\sum_{i=1}^{n+1}\hat{\pi}^{(i)}A^{(n-i+1)}=\hat{\pi}^{(n)},\qquad n\in\mathbb{N}_{0}\,.\label{eq:LCS-EMC-steady-state-equation-grouped-1}
\end{equation}

We determine $A^{(n)},~B^{(n)}$ explicitly, distinguishing cases
by the initial states $\hat{Z}(0)$.\\

\noindent $\bullet$ $\hat{Z}(0)=(i,0),~i\intpositive$: The server
waits for replenishment of inventory. The queue length stays at $i$
until the ordered replenishment arrives. Then the inventory is restocked
to $S=2$ and the server resumes his work, stochastically identical
to a standard $M/D/1/\infty$-system until the service expires. When
the served customer leaves the system, the inventory contains one
item. 
\[
A_{(0,1)}^{(n)}=P(Z(\tau_{1})=(i+n-1,1)|Z(0)=(i,0))=\int_{0}^{\infty}e^{-\lambda t}\frac{(\lambda t)^{n}}{n!}d\delta_{\frac{1}{\mu}}(t)=e^{-\frac{\nu}{\mu}}\frac{(\frac{\lambda}{\mu})^{n}}{n!}=\tilde{p}(n)\,.
\]
Obviously, from the inventory management regime 
\[
A_{(0,0)}^{(n)}=A_{(0,2)}^{(n)}=0\,.
\]

\noindent $\bullet$ $\hat{Z}(0)=(i,1),~i\intpositive$: A lead time
is ongoing and the server is active serving the first customer in
the queue. In this case there are two possible target states for the
inventory when the customer currently in service leaves the system.

\noindent $\circ$ \emph{Target state} $0$: The ongoing service expires
before the lead time does. The resulting inventory state after service
is finished is $0$. 
\begin{eqnarray*}
A_{(1,0)}^{(n)} & = & P(Z(\tau_{1})=(i+n-1,0)|Z(0)=(i,1))\\
 & = & \int_{0}^{\infty}e^{-\lambda t}\frac{(\lambda t)^{n}}{n!}e^{-\nu t}d\delta_{\frac{1}{\mu}}(t)=e^{-\frac{\lambda+\nu}{\mu}}\frac{(\frac{\lambda}{\mu})^{n}}{n!}=e^{-\frac{\nu}{\mu}}\tilde{p}(n)
\end{eqnarray*}

\noindent $\circ$ \emph{Target state} $1$: The ongoing lead expires
before the service time does, and the inventory is filled up to $S=2$
during the ongoing service. The resulting inventory state when service
expired is $1$. (Additionally, an order is placed, but this does
not change the state.) 
\begin{eqnarray*}
A_{(1,1)}^{(n)} & = & P(Z(\tau_{1})=(i+n-1,1)|Z(0)=(i,1))\\
 & = & \int_{0}^{\infty}e^{-\lambda t}\frac{(\lambda t)^{n}}{n!}(1-e^{-\nu t})d\delta_{\frac{1}{\mu}}(t)=(1-e^{-\frac{\nu}{\mu}})e^{-\frac{\lambda}{\mu}}\frac{(\frac{\lambda}{\mu})^{n}}{n!}=(1-e^{-\frac{\nu}{\mu}})\tilde{p}(n)
\end{eqnarray*}
Obviously, from the inventory management regime 
\begin{eqnarray*}
A_{(1,2)}^{(n)} & = & 0
\end{eqnarray*}

\noindent $\bullet$ $\hat{Z}(0)=(i,2),~i\intpositive$: There are
$S=2$ items on stock, no order is placed and the service is provided
just as in a standard $M/D/1/\infty$ system. The resulting inventory
state when service expired is $1$. (Additionally, an order is placed,
but this does not change the state.) 
\[
A_{(2,1)}^{(n)}=P(Z(\tau_{1})=(i+n-1,1)|Z(0)=(i,2))=\int_{0}^{\infty}e^{-\lambda t}\frac{(\lambda t)^{n}}{n!}d\delta_{\frac{1}{\mu}}(t)=\tilde{p}(n)
\]
Obviously, from the inventory management regime 
\[
A_{(2,0)}^{(n)}=A_{(2,2)}^{(n)}=0
\]

\noindent $\bullet$ $\hat{Z}(0)=(0,0)$: The queue is empty, an order
is placed. No customers are admitted until replenishment of inventory.
When the ongoing lead time expires, inventory is restocked to $S=2$.
Thereafter new customers are admitted, and service starts immediately
after the first arrival. When this customer is served, the stock size
is $1$. 
\[
B_{(0,1)}^{(n)}=P(Z(\tau_{1})=(n,m)|Z(0)=(0,k))=\int_{0}^{\infty}e^{-\lambda t}\frac{(\lambda t)^{n}}{n!}e^{-\nu t}d\delta_{\frac{1}{\mu}}(t)=\tilde{p}(n)
\]
Obviously, from the inventory management regime 
\[
B_{(0,0)}^{(n)}=B_{(0,2)}^{(n)}=0
\]

\noindent $\bullet$ $\hat{Z}(0)=(0,1)$: The queue is empty, there
is 1 item on stock, and an order is placed. In this case there are
two possible target states for the inventory when the first customer
who arrives will be served and leaves the system.

\noindent $\circ$ \emph{Target state} $0$: The ongoing inter-arrival
time expires before the lead time does. The arriving customer's service
starts immediately and is finished before the replenishment arrives.
The resulting inventory state after service is finished is $0$.

\begin{eqnarray*}
B_{(1,0)}^{(n)} & = & P(Z(\tau_{1})=(n,0)|Z(0)=(0,1))\\
 & = & \frac{\lambda}{\nu+\lambda}\int_{0}^{\infty}e^{-\lambda t}\frac{(\lambda t)^{n}}{n!}e^{-\nu t}d\delta_{\frac{1}{\mu}}(t)=\frac{\lambda}{\nu+\lambda}e^{-\frac{\nu}{\mu}}\tilde{p}(n)
\end{eqnarray*}

\noindent $\circ$ \emph{Target state} $1$:\\
 ${\bf (1)}$ The ongoing lead expires before the inter-arrival time
does, and the inventory is filled up to $S=2$ during the ongoing
inter-arrival time. Then, until the first departure, the system acts
like a standard $M/D/1/\infty$ queue. When the first departure happens,
inventory size decreases to $1$. (Additionally, an order is placed,
but this does not change the state.)\\
 ${\bf (2)}$ The ongoing inter-arrival time expires before the lead
time does. The arriving customer's service starts immediately and
the replenishment arrives before the service is finished and by the
replenishment the stock size increases to $2$. The resulting inventory
state after service is finished is $1$. (Additionally, an order is
placed, but this does not change the state.) 
\begin{eqnarray*}
B_{(1,1)}^{(n)} & = & P(Z(\tau_{1})=(n,1)|Z(0)=(0,1))\\
 & = & \frac{\nu}{\nu+\lambda}\int_{0}^{\infty}e^{-\lambda t}\frac{(\lambda t)^{n}}{n!}d\delta_{\frac{1}{\mu}}(t)+\frac{\lambda}{\nu+\lambda}\int_{0}^{\infty}e^{-\lambda t}\frac{(\lambda t)^{n}}{n!}(1-e^{-\nu t})d\delta_{\frac{1}{\mu}}(t)\\
 & = & \frac{\nu}{\nu+\lambda}\tilde{p}(n)+\frac{\lambda}{\nu+\lambda}(1-e^{-\frac{\nu}{\mu}})\tilde{p}(n)=\left(1-\frac{\lambda}{\nu+\lambda}e^{-\frac{\nu}{\mu}}\right)\tilde{p}(n)
\end{eqnarray*}
Obviously, from the inventory management regime 
\[
B_{(1,2)}^{(n)}=0
\]

\noindent $\bullet$ $\hat{Z}(0)=(0,2)$: The queue is empty, there
are $2$ items on stock, and an inter-arrival time is ongoing. Until
the first departure the system develops like a standard $M/D/1/\infty$
queue. After that departure the inventory size is $1$. (Additionally,
an order is placed, but this does not change the state.)

\[
B_{(2,1)}^{(n)}=P(Z(\tau_{1})=(n,1)|Z(0)=(0,2))=\int_{0}^{\infty}e^{-\lambda t}\frac{(\lambda t)^{n}}{n!}d\delta_{\frac{1}{\mu}}(t)=\tilde{p}(n)
\]
Obviously, from the inventory management regime 
\[
B_{(2,0)}^{(n)}=B_{(2,2)}^{(n)}=0
\]

Summarizing the results we have (note, that we ordered the environment
in line: $2,1,0$)

\[
A^{(n)}=\tilde{p}(n)\left(\begin{array}{ccc}
0 & 1 & 0\\
0 & 1-e^{-\frac{\nu}{\mu}} & e^{-\frac{\nu}{\mu}}\\
0 & 1 & 0
\end{array}\right)\,,~~\text{and}~~B^{(n)}=\tilde{p}(n)\left(\begin{array}{ccc}
0 & 1 & 0\\
0 & 1-\frac{\lambda}{\nu+\lambda}e^{-\frac{\nu}{\mu}} & \frac{\lambda}{\nu+\lambda}e^{-\frac{\nu}{\mu}}\\
0 & 1 & 0
\end{array}\right)\,.
\]

We first prove that the steady state (marginal) queue length distribution
of $\hat{X}$ is the steady state distribution $\hat{\xi}$ of the
standard $M/D/1/\infty$ queue.

The row sums of $B^{(n)}$ and $A^{(n)}$ are $\tilde{p}(n)$, that
is 
\[
B^{(n)}\mathbf{e}=A^{(n)}\mathbf{e}=\tilde{p}(n)\mathbf{e},\quad n\in\mathbb{N}_{0}.
\]

Multiplying the steady state equations \eqref{eq:LCS-EMC-steady-state-equation-grouped-1}
for $\hat{Z}$ with $\mathbf{e}$ leads to 
\[
\hat{\pi}^{(0)}B^{(n)}\mathbf{e}+\sum_{i=1}^{n+1}\hat{\pi}^{(i)}A^{(i,n-i+1)}\mathbf{e}=\hat{\pi}^{(n)}\mathbf{e}\Longrightarrow\hat{\pi}^{(0)}\mathbf{e}\tilde{p}(0)+\sum_{i=1}^{n+1}\hat{\pi}^{(i)}\mathbf{e}\tilde{p}(n+1-i)=\hat{\pi}^{(n)}\mathbf{e}\,,
\]

which is \eqref{MD1steadystateeq}, which has a unique stochastic
solution. Now, $\hat{\pi}^{(i)}\mathbf{e}$ is the steady state (marginal)
queue length distribution of $\hat{X}$ and solves \eqref{MD1steadystateeq},
so we have shown $\hat{\pi}^{(i)}\mathbf{e}=\hat{\xi}(i)$ for all
$i\in\mathbb{N}_{0}$.\\

Now we are prepared to show that assuming a product form steady state
distribution $(\pi(n,k)=\xi(n)\hat{\theta}(k),(n,k)\in E)$ inserted
\eqref{eq:LS-chain-steady-state-equation} leads to a contradiction.

Inserting this product form $\pi(n,k)=\xi(n)\theta(k)$ into the equation
for the level $n=0$ and phase $k=0$, the steady state equation \eqref{eq:LS-EMC-steady-state-equation-grouped}
is transformed into

\begin{eqnarray}
\hat{\pi}(0,1)B_{(1,0)}^{(0)}+\hat{\pi}(1,1)A_{(1,0)}^{(0)} & = & \hat{\pi}(0,0)\nonumber \\
\Longleftrightarrow\frac{\lambda}{\nu+\lambda}e^{-\frac{\lambda+\nu}{\mu}}\hat{\xi}(0)\hat{\theta}(1)+e^{-\frac{\lambda+\nu}{\mu}}\hat{\xi}(1)\hat{\theta}(1) & = & \hat{\xi}(0)\hat{\theta}(0)\nonumber \\
\Longleftrightarrow e^{-\frac{\lambda+\nu}{\mu}}\left(\frac{\lambda}{\nu+\lambda}+\frac{\hat{\xi}(1)}{\hat{\xi}(0)}\right)\hat{\theta}(1) & = & \hat{\theta}(0)\nonumber \\
\Longleftrightarrow e^{-\frac{\lambda+\nu}{\mu}}\left(\frac{\lambda}{\nu+\lambda}+e^{\rho}-1\right)\hat{\theta}(1) & = & \hat{\theta}(0)\,,\label{eq:md1inf-failed-theta-0-1}
\end{eqnarray}

and the equation for level $n=1$ and phase $k=0$ under this product
form assumption is transformed into

\begin{eqnarray}
\hat{\pi}(0,1)B_{(1,0)}^{(1)}+\hat{\pi}(1,1)A_{(1,0)}^{(1)}+\hat{\pi}(2,1)A_{(1,0)}^{(0)} & = & \hat{\pi}(1,0)\\
\hat{\xi}(0)\hat{\theta}(1)B_{(1,0)}^{(1)}+\hat{\xi}(1)\hat{\theta}(1)A_{(1,0)}^{(1)}+\hat{\xi}(2)\hat{\theta}(1)A_{(1,0)}^{(0)} & = & \hat{\xi}(1)\hat{\theta}(0)\nonumber \\
\Longleftrightarrow\left(\frac{\lambda}{\nu+\lambda}e^{-\frac{\lambda+\nu}{\mu}}\frac{\lambda}{\mu}\hat{\xi}(0)+e^{-\frac{\lambda+\nu}{\mu}}\frac{\lambda}{\mu}\hat{\xi}(1)+e^{-\frac{\lambda+\nu}{\mu}}\hat{\xi}(2)\right)\hat{\theta}(1) & = & \hat{\xi}(1)\hat{\theta}(0)\nonumber \\
\Longleftrightarrow e^{-\frac{\lambda+\nu}{\mu}}\left(\frac{\lambda}{\nu+\lambda}\frac{\lambda}{\mu}\frac{\hat{\xi}(0)}{\hat{\xi}(1)}+\frac{\lambda}{\mu}+\frac{\hat{\xi}(2)}{\hat{\xi}(1)}\right)\hat{\theta}(1) & = & \hat{\theta}(0)\nonumber \\
\Longleftrightarrow e^{-\frac{\lambda+\nu}{\mu}}\left(\frac{\lambda}{\nu+\lambda}\frac{\lambda}{\mu}\frac{1}{e^{\rho}-1}+\frac{\lambda}{\mu}+\frac{e^{\rho}(e^{\rho}-\rho-1)}{(e^{\rho}-1)}\right)\hat{\theta}(1) & = & \hat{\theta}(0)\label{eq:md1inf-failed-theta-0-2}
\end{eqnarray}

One can see that the expressions \eqref{eq:md1inf-failed-theta-0-1}
and \eqref{eq:md1inf-failed-theta-0-2} are in general not equal.
For example, with the parameters $\lambda=1$, $\mu=2$ and $\nu=3$
the $\hat{\theta}(0)$ from \eqref{eq:md1inf-failed-theta-0-1} is
approximately $0.122\cdot\hat{\theta}(1)$ and the $\hat{\theta}(0)$
from the expression \eqref{eq:md1inf-failed-theta-0-2} is approximately
$0.145\cdot\hat{\theta}(1)$.

\subsection{$M/G/1/\infty$ queueing systems with state dependent service intensities
and product form steady state}

\label{sect:MG1-PF} In the previous section we have shown by a counterexample,
that in general the steady state distribution of an $M/G/1/\infty$
system with $(r,S)$ policy and lost sales does not have a product
form. Nevertheless, there are cases where loss systems in a random
environment have product form steady states. These systems belong
to a class of generalized $M/G/1/\infty$ loss systems, which will
be discussed in this subsection. We point out, that the results apply
to general birth-and-death processes in a random environment as well.
\begin{defn}
\label{def:LCS-MG1Inf-interference-less} We consider an $M/G/1/\infty$
queueing system in continuous time with state dependent service intensities
(speeds) as described at the beginning of Section \ref{sect:MG1-1}
(p.\pageref{pageMGs1}) and use the notation introduced there.

The supplemented queue length process $(X,R)$ (queue length, residual
service request) is not Markov because we additionally assume that
this queueing system is coupled with a \textbf{finite} environment
$K=K_{W}+K_{B}$ with $K_{W}\neq\emptyset$, driven again by a generator
$\V$ and a stochastic jump matrix $R$, as described at the beginning
of \prettyref{sect:LS-MM1Inf-model}. The state of the environment
process will be denoted by $Y$ again.

We prescribe that the interaction of $(X,R)$ with the environment
process $Y$ is via the following principles and restrictions:

${\bf (1)}$ If the environment process is in a \emph{non-blocking}
state $k$, i.e. $k\in K_{W}$, the queueing system develops in the
same way as an $M/G/1/\infty$ queuing system in isolation, governed
by $\tilde{P}$ from \eqref{MGs1-1}, without any change of the environment
until the next departure happens. Holding the environment invariant
during this period is guaranteed by $\v(k,m)=0$ for all $k\in K_{W},m\in K$.

${\bf (2)}$ If at time $t$ a customer departs from the system, the
environment state changes according to the stochastic jump matrix
$R$, independent of the history of the system given $Y(t)$.

${\bf (3)}$ Whenever the environment process is in a \emph{blocking}
state $k\in K_{B}$, it may change its state with rates governed by
the matrix $\V$, independent of the queue length and the residual
service request. 
\end{defn}
From these assumptions it is immediate, that $Z=(X,R,Y)$ is a continuous
time strong Markov process. We introduce sequences of stopping times
for the process $Z=(X,R,Y)$ as before: With $\tau_{0}=\sigma_{0}=\zeta_{0}=0$
we will denote by

$\tau=(\tau_{0},\tau_{1},\dots)$ the sequence of departure times
of customers,

$\sigma=(\sigma_{0},\sigma_{1},\dots)$ the sequence of arrival times
of customers admitted to the system,

$\zeta=(\zeta_{0},\zeta_{1},\dots)$ the sequence of jump times of
the continuous time process $Z$.\\

By standard arguments it is seen that the sequence 
\[
(X(\tau_{n}),R(\tau_{n}-),Y(\tau_{n})):n\in\mathbb{N}_{0})
\]
is a homogeneous Markov chain on state space $\mathbb{N}_{0}\times\{0\}\times K$.
Because for all $n\in\mathbb{N}_{0}$ holds $R(\tau_{n}-)=0$ we omit
the $R$-component and consider henceforth the homogeneous Markov
chain 
\[
\hat{Z}=((\hat{X}(n),\hat{Y}(n)):=(X(\tau_{n}),Y(\tau_{n})):n\in\mathbb{N}_{0})
\]
on state space $\mathbb{N}_{0}\times K$. The following formulae follow
directly from the description.

${\bf (1)}\Longrightarrow$ for $k\in K_{W},~m\in K$ 
\[
P\left(\left(X(\tau_{1}),Y(\tau_{1}{-})\right)=(n+i-1,m)|Z(0)=(i,k)\right)=\delta_{km}\tilde{p}(i,n)\,.
\]

${\bf (2)}\Longrightarrow$ for $k\in K_{W},~m\in K$ 
\begin{eqnarray*}
 &  & P\left(\left(X(\tau_{1}),Y(\tau_{1})\right)=(n+i-1,m)|Z(0)=(i,k)\right)\\
 &  & =\sum_{h\in K}P\left(\left(X(\tau_{1}),Y(\tau_{1}{-})\right)=(n+i-1,h)|Z(0)=(i,k)\right)\cdot\Rentry hm\,.
\end{eqnarray*}

${\bf (3)}\Longrightarrow$ for $k\in K_{B},~m\in K$ 
\[
P\left(\left(X(\zeta_{1}),Y(\zeta_{1})\right)=(j,m)|Z(0)=(i,k)\right)=\delta_{ij}\frac{\v(k,m)}{-\v(k,k)}\,.
\]

Note, that in the last expression $k\in K_{B}$ implies that the queueing
system is frozen, and therefore in the denominator of the right side
a summand $+1_{[k\in K_{W}]}(\lambda+\mu1_{[i>0]})$, which one might
have expected, does \textbf{not} appear.\\

Although we have imposed constraints on the behaviour of the environment
the model still is a very versatile one. The class of models from
Definition \ref{def:LCS-MG1Inf-interference-less} encompasses (e.g.)
many vacation models. These are models describing a server working
on primary and secondary customers, a situation which arises in many
computer, communication, and production systems and networks. If one
is mainly interested in the service process of primary customers,
then working on secondary customers means from the viewpoint of the
primary customers, that the server is not available or is interrupted.
For more details see e.g. the survey of Doshi \cite{doshi:90}. In
the classification given there \cite{doshi:90}{[}p. 221, 222{]} the
above model is a single server queue with \emph{general nonexhaustive
service, with nonpreemptive vacations,} and \emph{general vacation
rule}. Our system fits into these classification because whenever
a service expires the server decides (governed by $R$) whether to
perform another service or to wait for the next arriving customer
(a state in $K_{W}$ is selected), or to change its activity for secondary
customers (a state in $K_{B}$ is selected). The sojourn time in this
status is completely general distributed by construction, in fact
these sojourns in general are neither identical distributed nor independent.\\

The proposed product form property of $\hat{Z}$ originates from the
specific structure of the one-step transition matrix $\mathbf{P}$
of $\hat{Z}$. With some stochastic matrix $H\in\mathbb{R}^{K\times K}$,
which we present in all details below, 
\begin{equation}
\mathbf{P}=\left(\begin{array}{ccccc}
\tilde{p}(1,0)H & \tilde{p}(1,1)H & \tilde{p}(1,2)H & \tilde{p}(1,3)H & \ldots\\
\tilde{p}(1,0)H & \tilde{p}(1,1)H & \tilde{p}(1,2)H & \tilde{p}(1,3)H & \ldots\\
0 & \tilde{p}(2,2)H & \tilde{p}(2,1)H & \tilde{p}(2,2)H & \ldots\\
0 & 0 & \tilde{p}(3,0)H & \tilde{p}(3,1)H & \ldots\\
\vdots & \vdots & \vdots & \vdots
\end{array}\right)\,.\label{eq:LCS-interference-less-P}
\end{equation}
We will use an evaluation procedure similar to that used for the $M/M/1/\infty$
in a random environment, by decomposing the matrices $B^{(n)}=WU^{(n,0)}R$
and $A^{(i,n)}=U^{(i,n)}R$.

The next lemma guarantees that the expression $\frac{1}{-\v(k,k)+1_{[k\in K_{W}]}}$
in \prettyref{lem:LS-EMC-mg1-interference-less-W} and \prettyref{lem:LCS-mg1-interference-less-U}
is always well defined.
\begin{lem}
\label{lem:LS-EMC-mgs1-interference-less-Vkk-aso-well-defined}For
the system defined in Definition \prettyref{def:LCS-MG1Inf-interference-less}
it holds 
\begin{equation}
|\v(k,k)|>0,\qquad\forall k\in K_{B}\label{eq:LCS-mg1-interference-less-non-negative-Vkk}
\end{equation}
Therefore the expression $\frac{1}{-\v(k,k)+1_{[k\in K_{W}]}}$ is
well defined for any $k\in K$\end{lem}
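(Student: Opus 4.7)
The plan is to derive a contradiction from the standing irreducibility (ergodicity) assumption on the continuous-time process $Z=(X,R,Y)$. Concretely, I would assume for contradiction that $\v(k,k)=0$ for some $k\in K_B$ and show that some state of the form $(n,k)$ becomes absorbing, violating irreducibility.

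First I would exploit the generator property of $\V$: since $\v(k,\cdot)$ has zero row sum and off-diagonal entries $\v(k,m)\geq 0$ for $m\neq k$, the hypothesis $\v(k,k)=0$ forces $\v(k,m)=0$ for every $m\in K\setminus\{k\}$. Thus from the environment state $k$ no $V$-driven transition is possible. Next I would invoke the three interaction principles of Definition~\ref{def:LCS-MG1Inf-interference-less}. By (3), in a blocking state the environment can change only through $V$, which by the preceding step is impossible at $k$. By the general blocking convention, whenever $Y\in K_B$ the service is interrupted and no arrivals are admitted, so $X$ cannot change either. By (2) the jump matrix $R$ is consulted only at departure instants, but no departures can occur from $k$ since service is suspended.

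Combining these observations, once $Z$ enters any state $(n,k)$ with $n\in\mathbb{N}_{0}$ and the assumed $k\in K_B$, none of the three ingredients (queue, residual service, environment) can ever move: $(n,k)$ is absorbing. This contradicts the standing irreducibility of $Z$ assumed throughout the paper (Section~\ref{sect:LS-MM1Inf-model}), so $\v(k,k)\neq 0$ and, being a diagonal entry of a generator matrix, $\v(k,k)<0$, i.e., $|\v(k,k)|>0$.

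Structurally this is the $M/G/1/\infty$ analogue of Lemma~\ref{lem:LS-EMC-non-zero-path} applied to the singleton $\tilde K_B=\{k\}$, and in fact I would phrase the argument either directly or as an appeal to that lemma together with principles (1)--(3). I do not anticipate a genuine obstacle: the only point requiring a moment of care is to verify that no side channel (arrival, departure, $R$-triggered change) can circumvent the absorbing property at $(n,k)$, and principles (1)--(3) together with the blocking convention cover exactly those three possibilities. The concluding sentence of the statement, that $\tfrac{1}{-\v(k,k)+1_{[k\in K_W]}}$ is well defined for every $k\in K$, is then immediate: on $K_W$ the indicator contributes $1$, and on $K_B$ the term $-\v(k,k)$ is strictly positive.
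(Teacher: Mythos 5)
Your proof is correct and follows essentially the same route as the paper's: the paper invokes ergodicity of $Z$ (via the reasoning of Lemma \ref{lem:LS-EMC-diag-inv} / Lemma \ref{lem:LS-EMC-non-zero-path}) to obtain a positive exit rate $\v(k,m)>0$ from each $k\in K_B$ and then applies the generator row-sum identity $|\v(k,k)|=\sum_{h\neq k}\v(k,h)$. You merely phrase this contrapositively, spelling out that $\v(k,k)=0$ would make $(n,k)$ absorbing under principles (1)--(3), which is the same underlying observation.
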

\begin{proof}
The proof uses the same idea as that of Lemma \ref{lem:LS-EMC-diag-inv}.
Because $Z$ is ergodic there must be a positive rate $\v(k,m)>0$
to leave any blocking state $k\in K_{B}$. The generator property
$|\v(k,k)|=\sum_{h\neq k}\v{(k,h)}$ of the matrix $\V$ proves the
inequality \prettyref{eq:LCS-mg1-interference-less-non-negative-Vkk}. 
\end{proof}
We now define similar to \prettyref{eq:LS-EMC-W-definition-1} in
Lemma \ref{lem:LS-EMC-ABUW} a matrix $W$ and determine an explicit
representation. 
\begin{lem}
\label{lem:LS-EMC-mg1-interference-less-W} For the system from Definition
\ref{def:LCS-MG1Inf-interference-less} we set for $k,m\in K$ 
\begin{equation}
W_{km}:={P\left(Z(\sigma_{1})=(1,m)|Z(0)=(0,k)\right)}\,,\label{eq:LS-EMC-Wkm-1}
\end{equation}
and remark that $W_{km}=0$ for all $m\in K_{B}$. Then it holds 
\begin{equation}
W=(I_{W}-\V)^{-1}I_{W}\label{eq:LS-EMC-mg1-inf-interference-less-W}
\end{equation}
\end{lem}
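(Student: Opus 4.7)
The plan is to mirror the structure of the proof of Proposition \ref{prop:LCS-W-matrix} while exploiting the crucial structural constraint \textbf{(1)} from Definition \ref{def:LCS-MG1Inf-interference-less}, namely $\v(k,m)=0$ for all $k\in K_W$ and $m\in K$. This constraint makes the computation essentially algebraic and, notably, eliminates any dependence of $W$ on $\lambda$.

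First I would split the analysis according to whether the starting state $k$ lies in $K_W$ or in $K_B$. If $k\in K_W$, then by \textbf{(1)} the environment cannot jump while the system waits for the first admitted arrival, so $Y(\sigma_1)=k$ almost surely and hence $W_{km}=\delta_{km}$; combined with the definitional convention $W_{km}=0$ for $m\in K_B$ this yields $W_{km}=(I_W)_{km}$ on $K_W$-rows. If $k\in K_B$, the queue is frozen and no arrivals are admitted; by \textbf{(3)} the next jump of $Z$ is purely an environment transition governed by $\V$. Conditioning on the first jump time $\zeta_1$ and applying the strong Markov property as in the proof of Proposition \ref{prop:LCS-W-matrix} gives
\begin{equation*}
W_{km}=\sum_{h\neq k}\frac{\v(k,h)}{-\v(k,k)}W_{hm},\qquad k\in K_B,
\end{equation*}
where the denominator is well-defined and nonzero by Lemma \ref{lem:LS-EMC-mgs1-interference-less-Vkk-aso-well-defined}. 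Using the generator identity $-\v(k,k)=\sum_{h\neq k}\v(k,h)$ this rearranges to $(\V W)_{km}=0$ for all $k\in K_B$.

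Next I would assemble the two cases into one matrix equation. For $k\in K_W$ the $k$-th row of $\V$ vanishes by \textbf{(1)}, so $(\V W)_{km}=0$, while $(I_W W)_{km}=W_{km}=\delta_{km}=(I_W)_{km}$; hence $((I_W-\V)W)_{km}=(I_W)_{km}$. For $k\in K_B$ we have $(I_W W)_{km}=0=(I_W)_{km}$ and $(\V W)_{km}=0$, so again $((I_W-\V)W)_{km}=(I_W)_{km}$. Thus $(I_W-\V)W=I_W$ on all of $K\times K$.

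It then remains to invert $I_W-\V$, which is exactly the step I expect to be the main subtlety because the Overall Assumption~(I) concerns $\lambda I_W-\V$ rather than $I_W-\V$. Since the environment $K$ is assumed finite in Definition \ref{def:LCS-MG1Inf-interference-less}, I would combine Lemma \ref{lem:LS-EMC-non-zero-path} with Lemma \ref{lem:LS-EMC-finite-K-M-invertible} from the appendix, applied with the parameter value $1$ in place of $\lambda$, to conclude that $I_W-\V$ is invertible; this is the same argument that drives Proposition \ref{prop:LS-EMC-matrix-inversibility}. Multiplying the identity $(I_W-\V)W=I_W$ by $(I_W-\V)^{-1}$ on the left then delivers $W=(I_W-\V)^{-1}I_W$, which completes the proof.
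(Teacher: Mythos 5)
Your argument is correct and follows essentially the same route as the paper: a first-entrance/strong-Markov decomposition reduces the computation to the matrix identity $(I_{W}-\V)W=I_{W}$, which is then inverted. Your explicit split into $k\in K_{W}$ versus $k\in K_{B}$ is a presentational variant of the paper's indicator-based treatment, and your explicit justification of the invertibility of $I_{W}-\V$ (via Lemmas \ref{lem:LS-EMC-non-zero-path} and \ref{lem:LS-EMC-finite-K-M-invertible}, or equivalently via the block-lower-triangular structure exhibited in the proof of Lemma \ref{lem:LCS-mg1-interference-less-B-A}) fills a step the paper leaves implicit.
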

\begin{proof}
Basically, the matrix $W$ has the same structure as $W$ in \prettyref{prop:LCS-W-matrix},
but we will derive a new representation, which is more suitable in
the subsequent proofs. Using the same transformation as in \prettyref{prop:LCS-W-matrix}
we get by a first entrance argument 
\begin{eqnarray*}
W_{km} & = & \sum_{h\in K\backslash\{k\}}\underbrace{P\left(Z(\sigma_{1})=(1,m)|Z(0)=(0,h)\right)}_{=W_{hm}}P\left(Z(\zeta_{1})=(0,h)|Z(0)=(0,k)\right)\\
 &  & +\delta_{km}{P\left(Z(\zeta_{1})=(1,m)|Z(0)=(0,k)\right)}
\end{eqnarray*}
The last term simplifies (with $\v(k,k)=0$ for $k\in K_{W}$) to
\begin{eqnarray*}
 &  & \delta_{km}P\left(Z(\zeta_{1})=(1,m)|Z(0)=(0,k)\right)=\delta_{km}\frac{\lambda1_{[k\in K_{W}]}}{\lambda1_{[k\in K_{W}]}-\v(k,k)}=\delta_{km}1_{[k\in K_{W}]}\,.
\end{eqnarray*}

If $k\neq h$ the expression $P\left(Z(\zeta_{1})=(0,h)|Z(0)=(0,k)\right)$
is $\frac{\v(k,h)}{-\v(k,k)}$ for $k\in K_{B}$ and $0$ for $k\in K_{W}$.
In both cases we will use the expression $\frac{\v(k,h)}{-\v(k,k)+\lambda1_{[k\in K_{W}]}}$,
which is defined for any $k\in K$ (see \prettyref{lem:LS-EMC-mgs1-interference-less-Vkk-aso-well-defined});
it follows:

\begin{eqnarray*}
W_{km} & = & \sum_{h\in K\backslash\{k\}}W_{hm}\frac{\v(k,h)}{-\v(k,k)+\lambda1_{[k\in K_{W}]}}+\delta_{km}{\frac{\lambda}{-\v(k,k)+\lambda1_{[k\in K_{W}]}}}1_{[k\in K_{W}]}\\
 & = & \sum_{h\in K\backslash\{k\}}W_{hm}\frac{\v(k,h)}{-\v(k,k)+1_{[k\in K_{W}]}}+\delta_{km}\underbrace{\frac{1}{-\v(k,k)+1_{[k\in K_{W}]}}}_{1\text{ for }k\in K_{W}}1_{[k\in K_{W}]}
\end{eqnarray*}
This equation reads in matrix form 
\[
W=(-diag(\V)+I_{W})^{-1}\left((\V-diag(\V))W+I_{W}\right)
\]
and can finally be transformed into the lemma's statement \prettyref{eq:LS-EMC-mg1-inf-interference-less-W}:
\begin{eqnarray*}
(-diag(\V)+I_{W})W & = & (\V-diag(\V))W+I_{W}\\
\Longleftrightarrow(I_{W}-\V)W & = & I_{W}~~\Longrightarrow W=(I_{W}-\V)^{-1}I_{W}
\end{eqnarray*}

\end{proof}
We now determine in a similar way the matrices $U^{(i,n)}$, see the
definition \eqref{eq:LS-EMC-U-definition-1} in \prettyref{lem:LS-EMC-ABUW}
for the exponential case, and determine an explicit representation. 
\begin{lem}
\label{lem:LCS-mg1-interference-less-U}In the system from \prettyref{def:LCS-MG1Inf-interference-less}
we define for $n\geq0$ and $i\intpositive$ 
\[
U_{km}^{(i,n)}:=P\left(\left(X(\tau_{1}),Y(\tau_{1}^{-})\right)=(n+i-1,m)|Z(0)=(i,k)\right)\,.
\]
Then for the transition probability matrix $U$ it holds 
\[
U^{(i,n)}=\tilde{p}(i,n)(I_{W}-\V)^{-1}I_{W}
\]
\end{lem}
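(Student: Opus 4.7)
The plan is to mimic the proof of Lemma \ref{lem:LS-EMC-mg1-interference-less-W} by a first-step analysis of the Markov process $Z=(X,R,Y)$ started from $(i,k)$, splitting cases on whether $k\in K_W$ or $k\in K_B$. The pivotal structural feature of Definition \ref{def:LCS-MG1Inf-interference-less} is condition (1): whenever the environment sits in a non-blocking state $k\in K_W$ we have $v(k,m)=0$ for every $m\in K$, so the environment is literally frozen for the entire duration of the ongoing service and the queue evolves exactly like a standalone $M/G/1/\infty$ with state-dependent speeds. This case will therefore contribute the plain $M/G/1$ transition probabilities $\tilde{p}(i,n)$ from \eqref{eq:p-n-i-definition}, placed on the diagonal indexed by $K_W$; i.e.\ $U_{km}^{(i,n)} = \delta_{km}\tilde{p}(i,n)$ for $k\in K_W$.

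For $k\in K_B$ both $X$ and $R$ are frozen until the environment leaves $K_B$. I will apply a first-jump argument at $\zeta_1$: the only possible transition out of $(i,k)$ with $k\in K_B$ is an environmental jump $k\to h\neq k$ with probability $v(k,h)/(-v(k,k))$, where the denominator is strictly positive by Lemma \ref{lem:LS-EMC-mgs1-interference-less-Vkk-aso-well-defined}. The strong Markov property then yields the linear recursion
\[
U_{km}^{(i,n)} \;=\; \sum_{h\in K\setminus\{k\}}\frac{v(k,h)}{-v(k,k)}\,U_{hm}^{(i,n)}, \qquad k\in K_B.
\]

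Merging the two cases via the uniformising trick already used in the proof of Lemma \ref{lem:LS-EMC-mg1-interference-less-W} --- writing the denominator as $-v(k,k)+1_{[k\in K_W]}$ so that it evaluates to $1$ on $K_W$ and to $-v(k,k)$ on $K_B$ --- produces the single matrix identity
\[
(-diag(V)+I_W)\,U^{(i,n)} \;=\; (V-diag(V))\,U^{(i,n)} + \tilde{p}(i,n)\,I_W,
\]
which rearranges to $(I_W-V)\,U^{(i,n)} = \tilde{p}(i,n)\,I_W$. Invoking Proposition \ref{prop:LS-EMC-matrix-inversibility} with $\lambda=1$ (the environment $K$ is finite and $Z$ is ergodic by assumption) gives invertibility of $I_W-V$, and the claimed formula $U^{(i,n)}=\tilde{p}(i,n)(I_W-V)^{-1}I_W$ follows immediately.

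The main technical hurdle I anticipate is the clean separation of the two regimes: one must rigorously justify that, starting in a $K_W$-state, no environmental transition can occur strictly before $\tau_1$. This rests on condition (1) of Definition \ref{def:LCS-MG1Inf-interference-less} together with condition (2), which only fires environmental jumps at the departure epoch itself, so that $Y(\tau_1^-)$ still agrees with the initial state. Once that point is secured, the algebra collapsing the two cases into $(I_W-V)U^{(i,n)}=\tilde{p}(i,n)I_W$ is a direct parallel of the computation already carried out for $W$ and requires no new idea.
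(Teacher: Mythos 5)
Your proof is correct and follows essentially the same route as the paper's: a first-jump decomposition at $\zeta_{1}$, the observation that $\v(k,h)=0$ for $k\in K_{W}$ (so the environment is frozen during an $M/G/1$-type service cycle), the uniform denominator $-\v(k,k)+1_{[k\in K_{W}]}$, and the resulting rearrangement $(I_{W}-\V)U^{(i,n)}=\tilde p(i,n)I_{W}$. The only cosmetic difference is that you split the cases $k\in K_{W}$ and $k\in K_{B}$ up front, whereas the paper writes one unified recursion directly; and you explicitly invoke Proposition \ref{prop:LS-EMC-matrix-inversibility} (with $\lambda=1$) for the invertibility of $I_{W}-\V$, which the paper uses tacitly.
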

\begin{proof}
For $U^{(i,n)}$ with any $n\geq0$ and $i\intpositive$ it holds:

\begin{eqnarray*}
U_{km}^{(i,n)} & = & P\left(\left(X(\tau_{1}),Y(\tau_{1}^{-})\right)=(n+i-1,m)|Z(0)=(i,k)\right)\\
 & = & \sum_{h\in K\backslash\{k\}}P\left(\left(X(\tau_{1}),Y(\tau_{1}^{-})\right)=(n+i-1,m)\cap Z(\zeta_{1})=(i,h)|Z(0)=(i,k)\right)\\
 &  & +\delta_{km}P\left(\left(X(\tau_{1}),Y(\tau_{1}^{-})\right)=(n+i-1,k)|Z(0)=(i,k)\right)\\
 & = & \sum_{h\in K\backslash\{k\}}P\left(\left(X(\tau_{1}),Y(\tau_{1}^{-})\right)=(n+i-1,m)|Z(\zeta_{1})=(i,h),Z(0)=(i,k)\right)\\
 &  & \cdot P\left(Z(\zeta_{1})=(i,h)|Z(0)=(i,k)\right)\\
 &  & +\delta_{km}P\left(\left(X(\tau_{1}),Y(\tau_{1}^{-})\right)=(n+i-1,k)|Z(0)=(i,k)\right)\\
 & = & \sum_{h\in K\backslash\{k\}}\underbrace{P\left(\left(X(\tau_{1}),Y(\tau_{1}^{-})\right)=(n+i-1,m)|Z(0)=(i,h)\right)}_{=U_{hm}^{(i,n)}}\\
 &  & \cdot P\left(Z(\zeta_{1})=(i,h)|Z(0)=(i,k)\right)\\
 &  & +\delta_{km}P\left(\left(X(\tau_{1}),Y(\tau_{1}^{-})\right)=(n+i-1,k)|Z(0)=(i,k)\right)
\end{eqnarray*}

We analyze the expression $P\left(Z(\zeta_{1})=(i,h)|Z(0)=(i,k)\right)$:\\
 it is $\frac{\v(k,h)}{-\v(k,k)}$ for $k\in K_{B}$ and $0$ for
$k\in K_{W}$. As in the proof of the \prettyref{lem:LS-EMC-mg1-interference-less-W},
we use the combined expression $\frac{\v(k,h)}{-\v(k,k)+\lambda1_{[k\in K_{W}]}}=\frac{\v(k,h)}{-\v(k,k)+1_{[k\in K_{W}]}}$
which is valid for any $k\in K$. It follows

\begin{eqnarray*}
U_{km}^{(i,n)} & = & \sum_{h\in K\backslash\{k\}}U_{hm}^{(i,n)}\frac{\v(k,h)}{-\v(k,k)+\lambda1_{[k\in K_{W}]}}+{\frac{\lambda}{-\v(k,k)+\lambda1_{[k\in K_{W}]}}}\delta_{km}\tilde{p}(i,n)1_{[k\in K_{W}]}\\
 & = & \sum_{h\in K\backslash\{k\}}U_{hm}^{(i,n)}\frac{\v(k,h)}{-\v(k,k)+1_{[k\in K_{W}]}}+\underbrace{\frac{1}{-\v(k,k)+1_{[k\in K_{W}]}}}_{=1~~\text{for }k\in K_{W}}\delta_{km}\tilde{p}(i,n)1_{[k\in K_{W}]}
\end{eqnarray*}

The equation above, written in matrix form, reads 
\begin{eqnarray*}
U^{(i,n)} & = & (-diag(\V)+I_{W})^{-1}((\V-diag(\V))U^{(i,n)}+\tilde{p}(i,n)I_{W})\\
\Longleftrightarrow(-diag(\V)+I_{W})U^{(i,n)} & = & ((\V-diag(\V))U^{(i,n)}+\tilde{p}(i,n)I_{W})\\
\Longleftrightarrow(I_{W}-\V)U^{(i,n)} & = & \tilde{p}(i,n)I_{W}\Longleftrightarrow U^{(i,n)}=\tilde{p}(i,n)(I_{W}-\V)^{-1}I_{W}.
\end{eqnarray*}

\end{proof}
We are now prepared to evaluate the transition matrix of the $M/G/1/\infty$
system in a random environment from Definition \prettyref{def:LCS-MG1Inf-interference-less}.
It turns out that it has precisely the form \prettyref{eq:LCS-interference-less-P}.
\begin{lem}
\label{lem:LCS-mg1-interference-less-B-A} Consider the continuous
time Markov state process of the system described in Definition \ref{def:LCS-MG1Inf-interference-less},
and the Markov chain $\hat{Z}$, embedded at departure instants of
customers.

The one-step transition matrix $\mathbf{P}$ of $\hat{Z}$ 
\[
\mathbf{P}=\left(\begin{array}{ccccc}
B^{(0)} & B^{(1)} & B^{(2)} & B^{(3)} & \ldots\\
A^{(0,1)} & A^{(1,1)} & A^{(2,1)} & A^{(3,1)} & \ldots\\
0 & A^{(0,2)} & A^{(1,2)} & A^{(2,2)} & \ldots\\
0 & 0 & A^{(0,3)} & A^{(1,3)} & \ldots\\
\vdots & \vdots & \vdots & \vdots
\end{array}\right),
\]
is build up by the following block matrices: 
\[
B^{(n)}=A^{(1,n)}=\tilde{p}(1,n)H~~\text{and}~~A^{(i,n)}=\tilde{p}(i,n)H
\]
with\index{$H$}\marginpar{$H$} 
\begin{equation}
H:=(I_{W}-\V)^{-1}I_{W}R\label{eq:LCS-mg1-interference-less-M}
\end{equation}
\end{lem}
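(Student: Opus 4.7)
The plan is to follow the same decomposition that was used for the $M/M/1/\infty$ case in Lemma \ref{lem:LS-EMC-ABUW}, namely
\[
A^{(i,n)} \;=\; U^{(i,n)} R,\qquad B^{(n)} \;=\; W\,A^{(1,n)} \;=\; W\,U^{(1,n)}\,R,
\]
and then substitute the explicit expressions for $W$ and $U^{(i,n)}$ given in \prettyref{lem:LS-EMC-mg1-interference-less-W} and \prettyref{lem:LCS-mg1-interference-less-U}. Most of the work needed for these two identities is already done in the earlier lemmas; the only structural input I would need beyond them is the fact that $W$ is idempotent.

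First I would justify the decompositions in the present setting. For $A^{(i,n)}$, start from $Z(0)=(i,k)$ with $i\ge 1$ and condition on the environment phase $h=Y(\tau_1^-)$ just before the first departure. Item $(2)$ of \prettyref{def:LCS-MG1Inf-interference-less} says that at the departure instant the environment jumps according to $R$, independently of everything else given $Y(\tau_1^-)$, so $A^{(i,n)}_{km}=\sum_h U^{(i,n)}_{kh}R_{hm}$, i.e.\ $A^{(i,n)}=U^{(i,n)}R$. For $B^{(n)}$, start from $(0,k)$ and note that $P(\sigma_1<\tau_1)=1$, since from an empty queue no departure can precede the first admitted arrival. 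Conditioning on $Z(\sigma_1)=(1,h)$ and applying the strong Markov property then gives $B^{(n)} = W\,A^{(1,n)} = W\,U^{(1,n)}\,R$ exactly as in the proof of \prettyref{lem:LS-EMC-ABUW}.

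Substituting now $U^{(i,n)}=\tilde{p}(i,n)(I_W-V)^{-1}I_W=\tilde{p}(i,n)W$ from \prettyref{lem:LCS-mg1-interference-less-U} and \prettyref{lem:LS-EMC-mg1-interference-less-W} yields immediately
\[
A^{(i,n)} \;=\; \tilde{p}(i,n)\,(I_W-V)^{-1}I_W\,R \;=\; \tilde{p}(i,n)\,H,
\]
which is the claim for $A^{(i,n)}$. For $B^{(n)}$ we obtain
\[
B^{(n)} \;=\; W\cdot\tilde{p}(1,n)W\cdot R \;=\; \tilde{p}(1,n)\,W^{2}\,R,
\]
and the proof reduces to showing $W^{2}=W$; granted this, $B^{(n)}=\tilde{p}(1,n)WR=\tilde{p}(1,n)H=A^{(1,n)}$, which simultaneously establishes $B^{(n)}=A^{(1,n)}$ and the correct form of $B^{(n)}$.

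The one genuinely non-routine point is therefore the idempotency of $W$, and this I expect to be the main obstacle. I would prove it by exploiting the special structure imposed by item $(1)$ of \prettyref{def:LCS-MG1Inf-interference-less}: for every $k\in K_W$ the whole row $v(k,\cdot)$ vanishes, so the $k$-th row of $I_W-V$ equals $e_k^{\top}$, whence the $k$-th row of $(I_W-V)^{-1}$ and thus of $W=(I_W-V)^{-1}I_W$ is $e_k^{\top}$ as well. Since the columns of $W$ indexed by $K_B$ are identically zero (they are killed by the right factor $I_W$), ordering the phases with $K_W$ before $K_B$ gives the block form
\[
W \;=\; \begin{pmatrix} I_{K_W} & 0 \\ W_{K_B,K_W} & 0 \end{pmatrix},
\]
from which $W^{2}=W$ follows by a one-line block multiplication. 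Combined with the decomposition and substitution above this completes the proof of the lemma.
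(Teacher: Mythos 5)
Your proposal is correct and follows essentially the same route as the paper: you decompose $A^{(i,n)}=U^{(i,n)}R$, $B^{(n)}=WU^{(1,n)}R$ as in Lemma \ref{lem:LS-EMC-ABUW}, substitute the closed forms from the two preceding lemmas, and reduce the claim to the idempotency of $W=(I_W-\V)^{-1}I_W$, which you prove from the block structure forced by item (1) of Definition \ref{def:LCS-MG1Inf-interference-less} ($\V$ vanishing on the $K_W$ rows). The paper's proof records the same block decomposition of $(I_W-\V)^{-1}I_W$ and invokes the same idempotency identity \eqref{eq:LCS-Iw-V-invariant-1}, so there is no substantive difference.
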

\begin{proof}
We analyze the block structure of the matrix $(I_{W}-\V)^{-1}I_{W}$:\\
 
\begin{eqnarray*}
(I_{W}-\V) & = & \left(\begin{array}{c|cc}
 & K_{W} & K_{B}\\
\hline K_{W} & I_{W} & 0\\
K_{B} & -\V|_{K_{B}\times K_{W}} & -\V|_{K_{B}\times K_{B}}
\end{array}\right)\\
\Longrightarrow(I_{W}-\V)^{-1} & = & \left(\begin{array}{c|cc}
 & K_{W} & K_{B}\\
\hline K_{W} & I_{W} & 0\\
K_{B} & (I_{W}-\V)^{-1}|_{K_{B}\times K_{W}} & (I_{W}-\V)^{-1}|_{K_{B}\times K_{B}}
\end{array}\right)\\
\Longrightarrow(I_{W}-\V)^{-1}I_{W} & = & \left(\begin{array}{c|cc}
 & K_{W} & K_{B}\\
\hline K_{W} & I_{W} & 0\\
K_{B} & (I_{W}-\V)^{-1}|_{K_{B}\times K_{W}} & 0
\end{array}\right)
\end{eqnarray*}

This leads to the useful property 
\begin{equation}
(I_{W}-\V)^{-1}I_{W}(I_{W}-\V)^{-1}I_{W}=(I_{W}-\V)^{-1}I_{W}\,.\label{eq:LCS-Iw-V-invariant-1}
\end{equation}
In a completely similar way as in Lemma \ref{lem:LS-EMC-ABUW} we
can show the following representations 
\[
A^{(i,n)}=U^{(i,n)}R\qquad\text{and}\qquad B^{(n)}=WU^{(1,n)}R.
\]

Inserting the results from Lemma \ref{lem:LCS-mg1-interference-less-U}
and Lemma \ref{lem:LS-EMC-mg1-interference-less-W} we obtain directly

\[
A^{(i,n)}=U^{(i,n)}R=\tilde{p}(i,n)(I_{W}-\V)^{-1}I_{W}R\,,
\]
\begin{eqnarray*}
B^{(n)} & = & WU^{(1,n)}R=\tilde{p}(1,n)\left((I_{W}-\V)^{-1}I_{W}\right)^{2}R\\
 & \overset{\eqref{eq:LCS-Iw-V-invariant-1}}{=} & \tilde{p}(1,n)(I_{W}-\V)^{-1}I_{W}R=A^{(1,n)}\,,
\end{eqnarray*}
which is the proposed result. 
\end{proof}
The next step is similar to that in case of the purely exponential
system. 
\begin{lem}
\label{lem:LCS-mg1-interference-stochastic-M}The matrix $H=(I_{W}-\V)^{-1}I_{W}R$
defined in \prettyref{eq:LCS-mg1-interference-less-M} is a stochastic
matrix and there exists a stochastic solution $\hat{\theta}$ of the
steady state equation 
\begin{equation}
\hat{\theta}H=\hat{\theta}\label{eq:LCS-mg1-interference-less-theta-check-existence}
\end{equation}
\end{lem}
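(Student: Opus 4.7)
The plan is to verify the two claims separately, exploiting that the environment space $K$ is finite by assumption in \prettyref{def:LCS-MG1Inf-interference-less}.

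For stochasticity of $H$, I would first verify the row sums. Using that $R$ is a stochastic matrix ($R\mathbf{e}=\mathbf{e}$) and that $\V$ is a generator ($\V\mathbf{e}=0$), the identity $(I_W-\V)\mathbf{e} = I_W\mathbf{e}$ holds. Multiplying both sides by $(I_W-\V)^{-1}$ from the left gives $\mathbf{e} = (I_W-\V)^{-1}I_W\mathbf{e}$, so
\[
H\mathbf{e} = (I_W-\V)^{-1}I_W R\mathbf{e} = (I_W-\V)^{-1}I_W\mathbf{e} = \mathbf{e}.
\]
For non-negativity I would invoke \prettyref{lem:LS-EMC-mg1-interference-less-W}, where $W = (I_W-\V)^{-1}I_W$ was identified as the matrix of the transition probabilities $W_{km} = P(Z(\sigma_1)=(1,m)\mid Z(0)=(0,k))$, hence $W\geq 0$ entrywise. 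Since $R$ is stochastic, $R\geq 0$, and the product $H = WR$ is non-negative.

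For the existence of a stochastic left eigenvector, I would use that $K$ is finite and $H$ is a stochastic matrix on $K$. Hence $H$ is the one-step transition matrix of some homogeneous Markov chain on the finite state space $K$, and by standard Markov chain theory every finite stochastic matrix admits at least one stationary distribution (for instance, the Cesàro averages $\frac{1}{n}\sum_{j=0}^{n-1}\delta_k H^j$ have a convergent subsequence whose limit $\hat{\theta}$ satisfies $\hat{\theta}H=\hat{\theta}$ and $\hat{\theta}\mathbf{e}=1$; equivalently, one can apply Brouwer's fixed point theorem to the continuous map $\theta\mapsto \theta H$ on the compact convex simplex of probability vectors on $K$).

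The only non-routine step is verifying non-negativity of $H$, and this is handled at no cost by re-using the probabilistic identification of $W$ from the previous lemma rather than arguing directly about $(I_W-\V)^{-1}$. Uniqueness of $\hat{\theta}$ is not claimed here, and indeed need not hold without further irreducibility assumptions on $H$; the lemma only asserts existence.
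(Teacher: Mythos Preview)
Your proof is correct and follows essentially the same approach as the paper: verify $H\mathbf{e}=\mathbf{e}$ via $(I_W-\V)\mathbf{e}=I_W\mathbf{e}$, obtain non-negativity from the probabilistic identification of $(I_W-\V)^{-1}I_W$ (the paper cites the $U^{(i,n)}$ of \prettyref{lem:LCS-mg1-interference-less-U} rather than $W$ of \prettyref{lem:LS-EMC-mg1-interference-less-W}, but both yield the same conclusion), and conclude existence of $\hat{\theta}$ from finiteness of $K$.
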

\begin{proof}
The generator property of $\V$ leads to 
\begin{equation}
(I_{W}-\V)\mathbf{e}=I_{W}\mathbf{e}+\underbrace{\V\mathbf{e}}_{=0}=I_{W}\mathbf{e}\label{eq:LCS-mg1-interference-less-M-stochastic-Iw-e}
\end{equation}
and the stochasticity of $R$ yields $R\mathbf{e}=\mathbf{e}$. Inserting
this into the definition of $H$ leads to 
\[
H\mathbf{e}=(I_{W}-\V)^{-1}I_{W}R\mathbf{e}=(I_{W}-\V)^{-1}I_{W}\mathbf{e}\overset{\eqref{eq:LCS-mg1-interference-less-M-stochastic-Iw-e}}{=}(I_{W}-\V)^{-1}(I_{W}-\V)\mathbf{e}=\mathbf{e}
\]
Since the matrix $\tilde{p}(i,n)(I_{W}-\V)^{-1}I_{W}$ describes transition
probabilities, all its entries are non-negative, therefore the matrix
$H$ is stochastic.

Finally, finiteness of $K$ guarantees the existence of a stochastic
solution of \prettyref{eq:LCS-mg1-interference-less-theta-check-existence}.\end{proof}
\begin{thm}
\label{thm:MG1-PF-result} Consider the $M/G/1/\infty$ in a random
environment from Definition \ref{def:LCS-MG1Inf-interference-less}
with state dependent service speeds and state dependent selection
of requested service times. The describing Markov process $(X,R,Y)$
in continuous time is assumed to be ergodic. For the Markov chain
$(\hat{X},\hat{Y})$ embedded at departure points of customers denote
the (existing) stationary distribution by $\hat{\pi}$.

Then $\hat{\pi}$ has product form according to 
\[
\hat{\pi}(n,k)=\hat{\xi}(n)\hat{\theta}(k)\,,\quad(n,k)\in\mathbb{N}_{0}\times K.
\]

Here $\hat{\xi}$ is the steady state distribution of the Markov chain
with one-step transition matrix \eqref{MGs1-1} derived for the queue
length process at departure points in a system with the same parameter
as under consideration but without environment, that is a solution
of 
\begin{equation}
\hat{\xi}\tilde{P}=\hat{\xi}\,,\label{eq:LS-xi-check-steady-state-equation}
\end{equation}
and $\hat{\theta}$ is a stochastic solution of the equation 
\begin{equation}
\hat{\theta}H=\hat{\theta}(I_{W}-\V)^{-1}I_{W}R=\hat{\theta}\label{eq:LS-EMC-mgs1-theta-check-equation}
\end{equation}
\end{thm}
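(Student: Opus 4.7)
The plan is to verify the product form ansatz $\hat{\pi}^{(n)} = \hat{\xi}(n)\,\hat{\theta}$ directly in the grouped steady state equation \eqref{eq:LCS-EMC-steady-state-equation-grouped-1}. The heavy lifting has already been done: by \prettyref{lem:LCS-mg1-interference-less-B-A} the one-step transition matrix $\mathbf{P}$ factors in a remarkably clean way, with every block of the form $\tilde{p}(i,n)H$ (or $\tilde{p}(1,n)H$ for the $B$-row), and by \prettyref{lem:LCS-mg1-interference-stochastic-M} the matrix $H=(I_W-V)^{-1}I_WR$ is stochastic, so the equation $\hat{\theta}H=\hat{\theta}$ has a stochastic solution.

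The decisive step is to insert the ansatz into the grouped steady state equation
\[
\hat{\pi}^{(0)}B^{(n)}+\sum_{i=1}^{n+1}\hat{\pi}^{(i)}A^{(i,n-i+1)}=\hat{\pi}^{(n)}, \qquad n\in\mathbb{N}_0,
\]
and use the factorization to pull the $H$ and $\hat{\theta}$ factors out. This yields
\[
\left(\hat{\xi}(0)\tilde{p}(1,n)+\sum_{i=1}^{n+1}\hat{\xi}(i)\,\tilde{p}(i,n-i+1)\right)\hat{\theta}H \;=\; \hat{\xi}(n)\,\hat{\theta}.
\]
Applying $\hat{\theta}H=\hat{\theta}$ eliminates the environment factor on the left and reduces the identity to the scalar equation
\[
\hat{\xi}(0)\,\tilde{p}(1,n)+\sum_{i=1}^{n+1}\hat{\xi}(i)\,\tilde{p}(i,n-i+1)=\hat{\xi}(n),
\]
which is precisely row $n$ of $\hat{\xi}\tilde{P}=\hat{\xi}$ for the upper Hessenberg matrix $\tilde{P}$ displayed in \eqref{MGs1-1}. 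Hence whenever $\hat{\xi}$ solves \eqref{eq:LS-xi-check-steady-state-equation} and $\hat{\theta}$ solves \eqref{eq:LS-EMC-mgs1-theta-check-equation}, the product $\hat{\xi}(n)\hat{\theta}(k)$ solves the full system of steady state equations.

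It remains to argue that the candidate is actually a probability distribution and that it coincides with the assumed stationary distribution of $\hat{Z}$. Since $\hat{\xi}$ and $\hat{\theta}$ are both stochastic by construction, their product is stochastic on $\mathbb{N}_0\times K$; the existence of $\hat{\xi}$ as a stochastic solution to $\hat{\xi}\tilde{P}=\hat{\xi}$ is a classical $M/G/1$ fact (and is implicitly present once ergodicity of the continuous time process $(X,R,Y)$ is assumed, since the latter forces the isolated queueing component to be stable), while existence of $\hat{\theta}$ is Lemma \ref{lem:LCS-mg1-interference-stochastic-M}. Together with the uniqueness of the stationary distribution of $\hat{Z}$ on its recurrent class (as in part (b) of \prettyref{thm:LCS-EMC-product-form-exp-service}), this identifies $\hat{\pi}$ with the product form.

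The only real obstacle is essentially bookkeeping: making sure that the indexing $(i,n)$ in the block decomposition of $\mathbf{P}$ is compatible with the row-convolution structure of $\tilde{P}$ so that the expression in the parentheses matches $\hat{\xi}(n)$ exactly. Once that is done, the result follows transparently, and no analogue of the delicate identity \eqref{eq:LS-EMC-M-n-explicite-with-xi} from the exponential case is needed, because here the level dependence already sits entirely inside the scalars $\tilde{p}(i,n)$ while the phase dynamics is concentrated in the single matrix $H$.
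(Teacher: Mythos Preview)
Your proof is correct and is essentially the same argument as the paper's. The paper condenses your component-wise verification into a single line by observing that the block structure $B^{(n)}=\tilde p(1,n)H$, $A^{(i,n)}=\tilde p(i,n)H$ means exactly $\mathbf{P}=\tilde P\otimes H$ as a Kronecker product, and then invokes the identity $(\hat\xi\otimes\hat\theta)(\tilde P\otimes H)=(\hat\xi\tilde P)\otimes(\hat\theta H)$; your insertion into the grouped steady state equation and subsequent factoring of $\hat\theta H$ is precisely this tensor identity written out entry by entry.
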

\begin{proof}
According to \prettyref{lem:LCS-mg1-interference-less-B-A} the transition
matrix $\mathbf{P}$ of the system has block form \prettyref{eq:LCS-interference-less-P},
which is the tensor product of $\tilde{P}$ from \eqref{MGs1-1} and
$H$: 
\[
\mathbf{P}=\tilde{P}\otimes H\,.
\]

Let $\hat{\xi}$ be the steady state solution of \eqref{eq:LS-xi-check-steady-state-equation},
i.e., of the pure queuing system without environment.

Let $\hat{\theta}$ be the stochastic solution of the equation $\hat{\theta}H=\hat{\theta}$,
which exists according to \prettyref{lem:LCS-mg1-interference-stochastic-M}.
Then from tensor calculus of matrices {\cite[(2.2.1.9) on p. 53]{neuts:81}}
$\hat{\pi}(n,k)=\hat{\xi}(n)\hat{\theta}(k)$ solves the steady state
equation 
\[
\hat{\pi}\mathbf{P}=(\hat{\xi}\otimes\hat{\theta})\tilde{P}\otimes H=(\hat{\xi}\tilde{P})\otimes(\hat{\theta}H)=\hat{\xi}\otimes\hat{\theta}=\hat{\pi}.
\]

\end{proof}

\section{Applications\label{sect:LS-EMC-applications}}

We apply the results from Sections \ref{sect:LS-EMC}, \ref{sect:LS-EMC-steadystate},
and \ref{sect:LS-EMC-MG1} to queueing-inventory systems which are
dealt with in literature recently, see the review in \cite{krishnamoorthy;lakshmy;manikandan:11}.

Note that to obtain the steady state distribution $\hat{\pi}$ from
$\pi$ using \eqref{eq:LCS-theta-vs-theta-check} could be easier
than using \eqref{eq:LCS-mm1inf-theta-hat-equations}, since starting
from \eqref{eq:LCS-theta-vs-theta-check} does not require to calculate
the inverse of $(\lambda I_{W}-\V)$. Nevertheless, we will use \eqref{eq:LCS-mm1inf-theta-hat-equations}
and calculate $\lambda(\lambda I_{W}-\V)^{-1}I_{W}R$ explicitly to
gain more insight into the mathematical structure of the problem.

In any of the following applications the queueing system represents
a production facility where raw material arrives and to assemble a
final product from a piece of raw material exactly one item from the
stock is needed. This item will formally be taken from the stock when
the production of the final product is finished.

In \prettyref{prop:LS-EMC-mm1inf-r-S} and \prettyref{prop:LS-EMC-mm1inf-r-Q}
we slightly extend the lost sales problems from \prettyref{ex:LS-original-r-S}
and \prettyref{ex:LS-original-r-Q} by incorporating stock size dependent
delivery rate $\nu_{k}$. The main reason of this modification is
besides of having more versatile models to demonstrate how each entry
of the matrices $\V$ and $R$ influences the transition probabilities
$\lambda(\lambda I_{W}-\V)^{-1}I_{W}R$ and the steady state distribution
$\hat{\theta}$.

\subsection{Systems with exponential service requests\label{sect:appl-exp}}

\begin{center}
\begin{figure}
\centering{}\subfloat[\label{fig:LS-EMC-r-S-example-diagram}$M/M/1/\infty$ system with
${(r=2,S=5)}$-policy and lost sales.]{\begin{tikzpicture}
  \path
	(60:2) node (Y5)[shape=rectangle,draw] {$5$}
	(0:2) node (Y4)[shape=rectangle,draw] {$4$}
	(-60:2) node (Y3)[shape=rectangle,draw] {$3$}
	(-120:2) node (Y2)[shape=rectangle,draw] {$2$}
	(-180:2) node (Y1)[shape=rectangle,draw] {$1$}
	(-240:2) node (Y0)[shape=rectangle,draw,fill=blocked.bg] {$0$};
  \draw[arrows={-latex}, thick]
    (Y5) -- (Y4);
  \draw[arrows={-latex}, thick]
    (Y4) -- (Y3);
  \draw[arrows={-latex}, thick]
    (Y3) -- (Y2);
  \draw[arrows={-latex}, thick]
    (Y2) -- (Y1);
  \draw[arrows={-latex}, thick]
    (Y1) -- (Y0);
  \draw[arrows={-latex}, thick, color=vratecolor]
    (Y0) -- node[auto]{$\nu_0$}(Y5);
  \draw[arrows={-latex}, thick, color=vratecolor]
    (Y1) -- node[auto]{$\nu_1$}(Y5);
  \draw[arrows={-latex}, thick, color=vratecolor]
    (Y2) -- node[auto]{$\nu_2$}(Y5);
\end{tikzpicture}}~~~~~~~\subfloat[\label{fig:LS-EMC-r-Q-example-diagram}$M/M/1/\infty$ system with
${(r=2,Q=3)}$-policy and lost sales.]{\begin{tikzpicture}
  \path 	(60:2) node (Y5)[shape=rectangle,draw] {$5$}
	(0:2) node (Y4)[shape=rectangle,draw] {$4$}
	(-60:2) node (Y3)[shape=rectangle,draw] {$3$}
	(-120:2) node (Y2)[shape=rectangle,draw] {$2$}
	(-180:2) node (Y1)[shape=rectangle,draw] {$1$}
	(-240:2) node (Y0)[shape=rectangle,draw,fill=blocked.bg] {$0$};
  \draw[arrows={-latex}, thick]
    (Y5) -- (Y4);
  \draw[arrows={-latex}, thick]
    (Y4) -- (Y3);
  \draw[arrows={-latex}, thick]
    (Y3) -- (Y2);
  \draw[arrows={-latex}, thick]
    (Y2) -- (Y1);
  \draw[arrows={-latex}, thick]
    (Y1) -- (Y0);
  \draw[arrows={-latex},near start, thick, color=vratecolor]
    (Y0.east) -- node[auto]{$\nu_0$}(Y3);
  \draw[arrows={-latex},near start, thick, color=vratecolor]
    (Y1) -- node[auto]{$\nu_1$}(Y4);
  \draw[arrows={-latex},near start, thick, color=vratecolor]
    (Y2) -- node[auto]{$\nu_2$}(Y5);
\end{tikzpicture}}\caption{{\Etid} for the environment of the lost sales of Propositions \ref{prop:LS-EMC-mm1inf-r-S}
and \ref{prop:LS-EMC-mm1inf-r-Q}. The environment counts the stock
size of the inventory.}
\end{figure}

\par\end{center}
\begin{prop}
\label{prop:LS-EMC-mm1inf-r-S} We consider an exponential single
server queue with state dependent service rates, environment dependent
replenishment rates, and an attached inventory under $(r,S)$ policy
(with $0\leq r<S\in\mathbb{N}$), and lost sales when the inventory
is depleted.

Using the definitions of Section \ref{sect:LS-EMC} we set the environment
state space $K:=\{0,...,S\}$ with $K_{B}=\{0\}$, $X(t)$ the queue
length at time $t$, and $Y(t)=k$ indicates that at time $t$ the
stock contains exactly $k$ items. The strictly positive transitions
intensities are 
\begin{eqnarray*}
q((n,k)\qsep(n+1,k)) & = & \lambda\qquad k>0\\
q((n,k)\qsep(n,S)) & = & \nu_{k}\qquad0\leq k\leq r\\
q((n,k)\qsep(n-1,k-1)) & = & \mu(n)\qquad n>0,1\leq k\leq S\\
q((n,k)\qsep(l,m)) & = & 0,\qquad otherwise
\end{eqnarray*}

The steady state $\hat{\pi}$ of the Markov chain $(\hat{X},\hat{Y})$
embedded at departure times has product form 

\begin{equation}
\hat{\pi}(n,k)=\xi(n)\hat{\theta}(k)\,,\quad(n,k)\in\mathbb{N}_{0}\times K\,,\label{eq:LS-EMC-mm1-r-S-product-form-assumption}
\end{equation}

with

\[
\xi(n):=C^{-1}\left(\prod_{i=1}^{n}\frac{\lambda}{\mu(i)}\right),~~n\in\mathbb{N}_{0},\qquad\text{with normalization constant}\, C^{-1}
\]

and 
\begin{equation}
\hat{\theta}(k)=\begin{cases}
C^{-1}\cdot\prod_{i=1}^{k}\left(\frac{\lambda+\nu_{i}}{\lambda}\right)^{i} & ,\qquad0\leq k\leq r\\
C^{-1}\cdot\prod_{i=1}^{r}\left(\frac{\lambda+\nu_{i}}{\lambda}\right)^{i} & ,\qquad r+1\leq k\leq S-1\\
0 & \qquad k=S
\end{cases}\label{eq:LS-EMC-mm1-chain-r-S-theta-hat}
\end{equation}

with

\[
C=\sum_{k=0}^{r-1}\prod_{i=1}^{k}\left(\frac{\lambda+\nu_{i}}{\lambda}\right)^{i}+(S-r)\prod_{i=1}^{r}\left(\frac{\lambda+\nu_{i}}{\lambda}\right)^{i}
\]

\end{prop}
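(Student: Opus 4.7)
The plan is to apply Theorem \ref{thm:LCS-EMC-product-form-exp-service} directly, with the environment being read off from the given transition rates. Concretely, one identifies $K=\{0,1,\dots,S\}$, $K_W=\{1,\dots,S\}$, $K_B=\{0\}$, the routing matrix $R$ with $R(k,k-1)=1$ for $1\le k\le S$ and $R(0,0)=1$, and the generator $V$ with $v(k,S)=\nu_k$, $v(k,k)=-\nu_k$ for $0\le k\le r$ and zeros elsewhere. Ergodicity of $(X,Y)$ in continuous time follows as in Example \ref{ex:LS-original-r-S} (since the rate parameters $\nu_k$ and $\mu(n)$ only refine the construction), so Theorem \ref{thm:LCS-EMC-product-form-exp-service}(a) immediately yields the product form $\hat\pi(n,k)=\xi(n)\hat\theta(k)$ with the stated $\xi(n)$. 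The remaining work is therefore entirely about identifying the vector $\hat\theta$.

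First I would use Theorem \ref{thm:LCS-EMC-product-form-exp-service}(b). The set $L=\{k\in K:\exists m\in K_W,\,R(m,k)>0\}$ equals $\{0,1,\dots,S-1\}$, because from any $m\in K_W=\{1,\dots,S\}$ only the state $m-1$ is reached under $R$. Hence $\hat\theta(S)=0$, which is the third case in \eqref{eq:LS-EMC-mm1-chain-r-S-theta-hat}.

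For the remaining values $k=0,\dots,S-1$, the cleanest route is Lemma \ref{lem:LS-EMC-theta-theta-hat-releationship}(b): compute the continuous-time marginal $\theta$ (which is independent of $\mu(n)$) from Corollary \ref{cor:LS-lambda-constant}, namely from $\theta(\lambda(R_W-I_W)+V)=0$, and then set
\[
\hat\theta=(\theta I_W\mathbf{e})^{-1}\,\theta I_W R.
\]
Because $(I_W R)(k,m)=1_{[k\in K_W]}\cdot 1_{[m=k-1]}$, one gets $(\theta I_W R)(m)=\theta(m+1)$ for $0\le m\le S-1$ and $0$ for $m=S$, so $\hat\theta(\cdot)$ is essentially a shift of $\theta(\cdot)$ restricted to $K_W$ and renormalized. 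The components of $\theta$ are obtained by reading off the columns of the matrix $\lambda(R_W-I_W)+V$: column $0$ gives $\nu_0\theta(0)=\lambda\theta(1)$; column $k$ for $1\le k\le r$ gives $(\lambda+\nu_k)\theta(k)=\lambda\theta(k+1)$; column $k$ for $r+1\le k\le S-1$ gives $\theta(k+1)=\theta(k)$. Substituting and normalizing reproduces \eqref{eq:LS-EMC-mm1-chain-r-S-theta-hat}.

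The main obstacle I expect is purely bookkeeping: carrying the shift $m\mapsto m+1$ consistently through all three regimes ($k\le r$, $r<k<S$, and $k=S$) and checking that the normalization constant $C$ indeed has the form stated. A sanity check at the end is that for constant $\nu_k\equiv\nu$ the formula should reduce to the embedded-chain analogue of Example \ref{ex:LS-original-r-S}, and one can also cross-verify by substituting the candidate $\hat\theta$ into $\hat\theta\,\lambda(\lambda I_W-V)^{-1}I_W R=\hat\theta$ using the explicit block form of $(\lambda I_W-V)^{-1}I_W$ already exploited in the proof of Lemma \ref{lem:LCS-mg1-interference-less-B-A}.
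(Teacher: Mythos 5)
Your proof is correct but takes a genuinely different route from the paper's. The paper attacks the fixed-point equation $\hat\theta\,\lambda(\lambda I_W - V)^{-1}I_W R = \hat\theta$ head-on: it writes out $\lambda I_W - V$, inverts it entry by entry (exploiting its near-diagonal structure), multiplies by $I_W$ and then by $R$, and finally verifies the candidate $\hat\theta$ by substitution into the resulting explicit matrix. You instead bypass the matrix inversion entirely: you solve the continuous-time balance equations $\theta\bigl(\lambda(R_W - I_W)+V\bigr)=0$, which here reduce to the three simple column relations you list, and then pass to $\hat\theta$ via the shift $\hat\theta\propto\theta I_W R$ from Lemma \ref{lem:LS-EMC-theta-theta-hat-releationship}(b), which in this model is literally $\hat\theta(m)\propto\theta(m+1)$ for $m<S$ and $\hat\theta(S)=0$. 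Your route is shorter and makes the vanishing at $k=S$ structural (it is $S\notin L$ from Theorem \ref{thm:LCS-EMC-product-form-exp-service}(b), not a cancellation); the paper's route, though more computational, has the side benefit of exhibiting the explicit block form of $\lambda(\lambda I_W-V)^{-1}I_W$, which it reuses in the companion $(r,Q)$ proposition. One thing worth flagging before you carry out the bookkeeping: iterating $\theta(k+1)=\tfrac{\lambda+\nu_k}{\lambda}\,\theta(k)$ and shifting gives $\hat\theta(k)\propto\prod_{i=1}^{k}\tfrac{\lambda+\nu_i}{\lambda}$, each factor to the \emph{first} power, which for $\nu_i\equiv\nu$ reduces to $(\tfrac{\lambda+\nu}{\lambda})^k$ and is indeed the shift of Example \ref{ex:LS-original-r-S}; the exponent $i$ written on each factor in \eqref{eq:LS-EMC-mm1-chain-r-S-theta-hat} is not what the recursion produces and appears to be a typographical slip, so do not be alarmed when your derivation does not reproduce it verbatim.
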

Note that even for the constant values $\nu_{k}=\nu$ the marginal
distribution $\hat{\theta}$ in \eqref{eq:LS-EMC-mm1-chain-r-S-theta-hat}
differs from the marginal stationary distribution $P(Y(t)=k)$ of
the continuous time process in \prettyref{ex:LS-original-r-S}.
\begin{proof}
According to \prettyref{thm:LCS-EMC-product-form-exp-service} the
marginal distribution $\hat{\theta}$ is the stochastic solution of
the equation 
\begin{equation}
\hat{\theta}\lambda(\lambda I_{W}-\V)^{-1}I_{W}R=\hat{\theta}\label{eq:LS-EMC-theta-hat-equation-in-r-S}
\end{equation}

We calculate the matrix $\lambda(\lambda I_{W}-V)^{-1}I_{W}R$ explicitly.
The cautious reader will realize, that we write down the matrices
here with indices in an order which inverts the usual one which we
prescribed for $\preccurlyeq$ on $K$ in the first part of the paper.
This will make reading easier in this special case.

\begin{eqnarray*}
 & (\lambda I_{W}-\V)=\\
 & \tiny\left(\begin{array}{c|cccccccccc}
 & 0 & 1 & 2 & \ldots & r-1 & r & r+1 & \ldots & S-1 & S\\
\hline 0 & \nu_{0} & 0 & 0 &  & 0 & 0 & 0 &  & 0 & -\nu_{0}\\
1 & 0 & (\nu_{1}+\lambda) & 0 &  & 0 & 0 & 0 &  & 0 & -\nu_{1}\\
2 & 0 & 0 & (\nu_{2}+\lambda) &  & 0 & 0 & 0 &  & 0 & -\nu_{2}\\
\vdots & \vdots & \vdots & \ddots &  &  &  &  &  &  & \vdots\\
r & 0 & 0 & 0 & \ldots & 0 & (\nu_{r}+\lambda) & 0 & \ldots & 0 & -\nu_{r}\\
r+1 & 0 & 0 & 0 &  & 0 & 0 & \lambda & \ldots & 0 & 0\\
\vdots & \vdots & \vdots & \vdots & \vdots & \vdots & \vdots & \ddots & \ddots & \vdots & \vdots\\
S-1 & 0 & 0 & 0 & \ldots & 0 & 0 &  &  & \lambda & 0\\
S & 0 & 0 & 0 & \ldots & 0 & 0 & 0 &  & 0 & \lambda
\end{array}\right)
\end{eqnarray*}

\begin{eqnarray*}
 & \lambda(\lambda I_{W}-\V)^{-1}I_{W}=\\
 & \tiny\left(\begin{array}{c|cccccccccc}
 & 0 & 1 & 2 & \ldots & r-1 & r & r+1 & \ldots & S-1 & S\\
\hline 0 & 0 & 0 & 0 &  & 0 & 0 & 0 &  & 0 & 1\\
1 & 0 & \frac{\lambda}{\nu_{1}+\lambda} & 0 &  & 0 & 0 & 0 &  & 0 & \frac{\nu_{1}}{\nu_{1}+\lambda}\\
2 & 0 & 0 & \frac{\lambda}{\nu_{2}+\lambda} &  & 0 & 0 & 0 &  & 0 & \frac{\nu_{2}}{\nu_{2}+\lambda}\\
\vdots & \vdots & \vdots & \ddots &  &  &  &  &  &  & \vdots\\
r & 0 & 0 & 0 & \ldots & 0 & \frac{\lambda}{\nu_{r}+\lambda} & 0 & \ldots & 0 & \frac{\nu_{r}}{\nu_{r}+\lambda}\\
r+1 & 0 & 0 & 0 &  & 0 & 0 & 1 & \ldots & 0 & 0\\
\vdots & \vdots & \vdots & \vdots & \vdots & \vdots & \vdots & \ddots & \ddots & \vdots & \vdots\\
S-1 & 0 & 0 & 0 & \ldots & 0 & 0 &  &  & 1 & 0\\
S & 0 & 0 & 0 & \ldots & 0 & 0 & 0 &  & 0 & 1
\end{array}\right)
\end{eqnarray*}

\begin{eqnarray*}
 & R=\\
 & \tiny\left(\begin{array}{c|cccccc}
 & 0 & 1 & 2 & \ldots & S-1 & S\\
\hline 0 & 1 & 0 & 0 & \ldots & 0 & 0\\
1 & 1 & 0 & 0 & \ldots & 0 & 0\\
2 & 0 & 1 & 0 &  & 0 & 0\\
\vdots & \vdots & \vdots & \ddots & \ddots & \vdots & \vdots\\
S-1 & 0 & 0 & 0 & \ldots & 0 & 0\\
S & 0 & 0 & 0 & \ldots & 1 & 0
\end{array}\right)
\end{eqnarray*}

\begin{eqnarray*}
 & \lambda(\lambda I_{W}-\V)^{-1}I_{W}R=\\
 & \tiny\left(\begin{array}{c|cccccccccc}
 & 0 & 1 & 2 & \ldots & r-1 & r & r+1 & \ldots & S-1 & S\\
\hline 0 & 0 & 0 & 0 &  & 0 & 0 & 0 &  & 1 & 0\\
1 & \frac{\lambda}{\nu_{1}+\lambda} & 0 & 0 &  & 0 & 0 & 0 &  & \frac{\nu_{1}}{\nu_{1}+\lambda} & 0\\
2 & 0 & \frac{\lambda}{\nu_{2}+\lambda}0 & 0 &  & 0 & 0 & 0 &  & \frac{\nu_{2}}{\nu_{2}+\lambda} & 0\\
\vdots & \vdots & \vdots & \ddots &  &  &  &  &  & \vdots & 0\\
r & 0 & 0 & 0 & \ldots & \frac{\lambda}{\nu_{r}+\lambda} & 0 & 0 &  & \frac{\nu_{r}}{\nu_{r}+\lambda} & 0\\
r+1 & 0 & 0 & 0 &  & 0 & 1 & 0 &  & 0 & 0\\
\vdots & \vdots & \vdots & \vdots & \vdots & \vdots & \vdots & \ddots & \ddots & \vdots & 0\\
S-1 & 0 & 0 & 0 & \ldots & 0 & 0 & 0 & \ddots & 0 & 0\\
S & 0 & 0 & 0 & \ldots & 0 & 0 & 0 &  & 1 & 0
\end{array}\right)
\end{eqnarray*}

Inserting this and \prettyref{eq:LS-EMC-mm1-chain-r-S-theta-hat}
into \prettyref{eq:LS-EMC-theta-hat-equation-in-r-S} finishes the
proof.\end{proof}
\begin{prop}
\label{prop:LS-EMC-mm1inf-r-Q} We consider an exponential single
server queue with state dependent service rates, environment dependent
replenishment rates, and an attached inventory under $(r,Q)$ policy
(with $0\leq r<Q\in\mathbb{N}$), and lost sales when the inventory
is depleted.

Using the definitions of Section \ref{sect:LS-EMC} we set the environment
state space $K:=\{0,...,S\}$ with $K_{B}=\{0\}$, $X(t)$ the queue
length at time $t$, and $Y(t)=k$ indicates that at time $t$ the
stock contains exactly $k$ items. The strictly positive transition
intensities are 
\begin{eqnarray*}
q((n,k)\qsep(n+1,k)) & = & \lambda\qquad k>0\\
q((n,k)\qsep(n,k+Q)) & = & \nu_{k}\qquad0\leq k\leq r\\
q((n,k)\qsep(n-1,k-1)) & = & \mu(n)\qquad n>0,1\leq k\leq r+Q\\
q((n,k)\qsep(l,m)) & = & 0,\qquad otherwise
\end{eqnarray*}

The steady state $\hat{\pi}$ has product form 

\begin{equation}
\hat{\pi}(n,k)=\xi(n)\hat{\theta}(k)\,,\quad(n,k)\in\mathbb{N}_{0}\times K\,,\label{eq:LS-EMC-mm1-r-Q-product-form-assumption}
\end{equation}

with

\[
\xi(n):=C^{-1}\left(\prod_{i=1}^{n}\frac{\lambda}{\mu(i)}\right),~~n\in\mathbb{N}_{0},\qquad\text{with normalization constant}\, C^{-1}
\]

and 
\begin{equation}
\hat{\theta}(k)=\begin{cases}
C^{-1}\cdot\prod_{i=1}^{k}\left(\frac{\lambda+\nu_{i}}{\lambda}\right)^{i} & ,\qquad0\leq k\leq r\\
C^{-1}\cdot\prod_{i=1}^{r}\left(\frac{\lambda+\nu_{i}}{\lambda}\right)^{i} & ,\qquad r+1\leq k\leq Q-1\\
C^{-1}\prod_{i=1}^{r}\left(\frac{\lambda+\nu_{i}}{\lambda}\right)^{i}-\prod_{i=1}^{k-Q}\left(\frac{\lambda+\nu_{i}}{\lambda}\right)^{i} & ,\qquad Q\leq k\leq r+Q-1\\
0 & ,\qquad k=r+Q
\end{cases}\label{eq:LS-EMC-mm1-chain-r-Q-theta-hat}
\end{equation}

with normalization constant

\[
C=(Q-r)\prod_{i=1}^{k}\left(\frac{\lambda+\nu_{i}}{\lambda}\right)^{i}
\]
\end{prop}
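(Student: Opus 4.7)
The plan is to proceed in complete parallel to the proof of Proposition \ref{prop:LS-EMC-mm1inf-r-S}, invoking Theorem \ref{thm:LCS-EMC-product-form-exp-service} as the main engine. By that theorem, it suffices to exhibit a stochastic solution $\hat{\theta}$ of
\[
\hat{\theta}\,\lambda(\lambda I_W - V)^{-1} I_W R = \hat{\theta},
\]
together with the marginal queue-length factor $\xi(n) = C^{-1}\prod_{i=1}^{n} \lambda/\mu(i)$. The product form $\hat{\pi}(n,k)=\xi(n)\hat{\theta}(k)$ then follows automatically.

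First I would read off the matrices $V$ and $R$ from the transition rates: $V$ has $-(\nu_k+\lambda)$ on the diagonal for $0\le k\le r$, $0$ on the diagonal for $k\ge r+1$ (treating $k\in K_W=\{1,\dots,r+Q\}$ with $\nu_k=0$ for $k>r$), the entry $\nu_k$ in position $(k,k+Q)$ for $0\le k\le r$, and zeros elsewhere; while $R$ has $R(k,k-1)=1$ for $1\le k\le r+Q$ and $R(0,0)=1$. Then $\lambda I_W - V$ is upper triangular on the ``working block'' with diagonal entries $\nu_k+\lambda$ on $\{0,\dots,r\}$ and $\lambda$ on $\{r+1,\dots,r+Q\}$, together with the offsetting $-\nu_k$ entries in positions $(k,k+Q)$. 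Inverting this matrix is a mechanical exercise (each row below $r$ contains at most two non-zero entries), and the resulting $\lambda(\lambda I_W-V)^{-1} I_W R$ has an almost banded structure: it routes each state $k\le r$ downward to $k-1$ with probability $\lambda/(\lambda+\nu_k)$ and to $k+Q-1$ with probability $\nu_k/(\lambda+\nu_k)$, while each $k\in\{r+1,\dots,r+Q\}$ is routed deterministically to $k-1$.

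Next I would verify that the $\hat{\theta}$ proposed in \eqref{eq:LS-EMC-mm1-chain-r-Q-theta-hat} is a left eigenvector for this transition matrix. Up to $k\le r$ the balance equation reduces to the same recursion $\hat{\theta}(k) = \frac{\lambda+\nu_k}{\lambda}\hat{\theta}(k-1)$ that appeared in the $(r,S)$ case, which gives the first branch. For $r+1 \le k \le Q-1$ only the deterministic shift from $k+1$ contributes, so $\hat{\theta}$ is constant, equal to $\prod_{i=1}^{r}\bigl(\tfrac{\lambda+\nu_i}{\lambda}\bigr)^{i}$. The interesting range is $Q\le k\le r+Q-1$: here the balance of incoming probability mass receives \emph{both} the deterministic shift from $k+1$ and the jumps of size $Q$ triggered by replenishments from state $k-Q\le r$. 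The recursion one obtains, after a brief algebraic manipulation, gives the telescoping form $\hat{\theta}(k) = \hat{\theta}(Q-1) - \prod_{i=1}^{k-Q}\bigl(\tfrac{\lambda+\nu_i}{\lambda}\bigr)^{i}$, which is precisely the third branch and forces $\hat{\theta}(r+Q) = 0$ at the boundary (consistent with part (b) of Theorem \ref{thm:LCS-EMC-product-form-exp-service}, since $r+Q \notin L$ because $R$ never targets $r+Q$ from any state in $K_W$).

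Finally, summation over $k\in K$ gives the normalization constant. Grouping terms, the first-branch sum and the telescoping third-branch sum combine so that each product $\prod_{i=1}^{k}\bigl(\tfrac{\lambda+\nu_i}{\lambda}\bigr)^{i}$ with $0\le k\le r-1$ appears exactly twice with opposite signs and cancels, leaving $(Q-r)\prod_{i=1}^{r}\bigl(\tfrac{\lambda+\nu_i}{\lambda}\bigr)^{i}$ as the total mass, which yields the stated $C$. I expect the main technical obstacle to be exactly this verification in the range $Q\le k \le r+Q-1$, where the two incoming flows must be balanced and the resulting telescoping identity has to be checked carefully; everything else is a straightforward transposition of the $(r,S)$ argument.
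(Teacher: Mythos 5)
Your approach is essentially identical to the paper's: invoke Theorem \ref{thm:LCS-EMC-product-form-exp-service}, compute $\lambda(\lambda I_W-V)^{-1}I_W R$ explicitly, and verify that the proposed $\hat\theta$ is a left fixed vector, with the normalization obtained by summing over $K$. One small imprecision in your description of the routing matrix: you lump $k=0$ in with ``each state $k\le r$'' and assign it the split $\lambda/(\lambda+\nu_0)$ to $k-1$ and $\nu_0/(\lambda+\nu_0)$ to $k+Q-1$, but $0\in K_B$ is a blocking state, so $\lambda I_W$ contributes nothing there and the row of $\lambda(\lambda I_W-V)^{-1}I_W R$ for $k=0$ places mass $1$ deterministically on column $Q-1$; the probabilistic split applies only for $1\le k\le r$. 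This does not change the structure of your argument, and the identification of $L=\{0,\dots,r+Q-1\}$ and the forced $\hat\theta(r+Q)=0$ via Theorem \ref{thm:LCS-EMC-product-form-exp-service}(b) are exactly right.
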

\begin{proof}
According to \prettyref{thm:LCS-EMC-product-form-exp-service} the
marginal distribution $\hat{\theta}$ is a solution of the equation
\begin{equation}
\hat{\theta}\lambda(\lambda I_{W}-\V)^{-1}I_{W}R=\hat{\theta}\label{eq:LS-EMC-theta-hat-equation-in-r-Q}
\end{equation}
We calculate the matrix $\lambda(\lambda I_{W}-V)^{-1}I_{W}R$ explicitly
(the remark from \prettyref{prop:LS-EMC-mm1inf-r-S} on indexing the
matrices applies here as well).

\begin{eqnarray*}
 & (\lambda I_{W}-\V)=\\
 & \tiny\left(\begin{array}{c|ccccccccccccc}
 & 0 & 1 & 2 & \ldots & r-1 & r & r+1 & \ldots & Q & 1+Q & 2+Q & \ldots & r+Q\\
\hline 0 & \nu_{0} & 0 & 0 &  & 0 & 0 & 0 &  & -\nu_{0} &  &  &  & 0\\
1 & 0 & (\nu_{1}+\lambda) & 0 &  & 0 & 0 & 0 &  & 0 & -\nu_{1} &  &  & 0\\
2 & 0 & 0 & (\nu_{2}+\lambda) &  & 0 & 0 & 0 &  & 0 &  & -\nu_{2} &  & 0\\
\vdots & \vdots & \vdots & \ddots &  &  &  &  &  &  &  &  & \ddots & \vdots\\
r & 0 & 0 & 0 & \ldots & 0 & (\nu_{r}+\lambda) & 0 & \ldots & 0 &  &  &  & -\nu_{r}\\
r+1 & 0 & 0 & 0 &  & 0 & 0 & \lambda & \ldots & 0 &  &  &  & 0\\
\vdots & \vdots & \vdots & \vdots & \vdots & \vdots & \vdots & \ddots & \ddots & \vdots &  &  &  & \vdots\\
Q & 0 & 0 & 0 & \ldots & 0 & 0 &  &  & \lambda &  &  &  & 0\\
1+Q & 0 & 0 & 0 &  & 0 & 0\\
2+Q\\
\vdots\\
r+Q & 0 & 0 & 0 & \ldots & 0 & 0 & 0 &  & 0 &  &  &  & \lambda
\end{array}\right)
\end{eqnarray*}

\begin{eqnarray*}
 & \lambda(\lambda I_{W}-\V)^{-1}I_{W}=\\
 & \tiny\left(\begin{array}{c|ccccccccccccc}
 & 0 & 1 & 2 & \ldots & r-1 & r & r+1 & \ldots & Q & 1+Q & 2+Q & \ldots & r+Q\\
\hline 0 & 0 & 0 & 0 &  & 0 & 0 & 0 &  & 1 & 0 & 0 &  & 0\\
1 & 0 & \frac{\lambda}{\nu_{1}+\lambda} & 0 &  & 0 & 0 & 0 &  & 0 & \frac{\nu_{1}}{\nu_{1}+\lambda}\\
2 & 0 & 0 & \frac{\lambda}{\nu_{2}+\lambda} &  & 0 & 0 & 0 &  & 0 &  & \frac{\nu_{2}}{\nu_{2}+\lambda}\\
\vdots & \vdots & \vdots & \ddots &  &  &  &  &  &  &  &  & \ddots\\
r & 0 & 0 & 0 & \ldots & 0 & \frac{\lambda}{\nu_{r}+\lambda} & 0 & \ldots & 0 &  &  &  & \frac{\nu_{r}}{\nu_{r}+\lambda}\\
r+1 & 0 & 0 & 0 &  & 0 & 0 & 1 & \ldots & 0 &  &  &  & 0\\
\vdots & \vdots & \vdots & \vdots & \vdots & \vdots & \vdots & \ddots & \ddots & \vdots &  &  &  & \vdots\\
Q & 0 & 0 & 0 & \ldots & 0 & 0 & 0 &  & 1 &  &  &  & 0\\
1+Q & 0 & 0 & 0 &  & 0 & 0 & 0 &  &  & 1\\
2+Q & 0 & 0 & 0 &  & 0 & 0 & 0 &  &  &  & 1\\
 & \vdots & \vdots & \vdots &  & \vdots & \vdots & \vdots &  &  &  &  & \ddots\\
S & 0 & 0 & 0 & \ldots & 0 & 0 & 0 &  & 0 & 0 & 0 &  & 1
\end{array}\right)
\end{eqnarray*}

\begin{eqnarray*}
 & R=\\
 & \tiny\left(\begin{array}{c|cccccc}
 & 0 & 1 & 2 & \ldots & r-1+Q & r+Q\\
\hline 0 & 1 & 0 & 0 &  & 0 & 0\\
1 & 1 & 0 & 0 &  & 0 & 0\\
2 & 0 & 1 & 0 &  & 0 & 0\\
\vdots & \vdots & \vdots & \ddots &  & \vdots & \vdots\\
r-1+Q & 0 & 0 & 0 & \ldots & 0 & 0\\
r+Q & 0 & 0 & 0 & \ldots & 1 & 0
\end{array}\right)
\end{eqnarray*}

\begin{eqnarray*}
 & \lambda(\lambda I_{W}-\V)^{-1}I_{W}R=\\
 & \tiny\left(\begin{array}{c|cccccccccccccc}
 & 0 & 1 & 2 & \ldots & r-1 & r & r+1 & \ldots & Q-1 & Q & 1+Q & 2+Q & \ldots & r+Q\\
\hline 0 & 0 & 0 & 0 &  & 0 & 0 & 0 &  & 1 & 0 & 0 & 0 &  & 0\\
1 & \frac{\lambda}{\nu_{1}+\lambda} & 0 & 0 &  & 0 & 0 & 0 &  &  & \frac{\nu_{1}}{\nu_{1}+\lambda}\\
2 & 0 & \frac{\lambda}{\nu_{2}+\lambda} &  &  & 0 & 0 & 0 &  &  &  & \frac{\nu_{2}}{\nu_{2}+\lambda}\\
\vdots & \vdots & \vdots & \ddots &  &  &  &  &  &  &  &  & \ddots\\
r & 0 & 0 & 0 & \ldots & \frac{\lambda}{\nu_{r}+\lambda} & 0 & 0 &  &  &  &  &  & \frac{\nu_{r}}{\nu_{r}+\lambda}\\
r+1 & 0 & 0 & 0 &  & 0 & 1 &  &  &  &  &  &  &  & 0\\
\vdots & \vdots & \vdots & \vdots & \vdots & \vdots & \vdots & \ddots &  &  &  &  &  &  & \vdots\\
Q-1 & 0 & 0 & 0 & 0 & 0 & 0 &  & \ddots\\
Q & 0 & 0 & 0 & \ldots & 0 & 0 & 0 &  & 1 &  &  &  &  & 0\\
1+Q & 0 & 0 & 0 &  & 0 & 0 & 0 &  &  & 1\\
2+Q & 0 & 0 & 0 &  & 0 & 0 & 0 &  &  &  & 1\\
 & \vdots & \vdots & \vdots &  & \vdots & \vdots & \vdots &  &  &  &  & \ddots\\
r+Q & 0 & 0 & 0 & \ldots & 0 & 0 & 0 &  &  & 0 & 0 & 0 & 1 & 0
\end{array}\right)
\end{eqnarray*}
 Note that even for constant values $\nu_{k}=\nu$ the marginal distribution
$\hat{\theta}$ under $(r,Q)$ policy differs from the marginal steady
state distribution $P(Y(t)=k)$ in continuous time from \prettyref{ex:LS-original-r-Q}.

Inserting this and \prettyref{eq:LS-EMC-mm1-chain-r-Q-theta-hat}
into \prettyref{eq:LS-EMC-theta-hat-equation-in-r-Q} finishes the
proof.
\end{proof}

\subsection{Systems with non-exponential service requests\label{sect:LS-EMC-appl-non-exp} }
\begin{prop}
We consider a single server queue of $M/G/1/\infty$-type, with state
dependent service speeds, state dependent selection of requested service
times, exponential-$\nu$ replenishment times, and an attached inventory
under $(r=0,S)$ policy (with $0<S\in\mathbb{N}$), and lost sales
when the inventory is depleted (see Definition \ref{def:LCS-MG1Inf-interference-less}).

We have $K=\{S,S-1,\dots,1,0\}$ with $K_{B}=\{0\}$.

The stochastic jump matrix $R$ represents the downward jumps of the
inventory 
\[
R=\left(\begin{array}{c|cc}
 & 0\ldots S-1 & S\\
\hline 0 & \left(1,0,\ldots,0\right) & 0\\
\begin{array}{c}
1\\
\vdots\\
S
\end{array} & \left(\begin{array}{ccc}
1\\
 & \ddots\\
 &  & 1
\end{array}\right) & 0
\end{array}\right)\,,
\]
and because the environment moves independently only if there is stockout,
the environment generator $\V$ has only non zero entries $\v(0,S)=\nu,~\v(0,0)=-\nu$.
So with $K_{B}=\{0\}$ the requirement of \prettyref{thm:MG1-PF-result}
is fulfilled. 
\[
\V=\left(\begin{array}{c|cc}
 & 0 & 1\ldots S\\
\hline 0 & -\nu & \left(0,\ldots,0,\nu\right)\\
\begin{array}{c}
1\\
\vdots\\
S
\end{array} &  & 0
\end{array}\right)
\]

From \prettyref{thm:MG1-PF-result} we conclude that the Markov chain
$(\hat{X},\hat{Y})$, embedded at departure instants of customers
has a stationary distribution $\hat{\pi}$ of product form 
\[
\hat{\pi}(n,k)=\hat{\xi}(n)\hat{\theta}(k)\,,\quad(n,k)\in\mathbb{N}_{0}\times K.
\]

Here $\hat{\xi}$ is the steady state distribution of the Markov chain
with one-step transition matrix \eqref{MGs1-1} derived for the queue
length process at departure points in a system with the same parameters
as under consideration but without environment, i.e, a solution of
$\hat{\xi}\tilde{P}=\hat{\xi}$, and $\hat{\theta}$ is for $k\in\{0,1,\dots,S\}$
\begin{equation}
\hat{\theta}(k)=\frac{1}{S}\quad k\neq S,\qquad\hat{\theta}(S)=0\label{eq:LS-EMC-mgs1-r0-S-theta}
\end{equation}

According to \prettyref{thm:MG1-PF-result}, $\hat{\theta}$ is a
stochastic solution of the equation $\hat{\theta}(I_{W}-\V)^{-1}I_{W}R=\hat{\theta}$
We calculate the matrix $H=(I_{W}-\V)^{-1}I_{W}R$ explicitly.

\[
(I_{W}-\V)=\left(\begin{array}{c|cc}
 & 0 & 1\ldots S\\
\hline 0 & \nu & \left(0,\ldots,0,-\nu\right)\\
\begin{array}{c}
1\\
\vdots\\
S
\end{array} & \begin{array}{c}
0\\
\vdots\\
0
\end{array} & \left(\begin{array}{ccc}
1\\
 & \ddots\\
 &  & 1
\end{array}\right)
\end{array}\right)
\]

\[
(I_{W}-\V)^{-1}=\left(\begin{array}{c|cc}
 & 0 & 1\ldots S\\
\hline 0 & \frac{1}{\nu} & \left(0,\ldots,0,1\right)\\
\begin{array}{c}
1\\
\vdots\\
S
\end{array} & \begin{array}{c}
0\\
\vdots\\
0
\end{array} & \left(\begin{array}{ccc}
1\\
 & \ddots\\
 &  & 1
\end{array}\right)
\end{array}\right)
\]

\[
(I_{W}-\V)^{-1}I_{W}=\left(\begin{array}{c|cc}
 & 0 & 1\ldots S\\
\hline 0 & 0 & \left(0,0,\ldots,1\right)\\
\begin{array}{c}
1\\
\vdots\\
S
\end{array} & \begin{array}{c}
0\\
\vdots\\
0
\end{array} & \left(\begin{array}{ccc}
1\\
 & \ddots\\
 &  & 1
\end{array}\right)
\end{array}\right)
\]

\[
H=(I_{W}-\V)^{-1}I_{W}R=\left(\begin{array}{c|cc}
 & 0\ldots S-1 & S\\
\hline 0 & \left(0,0,\ldots,1\right) & 0\\
\begin{array}{c}
1\\
\vdots\\
S
\end{array} & \left(\begin{array}{ccc}
1\\
 & \ddots\\
 &  & 1
\end{array}\right) & \begin{array}{c}
0\\
\vdots\\
0
\end{array}
\end{array}\right)
\]

$\hat{\theta}$ defined in \prettyref{eq:LS-EMC-mgs1-r0-S-theta}
is the unique solution of the equation \prettyref{eq:LS-EMC-mgs1-theta-check-equation}.
\end{prop}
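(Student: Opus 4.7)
The plan is to verify the hypotheses of Theorem \ref{thm:MG1-PF-result} and then identify $\hat{\theta}$ as a stochastic left-eigenvector of the matrix $H=(I_W-V)^{-1}I_W R$ with eigenvalue one. First I would check that this $M/G/1/\infty$ queueing-inventory system fits into the framework of Definition \ref{def:LCS-MG1Inf-interference-less}: since $K_W=\{1,\ldots,S\}$ corresponds to positive stock and $K_B=\{0\}$ corresponds to stock-out, the environment stays constant between departures when stock is available (condition (1)), the inventory decreases by one at every departure through $R$ (condition (2)), and only during stock-out does it change autonomously at rate $\nu$, independently of the queue (condition (3)). Invoking Theorem \ref{thm:MG1-PF-result} reduces the problem to exhibiting the stochastic $\hat{\xi}$ solving $\hat{\xi}\tilde{P}=\hat{\xi}$ (which is the standard $M/G/1$ embedded queue-length distribution, coming for free) together with a stochastic $\hat{\theta}$ solving $\hat{\theta}H=\hat{\theta}$.

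The bulk of the work is then the explicit computation of $H$. I would organize $K$ in the block form $\{0\}\cup\{1,\ldots,S\}$ and compute $(I_W-V)^{-1}$ by inverting a block upper-triangular matrix, using that $V$ has only the two non-zero entries $v(0,S)=\nu$, $v(0,0)=-\nu$. Right-multiplying by $I_W$ zeros the first column, and right-multiplying by $R$ implements the downward shift $k\mapsto k-1$ for $k\geq 1$ together with the reset $0\mapsto$ (the state obtained by first jumping to $S$ via $V$, then shifting down via $R$, giving $S-1$). The resulting $H$ is a permutation matrix realizing the cyclic shift $S-1\to S-2\to\cdots\to 1\to 0\to S-1$ on $\{0,1,\ldots,S-1\}$, while the $S$-row and $S$-column behave as in the explicit display just before \eqref{eq:LS-EMC-mgs1-r0-S-theta}.

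Once $H$ is in hand, verifying the claim is immediate: the uniform measure on a cyclic permutation's support is invariant, so $\hat{\theta}$ with $\hat{\theta}(k)=1/S$ for $k\in\{0,\ldots,S-1\}$ and $\hat{\theta}(S)=0$ satisfies $\hat{\theta}H=\hat{\theta}$ and is stochastic. For uniqueness I would observe that the state $S$ is transient for the chain governed by $H$ (the first column of $H$ restricted to row $S$ is zero in the non-$S$ block, while row $S$ can only enter states in $\{0,\ldots,S-1\}$), so $\hat{\theta}(S)=0$ is forced, and on the remaining cyclic class the stochastic invariant measure is unique.

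The main obstacle I anticipate is bookkeeping the block structure of $(I_W-V)^{-1}I_W R$ cleanly enough that the cyclic-shift form of $H$ is transparent; once that is visible, invariance of the uniform distribution and the product-form conclusion via Theorem \ref{thm:MG1-PF-result} are immediate, and no residual analytic difficulty remains.
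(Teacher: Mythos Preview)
Your proposal is correct and follows essentially the same route as the paper: verify that the system fits Definition \ref{def:LCS-MG1Inf-interference-less}, invoke Theorem \ref{thm:MG1-PF-result}, compute $H=(I_W-V)^{-1}I_W R$ explicitly via the block structure, and check that the proposed $\hat{\theta}$ solves $\hat{\theta}H=\hat{\theta}$. Your additional observation that $H$ acts as a cyclic shift on $\{0,\ldots,S-1\}$ (with $S$ inessential) is a helpful way to see at a glance why the uniform distribution on that set is the unique stochastic fixed point, but it is not a different method---just a cleaner reading of the same computation the paper carries out.
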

%LyX-Krücke
\begin{prop}
We consider a single server queue of $M/G/1/\infty$-type, with state
dependent service speeds, state dependent selection of requested service
times, inventory management policy $(r,Q)$ or $(r,S)$, and zero
lead times (see Definition \ref{def:LCS-MG1Inf-interference-less},
and note that lost sales do not occur because of zero lead time).

In the case of $(r,S)$ policy the inventory size after the first
delivery will stay on between $r+1$ and $S$, therefore for long
term behaviour of the system we take in account only environment states
$K=\{r+1,r+2,...,S\}$. The zero lead time means $\V=0$, $K_{B}=\emptyset$,
and the corresponding $R$ matrix has the form

\[
R=\left(\begin{array}{c|cc}
 & r+1\ldots S-1 & S\\
\hline r+1 & \left(0,0,\ldots,0\right) & 1\\
\begin{array}{c}
r+2\\
\vdots\\
S
\end{array} & \left(\begin{array}{ccc}
1\\
 & \ddots\\
 &  & 1
\end{array}\right) & 0
\end{array}\right)
\]

The steady state distribution has a product form 
\[
\hat{\pi}(n,k)=\hat{\xi}(n)\hat{\theta}(k)\,,\quad(n,k)\in\mathbb{N}_{0}\times K\,,
\]

with

\begin{equation}
\hat{\theta}(k)=\frac{1}{S-r}\,,\quad k\in K\,.\label{eq:LS-EMC-mgs1-zero-lead-theta}
\end{equation}
\end{prop}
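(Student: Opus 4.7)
The plan is to recognize that, with zero lead time, the system falls exactly within the framework of Definition \ref{def:LCS-MG1Inf-interference-less}, so that \prettyref{thm:MG1-PF-result} applies and yields the product form $\hat{\pi}(n,k) = \hat{\xi}(n)\hat{\theta}(k)$ with $\hat{\xi}$ the stationary distribution of the underlying isolated $M/G/1/\infty$ chain. What remains to identify is $\hat{\theta}$ as the unique stochastic solution of $\hat{\theta}H = \hat{\theta}$, where $H = (I_W - \V)^{-1} I_W R$.

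First I would verify the hypotheses of Definition \ref{def:LCS-MG1Inf-interference-less}. Since lead times are zero, replenishment is instantaneous and no stock-out period ever blocks the queue, so $K_B = \emptyset$ and consequently $I_W = I$. Moreover, the environment does not evolve autonomously between departures (all inventory changes are triggered by service completions), so $\V = 0$. Conditions (1)--(3) of the definition are therefore trivially satisfied, and the matrix simplifies dramatically to
\[
H \;=\; (I - 0)^{-1} \, I \, R \;=\; R.
\]

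Second, I would describe $R$ as a cyclic permutation on the effectively visited state space. Under the $(r,S)$ policy with zero lead time, immediately after any departure the on-hand inventory lies in $K = \{r+1, r+2, \ldots, S\}$: either the pre-departure stock was $r+1$ (so post-service stock is $r$ and the instantaneous replenishment brings it back to $S$) or the pre-departure stock was $k > r+1$ (so post-service stock is $k-1$). Hence $R$ restricted to $K$ is the permutation matrix of the cyclic shift $r+1 \mapsto S \mapsto S-1 \mapsto \cdots \mapsto r+2 \mapsto r+1$, a doubly stochastic permutation on $S-r$ states. Its unique stationary distribution is uniform, giving $\hat{\theta}(k) = 1/(S-r)$ for all $k \in K$, as claimed in \eqref{eq:LS-EMC-mgs1-zero-lead-theta}. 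The $(r,Q)$ case is entirely analogous: $K = \{r+1, \ldots, r+Q\}$ is cycled by $R$ as $r+1 \mapsto r+Q \mapsto r+Q-1 \mapsto \cdots \mapsto r+2 \mapsto r+1$, yielding the uniform $\hat{\theta}(k) = 1/Q$.

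There is essentially no obstacle: the only subtlety worth commenting on is the restriction of the environment state space to the recurrent (post-first-departure) states, which parallels the inessential-states discussion of part \textbf{(b)} of \prettyref{thm:LCS-EMC-product-form-exp-service}; on that reduced state space the cyclic structure of $R$ makes doubly stochasticity --- hence uniformity of the invariant measure --- immediate.
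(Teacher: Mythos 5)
Your proposal is correct and follows essentially the same route as the paper: invoke Theorem \ref{thm:MG1-PF-result}, note $\V=0$ and $K_B=\emptyset$ so $H=(I_W-\V)^{-1}I_W R=R$, and solve $\hat{\theta}R=\hat{\theta}$. The extra observation that $R$ is a cyclic permutation (hence doubly stochastic, hence uniformly invariant) is a useful explicit justification of what the paper simply asserts.
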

\begin{proof}
According to \prettyref{thm:MG1-PF-result} $\hat{\theta}$ is a stochastic
solution of the equation $\hat{\theta}(I_{W}-\V)^{-1}I_{W}R=\hat{\theta}$
We calculate the matrix $H=(I_{W}-\V)^{-1}I_{W}R$, which in the case
of the model equivalent to

\[
\hat{\theta}\underbrace{(I_{W}-V)^{-1}}_{=I}\underbrace{I_{W}}_{=I}R=\hat{\theta}\Longleftrightarrow\hat{\theta}R=\hat{\theta}
\]

with a unique stochastic solution \eqref{eq:LS-EMC-mgs1-zero-lead-theta}.

For system under $(r,Q)$ policy with zero lead times \eqref{eq:LS-EMC-mgs1-zero-lead-theta}
holds as well, the proof is analogous, we just set $S=r+Q$. \end{proof}
\begin{rem*}
Similar results for the steady state of queueing-inventory systems
with zero lead times (without speeds) were obtained by Vineetha in
\cite[Theorem 5.2.1]{vineetha:08} for the case of i.i.d service times. 
\end{rem*}

\section{Useful lemmata\label{sect:LS-EMC-useful-lemma}}

In our proofs we require the matrix $(\lambda I_{W}-\V)$ to be invertible,
the following lemma is the key to this property in case of finite
$K$.
\begin{lem}
\label{lem:LS-EMC-finite-K-M-invertible} Let $M\in\mathbb{R}^{K\times K}$,
where the set of indices is partitioned according to $K=K_{W}+K_{B}$,
$K_{W}\neq\emptyset$, and $|K|<\infty$, whose diagonal elements
have following properties:

\begin{equation}
|M_{kk}|=\sum_{m\in K\setminus\{k\}}|M_{km}|,\qquad\forall k\in K_{B}\label{eq:LS-EMC-inverse-3}
\end{equation}
\begin{equation}
|M_{kk}|>\sum_{m\in K\setminus\{k\}}|M_{km}|,\qquad\forall k\in K_{W}\label{eq:LS-EMC-inverse-1}
\end{equation}
and it holds the \underline{flow condition} \index{flow condition}
\begin{equation}
\forall\tilde{K}_{B}\subset K_{B},~\tilde{K}_{B}\neq\emptyset:\quad\exists~~~k\in\tilde{K}_{B},~~m\in\tilde{K}_{B}^{c}:~~M_{km}\neq0\,.\label{eq:LS-EMC-inverse-flow-condition}
\end{equation}

Then $M$ is invertible. \end{lem}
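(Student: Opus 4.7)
The plan is to show that the homogeneous system $Mx = 0$ has only the trivial solution, via a maximum-modulus argument that exploits the partition $K = K_W + K_B$ together with the flow condition. Suppose, for contradiction, that there exists $x \neq 0$ with $Mx = 0$. After rescaling, assume $\|x\|_\infty = 1$, and set $S := \{k \in K : |x_k| = 1\}$, which is nonempty by construction.

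The first step is the standard observation: for any $k \in S$, rewriting the $k$-th equation as $M_{kk} x_k = -\sum_{m \neq k} M_{km} x_m$ and taking absolute values gives
\begin{equation*}
|M_{kk}| \;=\; |M_{kk}|\,|x_k| \;\leq\; \sum_{m \neq k} |M_{km}|\,|x_m| \;\leq\; \sum_{m \neq k} |M_{km}|.
\end{equation*}
If some $k \in S$ happens to lie in $K_W$, this immediately contradicts the strict diagonal dominance \eqref{eq:LS-EMC-inverse-1}. Hence necessarily $S \subseteq K_B$.

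Next I would sharpen the inequality above. For $k \in S \subseteq K_B$ we have equality $|M_{kk}| = \sum_{m \neq k} |M_{km}|$ by \eqref{eq:LS-EMC-inverse-3}, so both intermediate inequalities in the display must be equalities. In particular, for every index $m$ with $M_{km} \neq 0$ one is forced to have $|x_m| = 1$, i.e.\ $m \in S$. Thus $S$ is \emph{forward-closed} under the non-zero pattern of $M$: if $k \in S$ and $M_{km} \neq 0$, then $m \in S$.

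Now set $\tilde K_B := S$; this is a nonempty subset of $K_B$, so the flow condition \eqref{eq:LS-EMC-inverse-flow-condition} yields some $k \in \tilde K_B$ and some $m \in \tilde K_B^c = K \setminus S$ with $M_{km} \neq 0$. By the forward-closure property just established, $m \in S$, contradicting $m \notin S$. This completes the proof. The only subtlety worth spelling out carefully is the forward-closure step, since that is where the weak-dominance equality on $K_B$ is converted into a combinatorial statement about the support of $x$; once that is in place, the flow condition finishes the argument in a single stroke.
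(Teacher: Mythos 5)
Your proof is correct and follows essentially the same route as the paper's: a maximum-modulus argument showing the set of indices attaining $\|x\|_\infty$ must lie in $K_B$ (via strict dominance on $K_W$), combined with the equality case on $K_B$ to show that set is closed under the nonzero pattern of $M$, and then the flow condition to derive the contradiction. Your phrasing of the final step as "forward-closure" and a direct application of the flow condition with $\tilde K_B = S$ is a slightly cleaner packaging of the paper's observation that $M_{km}=0$ for all $k\in J$, $m\in J^c$, but it is the same argument.
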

\begin{rem*}
The Lemma \ref{lem:LS-EMC-finite-K-M-invertible} does not require
the matrix to be irreducible. Since we are interested in systems with
reducible matrices $\V$ which appear in inventory models (see Propositions
\prettyref{prop:LS-EMC-mm1inf-r-S} and \prettyref{prop:LS-EMC-mm1inf-r-Q}),
we have to modify the proof for irreducible matrices which can be
found e.g. in \cite[Lemma 4.12]{kanzow:05}. \end{rem*}
\begin{rem}
\label{rem:LS-EMC-inverse-flow-graph-interpretation}

Consider the directed transition graph of $M$, with vertices $K$
and edges ${\cal E}$ defined by $km\in{\cal E}\Longleftrightarrow K_{km}>0.$
Then the condition \eqref{eq:LS-EMC-inverse-flow-condition} guarantees
the existence of a path from any vertex in $K_{B}$ to a some vertex
in $K_{W}$.\end{rem}
\begin{proof}
We prove the lemma by contradiction, and let $x=(x_{k}:k\in K)$ be
a vector with 
\begin{equation}
Mx=0\text{ with }x\neq0.\label{eq:LS-EMC-inverse-x-assumption}
\end{equation}

The property $Mx=0$ leads for all $k\in K$ to 
\begin{eqnarray}
-M_{kk}x_{k} & = & \sum_{m\in K\setminus\{k\}}M_{km}x_{m},\nonumber \\
\Longrightarrow|M_{kk}||x_{k}| & \leq & \sum_{m\in K\setminus\{k\}}|M_{km}||x_{m}|,\nonumber \\
\Longrightarrow|M_{kk}|\frac{|x_{k}|}{||x||_{\infty}} & \leq & \sum_{m\in K\setminus\{k\}}|M_{km}|\underbrace{\frac{|x_{m}|}{||x||_{\infty}}}_{\leq1}\leq\sum_{m\in K\setminus\{k\}}|M_{km}|,\label{eq:LS-EMC-inverse-2}
\end{eqnarray}

We denote by $J$ the set of indices of elements $x_{k}$ of $x$
with the largest absolute value 
\[
J:=\{k\in K|\,\,|x_{k}|=||x||_{\infty}\}\,.
\]
Because of $x\neq0$ and $|K|<\infty$ the set $J$ is non empty.

First we show that 
\begin{equation}
\forall k\in K_{W}:|x_{k}|<||x||_{\infty}\label{eq:LS-EMC-inverse-max-x-not-in-K-w}
\end{equation}
holds, which implies 
\begin{equation}
K_{W}\subset J^{c}\,.\label{eq:LS-EMC-inverse-max-x-not-in-K-w-1}
\end{equation}

For $K_{B}=\emptyset$ the proof is complete because we have 
\[
K=K_{W}\subseteq J^{c}\subsetneqq K\,,
\]
and so we proceed with the proof for $K_{B}\neq\emptyset$.

From \prettyref{eq:LS-EMC-inverse-2} and \prettyref{eq:LS-EMC-inverse-1}
it follows for all $k\in K_{W}$ 
\begin{eqnarray}
|M_{kk}|\frac{|x_{k}|}{||x||_{\infty}} & \leq & \sum_{m\in K\setminus\{k\}}|M_{km}|<|M_{kk}|,\nonumber \\
\Longrightarrow|M_{kk}|\frac{|x_{k}|}{||x||_{\infty}} & < & |M_{kk}|,\label{eq:LS-EMC-inverse-5}
\end{eqnarray}

The inequality \prettyref{eq:LS-EMC-inverse-5} is valid if and only
if $\frac{|x_{k}|}{||x||_{\infty}}$ is strictly less than $1$, which
implies $|x_{k}|<||x||_{\infty}$ and therefore \prettyref{eq:LS-EMC-inverse-max-x-not-in-K-w-1}.\\

\begin{figure}[h]
\centering{}    \begin{tikzpicture}
      \fill[black!10] (-2,-1) rectangle (0,1);
      \draw (-4,-1) rectangle (0,1);
      \draw(-2,1)--(-2,-1);
      \draw(-1.5,0)node (KB) {$K_B$};
      \draw(-0.5,0)node (J) {$J$};
      \draw(-0.5,0)node (J)[shape = circle,draw] {$J$};
      \draw(-3,0)node (KB) {$K_W$};
    \end{tikzpicture} \caption{\label{fig:LS-EMC-inverse-set} Sets in \prettyref{lem:LS-EMC-finite-K-M-invertible}.
The set $K_{B}$ is gray.}
\end{figure}

Next, we analyze the set $J\subset K_{B}$. For $k\in J$ we examine
the $k$th row of the equation $Mx=0$.

For all $k\in J$ it follows from \prettyref{eq:LS-EMC-inverse-2}
\begin{eqnarray}
|M_{kk}| & \leq & \sum_{m\in K\setminus\{k\}}|M_{km}|\underbrace{\frac{|x_{m}|}{||x||_{\infty}}}_{\leq1}\leq\sum_{m\in K\setminus\{k\}}|M_{km}|\leq|M_{kk}|,\nonumber \\
\Longrightarrow\sum_{m\in K\setminus\{k\}}|M_{km}|\underbrace{\frac{|x_{m}|}{||x||_{\infty}}}_{\leq1} & = & \sum_{m\in K\setminus\{k\}}|M_{km}|\label{ineq:LS-EMC-inverse-4}
\end{eqnarray}
Because $\frac{|x_{m}|}{||x||_{\infty}}$ is strictly less than $1$
for all $m\in J^{c}$, the inequality \eqref{ineq:LS-EMC-inverse-4}
yields 
\[
M_{km}=0,\qquad\forall k\in J,m\in J^{c}
\]

Since $K_{W}\subset J^{c}$ we have a contradiction to the existence
of a path of positive values $M_{km}$ from $k\in J\subset K_{B}$
to $K_{W}$ which is guaranteed by \eqref{eq:LS-EMC-inverse-flow-condition}. \end{proof}
\begin{example}
\label{exmp:LCS-chain-matrix-inverse-proof} This example provides
a matrix $M$ which fulfills the requirements of \prettyref{lem:LS-EMC-finite-K-M-invertible}
and is therefore invertible It is neither irreducible nor strictly
diagonal dominant. We set $\lambda,\v{(2,3}),\v{(3,2)},\v{(4,3)},\v{(4,6)},\v{(5,4)},\v{(6,3)}>0$,
all other entries are zero. \prettyref{fig:LS-EMC-chain-positive-nu-rate-graph-example}
shows the resulting flow graph according to \prettyref{rem:LS-EMC-inverse-flow-graph-interpretation}. 

$M=$ 
\[
\left(\begin{array}{c|cccccc}
 & 1\in K_{W} & 2\in K_{W} & 3\in K_{B} & 4\in K_{B} & 5\in K_{B} & 6\in K_{B}\\
\hline 1\in K_{W} & -\lambda\\
2\in K_{W} &  & -(\lambda+\v{(2,3)}) & \v{(2,3)}\\
3\in K_{B} &  & v{(3,2)} & -\v{(3,2)}\\
4\in K_{B} &  &  & \v{(4,3)} & -(\v{(4,3)}+\v{(4,6)}) &  & \v{(4,6)}\\
5\in K_{B} &  &  &  & \v{(5,4)} & -\v{(5,4)}\\
6\in K_{B} &  &  & \v{(6,3)} &  &  & -\v{(6,3)}
\end{array}\right)
\]
Note, that this matrix is of the form $M=\lambda I_{W}+\V$ with $\V=$
\[
\left(\begin{array}{c|cccccc}
 & 1\in K_{W} & 2\in K_{W} & 3\in K_{B} & 4\in K_{B} & 5\in K_{B} & 6\in K_{B}\\
\hline 1\in K_{W} & 0\\
2\in K_{W} &  & -\v{(2,3)} & \v{(2,3)}\\
3\in K_{B} &  & \v{(3,2)} & -\v{(3,2)}\\
4\in K_{B} &  &  & \v{(4,3)} & -(\v{(4,3)}+\v{(4,6)}) &  & \v{(4,6)}\\
5\in K_{B} &  &  &  & \v{(5,4)} & -\v{(5,4)}\\
6\in K_{B} &  &  & \v{(6,3)} &  &  & -\v{(6,3)}
\end{array}\right)\,,
\]
and fits therefore exactly into the realm of our investigations of
loss systems in a random environment. 

\begin{figure}[h]
\centering{} \begin{tikzpicture}
      \fill[black!10] (1,0.5) rectangle (4,-3.5);
      \draw (-1,0.5) rectangle (1,-3.5);
      \draw (-1,0.5) rectangle (4,-3.5);
      \draw (0, -3) node (K_W) {$K_W$};
      \draw (3, -3) node (K_B) {$K_B$};
      \path 	      (0.0, -1.0) node (Y1)[shape=rectangle,draw] {$1$}
	      (0.0, -2.0) node (Y2)[shape=rectangle,draw] {$2$}
	      (2.0, 0.0) node (Y3)[shape=rectangle,draw] {$3$}
	      (2.0, -1.0) node (Y4)[shape=rectangle,draw] {$4$}
	      (2.0, -2.0) node (Y5)[shape=rectangle,draw] {$5$}
	      (2.0, -3.0) node (Y6)[shape=rectangle,draw] {$6$}
      ;
      \draw (Y2){} edge[->, bend left=10] node[auto]{}(Y3);
      \draw (Y3){} edge[->, bend left=10] node[auto]{}(Y2);
      \draw (Y4){} edge[->] node[auto]{}(Y3);
      \draw (Y4){} edge[->,bend left=30] node[auto]{}(Y6);
      \draw (Y5){} edge[->] node[auto]{}(Y4);
      \draw (Y6){} edge[->,bend left=30] node[auto]{}(Y3);
\end{tikzpicture}\caption{Graph from example \label{fig:LS-EMC-chain-positive-nu-rate-graph-example}
according to the remark of \prettyref{lem:LS-EMC-non-zero-path}.}
\end{figure}

\end{example}
For infinite $K$ we have the following results.
\begin{prop}
\label{prop:LS-EMC-inverse-diagonal-very-dominant} Let $M\in\mathbb{R}^{K\times K}$,
be a linear operator on $\ell_{\infty}(\mathbb{R}^{K})$. If for all
$k\in K$ holds $|M_{kk}|\geq\sum_{m\in K\setminus\{k\}}|M_{km}|+\varepsilon$
for some $\varepsilon>0$ and $\sup_{k\in K}|M_{kk}|<\infty$ ,then
$M$ is invertible. \end{prop}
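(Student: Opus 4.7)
The plan is to reduce the question to the Neumann-series criterion for invertibility on the Banach space $\ell_\infty(\mathbb{R}^K)$ equipped with the supremum norm, by writing $M$ as a bounded perturbation of an invertible diagonal operator. Set $D := \mathrm{diag}(M)$ and $N := D - M$, so that $M = D - N$. First I would check that all of $M$, $D$, $N$ are bounded operators on $\ell_\infty$: from $\sum_{m\neq k}|M_{km}|\le |M_{kk}|\le C:=\sup_k|M_{kk}|<\infty$ one gets $\|M\|_\infty\le 2C$, and similarly for $D$ and $N$. The hypothesis $|M_{kk}|\ge \varepsilon+\sum_{m\neq k}|M_{km}|\ge\varepsilon>0$ combined with $|M_{kk}|\le C$ means $D$ is invertible on $\ell_\infty$ with $\|D^{-1}\|_\infty\le \varepsilon^{-1}$.

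Next I would estimate $\|D^{-1}N\|_\infty$. The row-$k$ absolute-row-sum of $D^{-1}N$ equals
\[
\sum_{m\ne k}\frac{|M_{km}|}{|M_{kk}|}\;\le\;\frac{|M_{kk}|-\varepsilon}{|M_{kk}|}\;=\;1-\frac{\varepsilon}{|M_{kk}|}\;\le\;1-\frac{\varepsilon}{C}.
\]
Taking the supremum over $k$ gives $\|D^{-1}N\|_\infty\le 1-\varepsilon/C<1$. This is the step where the assumption $\sup_k|M_{kk}|<\infty$ is crucial: without it, the quantity $1-\varepsilon/|M_{kk}|$ could approach $1$ along a sequence and the operator norm would fail to be strictly less than one.

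Now the standard Neumann-series argument applies: the series $\sum_{n=0}^\infty (D^{-1}N)^n$ converges in operator norm on $\ell_\infty$, its limit is a bounded operator inverting $I-D^{-1}N$, and consequently $M = D(I-D^{-1}N)$ is a product of two invertible bounded operators on $\ell_\infty$, hence invertible with bounded inverse $(I-D^{-1}N)^{-1}D^{-1}$.

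The main obstacle is essentially just the one already flagged: making the Neumann bound uniform. Once one has both a uniform lower bound on $|M_{kk}|$ (from $\varepsilon>0$) and a uniform upper bound (from $\sup_k|M_{kk}|<\infty$), the strict contraction $\|D^{-1}N\|_\infty\le 1-\varepsilon/C<1$ is forced, and everything else is routine Banach-algebra bookkeeping. No irreducibility or flow condition is needed here, in contrast to Lemma \ref{lem:LS-EMC-finite-K-M-invertible}, because the uniform strict diagonal dominance assumed here is strictly stronger than the mixed weak/strict dominance used there.
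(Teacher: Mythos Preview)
Your proof is correct and follows essentially the same Neumann-series strategy as the paper, arriving at the identical contraction bound $1-\varepsilon/C$. The only cosmetic difference is that the paper first scales by the scalar $\beta=1/\sup_k M_{kk}$ (assuming $M_{kk}>0$) to get $\|I-\beta M\|_\infty<1$ and then handles arbitrary signs via a separate diagonal $\pm1$ matrix $S$, whereas your factorization $M=D(I-D^{-1}N)$ absorbs both the scaling and the sign correction into the single diagonal operator $D^{-1}$, which is slightly more streamlined.
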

\begin{proof}
\textbf{(1)} Assume $M_{kk}>0$ for all $k\in K$. Define $\beta:=\frac{1}{\sup_{k\in K}M_{kk}}$,
then it holds

\begin{eqnarray}
||I-\beta M||_{\infty} & = & \sup_{k\in K}\left(|1-\underbrace{\beta M_{kk}}_{\leq1}|+\beta\underbrace{\sum_{m\in K\setminus\{k\}}|M_{km}}_{\leq M_{kk}-\varepsilon}|\right)\\
 & \leq & \sup_{k\in K}\left(1-\beta M_{kk}+\beta(M_{kk}-\varepsilon)\right)<1
\end{eqnarray}
Thus $M$ is invertible and it holds 
\[
M^{-1}=\beta\sum_{n=0}^{\infty}(I-\beta M)^{n}
\]

\textbf{(2)} We define a matrix $S$ with 
\begin{equation}
S_{km}=\begin{cases}
1 & k=m,\ M_{kk}>0\\
-1 & k=m,\ M_{kk}<0\\
0 & \text{otherwise}
\end{cases}
\end{equation}
Then $S$ is a bounded invertible operator with $S^{-1}=S$. According
to \textbf{(1)} $SM$ is invertible and it holds $M^{-1}=(SSM)^{-1}=(SM)^{-1}S^{-1}=(SM)^{-1}S$ \end{proof}
\begin{lem}
\label{lem:LS-EMC-M-invertible-inf} Let $M\in\mathbb{R}^{K\times K}$,
be a linear operator on $\ell_{\infty}(\mathbb{R}^{K})$ where the
set of indices is partitioned according to $K=K_{W}+K_{B}$, $K_{W}\neq\emptyset$,
and $|K_{B}|<\infty$, with the following properties:\\

\underline{{\em Flow condition}}:~~ Define a directed graph $(K,{\cal E})$
by 
\[
(k,m)\in{\cal E}:\Leftrightarrow M(k,m)\neq0\,.
\]
Then for any ~$k\in K_{B}$ there exists some $m=m(k)\in K_{W}$
such that there exists a directed path of finite length in $(K,{\cal E})$
from $k$ to $m$. 
\begin{equation}
\text{The sequence}~~~|M_{mm}|,m\in K,~~~\text{is bounded.}\label{eq:LS-EMC-inverse-inf-4}
\end{equation}

\begin{equation}
|M_{kk}|=\sum_{m\in K\setminus\{k\}}|M_{km}|,\qquad\forall k\in K_{B}\,.\label{eq:LS-EMC-inverse-inf-1}
\end{equation}
\begin{equation}
\sup_{k\in K_{W}}\sum_{m\in K\setminus\{k\}}|M_{km}|=:ND(K_{W})<\infty\,.\label{eq:LS-EMC-inverse-inf-2}
\end{equation}
There exists some $\varepsilon(K_{W})>0$ such that 
\begin{equation}
\inf_{m\in K_{W}}|M_{mm}|=ND(K_{W})+\varepsilon(K_{W})\label{eq:LS-EMC-inverse-inf-3}
\end{equation}
holds.

Then $M$ is injective. \end{lem}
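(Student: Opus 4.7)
The plan is to adapt the contradiction argument of \prettyref{lem:LS-EMC-finite-K-M-invertible} to the infinite-dimensional setting, with the crucial modification that the set where the supremum of $|x|$ is attained need no longer be non-empty in $\ell_\infty$. The role of \eqref{eq:LS-EMC-inverse-inf-3} together with \eqref{eq:LS-EMC-inverse-inf-4} will be to force the supremum of $|x_k|$ over $K_W$ to be strictly smaller than $\|x\|_\infty$ by a uniform amount, so that the supremum must in fact be realized on $K_B$, which is finite.

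Assume, for contradiction, that $Mx = 0$ with $x \in \ell_\infty(\mathbb{R}^K)$, $x \neq 0$. Set $C := \sup_{m\in K}|M_{mm}| < \infty$ by \eqref{eq:LS-EMC-inverse-inf-4}, and $\varepsilon := \varepsilon(K_W) > 0$. For any $k \in K_W$, $(Mx)_k = 0$ gives
\[
|M_{kk}|\,|x_k| \;\le\; \sum_{m\in K\setminus\{k\}}|M_{km}|\,|x_m| \;\le\; ND(K_W)\,\|x\|_\infty \;\le\; (|M_{kk}|-\varepsilon)\,\|x\|_\infty,
\]
and hence $|x_k| \le \|x\|_\infty(1 - \varepsilon/|M_{kk}|) \le \|x\|_\infty(1 - \varepsilon/C)$. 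Consequently
\[
\sup_{k\in K_W}|x_k| \;\le\; \|x\|_\infty(1-\varepsilon/C) \;<\; \|x\|_\infty,
\]
so the norm $\|x\|_\infty$ is attained only on $K_B$. Because $|K_B|<\infty$, the set $J := \{k\in K_B : |x_k| = \|x\|_\infty\}$ is non-empty.

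Next, for $k\in J$ the equation $(Mx)_k = 0$ combined with \eqref{eq:LS-EMC-inverse-inf-1} yields the chain of inequalities
\[
|M_{kk}|\,\|x\|_\infty \;=\; |M_{kk}|\,|x_k| \;\le\; \sum_{m\in K\setminus\{k\}}|M_{km}|\,|x_m| \;\le\; \sum_{m\in K\setminus\{k\}}|M_{km}|\,\|x\|_\infty \;=\; |M_{kk}|\,\|x\|_\infty,
\]
which must therefore hold with equality. Hence for every $m\neq k$ with $M_{km}\neq 0$ we have $|x_m|=\|x\|_\infty$. Since $|x_m|<\|x\|_\infty$ for all $m\in K_W$, this forces $m\in J$. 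In other words, in the directed graph $(K,\mathcal{E})$ associated with $M$ no edge leaves $J\subseteq K_B$ into $J^c$, and in particular no edge leaves $J$ into $K_W$.

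This contradicts the flow condition, which provides for each $k\in J\subseteq K_B$ a finite directed path in $(K,\mathcal{E})$ from $k$ to some vertex in $K_W\subseteq J^c$; along any such path there must be a first edge crossing from $J$ to $J^c$. The contradiction shows that no non-zero $x\in\ell_\infty(\mathbb{R}^K)$ solves $Mx=0$, i.e. $M$ is injective. The main subtlety, compared with the finite case, is the first paragraph: without the uniform gap $\varepsilon(K_W)>0$ together with the uniform upper bound $C$ on the diagonal, one cannot exclude sequences $k_n\in K_W$ along which $|x_{k_n}|\to\|x\|_\infty$, and the localization of the supremum on the finite set $K_B$ would break down.
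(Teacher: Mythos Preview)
Your proof is correct and follows essentially the same contradiction scheme as the paper's own argument: bound $|x_k|$ uniformly below $\|x\|_\infty$ on $K_W$, deduce that the supremum is attained on the finite set $K_B$, and then propagate equality along edges to contradict the flow condition. The only minor technical difference is in how you obtain the uniform gap on $K_W$: you use the global upper bound $C=\sup_m|M_{mm}|$ from \eqref{eq:LS-EMC-inverse-inf-4} to get $|x_k|\le(1-\varepsilon/C)\|x\|_\infty$, whereas the paper uses only the lower bound \eqref{eq:LS-EMC-inverse-inf-3} to obtain $|x_k|\le\bigl(1-\varepsilon(K_W)/(ND(K_W)+\varepsilon(K_W))\bigr)\|x\|_\infty$, which is why the paper remarks afterwards that boundedness of the diagonal is not actually needed.
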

\begin{rem*}
The sequence $|M_{mm}|,m\in K,$ needs not be bounded. \end{rem*}
\begin{proof}
In the case $K_{B}=\emptyset$ the matrix $M$ is strictly diagonal
dominant and thus invertible according to \prettyref{prop:LS-EMC-inverse-diagonal-very-dominant}.

Let $x=(x_{k}:k\in K)\in\ell_{\infty}(\mathbb{R}^{K})$ be any vector
with 
\begin{equation}
Mx=0\text{ with }x\neq0\label{eq:LS-EMC-inverse-x-assumption-inf}
\end{equation}
\textbf{(a)} To show that 
\begin{equation}
\forall k\in K_{W}:|x_{k}|<||x||_{\infty}\label{eq:LS-EMC-inverse-max-x-not-in-K-w-inf}
\end{equation}
holds, is a word-by-word analogue of that property in the proof of
\prettyref{lem:LS-EMC-finite-K-M-invertible}.

\noindent \textbf{(b)} We show: $\{|x_{k}|:k\in K_{W}\}$ is uniformly
bounded away from $||x||_{\infty}$ from below.

The property $Mx=0$ leads for all $k\in K$ to 
\begin{eqnarray*}
-M_{kk}x_{k} & = & \sum_{m\in K\setminus\{k\}}M_{km}x_{m}\Longrightarrow\\
|M_{kk}||x_{k}| & \leq & \sum_{m\in K\setminus\{k\}}|M_{km}||x_{m}|\leq||x||_{\infty}\sum_{m\in K\setminus\{k\}}|M_{km}|\leq||x||_{\infty}ND(K_{W})\,,
\end{eqnarray*}
and therefore

\begin{eqnarray*}
|x_{k}|\inf_{m\in K_{W}}|M_{mm}| & \leq & ||x||_{\infty}ND(K_{W})~~\Longrightarrow\\
|x_{k}|\leq\frac{ND(K_{W})}{\inf_{m\in K_{W}}|M_{mm}|}||x||_{\infty} & = & \left(1-\underbrace{\frac{\varepsilon(K_{W})}{ND(K_{W})+\varepsilon(K_{W})}}_{\in(0,1)}\right)||x||_{\infty}
\end{eqnarray*}

\noindent \textbf{(c)} We show: $J:=\{k\in K:|x_{k}|=||x||_{\infty}\}\neq\emptyset$
and $K_{W}\subset J^{c}$.

The second property follows from \textbf{(b)}, while the first property
holds, because the set $\{|x_{k}|:k\in K_{W}\}$ is uniformly bounded
away from $||x||_{\infty}$ from below and $K_{B}$ is finite, so
there must exist some $k(0)\in K_{B}$ where $|x_{k(0)}|=||x||_{\infty}$
is attained.

\noindent \textbf{(d)} To show that 
\[
M_{km}=0,\qquad\forall k\in J,m\in J^{c}
\]
holds, is a word-by-word analogue of that property in the proof of
\prettyref{lem:LS-EMC-finite-K-M-invertible}. Therefore the \underline{flow condition}
is violated and we have proved the theorem. \end{proof}

\part{References and Indexes}

\appendix
\bibliographystyle{alpha}
\bibliography{../bib/dr,../bib/bib-b,../bib/bib-d,../bib/bib-e,../bib/bib-f,../bib/bib-g,../bib/bib-h,../bib/bib-j,../bib/bib-k,../bib/bib-l,../bib/bib-n,../bib/bib-o,../bib/bib-p,../bib/bib-s,../bib/bib-t,../bib/bib-v,../bib/bib-w,../bib/bib-z}

\printindex{}
\end{document}